%% file: dirac_res_11.tex
\documentclass[a4paper,reqno]{amsart}

\input{commands.tex}      
\graphicspath{{../figures/}} 

\usepackage[plainpages=false,colorlinks,linkcolor=black,citecolor=black,urlcolor=black,breaklinks]{hyperref}

\usepackage{csquotes}
\usepackage{mathtools,dsfont}
\usepackage{subcaption}
\numberwithin{equation}{section}
\numberwithin{figure}{section}

\begin{document}
\title[Dirac operators with imaginary potentials]{Resolvent estimates for one-dimensional Dirac operators with imaginary potentials}

\author{Antonio Arnal}

\address[PS, AA]{Institute of Applied Mathematics, Graz University of Technology, Steyrergasse 30, 8010 Graz, Austria}

\email{aarnalperez01@qub.ac.uk}
\email{siegl@tugraz.at}

\author{Tho Nguyen Duc}

\address[TND]{Analytical and Algebraic Methods in Optimization Research Group, Faculty of Mathematics and Statistics, Ton Duc Thang University, Ho Chi Minh City, Vietnam}
\email{nguyenductho@tdtu.edu.vn}

\author{Petr Siegl}


\subjclass[2020]{34L40, 34L05, 35P20, 47A10, 81Q12}

\keywords{non-self-adjoint Dirac operator, complex potential, resolvent operator, resolvent bounds, pseudospectrum}

\date{\today}

\begin{abstract}
We investigate massive one-dimensional Dirac operators perturbed by diagonal matrix potentials of the form $i V$ where the function $V$ is real-valued and unbounded at infinity. For such operators we find an $L^2$-realization with non-empty resolvent set using generalized coercivity and Schur complement dominance techniques. In the prototypical Airy-Dirac case $V(x)=x$, $x \in \R$, we derive the precise asymptotic behavior of the resolvent as the spectral parameter tends to infinity and also as the mass $m$ tends to $0$. Finally, we find the asymptotics of the resolvent norm for general potentials $V$ in terms of the Airy-Dirac resolvent, which in particular yields an asymptotic shape of $\eps$-pseudospectral curves and establishes the optimality of the pseudospectral region found in \cite{Krejcirik-2022-282}.
\end{abstract}

\maketitle


\section{Introduction}
\label{sec:intro}

We analyse massive one-dimensional Dirac operators in $L^2(\R) \oplus L^2(\R)$ with imaginary potentials such as
\begin{equation}
	\label{eq:H.gral.intro}
	H =  
	\begin{pmatrix}
		m & -i\partial_{x}\\
		-i\partial_{x} & -m
	\end{pmatrix} 
	+ i  \begin{pmatrix}
		V(x) & 0\\
		0 & V(x)
	\end{pmatrix},
\end{equation}
where $m > 0$ and $V \in C^1(\R,\R)$ is eventually increasing on $(0,+\infty)$ and unbounded at infinity (cf.~details in Assumption~\ref{asm:V.iR} below). Model cases include monomial potentials $V(x) = x^k$, $k \in \N$, $x \in \R$, for which the Dirac operator $H$ can be viewed as a relativistic counterpart of complex oscillators $-\partial_x^2 + i x^k$ in $L^2(\R)$ (see e.g.~\cite[Chap.~14]{Helffer-2013-book}, \cite[Chap.~14]{Davies-2007}, \cite{Sjoestrand-2019-14}). 

Our main goal is to obtain estimates for the resolvent norm $\| (H - \la)^{-1} \|$ when the spectral parameter $\la$ lies on the imaginary axis and in adjacent regions. In particular, we aim on finding the asymptotic shape of $\eps$-pseudosectral curves, i.e., the curves of the form $\la_b := a(b) + i b \in \C$, $b > 0$, for which
\begin{equation}\label{eps.curve.intro}
\| (H - \la_b)^{-1} \| = \frac{1}{\eps}(1+o(1)), \quad b \to + \infty, 
\end{equation}
where $\eps>0$ is given. 

In the non-relativistic Schr\"odinger case, such an analysis is closely related to the so-called Boulton's conjecture (see \cite{Boulton-2002-47}) resolved in \cite{Pravda-Starov-2006-73} and then further improved in \cite{BordeauxMontrieux-2013,ArSi-2023-284}; cf.~also \cite{Dencker-2004-57,Henry-2014,Almog-2016-48,Dondl-2016,Bellis-2018-9,Bellis-2019-277,Arnal-2026-263,Nguyen_25,Boulton_26}. We briefly summarise the known results and the ideas behind their proofs for odd or even $V \in C^2(\R,\R)$ which is eventually increasing on $(0, + \infty)$ (and satisfies further technical conditions in \cite[Asm.~3.1]{ArSi-2023-284}). 
In this case, \cite[Thm.~3.2]{ArSi-2023-284} yields  
\begin{equation}
	\label{eq:Hv.res.norm.intro}
	\| (-\partial_x^2 + i V(x) - i b)^{-1} \| = \| A^{-1} \| (V'(x_b))^{-\frac23} (1 + o(1)), \quad b \to +\infty,
\end{equation}
where $x_b>0$ is the turning point of $V$ defined by $V(x_b) = b$ for each sufficiently large $b>0$ and $A$ is the imaginary Airy operator in $L^2(\R)$
\begin{equation}\label{Airy.intro}
	A = -\partial_x^2 + ix.
\end{equation}
The Airy operator appears in numerous contexts, including the study of resonances \cite{Herbst-1979-64}, superconductivity \cite{Almog-2008-40,Almog-2010-300,Rubinstein-2010-195}, spectral instability \cite{Henry-2012-350} or spectral approximation in domain truncations \cite{Semoradova-2022-54}. It is well-known that the spectrum of $A$ is empty, the resolvent norm satisfies 
\begin{equation}\label{Airy.res.intro}
	\| (A - \la)^{-1} \| = \sqrt{\frac{\pi}{2}} \frac1{ \la^{\frac14}} \exp \left(\frac43 \la^{\frac32} \right) (1 + o(1)), \quad \la \to +\infty,
\end{equation}
see \cite[Cor.~1.4]{BordeauxMontrieux-2013}, \cite[Chap.~14]{Helffer-2013-book} or \cite{ArSi-2025}, and the numerics yields $\|A^{-1}\| = 0.7497\dots$. 

The asymptotic relation \eqref{eq:Hv.res.norm.intro} reflects that
\begin{equation}
V(x) - b = V(x) - V(x_b) = V'(x_b)(x-x_b)(1+o(1)), \quad b \to + \infty	
\end{equation}
if $x$ is sufficiently close to $x_b$ and $V$ together with $V'$ and $V''$ can be suitably controlled at infinity (here the technical conditions in \cite[Asm.~3.1]{ArSi-2023-284} are employed). Thus for $x$ near $x_b$, one expects that the operator $-\partial_x^2 + i V(x) - i b $ can be approximated by $-\partial_x^2 + i V'(x_b)(x-x_b)$ and a subsequent shift $x \mapsto x+x_b$ and scaling $x \mapsto V'(x_b)^{-\frac 13} x$ leads to a unitarily equivalent operator $V'(x_b)^\frac23 A$, cf.~\eqref{eq:Hv.res.norm.intro}. The proof of the latter is a rigorous justification of these steps, see \cite[Thm.~3.2]{ArSi-2023-284}. 

An analogous approximation by the Airy operator can be implemented also for the spectral parameter with a non-zero real part, which leads to 
\begin{equation}
	\label{eq:Hv.res.norm.a.intro}
	\| (-\partial_x^2 + i V(x) - (a+ i b))^{-1} \| = \| (A - a V'(x_b)^{-\frac 23})^{-1} \| (V'(x_b))^{-\frac23} (1 + o(1)), 
\end{equation}
as $b \to +\infty$ (if $a V'(x_b)^{-\frac 23}$ remains sufficiently small, for details see \cite[Prop.~5.1]{ArSi-2023-284} and \cite{ArSi-2025}). Hence equating $1/\eps$ and the l.h.s.~of \eqref{eq:Hv.res.norm.a.intro}, where \eqref{Airy.res.intro} is used in the r.h.s.,~yields that  
$\| (-\partial_x^2 + i V(x) - \la_b)^{-1} \| = \eps^{-1}(1+o(1))$ as $b \to + \infty$ holds on curves satisfying
\begin{equation}\label{eps.c.Schr.intro}
a(b)^\frac 32 = V'(x_b) \log \left(\frac{V'(x_b)^\frac 12}{\eps^\frac34} \right).
\end{equation}
(This is justified in \cite[Sec.~5.1]{ArSi-2023-284} if $V'(x) \to + \infty$ and $V(x)$ grows slower than $x^4$ as $x \to + \infty$ or $V(x) = x^k$ in \cite{BordeauxMontrieux-2013}).

Our analysis of the Dirac operator $H$ is inspired by the strategy above, however, several new effects appear here. We show in Theorem~\ref{thm:iR} that 
\begin{equation}\label{H.iR.intro}
\|(H - i b)^{-1} \| 
 = \| A_{m_b} ^{-1} \| (V'(x_b))^{-\frac12} 
		\left(1 + o(1)\right),
	\quad b \to \infty,
\end{equation}
where $A_m$ is the imaginary Airy-Dirac operator
\begin{equation}\label{Am.intro}
A_m = -i\Ntp \sigma_1 + m \sigma_3 + i x I_2 
= \begin{pmatrix}
	m & -i\partial_{x}\\
	-i\partial_{x} & -m
\end{pmatrix} 
+ i  
\begin{pmatrix}
	x & 0\\
	0 & x
\end{pmatrix}
\end{equation}
and $m_b$ is the transformed mass 
\begin{equation}\label{mb.intro}
	m_b = \frac{m} {V'(x_b)^{\frac12}},
\end{equation}
(which originates from a scaling when $H-ib$ is approximated near $x_b$). The properties of the Airy-Dirac operator $A_m$ are analogous to those of $A$ in \eqref{Airy.intro}, namely, if $m>0$, then the spectrum of $A_m$ is empty (see Proposition~\ref{prop:Airy.basic}) and one has (with $\la \in \R$, see Theorem~\ref{thm:airy.res.a})
\begin{equation}\label{Am.inv.la}
	\Vert (A_m-\la)^{-1} \Vert = \frac{ \Gamma(\frac{m^2}{2}+1)}{2^{\frac{m^2}{2}}m} \frac {\exp(\la^2)}{|\la|^{m^2}} 	\left(
	1 + \BigO(|\la|^{-2})
	\right), \quad \la \to \pm\infty,
\end{equation}
cf.~\eqref{Airy.res.intro}. However, a new feature is the behavior of $A_m^{-1}$ as $m \to 0+$, where 
(see Proposition~\ref{prop:Airy.basic})
\begin{equation}\label{Am.inv.intro}
	\|A_m^{-1}\| = \frac1m, \quad m>0,
\end{equation}
and the spectrum of $A_0$ is the whole $\C$ (see Proposition~\ref{prop:A0}).

The results \eqref{eq:Hv.res.norm.intro} and \eqref{H.iR.intro} seem analogous, however, if $V'(x) \to + \infty$ as $x \to \infty$, then the term $\|A_{m_b}^{-1}\|$ is unbounded as $b  \to +\infty$ and \eqref{H.iR.intro} in fact simplifies to  
\begin{equation}
\|(H - i b)^{-1} \|	= \frac 1m 
	\left(1 + o(1) \right),
	\quad b \to +\infty,
\end{equation}
(see Corollary~\ref{cor:iR} for details and also other cases of behavior of $V'$). This reveals an interesting difference between the Schr\"{o}dinger and Dirac settings. Along the positive imaginary axis, the resolvent norm decays to zero in the Schrödinger case, whereas in the Dirac case it converges to $\frac{1}{m}$ as the spectral parameter tends to infinity.

Similarly as for the Schr\"odinger case in \eqref{eq:Hv.res.norm.a.intro}, to find the curves for which \eqref{eps.curve.intro} holds, an approximation by Airy-Dirac operator leads to 
\begin{equation}\label{eq:resnorm.gen.intro}
	\|(H - (a+i b))^{-1} \| = \| (A_{m_b} - a V'(x_b)^{-1/2})^{-1} \| (V'(x_b))^{-\frac12} 
	\left(1 + o(1)\right),
\end{equation}
as $b \to +\infty$ if $a V'(x_b)^{-1/2}$ stays bounded (see Theorem~\ref{thm:iR}). For this case we also have an extension of \eqref{Am.inv.intro}, namely,  
\begin{equation}\label{Am.inv.intro.la}
\| (A_{m} - \la)^{-1} \| =  \frac{\exp(\la^2)}{m} (1 + \BigO(m)), \quad m \to 0+,
\end{equation}
for $\la \in [-R,R]$ with a fixed $R>0$, see Theorem~\ref{thm:airy.res.m}. Hence equating $1/\eps$ with the l.h.s.~of \eqref{eq:resnorm.gen.intro} yields that \eqref{eps.curve.intro} is satisfied for the curves (see Corollary~\ref{cor:iR})
\begin{equation}\label{ps.curv.intro}
a(b)^2 = V'(x_b) \log \frac m \eps;
\end{equation}
cf.~\eqref{eps.c.Schr.intro} and Figure~\ref{fig:main} for an illustration in the model cases $V(x)=x^k$ with $k=2,3$.
\begin{figure}[htbp]
	\centering
	\begin{subfigure}{0.45\textwidth}
		\centering
		\includegraphics[width=\linewidth]{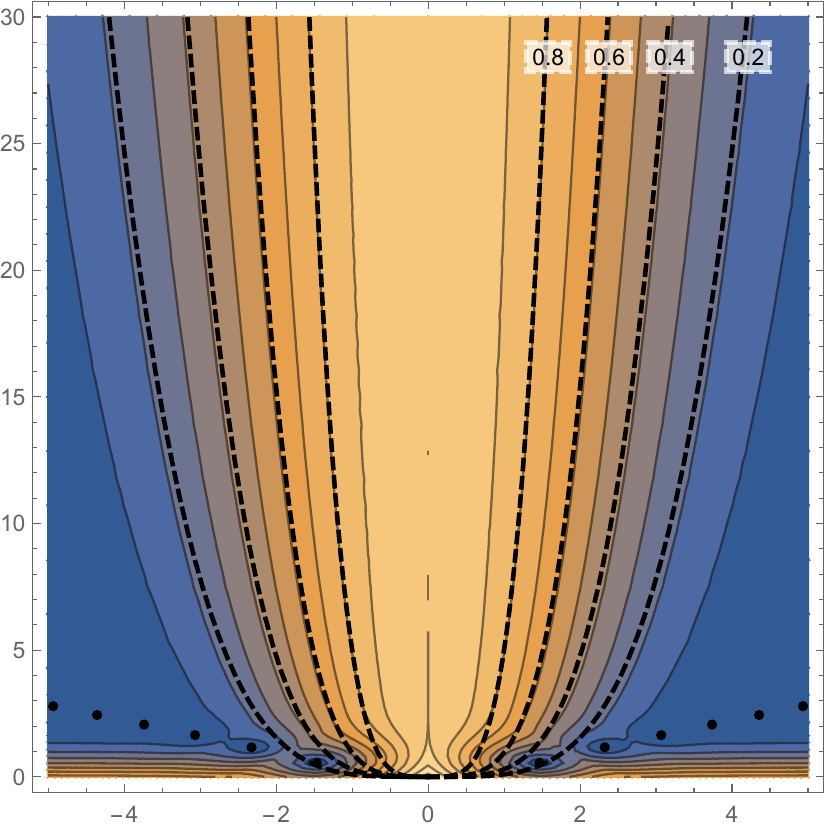}
		\caption{$V(x)=x^2$.}
		\label{fig:example x2}
	\end{subfigure}
	\hfill
	\begin{subfigure}{0.45\textwidth}
		\centering
		\includegraphics[width=\linewidth]{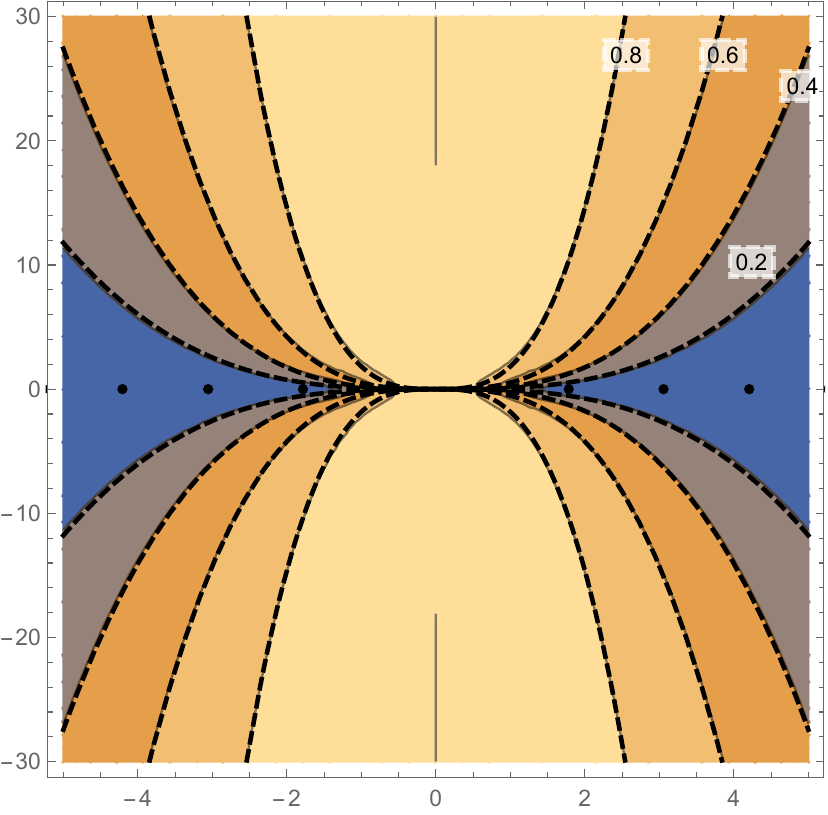}
		\caption{$V(x)=x^3$.}
		\label{fig:example x3}
	\end{subfigure}
	\caption{Numerical computation of pseudospectra associated with the polynomial potentials shown. Each operator in the operator matrix $H$ is expressed in the orthonormal basis of Hermite functions and truncated to a $400 \times 400$ matrix. The dashed curves are the pseudospectral curves \subref{fig:example x2} $a(b)^2 = 2 b^{\frac{1}{2}} \log\left(\frac{m}{\varepsilon}\right)$ and \subref{fig:example x3} $a(b)^2 = 3 \vert b \vert^{\frac{2}{3}} \log\left(\frac{m}{\varepsilon}\right)$ for $\varepsilon=0.2, 0.4, 0.6, 0.8$ and $m=1$. The black dots correspond to the respective numerical eigenvalues.}
	\label{fig:main}
\end{figure}

We also note that \eqref{eq:resnorm.gen.intro} shows that the pseudospectral region found in \cite{Krejcirik-2022-282} is optimal (see Remark~\ref{rmk:res}~\ref{rem opt} for more details and \cite{Krejcirik-2019-276,Arifoski-2020-52,Nguyen-2023-55} for non-semiclassical pseudomodes for Schr\"odinger, wave and biharmonic operators).

To obtain the results listed above (and also the existence of infinitely many eigenvalues and completeness of eigensystem for $V(x) \gs \langle x \rangle^\gamma$ with $\gamma>1$, see Corolarry~\ref{cor:V.bd.below}), several new ingredients are needed. In particular, we find a densely defined realisation of $H$ in $L^2(\R) \oplus L^2(\R)$ with non-empty resolvent set and give sufficient conditions on $V$ for the compactness of the resolvent, see Theorem~\ref{thm:H.def}. To this end, we employ the Schur complement dominance developed in \cite{Gerhat-2024-286} and the generalised coercivity of Almog-Helffer introduced in \cite{Almog-2015-40}, see Appendix~\ref{app:Schur.dom}.

A second key element is a detailed spectral analysis of the imaginary Airy-Dirac operator $A_m$ in \eqref{Am.intro}. The resolvent estimates for $A_m$ in the regime both $m \to 0+$, cf.~\eqref{Am.inv.intro}, \eqref{Am.inv.intro.la} and $\la \to \pm \infty$, cf.~\eqref{Am.inv.la}, are based on an explicit representation of the resolvent kernel. In fact, it is related to the resolvent of the harmonic oscillator
$\sL = -\partial_x^2 + x^2$ and that of the conjugated harmonic oscillator $\sL_\la = -(\partial_x+\la)^2 + x^2$ (see Section~\ref{ssec:Am.res} and  Appendix~\ref{app:HO}). The estimate for $\la \to \pm \infty$ is inspired by \cite{ArSi-2025} and it relies on the Schur test with non-trivial weights and a related choice of a pseudomode.

As indicated above, the proof of Theorem~\ref{thm:iR}, cf.~\eqref{eq:resnorm.gen.intro}, is inspired by \cite{ArSi-2023-284} and it consists of the following steps. First, in Proposition~\ref{prop:away.iR}, $\|(H-\la)u\|$ is estimated from below for functions $u$ having support outside of a neighbourhood of $x_b$. Second,	in Proposition~\ref{prop:local.iR}, a lower estimate of $\|(H-\la)u\|$ for $u$ supported in a neighborhood of $x_b$ is obtained and here the approximation by the Airy-Dirac operator $A_{m_b}- a V'(x_b)^{-\frac 12}$ is the crucial point. Third, in Proposition~\ref{prop:lbound.iR}, we construct a pseudomode, localised at $x_b$, which shows that the resolvent estimate for $H-\la$ cannot be improved; this step exploits the localisation technique and the approximation by the Airy-Dirac operator used in the second step. In the final fourth step, we combine the results from the previous ones with the aid of commutator estimates and a suitably constructed partition of unity.

Finally, we remark that the singularity of $A_m^{-1}$ at $m = 0$ also accounts for the more restrictive conditions on $V$ in Assumption~\ref{asm:V.iR} as compared to the Schr\"odinger case (where both logarithmically and exponentially growing potentials are allowed, see \cite[Asm.~3.1]{ArSi-2023-284}).

The remainder of our paper is structured as follows. In Section~\ref{sec:results}, we summarise our main results. In Sections~\ref{sec:H.def.proofs},~\ref{sec:Airy.proofs}~and~\ref{sec:resnorm.est.iR}, we prove the claims concerning the definition of $H$, the Airy-Dirac operators and the Dirac operators with general potentials, respectively. Appendices~\ref{app:HO}~and~\ref{app:Schur.dom} gather some facts on harmonic and conjugates oscillators and on generalized coercivity and Schur complement dominance, respectively.

\subsection{Notation}
\label{ssec:notation}

We write $\N_0 = \N \cup \{0\}$. If $\cX_1, \cX_2$ are two Banach spaces, $\cB(\cX_1, \cX_2)$ denotes the Banach space of bounded linear operators from $\cX_1$ to $\cX_2$; as usual, $\cB(\cX)= \cB(\cX, \cX)$. 

For a closed linear operator $T$ on a Banach space $\cX$, we denote its numerical range by $\Num(T)$, its spectrum by $\sigma(T)$, its point spectrum by $\spp(T)$, and its resolvent set by $\rho(T)$. The essential spectra are denoted by $\se{k}(T)$, $k = 1,\dots,5$, as in \cite[Chap.~IX]{EE}. 

The singular values of a compact operator $T$ in a Hilbert space $\cH$ are denoted by $s_k(T)$, $k \in \N$; They are listed in a non-increasing order and repeated according to their multiplicities, for details, see~\eg~\cite{Gohberg-1969,BS-1987}. For $0 < p < \infty$, let $\cS_{p,\infty}\coloneqq  \left\{\opT: \mathcal{H}\to \mathcal{H} \text{ is compact and }  s_{k}(T)=\mathcal{O}(k^{-1/p}) \text{ as }k\to+\infty\right\}$ denote the Schatten-Lorentz class and $\cS_{p}\coloneqq \left\{\opT : \mathcal{H}\to \mathcal{H} \text{ is compact and }  \sum_{k=1}^{\infty} (s_{k}(T))^p <\infty\right\}$ denote the Schatten class.

Unless stated otherwise, our underlying Hilbert space is $\Lt(\R, \C^2)$. Its elements are written as $u = (u_1, u_2)^t$ with $u_1, u_2 \in \Lt(\R) := \Lt(\R, \C)$. Its inner product and norm are denoted by $\langle\cdot,\cdot\rangle$ and $\| \cdot \|$, respectively, unless a subscript is needed to avoid ambiguity. We denote by $I_2$ both the $2 \times 2$ identity matrix and the identity operator on $\Lt(\R, \C^2)$. The Pauli matrices are
\begin{equation}
	\sigma_1 = \begin{pmatrix}
		0 & 1 \\
		1 & 0
	\end{pmatrix}, 
	\qquad	
	\sigma_2 = \begin{pmatrix}
		0 & -i \\
		i & 0
	\end{pmatrix},
	\qquad 
	\sigma_3 = \begin{pmatrix}
		1 & 0 \\
		0 & -1
	\end{pmatrix}.
\end{equation}
If $M: \R \to \C$ is a measurable function, we denote by $M$ the corresponding multiplication operator in $\Lt(\R)$ on the maximal domain,~\ie
\begin{equation}
\Dom(M) = \{f \in \Lt(\R) : M f \in \Lt(\R)\}.
\end{equation}
Similarly, for the multiplication operator by $M I_2$ in $L^2(\R,\C^2)$ we use the maximal domain
\begin{equation}
	\Dom(M I_2) = \{u \in \Lt(\R,\C^2) : M I_2 u \in \Lt(\R,\C^2)\}.
\end{equation}
The commutator of two operators $A$, $B$ is denoted by $[A,B]:=AB - BA$.

To avoid introducing multiple constants whose exact value is inessential for our purposes, we write $a \lesssim b$ to indicate that, given $a,b \ge 0$, there exists a constant $C>0$, independent of any relevant variable or parameter, such that $a \le Cb$. The relation $a \gtrsim b$ is defined analogously whereas $a \approx b$ means that $a \lesssim b$ \textit{and} $a \gtrsim b$.

\subsection*{Acknowledgements}

This research was funded in part by the Austrian Science Fund (FWF) 10.55776/P 33568-N.

\section{Summary of main results}
\label{sec:results}

We present our main results here, the proofs are given in further sections.

\subsection{\texorpdfstring{$L^2$}{L2}-realization with non-empty resolvent set}

Let $m > 0$ and assume $V \in C^1(\R,\R)$. Our first goal is to find a densely defined realization of the Dirac operator 
\begin{equation}
H = -i\Ntp \sigma_1 + m \sigma_3 + i V I_2 =  
	\begin{pmatrix}
		m & -i\partial_{x}\\
		-i\partial_{x} & -m
	\end{pmatrix} 
	+ i  \begin{pmatrix}
		V(x) & 0\\
		0 & V(x)
	\end{pmatrix},
\end{equation}
with a non-empty resolvent set in the Hilbert space $\Lt(\R, \C^2)$.  Theorem~\ref{thm:H.def} comprises a collection of results on basic spectral properties of $H$ obtained by these methods.

\begin{theorem}\label{thm:H.def}
Let $m > 0$, let $V \in C^1(\R,\R)$ and suppose that
\begin{equation}
	\label{eq:V.separ.cond}
	\exists \eps \in (0,1), \ \exists M_\eps >0 \, : \, |V'(x)| \le \eps |V(x)|^2 + M_\eps, \quad x \in \R.
\end{equation}
Let $H$ be the Dirac operator in $L^2(\R, \C^2)$ 
\begin{equation}\label{H.def}
H := -i\Ntp \sigma_1 + m \sigma_3 + i V I_2,
\quad
\Dom(H) := H^1(\R,  \C^2) \cap \Dom(V I_2).
\end{equation}
Then the following claims hold.

\begin{enumerate}[\upshape (i)]
\item \label{thm:H.def.i)} The adjoint operator $H^*$ reads
\begin{equation}\label{H*.def}
H^* = -i\Ntp \sigma_1 + m \sigma_3 - i V I_2,
\quad	\Dom(H^*)  = \Dom(H).
	\end{equation}
\item \label{thm:H.def.ii)}
There exists $C>0$ such that for all $u \in \Dom(H)$
\begin{equation}\label{H.gr.norm.sep}
	\begin{aligned}
		\| H u \|^2 + \| u \|^2 &\geq C \left(
		 \| u' \|^2 + \| V I_2 u \|^2 + \| u \|^2 \right),
		\\
		\| H^* u \|^2 + \| u \|^2 &\geq C \left( \| u' \|^2 + \| V I_2 u \|^2 + \| u \|^2  \right).
	\end{aligned}		
\end{equation}
Moreover, $C_{c}^{\infty}(\R,\C^2)$ is a core of $H$ and $H^*$.
\item \label{thm:H.def.iii)} The resolvent set of $H$ is non-empty, moreover,
\begin{equation}\label{H.rho.strip}
\{ z \in \C \, :\, \Re z \in (-m,m) \} \subset	\rho(H). 
\end{equation}
\item \label{thm:H.def.iv)} If, in addition to \eqref{eq:V.separ.cond}, $V$ satisfies $|V(x)| \to \infty$ as $|x| \to \infty$, then the resolvent of $H$ is compact. 
In particular, if for some $\gamma>0$ and $x_0 \geq 0$,
\begin{equation}\label{V.x.gamma}
|V(x)| \gs \langle x \rangle^\gamma,	\quad |x| \geq x_0,
\end{equation}
then the singular values of $H^{-1}$ satisfy
\begin{equation}
s_k(H^{-1}) = \BigO(k^{-1/p_\gamma}), \quad k \to + \infty, \quad p_\gamma = \frac{\gamma+1}{\gamma}.
\end{equation}
\item \label{thm:H.def.v)} Suppose that \eqref{V.x.gamma} is satisfied. For every $\delta>0$, there exists $c>0$ such that for all $\la \in \rho(H)$ with $|\la|\geq e$ and $\dist(\la, \sigma(H)) \geq \delta |\la|$  
\begin{equation}\label{res.est.Schatten}
\log \|(H-\la)^{-1}\| \leq c |\la|^{p_\gamma}.
\end{equation}
\end{enumerate}
\end{theorem}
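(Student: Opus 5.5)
The core of the argument is the separation estimate \eqref{H.gr.norm.sep}. The other claims follow from it by standard arguments, using Schur complement dominance and the generalised coercivity recalled in Appendix~\ref{app:Schur.dom}.

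\textbf{Step 1: separation estimate (ii).} Write $H = D + iV I_2$ with $D = -i\partial_x\sigma_1 + m\sigma_3$. For $u \in C_c^\infty(\R,\C^2)$ expand
\[
\|Hu\|^2 = \|Du\|^2 + \|Vu\|^2 + 2\Re\langle Du, iVu\rangle .
\]
The $m\sigma_3$ part of the cross term gives $2\Re\, i m\langle \sigma_3 u, Vu\rangle = 0$, since $\langle \sigma_3 u, Vu\rangle$ is real. The derivative part is a commutator term:
\[
2\Re\langle -i\sigma_1 u', iVu\rangle = -\langle [\partial_x, V]\sigma_1 u, u\rangle ,
\]
so it is bounded in modulus by $\int |V'|\,|u|^2$. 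Also $\|Du\|^2 = \|u'\|^2 + m^2\|u\|^2$, because the anticommutator $\{\sigma_1,\sigma_3\}$ vanishes.

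Next, \eqref{eq:V.separ.cond} gives
\[
\int |V'|\,|u|^2 \le \eps\|Vu\|^2 + M_\eps\|u\|^2 .
\]
With $\eps<1$ this absorbs into $\|Vu\|^2$, so
\[
\|Hu\|^2 + M_\eps\|u\|^2 \ge \|u'\|^2 + (1-\eps)\|Vu\|^2 + m^2\|u\|^2 .
\]
The same computation works for $H^* $ (with $V\mapsto -V$).

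To pass from $C_c^\infty$ to $\Dom(H)$ I need density of $C_c^\infty$ in $H^1\cap\Dom(V I_2)$ for the graph norm. I will use cutoffs $\chi(\cdot/n)$ followed by mollification. Cutoff errors are controlled by $\|u\|/n$. The mollification error in $\|V(u-u*\rho_\delta)\|$ is fine because $V$ is continuous and the support is compact. This also gives the core statement. I expect this density step, together with showing closedness and the identification $\Dom(H^*) = \Dom(H)$, to be the main technical obstacle.

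\textbf{Step 2: adjoint and closedness (i).} The estimate from Step 1 shows that the graph norm is equivalent to the $H^1\cap\Dom(V I_2)$ norm, so $H$ is closed. The formal adjoint $\widetilde H = -i\partial_x\sigma_1 + m\sigma_3 - iV I_2$ on the same domain satisfies $\widetilde H\subset H^*$. For the reverse inclusion, I will show that $H-\la$ and $\widetilde H-\bar\la$ are both surjective for some $\la$, which is Step 3. A standard argument then gives $H^* = \widetilde H$: if $v \in \Dom(H^*)$, pick $w\in\Dom(\widetilde H)$ with $(\widetilde H-\bar\la)w = (H^*-\bar\la)v$; then $v-w\in\ker(H^*-\bar\la) = \Ran(H-\la)^\perp = \{0\}$.

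\textbf{Step 3: the strip lies in $\rho(H)$ (iii).} For $\Re z\in(-m,m)$ I will use the Schur complement structure, as in \cite{Gerhat-2024-286}. Write
\[
H - z = \begin{pmatrix} m+iV-z & -i\partial_x \\ -i\partial_x & -m+iV-z\end{pmatrix}.
\]
The diagonal entries are invertible multiplication operators, with $|m+iV-z|\ge m-\Re z>0$ and $|-m+iV-z|\ge m+\Re z>0$. The Schur complement is
\[
S(z) = m+iV-z + \partial_x\,(-m+iV-z)^{-1}\,\partial_x .
\]
In weighted form this is a Sturm--Liouville operator with complex coefficient $q=(m-iV+z)^{-1}$, and
\[
\Re q = \frac{m+\Re z}{|m-iV+z|^2}>0 .
\]
I will use the generalised coercivity of \cite{Almog-2015-40} on $S(z)$ with a suitable multiplier to get a lower bound $\|S(z)f\|\gtrsim\|f\|$ (and likewise for the adjoint), invoking the abstract Schur complement dominance result from Appendix~\ref{app:Schur.dom}. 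A simpler route may also work: directly estimate $\Re\langle (H-z)u,\sigma_3u\rangle$. Since $\sigma_3\sigma_1$ is anti-symmetric in the right sense, $-i\partial_x\sigma_1$ drops out of the real part; $iV$ also drops out; and one is left with
\[
\Re\langle (H-z)u,\sigma_3 u\rangle = m\|u\|^2 - \Re z\,\langle\sigma_3u,u\rangle \ge (m-|\Re z|)\|u\|^2 .
\]
This gives injectivity with closed range, and the same bound for $H^*-\bar z$ gives density of the range, so $z\in\rho(H)$. I would try this direct route first.

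\textbf{Steps 4 and 5: compactness, singular values and resolvent growth (iv), (v).} For (iv), by \eqref{H.gr.norm.sep} the operator $H^{-1}$ maps $L^2$ boundedly into $\{u : u', Vu\in L^2\}$. When $|V|\to\infty$ this space embeds compactly into $L^2$ by Rellich's theorem, so the resolvent is compact. For the singular values, $s_k(H^{-1})\lesssim s_k\bigl((-\partial_x^2+\langle x\rangle^{2\gamma}+1)^{-1/2}\bigr)$, since the graph-norm bound dominates $\|(-\partial_x^2+\langle x\rangle^{2\gamma})^{1/2}u\|$. Weyl asymptotics for the anharmonic oscillator give eigenvalues $\mu_k\approx k^{2\gamma/(\gamma+1)}$, hence $s_k \approx k^{-\gamma/(\gamma+1)} = k^{-1/p_\gamma}$.

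For (v), $H^{-1}\in\cS_{p,\infty}\subset\cS_{p'}$ for every $p'>p_\gamma$. The standard Carleman-type bound in the Schatten-Lorentz version (cf.~\cite{Gohberg-1969}, or Dunford--Schwartz with the improvement for weak classes) gives
\[
\|(I-\la H^{-1})^{-1}\|\le \exp\bigl(c|\la|^{p_\gamma}\bigr)
\]
when $\dist(\la,\sigma(H))\ge\delta|\la|$. Combining this with $(H-\la)^{-1} = H^{-1}(I-\la H^{-1})^{-1}$ yields \eqref{res.est.Schatten}.
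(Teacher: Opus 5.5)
\textbf{Gap: Steps 2 and 3 are circular.} This is exactly where the substance of the theorem lies.

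In Step 2 you obtain $H^*=\widetilde H$ from the surjectivity of $H-\la$ and $\widetilde H-\bar\la$, which you defer to Step 3. In Step 3, however, density of $\Ran(H-z)$ is derived from ``the same bound for $H^*-\bar z$''. That bound has only been proved on $\Dom(H)=\Dom(\widetilde H)$, i.e.\ for $\widetilde H-\bar z$. A priori you only know $\widetilde H\subset H^*$. So $\Ker(\widetilde H-\bar z)=\{0\}$ says nothing about $\Ker(H^*-\bar z)=\Ran(H-z)^\perp$. Likewise, surjectivity of $\widetilde H-\bar\la$ needs $\Ker(\widetilde H^*-\la)=\{0\}$.

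The Schur-complement alternative you mention does not close the gap either. It would produce some realization $H_0$ with non-empty resolvent set, and you would still have to prove $H_0=H$. The paper does exactly this:
\begin{enumerate}[\upshape (a)]
\item it shows $C_c^\infty(\R,\C^2)$ is a core of $H_0$ and extends \eqref{H.gr.norm.sep} to $\Dom(H_0)$, giving $\Dom(H_0)=\Dom(H)$;
\item it runs the same construction with $-iV$ to get $0\in\rho(H^c)$;
\item it concludes $H^*=H^c$ from $H^c\subset H^*$ and $0\in\rho(H^c)\cap\rho(H^*)$.
\end{enumerate}

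\textbf{The missing idea: identify $H^*$ first, without any resolvent information.} Since $C_c^\infty(\R,\C^2)$ is a core of the closed operator $H$ (your Steps 1--2), $\Dom(H^*)$ consists of all $v\in L^2(\R,\C^2)$ for which the distribution $-i\sigma_1 v'+m\sigma_3 v-iVv$ lies in $L^2$. Continuity of $V$ then forces $v\in H^1_{\rm loc}$.

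Hence $\chi_n v\in\Dom(\widetilde H)$ for $\chi_n=\chi(\cdot/n)$, and $\widetilde H(\chi_n v)=\chi_n H^*v-i\sigma_1\chi_n'v$ is bounded in $L^2$ uniformly in $n$. Your separation estimate (for $-V$) applied to $\chi_n v$ bounds $\|\chi_n v'\|$ and $\|\chi_n Vv\|$ uniformly. Fatou's lemma then gives $v\in H^1\cap\Dom(VI_2)$, which is (i).

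Your identity $\Re\langle (H-z)u,\sigma_3u\rangle=m\|u\|^2-\Re z\,(\|u_1\|^2-\|u_2\|^2)$ is correct. Applied to $H-z$ and to $H^*-\bar z=\widetilde H-\bar z$, it now gives (iii), together with $\|(H-z)^{-1}\|\le(m-|\Re z|)^{-1}$.

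\textbf{Comparison once repaired.} Your route is then a genuinely more elementary alternative to the paper's Schur-complement dominance and Almog--Helffer coercivity. What the paper's machinery buys is a realization with non-empty resolvent set already under the weaker condition \eqref{V.nabla.form}, where the separation estimate need not hold.

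\textbf{Steps 4--5.} These agree with the paper: Rellich, comparison with $-\partial_x^2+|x|^{2\gamma}$, and a cited Carleman-type bound. Note only that $\cS_{p_\gamma,\infty}\subset\cS_{p'}$ by itself gives exponent $p'>p_\gamma$. The weak-class result (the paper cites \cite{Sarihan-2021-610}) is genuinely needed to reach $|\la|^{p_\gamma}$.
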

\begin{remark}
	\label{rmk:D.massless}
	We note that the condition $m>0$ in Theorem~\ref{thm:H.def} is important for the spectral properties of $H$. Namely, if $m=0$, $V \in C(\R,\R)$ with 
	\begin{equation}
		\lim_{x \to \pm \infty} V(x) = \pm \infty,
	\end{equation}
	and $H$ is defined as in \eqref{H.def}, then
	\begin{equation}
		\sigma(H) = \spp(H) = \C.
	\end{equation}
	Indeed, for any $\la \in \C$, we have an eigenfunction $(-f_\la, f_\la) \in \Dom(H)$, where
	\begin{equation}\label{f.la.def}
		f_\la(x) := \exp \left(-\int_0^x V(t) \, \dd t - i \la x \right).
	\end{equation}
\end{remark}
The proof of Theorem~\ref{thm:H.def} is given in Section~\ref{sec:H.def.proofs}  (for some claims in Theorem~\ref{thm:H.def}, the assumption \eqref{eq:V.separ.cond} can be relaxed to \eqref{V.nabla.form}, see Section~\ref{sec:H.def.proofs} for details).

\subsection{Imaginary Airy-Dirac operator}

An important special case in our analysis is the Airy-Dirac operator
\begin{equation}\label{eq:airy.dirac.def}
\begin{aligned}
A_m &= -i\Ntp \sigma_1 + m \sigma_3 + i x I_2 
	= \begin{pmatrix}
		m & -i\partial_{x}\\
		-i\partial_{x} & -m
\end{pmatrix} 
+ i  
\begin{pmatrix}
		x & 0\\
		0 & x
\end{pmatrix},
\\
\Dom(A_m) & = H^1(\R, \C^2) \cap \Dom(x I_2).
\end{aligned}
\end{equation}
Similarly as for Schr\"odinger and wave operators (see \cite{ArSi-2023-284,Arnal-2026-263}) the precise analysis of $A_m$ allows us to establish resolvent estimates for Dirac operators with general potentials (see Theorem~\ref{thm:iR} below). Like the imaginary Airy operator $-\partial_x^2 + ix$ in $L^2(\R)$, $A_m$ has peculiar spectral properties. 
\begin{proposition}\label{prop:Airy.basic}
Let $m>0$ and let $A_m$ be as in \eqref{eq:airy.dirac.def}. Then the following claims hold.

\begin{enumerate}[\upshape (i)]
	\item For all $u \in \Dom(A_m) = \Dom(A_m^*)$
	\begin{equation}\label{Separation Airy}
		\begin{aligned}
			&\Vert A_{m} u \Vert^2 + \Vert u \Vert^2 \geq \Vert u' \Vert^2 + \Vert x I_2 u\Vert^2 +m^2 \Vert u \Vert^2 ,\\
			&\Vert A_{m}^{*} u \Vert^2 + \Vert u \Vert^2 \geq \Vert u' \Vert^2 + \Vert x I_2 u\Vert^2 +m^2 \Vert u \Vert^2.
		\end{aligned}
	\end{equation}
	\item \label{Spec 0} The resolvent of $A_{m}$ is compact,
	\begin{equation}
		\sigma(A_{m})=\emptyset	
	\end{equation}
	and 
	\begin{equation}\label{res.Airy.unit.eq}
		\Vert (A_{m}-\lambda)^{-1} \Vert= \Vert (A_{m}-\Re \lambda)^{-1} \Vert, \quad \la \in \C.	
	\end{equation}	 	 	
\item \label{Singular} The singular values of $A_m^{-1}$ read
\begin{equation}
\label{eq:Am.inv.sv}
s_{2k} = \frac{1}{\sqrt{m^2 + 2k}}, \quad  s_{2k+1} = \frac{1}{\sqrt{m^2 + 2k +2}}, \quad  k \in \N_0,
\end{equation} 
thus in particular
\begin{equation}\label{Am.inv.norm}
\Vert A_{m}^{-1} \Vert=\frac{1}{m}.
\end{equation}
\item There exists $c>0$ such that
	\begin{equation}\label{Carleman est 2}
	\log \Vert (A_{m}-\lambda)^{-1} \Vert \leq c (\Re\lambda)^2, \qquad \lambda \in \C.
	\end{equation}
\end{enumerate}
\end{proposition}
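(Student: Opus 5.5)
We take the four claims in order. Throughout, write $A_m = D + iX$ with $D := -i\partial_x \sigma_1 + m\sigma_3$ (symmetric) and $X := x I_2$. Since $V(x)=x$ satisfies \eqref{eq:V.separ.cond}, Theorem~\ref{thm:H.def} applies. It gives $\Dom(A_m^*)=\Dom(A_m)$ and $A_m^* = D - iX$. It gives that $C_c^\infty(\R,\C^2)$ is a core for both operators, and that $0\in\rho(A_m)$.

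\emph{Claim (i).} For $u\in C_c^\infty(\R,\C^2)$ we expand
\begin{equation*}
\|A_m u\|^2 = \|Du\|^2 + \|Xu\|^2 + 2\Re\langle Du, iXu\rangle .
\end{equation*}
Since $\sigma_1\sigma_3 = -\sigma_3\sigma_1$, the mixed terms in $\|Du\|^2$ cancel, so $\|Du\|^2 = \|u'\|^2 + m^2\|u\|^2$. The remaining cross term is a commutator term: it equals $\pm\langle \sigma_1 u,u\rangle$, because $[D,X]=-i\sigma_1$. Its modulus is therefore at most $\|u\|^2$, which gives the first inequality in \eqref{Separation Airy}. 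The same computation with $-iX$ gives the inequality for $A_m^*$. Both inequalities extend to $\Dom(A_m)$ by the core property.

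\emph{Claim (ii).} By (i), $\Dom(A_m)$ with the graph norm embeds continuously into $H^1\cap\Dom(\langle x\rangle)$, and this space embeds compactly into $L^2$. Combined with $0\in\rho(A_m)$, the resolvent is compact. Let $(T_s u)(x) = u(x+s)$ be the translation, which is unitary. Then $T_s A_m T_s^{-1} = A_m + is$, so $A_m - \lambda$ is unitarily equivalent to $A_m - \lambda + i\Im\lambda$. This gives \eqref{res.Airy.unit.eq} and shows that $\sigma(A_m)$ is invariant under all imaginary shifts. Since the resolvent is compact, the spectrum consists of isolated eigenvalues. If $\sigma(A_m)$ contained a point, it would contain a whole vertical line, which is not discrete. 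Hence $\sigma(A_m)=\emptyset$.

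\emph{Claim (iii).} The singular values of $A_m^{-1}$ are $\mu^{-1/2}$, where $\mu$ runs over the eigenvalues of the self-adjoint operator $A_m^*A_m$. On the core, the same expansion as in (i) gives
\begin{equation*}
A_m^*A_m = -\partial_x^2 + x^2 + m^2 + i[D,X] = (-\partial_x^2 + x^2 + m^2)I_2 \pm \sigma_1 .
\end{equation*}
Diagonalising the constant matrix $\sigma_1$, whose eigenvalues are $\pm1$, decouples this into two scalar harmonic oscillators shifted by $m^2\pm1$. The spectrum is therefore $\{2k+1+m^2\pm1 : k\in\N_0\}$, that is, $m^2+2k$ and $m^2+2k+2$. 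This yields \eqref{eq:Am.inv.sv}, and in particular $\|A_m^{-1}\| = 1/m$.

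The one technical point is to identify the operator-theoretic product $A_m^*A_m$ with the closed harmonic-oscillator realisation. I plan to do this using the separation estimate (i) and the core property. The idea is to show that the closure of the expression on $C_c^\infty$ is already self-adjoint and coincides with $A_m^*A_m$.

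\emph{Claim (iv).} Apply Theorem~\ref{thm:H.def}~\ref{thm:H.def.v)} with $\gamma=1$, hence $p_\gamma=2$. Since $\sigma(A_m)=\emptyset$, the condition $\dist(\lambda,\sigma(A_m))\ge\delta|\lambda|$ holds trivially, so we get $\log\|(A_m-\lambda)^{-1}\|\le c|\lambda|^2$ for all $|\lambda|\ge e$. For real $|\lambda|<e$, the resolvent norm is bounded by continuity on a compact set. After enlarging $c$, this bound is absorbed once $|\lambda|$ is not too small; for $|\lambda|$ near $0$ it is absorbed by the additive constant hidden in the statement, or handled via (iii) directly. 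Using \eqref{res.Airy.unit.eq} we then pass from real $\lambda$ to general $\lambda\in\C$, replacing $|\lambda|$ by $|\Re\lambda|$.

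The main obstacle I expect is the domain identification in (iii). The small-$|\Re\lambda|$ bookkeeping in (iv) is a minor point by comparison.
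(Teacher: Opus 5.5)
Your proposal is correct and follows the paper's proof essentially step by step. You use the expansion on $C_c^\infty(\R,\C^2)$ with cross term $\langle \sigma_1 u,u\rangle$ for (i), and translation covariance $T_sA_mT_s^{-1}=A_m+is$ together with compactness for (ii). For (iii) you diagonalise $A_m^*A_m=(-\partial_x^2+x^2+m^2)I_2+\sigma_1$; the paper uses $A_mA_m^*$, which is equivalent. The paper simply asserts the domain identification, whereas you plan to justify it. The input needed there is essential self-adjointness of $-\partial_x^2+x^2$ on $C_c^\infty(\R)$ plus maximality of self-adjoint operators, not estimate (i). For (iv) you, like the paper, apply Theorem~\ref{thm:H.def}~(v) with $\gamma=1$.

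The one caveat concerns (iv): as literally written it is false for $m<1$. By (ii) and (iii), $\log\|(A_m-ib)^{-1}\|=\log(1/m)>0=c(\Re(ib))^2$. So the additive constant you invoke is genuinely necessary, and what you and the paper actually prove is $\log\|(A_m-\lambda)^{-1}\|\le c(1+(\Re\lambda)^2)$. Your alternative of handling small $|\Re\lambda|$ via (iii) directly cannot work when $m<1$. The paper's remark that the bound extends to $|\lambda|\le e$ because $\sigma(A_m)=\emptyset$ glosses over exactly the same point.
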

The proof of Proposition~\ref{prop:Airy.basic} is given in Sub-Section \ref{ssec:Airy.basic} and like Theorem~\ref{H.def} it relies mostly on more general operator-theoretic methods. The precise singular values in \eqref{eq:Am.inv.sv} are obtained by noticing that $A_m^*A_m$ is related via conjugation to the harmonic oscillator $\sL = -\partial_x^2 + x^2$ in $L^2(\R)$, see \eqref{Conjugate by U}.

In fact, the links between the resolvent of $A_m$ and those of the harmonic oscillator and the related conjugated one $\sL_\la = -(\partial_x+\la)^2 + x^2$ (see  Appendix~\ref{app:HO.conj}) are further exploited in Sub-section~\ref{ssec:Am.res} where an explicit representation of $(A_m-\la)^{-1}$ is found (see Lemma~\ref{lem:res.form}). Based on this and a formula for the resolvent of the harmonic oscillator in terms of parabolic cylinder functions (see Appendix~\ref{sec:app.ho}), we establish the asymptotic behaviour of the resolvent of $A_m$ as $\la \to \pm  \infty$. 
\begin{theorem}\label{thm:airy.res.a}
Let $m>0$ be fixed and $A_m$ be the imaginary Airy-Dirac operator defined in \eqref{eq:airy.dirac.def}. 
Then (with $\la \in \R$) 
	\begin{equation}\label{Airy.la.inf}
	\Vert (A_m-\la)^{-1} \Vert = 
	\frac{ \Gamma(\frac{m^2}{2}+1)}{2^{\frac{m^2}{2}}m} \frac{\exp(\la^2)}{|\la|^{m^2}}
		\left(1+\mathcal{O}\left(\frac{1}{|\la|^2}\right)\right), \quad \la \to \pm \infty.
	\end{equation}
\end{theorem}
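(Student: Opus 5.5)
The plan is to obtain an explicit integral kernel for $(A_m-\la)^{-1}$, $\la \in \R$, and then read off its norm asymptotically: an upper bound from a weighted Schur test and a matching lower bound from an explicit pseudomode. By \eqref{res.Airy.unit.eq} only real $\la$ matter. By the symmetry $x\mapsto -x$ combined with a suitable Pauli conjugation (for instance $\sigma_3$ or $\sigma_1$ together with complex conjugation), I expect to reduce to $\la \to +\infty$.

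\emph{Step 1 (explicit kernel).} Write $A_m-\la = D_\la + ix$ with $D_\la = -i\partial_x\sigma_1 + m\sigma_3 - \la$, which is symmetric. Then $(A_m-\la)^{-1} = (A_m^*-\la)\bigl[(A_m-\la)(A_m^*-\la)\bigr]^{-1}$. Computing the product gives a matrix operator whose entries are harmonic oscillators $-\partial_x^2+x^2+m^2 \pm 1$, shifted and coupled through first-order terms carrying $\la$. After the unitary rotation from \eqref{Conjugate by U}, and conjugation by the multiplier $e^{\pm\la x}$ (equivalently a translation in Fourier space), these become the conjugated oscillators $\sL_{\pm\la} = -(\partial_x\pm\la)^2+x^2$ of Appendix~\ref{app:HO.conj}. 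This is the content of Lemma~\ref{lem:res.form}. Inserting the parabolic cylinder (Mehler-type) representation of $(\sL + m^2 + \mu)^{-1}$ from Appendix~\ref{sec:app.ho} should give a kernel of the form
\[
K_\la(x,y) = \int_0^\infty e^{-m^2 t}\,\Phi_t(x,y)\,e^{\la(\cdots)}\,\dd t,
\]
where $\Phi_t$ is a Gaussian matrix kernel. Completing squares in the exponent produces the factor $e^{\la^2}$, up to a $t$-dependent correction.

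\emph{Step 2 (dominant part).} As $\la\to+\infty$, a Laplace-type analysis in $t$ localises the $t$-integral near large $t$, that is, near the ground state. Through the substitution $s = \la^2(\ldots)$, the weight $e^{-m^2t}$ becomes $s^{m^2/2}e^{-s}$-type integrands. This is where $\Gamma(\tfrac{m^2}{2}+1)\,2^{-m^2/2}\,|\la|^{-m^2}$ should come from. I would then split $K_\la = K_\la^{\mathrm{main}} + K_\la^{\mathrm{rem}}$, where $K_\la^{\mathrm{main}}$ is, to leading order, a rank-one operator $g_\la\otimes h_\la$ built from shifted Gaussians centred near $\pm\la$. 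Its norm $\|g_\la\|\,\|h_\la\|$ is computed exactly and gives the claimed leading term.

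\emph{Step 3 (upper and lower bounds).} For the remainder I would apply the Schur test with non-trivial weights, for example Gaussians adapted to the centres $\pm\la$, following \cite{ArSi-2025}. The aim is to show that $K_\la^{\mathrm{rem}}$ has norm $\BigO(|\la|^{-2})$ relative to the main term. For the lower bound I would test the resolvent on the pseudomode $h_\la$, i.e.\ choose $f = h_\la/\|h_\la\|$, and verify that $\|(A_m-\la)^{-1}f\|$ reproduces the main term up to $1+\BigO(\la^{-2})$.

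The main obstacle is Step 3, specifically the Schur test for the remainder. The kernel carries the huge factor $e^{\la^2}$, and off-diagonal Gaussian tails must be shown to be smaller by a relative factor $\la^{-2}$ rather than merely $o(1)$. This requires careful choice of weights and control of the $t$-integral uniformly in $x,y$. I would first try weights $w(x)=e^{-\alpha(x-\la)^2}$ with $\alpha$ tuned to the Gaussian widths appearing in $g_\la$ and $h_\la$.
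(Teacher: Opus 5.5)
Your overall architecture matches the paper's: Lemma~\ref{lem:res.form}, an explicit kernel, a weighted Schur test in the spirit of \cite{ArSi-2025} for the upper bound, and the $\xi$-factor of the kernel as the pseudomode for the lower bound. However, the step you call the main obstacle is left open, and the tools you propose would not close it. The missing idea is the exact Sturm--Liouville structure of the kernel. Formula \eqref{Prop HO Res} is not a Mehler-type $t$-integral. It is the product Green's function $\propto U(-\frac{\mu}{2},\sqrt2\max\{y,\xi\})\,U(-\frac{\mu}{2},-\sqrt2\min\{y,\xi\})$. Combined with \eqref{Recurrence}, it gives (Lemma~\ref{lem:res.norm.form}) $(\wt A_m-\la)^{-1}=\mathcal R_{\la,1}+\mathcal R_{\la,2}$. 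Both kernels are rank-one $2\times 2$ matrices, $\|\mathcal R_{\la,1}\|=\BigO(\la^{-1})$ as $\la\to+\infty$ by a Schur test with constant weights, and
\begin{equation*}
|\mathcal R_{\la,2}(y,\xi)|_{\C^{2\times2}} = c_m\, e^{-\la y}u_m(-y)\; e^{\la\xi}u_m(\xi)\, \mathds{1}_{\{y\le \xi\}}.
\end{equation*}
Consequences:
\begin{itemize}
\item The restriction of $\mathcal R_{\la,2}$ to $\{y\le 0\le\xi\}$ is \emph{exactly} a rank-one operator, both factors are in $L^2$, and its norm is $c_m\int_0^\infty e^{2\la\xi}u_m(\xi)^2\,\dd\xi$.
\item On $\{y\le\xi\le 0\}\cup\{0\le y\le\xi\}$ the kernel is $\ls e^{\la^2/2}$ up to powers of $\la$. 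Constant Schur weights already give a norm exponentially smaller than $e^{\la^2}\la^{-m^2}$, so no delicate remainder control is needed.
\item The relative error $\BigO(\la^{-2})$ comes entirely from Laplace's method for $\int e^{2\la\xi}u_m^2$ and from \eqref{Asymp U +}--\eqref{Asymp U -}. In Laplace's method the linear term of $(\la+s)^{-m^2}$ is odd and integrates out.
\end{itemize}
The paper does not split off this rank-one piece. It runs the Schur test on all of $\mathcal R_{\la,2}$ with weights equal to these exact factors. Your split, done with the exact factors, is an equally valid and slightly simpler way to finish.

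As written, your plan also has three concrete defects.
\begin{itemize}
\item The factorization $(A_m^*-\la)[(A_m-\la)(A_m^*-\la)]^{-1}$ does not diagonalise. In the variables of \eqref{A.td.def}, $(\wt A_m-\la)(\wt A_m^*-\la)$ has off-diagonal entries $-2m\la$. The paper instead uses $(\wt A_m-\la)^{-1}=(\wt A_m^*+\la)\,\wt T_m(\la)^{-1}$ with $\wt T_m(\la)=\operatorname{diag}(\sL_\la+m^2-1,\sL_\la+m^2+1)$.
\item The true factors are not Gaussians. Near $y=-\la$ one has $e^{-\la y}u_m(-y)\approx e^{\la^2/2}e^{-(y+\la)^2/2}(\sqrt2|y|)^{-m^2/2}$. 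The power factor has a component odd in $y+\la$ of relative size $\la^{-1}$. Replacing the factors by Gaussians centred at $\mp\la$ therefore makes $\|K_\la-g_\la\otimes h_\la\|$ of relative size $\la^{-1}$, and your target $\BigO(\la^{-2})$ for $K_\la^{\mathrm{rem}}$ becomes unreachable. You must keep the exact factors.
\item Gaussian Schur weights cannot work globally. The one-sided factors $e^{-\la y}u_m(-y)$ and $e^{\la\xi}u_m(\xi)$ grow like $e^{y^2/2}$ and $e^{\xi^2/2}$ on one side, and only the cut-off $\mathds{1}_{\{y\le\xi\}}$ tames them. For example, with $q(\xi)=e^{-(\xi-\la)^2/2}$ one finds, as $y\to+\infty$,
\begin{equation*}
\int|\mathcal R_{\la,2}(y,\xi)|\,q(\xi)\,\dd\xi\approx \frac{e^{-(y-\la)^2/2}}{2(y-\la)}.
\end{equation*}
Its ratio to $p(y)=e^{-(y+\la)^2/2}$ is unbounded. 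This is exactly why the weights $p_\la,q_\la$ in Lemma~\ref{lem:R2.est.up} equal the exact factors on $[-2\la,2\la]$ and are frozen to constants outside.
\end{itemize}
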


In particular, \eqref{Airy.la.inf} shows that the exponential order in the upper resolvent bound \eqref{Carleman est 2} provided by abstract operator methods is sharp. 

The relation of the resolvent of $A_m$ to the conjugated oscillator $\sL_\la$ can be used also to analyse the resolvent norm of $A_m$ for varying $m$, in particular as $m \to 0 +$.

\begin{theorem}\label{thm:airy.res.m}
Let $A_m$, $m>0$, be the Airy-Dirac operators in \eqref{eq:airy.dirac.def}. 
Let $R >0$ be fixed and let $\la \in [-R,R]$. Then the following claims hold.
\begin{enumerate}[\upshape (i)]
\item If $m \to 0+$, then
\begin{equation}\label{eq:Am.small.m}
	\| (A_m - \la)^{-1} \| = \frac{\exp(\la^2)}{m} (1 + \BigO(m)), 
\end{equation}
where the remainder estimate is uniform for $\la \in [-R,R]$.
\item If $m \in [m_1,m_2]$ for some fixed $m_2>m_1>0$, then
\begin{equation}\label{Am.norm.m.1}
	\| (A_m - \la)^{-1} \| \approx 1.	
\end{equation}
\item If $m \to + \infty$, then
\begin{equation}\label{Am.norm.m.inf}
	\|(A_m - \la)^{-1}\| = \frac{1}{m}\left(1 + \BigO(m^{-1}) \right).
\end{equation}
\end{enumerate}
In particular,
\begin{equation}\label{Am.res.sum}
\|(A_m - \la)^{-1}\| \approx \frac{1}{m}, \quad m >0.
\end{equation}
\end{theorem}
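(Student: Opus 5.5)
The plan is to use the explicit representation of $(A_m-\la)^{-1}$ from Lemma~\ref{lem:res.form}, together with the spectral facts on the conjugated oscillator in Appendix~\ref{app:HO.conj}, to isolate the one rank-one piece that carries the $1/m$ singularity. The three regimes in the statement are then handled by different arguments: (i) by spectral decomposition, (ii) by continuity and compactness, (iii) by a perturbation argument around $A_m^{-1}$.

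\emph{Structure of the resolvent.} I would first conjugate by the constant unitary that diagonalises $\sigma_1$. With $\mathfrak a=\partial_x+x$ and $\mathfrak a^*=-\partial_x+x$, the operator $A_m-\la$ becomes
\[
\begin{pmatrix} i\mathfrak a^*-\la & m\\ m & i\mathfrak a-\la\end{pmatrix}.
\]
Solving this block system, as in Lemma~\ref{lem:res.form}, expresses every entry of $(A_m-\la)^{-1}$ through $(T_\la+m^2)^{-1}$ and through $\mathfrak a$ or $\mathfrak a^*$ composed with it. Here $T_\la$ is unitarily (or similarity) equivalent to a conjugated oscillator $\tfrac12(\sL_{\la'}-1)$ with suitable real $\la'\propto\la$. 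Its spectrum is $\{0,2,4,\dots\}$, its eigenvalues are simple, and its Riesz projections are explicitly computable.

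\emph{Claim (i), $m\to0+$.} I would split $(T_\la+m^2)^{-1}=m^{-2}P_0+(T_\la+m^2)^{-1}(I-P_0)$, where $P_0$ is the Riesz projection onto the ground state.
\begin{itemize}
\item The reduced resolvent on $\operatorname{Ran}(I-P_0)$ is analytic in $m^2$ near $0$. It is bounded uniformly for $\la\in[-R,R]$, since the spectral gap $2$ is uniform and $\la\mapsto T_\la$ is a holomorphic family.
\item Hence all terms except those coming from $P_0$ contribute $\BigO(1)$ to $(A_m-\la)^{-1}$.
\item The ground-state term has the form $m^{-1}\,\widetilde P_0$. Here $\widetilde P_0$ is a rank-one operator $\langle\cdot,\psi\rangle\phi/\langle\phi,\psi\rangle$ with Gaussians $\phi,\psi\propto e^{-x^2/2\pm c\la x}$, possibly multiplied by phases. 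On the factor $\mathfrak a$ it contributes only lower order, because $\mathfrak a$ annihilates the ground state.
\item The norm of such a projection is $\|\phi\|\|\psi\|/|\langle\phi,\psi\rangle|$. This is a Gaussian computation, which I expect to equal $e^{\la^2}$ once the constants are tracked.
\end{itemize}
This gives $\|(A_m-\la)^{-1}\|=m^{-1}e^{\la^2}+\BigO(1)$, which is \eqref{eq:Am.small.m}. The main obstacle is bookkeeping: checking that the mixed entries involving $\mathfrak a$, $\mathfrak a^*$ produce no further $m^{-1}$ terms, and that the constant in the exponent is exactly $\la^2$. As a consistency check, at $\la=0$ the result must reduce to \eqref{Am.inv.norm}.

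\emph{Claim (iii), $m\to+\infty$.} This follows from the resolvent identity and \eqref{Am.inv.norm}:
\[
\bigl|\|(A_m-\la)^{-1}\|-\|A_m^{-1}\|\bigr|\le |\la|\,\|A_m^{-1}\|\,\|(A_m-\la)^{-1}\|.
\]
Since $\|A_m^{-1}\|=1/m$ and $|\la|\le R$, for $m>2R$ this first yields $\|(A_m-\la)^{-1}\|\le 2/m$. Substituting back then gives $\|(A_m-\la)^{-1}\|=m^{-1}(1+\BigO(m^{-1}))$.

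\emph{Claim (ii), $m\in[m_1,m_2]$.} By Proposition~\ref{prop:Airy.basic}, $\sigma(A_m)=\emptyset$, so the map $(m,\la)\mapsto\|(A_m-\la)^{-1}\|$ is finite and positive. It is also continuous: $A_m-\la$ is a bounded perturbation of $A_{m_0}-\la_0$, so the second resolvent identity applies. On the compact set $[m_1,m_2]\times[-R,R]$ it is therefore bounded above and bounded away from $0$, which is \eqref{Am.norm.m.1}.

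\emph{Conclusion.} Combining (i), (ii) and (iii), applied on the ranges $(0,m_1]$, $[m_1,m_2]$ and $[m_2,\infty)$ with $m_1$ small and $m_2$ large, yields \eqref{Am.res.sum}.
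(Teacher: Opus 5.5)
Your proposal is correct and follows the paper's proof essentially step by step. The paper likewise writes the resolvent through $(\sL_\la+m^2\mp1)^{-1}$, though after conjugating by $U$ and applying a Fourier transform, so it works with the conjugated oscillator $\sL_\la$ rather than your complex-shifted one. It then splits off the Riesz projection $P_\la$ onto the ground state and uses that $\partial_y+y+\la$ annihilates it, so that the only singular piece is $\frac1m\begin{pmatrix}0&0\\P_\la&0\end{pmatrix}$ with $\|P_\la\|=e^{\la^2}$; (ii) is handled by continuity and compactness, and (iii) by the resolvent identity.

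The one point you should make precise is the uniform $\BigO(1)$ bound on the remainder, including its composition with the first-order factors $\mathfrak a,\mathfrak a^*$. A spectral gap alone does not bound the resolvent of a non-normal operator. The paper instead uses continuity of $(\la,m,z)\mapsto(\sL_\la+m^2-1-z)^{-1}\in\cB(L^2,\cD_{\sL})$ on the compact set $\{|\la|\le R,\ m^2\le\frac12\}\times\{|z|=1\}$, which is what your ``holomorphic family'' remark must be turned into.
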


The proof of Theorem~\ref{thm:airy.res.m} can be found in Sub-Section~\ref{ssec:Am.m0}.

Finally, for $m=0$, the spectrum of $A_0$ is very different from that of $A_m$, $m > 0$ (for the definitions of essential spectra, see \cite[Chap.~IX]{EE}). We recall that, if $T$ is a closed linear operator in a Hilbert space $\cH$ and $\la \in \C$ is an eigenvalue of $T$, then the \textit{geometric multiplicity} of $\la$ is the dimension of the eigenspace associated with $\la$, \ie~$\dim\Ker(T - \la)$. The \textit{algebraic multiplicity} of $\la$ is the dimension of the \textit{generalised} eigenspace corresponding to $\la$, \ie~$\dim\cM_\la$, with $\cM_\la := \overset{\infty}{\underset{n=1}{\cup}}\Ker((T-\la)^n)$ (see \eg~\cite[Def.~1.4.19]{Locker-2000-73}).
\begin{proposition}\label{prop:A0}
Let
\begin{equation}
\begin{aligned}
A_0 & = -i\Ntp \sigma_1 + i x I_2 
= \begin{pmatrix}
i x & - i \partial_x 
\\
- i \partial_x  & i x 
\end{pmatrix},
\\
\Dom(A_0) & = H^1(\R,\C^2) \cap \Dom(x I_2).
\end{aligned}
\end{equation}
Then $A_0$ is closed, $A_0^* = -i\Ntp \sigma_1 - i x I_2$, $\Dom(A_0^*) = \Dom(A_0)$ and 
\begin{equation}\label{A0.spe}
	\sigma(A_0)=  \spp(A_0)= \se{5}(A_{0}) = \C, \quad \se{k}(A_{0}) =\emptyset, \qquad k\in \{1,2,3,4\}.
\end{equation}
The spectral results in \eqref{A0.spe} also hold for $A_0^*$. Furthermore, each eigenvalue $\la\in \C$ has geometric multiplicity equal to $1$ and algebraic multiplicity equal to $\infty$ for both $A_0$ and $A_0^*$.	
\end{proposition}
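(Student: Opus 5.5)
The plan is to decouple $A_0$ into two scalar first-order operators and then use the explicit structure of creation and annihilation operators, i.e.\ Hermite functions. Let $U=\frac1{\sqrt2}\begin{pmatrix}1&1\\1&-1\end{pmatrix}$, which is unitary and self-adjoint with $U\sigma_1U=\sigma_3$. Then $U$ preserves $\Dom(A_0)=H^1(\R,\C^2)\cap\Dom(xI_2)$ and
\begin{equation*}
U A_0 U = \begin{pmatrix} -i(\partial_x - x) & 0\\ 0 & i(\partial_x + x)\end{pmatrix} =: \begin{pmatrix} T_- & 0\\ 0 & T_+\end{pmatrix},
\end{equation*}
with both $T_\pm$ defined on $H^1(\R)\cap\Dom(x)$. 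All claims therefore reduce to statements about $T_\pm$ in $\Lt(\R)$.

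First I would settle closedness and the adjoint. For $f\in C_c^\infty(\R)$, integrating by parts gives
\begin{equation*}
\|(\partial_x\pm x)f\|^2=\|f'\|^2+\|xf\|^2\mp\|f\|^2 .
\end{equation*}
The graph norm of $T_\pm$ is therefore equivalent to the $H^1\cap\Dom(x)$ norm, and the identity extends to the whole domain by density of $C_c^\infty(\R)$ in that norm. This yields closedness and shows that $C_c^\infty$ is a core. In the Hermite basis $\{h_n\}_{n\in\N_0}$, $\partial_x+x$ and $-\partial_x+x$ act as the annihilation and creation operators $\sqrt{2}\,a$ and $\sqrt{2}\,a^\dagger$, and the common domain is exactly $\{\sum c_nh_n:\sum n|c_n|^2<\infty\}$. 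In this representation the adjoint relations $(\partial_x+x)^*=-\partial_x+x$ on the same domain are standard. Consequently $T_\pm^*=-i\partial_x\mp ix$ on $H^1\cap\Dom(x)$, and conjugating back with $U$ gives $A_0^*=-i\partial_x\sigma_1-ixI_2$ with $\Dom(A_0^*)=\Dom(A_0)$.

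For the spectrum, write $\la=\alpha+i\beta$. Then $T_+-\la=i(\partial_x+(x-\beta)+i\alpha)$, and this operator is unitarily equivalent to $T_+$ via the translation $x\mapsto x+\beta$ followed by the gauge transform $e^{-i\alpha x}$. The same holds for $T_--\la$ with its own translation and gauge. Since the two components are decoupled, using different unitaries on each component is allowed, so $A_0-\la$ is unitarily equivalent to $A_0$ for every $\la\in\C$. It therefore suffices to study $\la=0$.

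At $\la=0$, $T_+\cong a$ has kernel $\mathrm{span}\{h_0\}$ and is surjective, with $a h_n=\sqrt n h_{n-1}$. On the other side, $T_-\cong a^\dagger$ is injective with closed range $\{h_0\}^\perp$. Hence $A_0$ is Fredholm of index $1+(-1)=0$, with $\dim\Ker A_0=1$ spanned by $U(0,h_0)^t$. Explicitly, for general $\la$ the eigenfunction is $U(0,e^{-x^2/2-i\la x})^t$. This gives $\spp(A_0)=\sigma(A_0)=\C$ and geometric multiplicity $1$. Because $A_0-\la$ is Fredholm of index $0$ for every $\la$, we get $\se{k}(A_0)=\emptyset$ for $k=1,\dots,4$. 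On the other hand, $\se{5}$ is $\se{1}$ together with those components of $\C\setminus\se{1}$ that do not meet $\rho(A_0)$; since $\rho(A_0)=\emptyset$, it follows that $\se{5}(A_0)=\C$.

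For the algebraic multiplicity, $\Ker A_0^n\supseteq U(\{0\}\oplus\Ker a^n)=U(\{0\}\oplus\mathrm{span}\{h_0,\dots,h_{n-1}\})$, which has dimension $n$. Thus $\dim\cM_0=\infty$, and by the unitary equivalence above the same holds at every $\la$. For $A_0^*$ the roles of $T_\pm$ are swapped, because $A_0^*=U\,\mathrm{diag}(T_-^*,T_+^*)\,U$ with $T_-^*\cong a$; the identical argument applies. I expect the main obstacle to be the careful domain bookkeeping: verifying that the maximal Hermite domain coincides with $H^1\cap\Dom(x)$ and that the translation and gauge maps preserve it. Both follow from the norm identity above together with the boundedness of $x\mapsto x+\beta$ perturbations relative to that norm.
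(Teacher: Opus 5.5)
Your proof is correct, and it takes a genuinely different route from the paper's.

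\textbf{The paper's route.}
\begin{itemize}
\item Closedness: it shows that the maximal operator coincides with $A_0$, using the graph-norm identity inherited from $A_m$ together with the Heisenberg inequality.
\item Adjoint: it identifies $A_0^*$ as the maximal operator of the formal adjoint.
\item Kernel: it finds $\Ker(A_0-\la)$ by solving the ODE system directly.
\item Fredholm property: it builds a right approximate inverse $B_\la$ for the Fourier-transformed $\wt A_0-\la$. This uses the operators $T_{\la,0}$ and $P_\la$ constructed by contour integration in the $m\to0+$ analysis, which give $(\wt A_0-\la)B_\la=\mathrm{diag}(I-P_\la,I)$.
\item Algebraic multiplicity: it deduces infinite multiplicity abstractly from \cite[Thm.~2.2.16]{Locker-2000-73}, applied to the Fredholm set $\C$.
\end{itemize}

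\textbf{Your route.} The key new observation is that, after the $U$-decoupling, each diagonal entry allows real as well as imaginary shifts of the spectral parameter. A translation combined with a component-dependent gauge $e^{\pm i\alpha x}$ shows that $A_0-\la$ is unitarily equivalent to $A_0$ for every $\la\in\C$. Everything then reduces to $\la=0$. There the creation/annihilation structure gives:
\begin{itemize}
\item the kernel;
\item a closed range of codimension one;
\item index $0$;
\item explicit generalized eigenvectors $U(0,h_k)^t$, $k<n$, in $\Ker A_0^n$.
\end{itemize}

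\textbf{Comparison.} Your argument is more elementary and self-contained: it needs neither Locker's theorem nor the contour-integral machinery. It also explains structurally why every point of $\C$ looks the same and makes the infinite algebraic multiplicity transparent. The paper's argument instead reuses the machinery of the $m\to0+$ resolvent decomposition. It thereby exhibits $A_0$ as the degenerate limit responsible for the singular term $m^{-1}P_\la$.

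\textbf{Sign slip.} Your intermediate formula for the adjoints has a sign error. The correct adjoints are
\[
T_\pm^*=\pm i\partial_x-ix, \qquad \text{i.e.}\quad T_-^*=-i(\partial_x+x),\quad T_+^*=i(\partial_x-x),
\]
not $-i\partial_x\mp ix$. Your conjugated formula for $A_0^*$ and your later identification $T_-^*\cong a$ are nevertheless correct, so nothing downstream is affected.
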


The proof is given in Sub-Section~\ref{ssec:A0}.

\subsection{Resolvent estimates for general \texorpdfstring{$V$}{V}}

Our final goal is to establish resolvent bounds on the imaginary axis and adjacent regions for $H$ with a general potential $V$. More precisely, we assume that $V$ satisfies the following conditions (mostly asymptotic ones at $+ \infty$). 
\begin{asm-sec}
\label{asm:V.iR}
Let $V \in C^1(\R,\R) \cap C^2((x_0, \infty),\R)$ for some $x_0 \ge 0$. Suppose further that \eqref{eq:V.separ.cond} holds and that the following conditions are satisfied:
	\begin{enumerate}[\upshape (i)]
		\item\label{itm:V.unbd.rhs} $V$ is unbounded and eventually increasing on $(0,\infty)$:
		\begin{equation}\label{V.unbd}
		\lim_{x \to +\infty} V(x) = +\infty, \qquad V'(x) > 0, \qquad x > x_0;
		\end{equation}
		\item\label{itm:V.bd.lhs} $V$ is bounded above on $(-\infty,0)$: there exists $M \in \R$ such that
		\begin{equation}
			V(x) \le M, \qquad x < 0;
		\end{equation}
		\item\label{itm:nu.asm} $V$ has controlled derivatives: 
		\begin{equation}\label{V'.nu.asm}
		V'(x) \approx V(x) x^{-1}, \qquad |V''(x)| \ls V'(x) x^{-1}, \qquad x > x_0.
		\end{equation}
	\end{enumerate}
\end{asm-sec}

We remark that the conditions at $+\infty$ in Assumption~\ref{asm:V.iR} are satisfied by functions such as $V(x) = x^p$, $p > 0$, $x \gs 1$, or $V(x) = x^p \log x$, $p > 0$, $x \gs 1$, see also Example~\ref{ex:odd} below. Notice that by \eqref{V'.nu.asm} 
\begin{equation}
\frac{V'(x)}{V^2(x)} \approx \frac{1}{x V(x)} = o(1), \quad x \to + \infty,
\end{equation}
thus the condition \eqref{eq:V.separ.cond} is satisfied for $x>0$.

As a consequence of Gronwall inequality and the condition \eqref{V'.nu.asm}, the functions $V$ and $V'$ cannot grow faster than a polynomial. In particular, there exists $n_0 \in \N$ such that 
\begin{equation}\label{n0.def}
\frac{V'(x)}{	x^{2 n_0}} = o(1), \qquad x \to +\infty.
\end{equation}
Assuming that $b>0$ is sufficiently large, we denote by $x_b \in (0,\infty)$ the \textit{unique} solution (see Assumptions~\ref{asm:V.iR}~\ref{itm:V.unbd.rhs},~\ref{itm:V.bd.lhs}) to the equation
\begin{equation}\label{eq:xb.def}
	V(x_b)=b
\end{equation}
(sometimes called a turning point of $V$). Clearly, by Assumption~\ref{asm:V.iR}~\ref{itm:V.unbd.rhs}, we have $x_b \to +\infty$ as $b \to +\infty$. With $b$ positive, we define the curves
\begin{equation}\label{eq:la.def}
(0,\infty) \ni b \mapsto \la_b := a(b) + i b \in \C.
\end{equation}
\begin{theorem}\label{thm:iR}
Let $H$ be the Dirac operator in \eqref{H.def} with $m>0$ and $V$ satisfying Assumption~\ref{asm:V.iR}. Let $x_b$ and $\la_b$ be defined as in \eqref{eq:xb.def} and \eqref{eq:la.def}, respectively. Suppose furthermore that $\la_b$ satisfies
\begin{equation}\label{mu.b.def}
\mu_b := \frac{a(b)}{V'(x_b)^{\frac12}} = \BigO(1), \quad b \to +\infty.
\end{equation}
Then as $ b \to +\infty$
\begin{equation}\label{eq:resnorm.iR}
\|(H - \la_b)^{-1} \| = \| (A_{m_b} - \mu_b)^{-1} \| (V'(x_b))^{-\frac12} 
\left(1 + \BigO(x_b^{-\frac 1 2} + b^{-\frac12})\right),
\end{equation}
where $A_{m_b}$ denotes the Airy-Dirac operator, defined as in \eqref{eq:airy.dirac.def}, with mass 
\begin{equation}\label{mb.def}
m_b := \frac{m} {V'(x_b)^{\frac12}}.
\end{equation}
\end{theorem}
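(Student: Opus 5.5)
We prove \eqref{eq:resnorm.iR} by the four-step localisation scheme of \cite{ArSi-2023-284} described in the introduction, with the Airy-Dirac operator $A_{m_b}$ as the local model. Throughout we write $\nu_b := V'(x_b)^{\frac12}$ and $\kappa_b := \|(A_{m_b}-\mu_b)^{-1}\|$. By \eqref{Am.res.sum} of Theorem~\ref{thm:airy.res.m} (uniform since $\mu_b=\BigO(1)$) we have $\kappa_b \approx m_b^{-1} = \nu_b/m$. Hence the claimed size of the resolvent is $\kappa_b\nu_b^{-1}\approx 1/m$, and this is bounded above and below. This fact will be used repeatedly to absorb error terms.

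\textbf{Step 1 (away from $x_b$).} Fix a neighbourhood $J_b=(x_b-\delta_b,x_b+\delta_b)$ with $\delta_b = x_b^{\gamma}$ for some $\gamma\in(\frac12,1)$, to be tuned. For $u\in\Dom(H)$ with $\operatorname{supp}u\cap J_b=\emptyset$ we aim for
\[
\|(H-\la_b)u\|\gtrsim \min_{x\notin J_b}|V(x)-b|\,\|u\|.
\]
To get this, use
\[
\Im\langle (H-\la_b)u, \operatorname{sgn}(V-b)u\rangle = \int |V-b||u|^2
\]
up to commutator terms. The Dirac part $-i\partial_x\sigma_1+m\sigma_3$ is symmetric, so its contribution comes only from a derivative of a smoothed sign function. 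Assumption~\ref{asm:V.iR}\,(i),(ii) gives $|V-b|\gtrsim b$ for $x<x_b/2$, and \eqref{V'.nu.asm} gives $|V-b|\gtrsim V'(x_b)\delta_b$ near $x_b$. This lower bound is $\gg \nu_b$, so this part of the resolvent is $o(1)$, in particular $o(\kappa_b\nu_b^{-1})$.

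\textbf{Step 2 (near $x_b$).} For $\operatorname{supp}u\subset J_b$, apply the shift $x\mapsto x+x_b$ and the unitary scaling $x\mapsto \nu_b^{-1}x$. These turn $H-\la_b$ into
\[
\nu_b\bigl(A_{m_b}-\mu_b + iW_b\bigr),
\]
where, by Taylor's theorem and $|V''|\lesssim V'x^{-1}$,
\[
W_b(y)=\nu_b^{-2}\bigl(V(x_b+y/\nu_b)-b\bigr)-y=\BigO\bigl(y^2\,\nu_b^{-1}x_b^{-1}\bigr).
\]
To control $W_b$ we need localisation in $y$, and we get it by splitting $u$ further. On $|y|\le \nu_b^{1/2}x_b^{1/2}$ the perturbation is $\BigO(1)$ relative to $y$, which is not enough on its own. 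So we use Step~1 to handle $|x-x_b|\ge \rho_b$ with $\rho_b$ chosen so that
\[
V'(x_b)\rho_b\gg\nu_b \quad\text{and}\quad \rho_b\ll x_b .
\]
Then $\|W_b\|_{L^\infty}=\BigO(V'(x_b)\rho_b^2/(x_b\nu_b^{2}))$ on the scaled support. A Neumann argument gives
\[
\|(H-\la_b)u\|\ge \nu_b\kappa_b^{-1}(1-\kappa_b\|W_b\|_\infty)\|u\|.
\]
Since $\kappa_b\approx\nu_b$, the error term is $\kappa_b\|W_b\|_\infty\approx V'(x_b)^{\frac12}\rho_b^2/x_b$. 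We need a $\rho_b$ compatible with both Step~1 ($\rho_b\gg V'(x_b)^{-1/2}$) and this error ($\rho_b\ll x_b^{1/2}V'(x_b)^{-1/4}$). Using $V'\approx V/x$ and polynomial growth, the window is nonempty and yields the error $x_b^{-1/2}$ (the $b^{-1/2}$ in \eqref{eq:resnorm.iR} comes from the cutoff commutators). This balancing is the step I expect to be the main obstacle. The difficulty is that $\kappa_b$ itself grows like $\nu_b$, unlike the Schrödinger case, so perturbations must be genuinely small in absolute terms. If needed, I would instead use the decay of $(A_{m}-\mu)^{-1}$ in weighted spaces, i.e.\ bound $\|\langle y\rangle^{2}(A_m-\mu)^{-1}\langle y\rangle^{-2}\|$ via the explicit kernel of Lemma~\ref{lem:res.form}.

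\textbf{Steps 3 and 4 (pseudomode and gluing).} For the lower bound on the resolvent, take a near-optimal $f$ for $\kappa_b$, i.e.\ $\|(A_{m_b}-\mu_b)f\|\le(1+o(1))\kappa_b^{-1}\|f\|$, cut it off, and undo the scaling. The perturbation $W_b$ and the commutators with the cutoff contribute a relative error $\BigO(x_b^{-1/2}+b^{-1/2})$. This requires that $f$ can be chosen localised on scale $\ll\rho_b\nu_b$; for small $m_b$ the optimiser is close to the $A_0$ eigenfunction $e^{-y^2/2}$-type profile, which is Gaussian-localised. For the upper bound on the resolvent, use a quadratic partition of unity $\chi_1^2+\chi_2^2=1$ adapted to $J_b$ with $|\chi_j'|\lesssim\rho_b^{-1}$. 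We have
\[
\|(H-\la_b)u\|^2\ge\sum_j\|(H-\la_b)\chi_ju\|^2-C\rho_b^{-2}\|u\|^2,
\]
because $[H,\chi_j]=-i\chi_j'\sigma_1$ is bounded. Inserting Steps 1–2 and comparing $\rho_b^{-2}$ with $\nu_b^2\kappa_b^{-2}\approx m^2$ gives the upper estimate with the stated error. Together with the pseudomode from Step~3, this proves \eqref{eq:resnorm.iR}.
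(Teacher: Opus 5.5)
Your architecture (away estimate, local Airy--Dirac comparison, pseudomode, gluing) is the paper's. However, the gluing step as you set it up does not close, and the local step uses too weak a perturbation bound.

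\textbf{Gluing.} You bound $\|[H,\chi_j]u\|=\|\chi_j'u\|\le C\rho_b^{-1}\|u\|$. The target size $\kappa_b\nu_b^{-1}\approx 1/m$ is bounded, so this only helps if $\rho_b\to\infty$; for the stated rate you need $\rho_b^{-2}\lesssim x_b^{-1/2}+b^{-1/2}$.

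Your Step~2, however, only proves the local bound on windows that shrink when $V'$ grows fast. With the correct normalisation $W_b(y)=\nu_b^{-1}(V(x_b+y/\nu_b)-b)-y$ (not $\nu_b^{-2}$), one has $\|W_b\|_{L^\infty(|y|<\rho_b\nu_b)}\approx \nu_b\rho_b^2/x_b$. Hence $\kappa_b\|W_b\|_\infty\approx V'(x_b)\rho_b^2/x_b$, which forces $\rho_b\to 0$ already for $V(x)=x|x|$. Even your own formula $\rho_b\ll x_b^{1/2}V'(x_b)^{-1/4}$ tends to $0$ for $V(x)=x^3|x|$. The sharper weighted bound used in the paper (below) still gives a relative error $\approx \rho_b\nu_b/x_b$, forcing $\rho_b\to 0$ for $V(x)=x^3$. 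Indeed, there the paper's window is $\Delta_b=x_b^{1/2}V'(x_b)^{-1/2}\approx x_b^{-1/2}$.

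With $\rho_b\to 0$, the term $C\rho_b^{-2}\|u\|^2$ dominates $m^2\|u\|^2$ and your upper bound collapses. Assumption~\ref{asm:V.iR} allows any polynomial growth, so this is not an edge case.

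The missing idea is that $\chi_j'$ is supported where the away estimate holds. The paper applies Proposition~\ref{prop:away.iR} to $\phi_b'u$ and expands $H_b\phi_b^{(j)}u=\phi_b^{(j)}H_bu-i\sigma_1\phi_b^{(j+1)}u$. It iterates this $n_0+1$ times, using $V'(x_b)\Delta_b^2=x_b$ and \eqref{n0.def}, to get $\|\phi_b'u\|\lesssim x_b^{-1}\|H_bu\|+x_b^{-1/2}\|u\|$ (Lemma~\ref{lem:Hb.phibk.commutator}). The same device applied to $S_b$ controls the cutoff of the pseudomode (Lemma~\ref{lem:Sb.psib.comm.est}). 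Your alternative there, uniform Gaussian localisation of a near-optimiser of $A_{m_b}-\mu_b$ in every regime of $m_b$, is asserted rather than proved.

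\textbf{Local step.} An unweighted $L^\infty$ Neumann bound cannot deliver $\BigO(x_b^{-1/2}+b^{-1/2})$. Your Step~1 threshold is also misstated. Since the main term is $\approx 1/m$, the away region needs $V'(x_b)\rho_b\to\infty$, not $V'(x_b)\rho_b\gg\nu_b$. With $\rho_b=x_b^\gamma$, the quantity $V'(x_b)x_b^\gamma\approx b\,x_b^{\gamma-1}$ need not diverge for slowly growing $V$.

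The tool the paper uses is not a kernel estimate but the graph-norm bound \eqref{Am.gr.mu}, i.e.\ $\|x(A_m-\mu)^{-1}\|^2\le 2+2\langle\mu\rangle^2\|(A_m-\mu)^{-1}\|^2$. This is combined with $\|x^{-1}R_b\|_\infty\lesssim\delta_b=(V'(x_b)x_b)^{-1/2}$, where $R_b$ is supported in $|y|<2x_b^{1/2}$. The perturbation then costs only $\delta_b(1+m_b^{-1})\lesssim b^{-1/2}+x_b^{-1/2}$ relative to $\kappa_b$. Meanwhile, the choice of $\Delta_b$ gives $V'(x_b)\Delta_b\approx b^{1/2}$ in the away region.
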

The proof of Theorem~\ref{thm:iR} is given in Section~\ref{sec:resnorm.est.iR}. 

Combining \eqref{eq:resnorm.iR} and \eqref{eq:Am.small.m}, we can further simplify the main term in \eqref{eq:resnorm.iR} and also obtain the asymptotic shape of the pseudospectral curves of $H$.

\begin{corollary}\label{cor:iR}
Let the assumptions of Theorem~\ref{thm:iR} be satisfied.
\begin{enumerate}[\upshape (i)]
\item We have 
\begin{equation}\label{eq:resnorm.iR.exact}
\|(H - i b)^{-1} \| = \frac 1m 
\left(1 + \BigO(x_b^{-\frac 1 2} + b^{-\frac12})\right),
 \quad b \to \infty.
\end{equation}
\item Suppose in addition that $V'(x) \to +\infty$ as $x \to +\infty$. Then as $b \to +\infty$
	\begin{equation}\label{eq:resnorm.iR.expl}
		\|(H - \la_b)^{-1} \| = \frac1m \exp \left(\frac{a(b)^2}{V'(x_b)} \right) \left(1 + \BigO(x_b^{-\frac 12}) + \BigO(V'(x_b)^{-\frac12}) \right).
	\end{equation}
	Moreover, for the curve $b \mapsto \la_b = a(b) + i b$, where
	\begin{equation}
		a(b)^2 = V'(x_b) \log \frac m \eps,
	\end{equation}
	with a fixed $\eps>0$, we have as $b \to +\infty$
	\begin{equation}\label{H.res.ps}
		\|(H-\la_b)^{-1}\| = \frac1 \eps \left(1 + \BigO(x_b^{-\frac 12}) + \BigO(V'(x_b)^{-\frac12}) \right).	
	\end{equation}
		\item Suppose in addition that  $V'(x) \approx 1$ as $x \to +\infty$. Then 
	\begin{equation}\label{H.lb.V'.bdd}
	\|(H - \la_b)^{-1} \| \approx 1, \quad b \to + \infty.
	\end{equation}
	(By \eqref{mu.b.def}, we have $a(b) = \BigO(1)$ as $b \to + \infty$ in this case.)
	\item Suppose in addition that  $V'(x) \to 0$ as $x \to +\infty$. Then 
\begin{equation}\label{H.lb.V'.dec}
\|(H - \la_b)^{-1} \| = \frac 1m   \left(1 + \BigO( b^{-\frac12}) +\BigO(V'(x_{b})^{\frac12}) \right), \quad b \to +\infty.
\end{equation}
	(By \eqref{mu.b.def}, we have $a(b) = o(1)$ as $b \to + \infty$ in this case.)
\end{enumerate}
\end{corollary}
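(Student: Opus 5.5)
The corollary will follow by inserting the Airy-Dirac resolvent asymptotics of Theorem~\ref{thm:airy.res.m} and Proposition~\ref{prop:Airy.basic} into the main term of \eqref{eq:resnorm.iR}. The key identity is that, by \eqref{res.Airy.unit.eq} and $\mu_b\in\R$, the main term in \eqref{eq:resnorm.iR} is $\|(A_{m_b}-\mu_b)^{-1}\|\,V'(x_b)^{-1/2}$ with $m_b V'(x_b)^{1/2}=m$. Writing $1/m_b=V'(x_b)^{1/2}/m$, every asymptotic of the form $\|(A_{m_b}-\mu_b)^{-1}\| = \frac{g}{m_b}(1+r)$ turns into $\|(H-\la_b)^{-1}\|=\frac{g}{m}(1+r)(1+\BigO(x_b^{-1/2}+b^{-1/2}))$.

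For (i) we take $a(b)=0$, so $\mu_b=0$, and use the exact identity \eqref{Am.inv.norm}, $\|A_{m_b}^{-1}\|=1/m_b$. This gives \eqref{eq:resnorm.iR.exact} directly, with no restriction on the behaviour of $V'$.

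For (ii), $V'(x_b)\to\infty$ gives $m_b\to0+$, while $\mu_b=\BigO(1)$ lies in some fixed $[-R,R]$. Theorem~\ref{thm:airy.res.m}\,(i) then yields $\|(A_{m_b}-\mu_b)^{-1}\|=\frac{e^{\mu_b^2}}{m_b}(1+\BigO(m_b))$ uniformly, and $\mu_b^2=a(b)^2/V'(x_b)$. Together with $\BigO(m_b)=\BigO(V'(x_b)^{-1/2})$, this gives \eqref{eq:resnorm.iR.expl}. The $b^{-1/2}$ error is absorbed because, by \eqref{V'.nu.asm}, $V'(x_b)\approx b/x_b$, so $b^{-1/2}\approx x_b^{-1/2}V'(x_b)^{-1/2}$. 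For the pseudospectral curve, $\mu_b^2=\log(m/\eps)$ is constant, hence bounded, so \eqref{mu.b.def} holds (we need $\eps<m$ for a real $a(b)$; otherwise we read $a=0$). Then $\frac1m e^{\log(m/\eps)}=\frac1\eps$, which is \eqref{H.res.ps}.

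For (iii), $V'(x_b)\approx1$ gives $m_b\in[m_1,m_2]$ for suitable $0<m_1<m_2$, and $\mu_b$ is bounded. So \eqref{Am.norm.m.1} gives $\|(A_{m_b}-\mu_b)^{-1}\|\approx1$, and multiplying by $V'(x_b)^{-1/2}\approx1$ gives \eqref{H.lb.V'.bdd}. Uniformity in $\la\in[-R,R]$ and in $m$ within compact sets is assumed in Theorem~\ref{thm:airy.res.m}\,(ii).

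For (iv), $m_b\to\infty$, and Theorem~\ref{thm:airy.res.m}\,(iii) gives $\|(A_{m_b}-\mu_b)^{-1}\|=\frac1{m_b}(1+\BigO(m_b^{-1}))$ with $m_b^{-1}\approx V'(x_b)^{1/2}$. We also have $x_b^{-1/2}\lesssim b^{-1/2}$ here: since $V'\to0$ we get $b=V(x_b)\approx x_bV'(x_b)=o(x_b)$, hence $x_b^{-1}\lesssim b^{-1}$. Together these give \eqref{H.lb.V'.dec}.

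The only delicate point I expect is the uniformity of the remainder in Theorem~\ref{thm:airy.res.m} with respect to the simultaneously varying $\mu_b$ and $m_b$. That uniformity is exactly what the theorem provides for $\la\in[-R,R]$, so the remaining work is bookkeeping of error terms.
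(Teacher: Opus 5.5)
Your proposal is correct and is essentially the paper's proof: each item follows by substituting \eqref{Am.inv.norm}, \eqref{eq:Am.small.m}, \eqref{Am.norm.m.1} and \eqref{Am.norm.m.inf} (all uniform for $\mu_b$ in a bounded set) into \eqref{eq:resnorm.iR} using $m_b V'(x_b)^{1/2}=m$, and your error bookkeeping (absorbing $b^{-1/2}\approx x_b^{-1/2}V'(x_b)^{-1/2}$ in (ii), and using $x_b^{-1/2}\lesssim b^{-1/2}$ in (iv)) correctly makes explicit what the paper leaves implicit. One small slip: for $\varepsilon>m$, ``reading $a=0$'' gives $\|(H-ib)^{-1}\|=\frac1m(1+o(1))$ by (i) rather than $\frac1\varepsilon$, so the curve statement should instead be understood as applying only for $\varepsilon\le m$.
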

\begin{proof}
\begin{enumerate}[\upshape (i), wide]
	\item An immediate consequence of \eqref{eq:resnorm.iR} and \eqref{Am.inv.norm}.
	\item We have $m_b \to 0+$ as $b \to + \infty$, thus to show \eqref{eq:resnorm.iR.expl}, we employ \eqref{eq:resnorm.iR} and \eqref{eq:Am.small.m}; \eqref{H.res.ps} then follows immediately.
	\item As $m_b \approx 1$ and $|\mu_b| \ls 1$ as $b \to +\infty$ in this case, \eqref{H.lb.V'.bdd} follows by \eqref{eq:resnorm.iR} and \eqref{Am.norm.m.1}.
	 \item In this case $m_b \to +\infty$, so \eqref{H.lb.V'.dec} follows by \eqref{Am.norm.m.inf} and \eqref{eq:resnorm.iR}.
	  \qedhere
\end{enumerate}
\end{proof}

\begin{remark}\label{rmk:res}
\begin{enumerate}[\upshape (i), wide]
\item\label{rem odd} If $V$ satisfies Assumption \ref{asm:V.iR} and it is in addition an odd function (\ie~$V(-x) = -V(x)$, $x \in \R$), then Theorem~\ref{thm:iR} and Corollary~\ref{cor:iR} also hold for $\lambda_{b}=a(b)+ib$ as $b \to -\infty$. Indeed, this easily follows from the identity
\begin{equation}
\Vert (H-\lambda_{b})^{-1} \Vert = \Vert (H^{*}-\overline{\lambda_{b}})^{-1} \Vert,
\end{equation}
together with $H^* = -i\Ntp \sigma_1 + m \sigma_3 - i V I_2$ (see~\eqref{H*.def}) and $\overline{\lambda_{b}}=a - ib$. 
\item \label{rem opt} In \cite{Krejcirik-2022-282}, the pseudospectrum of $H$ was found by applying a non-semi-classical pseudomode construction. By \eqref{eq:resnorm.iR.expl} and Assumption~\ref{asm:V.iR}~\ref{itm:nu.asm}, for all $\varepsilon>0$ and $\varepsilon'\in (0,1)$, there exists $b_{0}>0$ such that, for all $\lambda_{b}$ in the region determined by
\begin{equation} 
\Lambda=\left\{ a+ib\in \C: \, b>b_{0}, \, a^2\ls b x_{b}^{-1} \log\frac{m(1-\varepsilon')}{\varepsilon} \right\},
\end{equation}
we have $\Vert (H-\lambda_{b})^{-1} \Vert\leq \frac{1}{\varepsilon}$. This shows that the condition \cite[Eq.~(4.14)]{Krejcirik-2022-282}, with $\nu=-1$, is sharp.
\end{enumerate}
\end{remark}

\begin{example}\label{ex:odd}
\begin{enumerate}[\upshape (i), wide]
\item Polynomial potential:
\begin{align}
V(x)=\left\{\begin{aligned}
& x\vert x \vert^{\gamma-1},\qquad &&\text{if }|x|\geq 1,\\
& V_{0}(x), &&\text{if } -1< x< 1,\\
\end{aligned}
\right.
\end{align}
where $\gamma>0$ and  $V_{0}:(-1,1)\to \R$ is any odd function such that $V\in C^{1}(\R, \R)$. It is not hard to check that $V(x)$ satisfies Assumption~\ref{asm:V.iR}. For $b>0$ and sufficiently large, the turning point $x_{b}$ is determined by $x_{b}=b^{\frac{1}{\gamma}}$. Applying Corollary \ref{cor:iR}, the asymptotic behavior of the resolvent norm on the curve $\lambda_{b}=a(b)+ib\in \C$ as $b\to+\infty$ is given by
\begin{align}
&\|(H-\la_b)^{-1}\| \nonumber \\
=& \left\{
\begin{aligned}
& \frac1 \eps \left(1 + \BigO(b^{-\frac{1}{2\gamma}})+\BigO\left(b^{\frac{1-\gamma}{2\gamma}}\right) \right), &&\text{ for }a(b)^2 = \gamma b^{\frac{\gamma-1}{\gamma}} \log\frac{m}{\varepsilon} \text{ if }\gamma>1,\\
&\mathcal{O}(1), &&\text{ for }\vert a(b) \vert=\mathcal{O}(1) \text{ if }\gamma=1,\\
&\frac{1}{m}\left(1+\BigO\left(b^{-\frac{1}{2}}\right)+\BigO\left(b^{\frac{\gamma-1}{2\gamma}}\right) \right), && \text{ for } \vert a(b)\vert=o(1) \text{ if }0<\gamma<1.
\end{aligned}
\right. \label{Eq norm res}
\end{align}
The reader is invited to compare the above results with \cite[Example~5]{Krejcirik-2022-282}, where the resolvent norm blows up in the regions: $\{\alpha + i\beta \in \C : \, \vert\alpha\vert \gs \beta^{\frac{1}{2}\frac{\gamma-1}{\gamma}+\eta}\}$, for any $\eta > 0$ and $\gamma \ge 1$; and $\{\alpha + i\beta \in \C : \, \vert\alpha\vert \geq m\}$, for $0<\gamma<1$. Furthermore, since $V(x)$ is odd, it follows that  \eqref{Eq norm res} holds for $|b| \to +\infty$ replacing $b$ with $\vert b \vert$ in the r.h.s of the equality (see Remark~\ref{rmk:res}~\ref{rem odd}). For example, when $V(x)=x^3$, \eqref{Eq norm res} yields 
\begin{equation}
	\|(H-\la_b)^{-1}\| = \frac1 \eps \left(1 + \BigO(\vert b \vert^{-\frac{1}{6}})\right),\qquad \vert b \vert\to+\infty,
\end{equation}
for the curves $\la_b$ s.t. $a(b)^2 = 3 \vert b \vert^{\frac{2}{3}} \log\left(\frac{m}{\varepsilon}\right)$. This case is illustrated in Fig.~\ref{fig:main}\subref{fig:example x3}.
\item Logarithmic-linear potential:
\begin{align}
V(x)=\left\{\begin{aligned}
& x\log(\vert x \vert),\qquad &&\text{if }|x|\geq 1,\\
& V_{0}(x), &&\text{if } -1< x< 1,\\
\end{aligned}
\right.
\end{align}
where $V_{0}:(-1,1)\to \R$ is any odd function such that $V\in C^{1}(\R, \R)$. For $b>0$ and sufficiently large, the turning point $x_{b}$ solving $x\ln(x) =b$ has the form
\begin{equation} 
x_{b}=\frac{b}{W(b)},
\end{equation}
where $W$ is the Lambert function which solves the equation $W e^{W}=b$, see \cite[Sec.~4.13]{DLMF}. Using the asymptotic expansion of the Lambert function at infinity (cf. \cite[Eq.~4.13.10]{DLMF}), we have as $b\to+\infty$
\begin{align}
V'(x_{b})=\log(x_{b})+1=W(b)+1=\log b\left(1+\mathcal{O}\left(\frac{\log \log b}{\log b}\right) \right).
\end{align}
Then, applying \eqref{H.res.ps}, the level sets of the resolvent norm are given by
\begin{equation}
\|(H-\la_b)^{-1}\| = \frac1 \eps \left(1  + \BigO((\log b)^{-\frac12}) \right),\qquad b\to+\infty, 
\end{equation}
on the curves 
\begin{equation} \lambda_{b}=a(b)+ib \text{ where } a(b)^2 = (W(b)+1) \log\frac{m}{\varepsilon}.
\end{equation}
\end{enumerate}
\end{example}
\subsubsection{The case of $V$ bounded from below}

The claim of Theorem~\ref{thm:iR} can be extended for potentials with 
\begin{equation}
\lim_{x \to \pm \infty } V(x) = + \infty	
\end{equation}
which satisfy analogous conditions to those in Assumption~\ref{asm:V.iR} also at $-\infty$. %
\begin{asm-sec}	\label{asm:V.iR.ext}
Let $V \in C^1(\R,\R) \cap C^2((-\infty, -x_0) \cup (x_0, \infty),\R)$ for some $x_0 > 1$. Suppose that the following conditions are satisfied:
	\begin{enumerate}[\upshape (i)]
		\item\label{itm:V.unbd.rhs.ext} $V$ is unbounded and eventually increasing on $(0,\infty)$/decreasing on $(-\infty,0)$:
		\begin{equation}
			\begin{aligned}
				&\underset{x \to +\infty}{\lim} V(x) = +\infty, \qquad  V'(x) > 0, \qquad x > x_0,\\
				&\underset{x \to -\infty}{\lim} V(x) = +\infty, \qquad  V'(x) < 0, \qquad x < -x_0;
			\end{aligned}
		\end{equation}
		\item\label{itm:nu.asm.ext} $V$ has controlled derivatives: 
		\begin{equation}
			\begin{aligned}
				|V'(x)| \approx V(x) |x|^{-1}, \qquad |V''(x)| \ls |V'(x)| |x|^{-1}, \qquad |x| > x_0.
			\end{aligned}
		\end{equation}
	\end{enumerate}
\end{asm-sec}
Similarly to our preparation for Theorem~\ref{thm:iR}, for sufficiently large $b>0$, we define the turning points $x_{b,\pm}$ by
\begin{equation}
	\label{eq:xb.def.ext}
	V(x_{b, \pm}) = b, \quad \text{with} \quad x_{b,+} > x_0, \quad \quad x_{b,-} < -x_0.
\end{equation}

In the following, we use the notation $\max\{a_\pm\} := \max\{a_+, a_-\}$.
\begin{theorem}
\label{thm:iR.ext}
Let $H$ be the Dirac operator in \eqref{H.def} with $m>0$ and $V$ satisfying Assumption~\ref{asm:V.iR.ext}. Let $x_{b,\pm}$ and $\la_b=a(b)+i b$ be defined as in \eqref{eq:xb.def.ext} and \eqref{eq:la.def}, respectively. Suppose furthermore that $\la_b$ satisfies
	\begin{equation}
		\label{eq:lab.gral.curves.ext}
		\mu_{b,\pm} := \frac{a(b)}{|V'(x_{b,\pm})|^{\frac12}} = \BigO(1), \quad b \to +\infty.
	\end{equation}
	Then as $b \to +\infty$
	\begin{equation}
		\label{eq:resnorm.iR.ext}
		\begin{aligned}
			\|(H - \la_b)^{-1} \| &= \max \{ \| (A_{m_{b,\pm}} - \mu_{b,\pm})^{-1} \| |V'(x_{b, \pm})|^{-\frac12}\} \times
			\\
			&\qquad \qquad \qquad \qquad \qquad \qquad \left(
			1 + \BigO(\max\{x_{b,\pm}^{-\frac 12}\} + b^{-\frac12})
			\right),
		\end{aligned}
	\end{equation}
	where $A_{m_{b,\pm}}$ denotes the Airy-Dirac operator, defined as in \eqref{eq:airy.dirac.def}, with mass 
	\begin{equation}
	m_{b,\pm}:= \frac{m}{|V'(x_{b, \pm})|^{\frac12}}.	 
	\end{equation}
	In particular, as $b \to + \infty$, 
	\begin{equation}
		\label{eq:resnorm.iR.ext.exact}
			\|(H - i b)^{-1} \| = \frac1m \left(1 + \BigO(\max\{x_{b,\pm}^{-\frac 12}\} + b^{-\frac12}) 
			\right).
	\end{equation}
	
\end{theorem}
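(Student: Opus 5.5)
The plan is to repeat the four-step scheme used for Theorem~\ref{thm:iR}, now with two turning points $x_{b,+}>x_0$ and $x_{b,-}<-x_0$.

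\emph{Step 1 (away from the turning points).} I would prove the analogue of Proposition~\ref{prop:away.iR}. Take $u\in\Dom(H)$ supported outside $(x_{b,\pm}-\delta_{b,\pm},x_{b,\pm}+\delta_{b,\pm})$, where $\delta_{b,\pm}$ are small multiples of $|x_{b,\pm}|$ times a small power of $|x_{b,\pm}|^{-1}$, exactly as in the one-sided case. Since $V\to+\infty$ at both ends and is monotone for $|x|>x_0$, we have $|V(x)-b|$ large on this set. This includes the bounded middle region $[-x_0,x_0]$, where $V-b\approx -b$. The estimate $\Im\langle (H-\la_b)u,(V-b)^{-1}u\rangle$-type, i.e.\ $\|(H-\la_b)u\|\gtrsim \min|V-b|\,\|u\|$ up to errors from $[(V-b)^{-1},\partial_x]$ controlled by $|V'|/(V-b)^2$, applies verbatim on each half-line. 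On the middle compact piece one uses $|V-b|\gtrsim b$. The outcome is that these regions contribute $o(|V'(x_{b,\pm})|^{-1/2})$.

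\emph{Step 2 (local lower bounds).} Near $x_{b,+}$, Proposition~\ref{prop:local.iR} applies unchanged. Near $x_{b,-}$, I would reduce to it by reflection. Let $(Pu)(x)=\sigma_3u(-x)$, or another suitable unitary. Since $-i\partial_x\sigma_1$ anticommutes with $x\mapsto-x$ and $\sigma_3\sigma_1\sigma_3=-\sigma_1$, we get $P H_V P^{-1}= -i\partial_x\sigma_1+m\sigma_3+iV(-x)I_2$. The reflected potential $V(-x)$ is increasing near $-x_{b,-}$ with derivative $|V'(x_{b,-})|$, and Assumption~\ref{asm:V.iR.ext} gives the needed derivative bounds there. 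Only the local behaviour is used in Proposition~\ref{prop:local.iR}, so the boundedness-above hypothesis at $-\infty$ is irrelevant. The local Airy-Dirac approximation then gives
\[
\|(H-\la_b)u\|\ge \|(A_{m_{b,-}}-\mu_{b,-})^{-1}\|^{-1}|V'(x_{b,-})|^{\frac12}(1+\BigO(|x_{b,-}|^{-\frac12}+b^{-\frac12}))^{-1}\|u\|.
\]

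\emph{Step 3 (pseudomodes).} For the upper estimate of $\|(H-\la_b)u\|$ I would take the pseudomode of Proposition~\ref{prop:lbound.iR} localised at whichever of $x_{b,\pm}$ realises the maximum in \eqref{eq:resnorm.iR.ext}, transported by $P$ in the minus case. This gives the lower bound for the resolvent norm.

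\emph{Step 4 (gluing).} Take a partition of unity $\chi_+^2+\chi_-^2+\chi_0^2=1$ adapted to the two neighbourhoods and the complement. Then
\[
\|(H-\la_b)u\|^2\ge \sum_j\|(H-\la_b)\chi_ju\|^2-C\sum_j\|\chi_j'\|_\infty^2\|u\|^2.
\]
The commutator $[H,\chi_j]=-i\chi_j'\sigma_1$ is only first order, with size $\lesssim \delta_{b,\pm}^{-1}$. Combining Steps~1 and~2 then yields the lower bound with the maximum over $\pm$.

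The main obstacle is this gluing. The commutator errors must be $o$ of the smaller of the two local scales $\|(A_{m_{b,\pm}}-\mu_{b,\pm})^{-1}\|^{-1}|V'(x_{b,\pm})|^{1/2}$ when the two sides have very different sizes. In particular, the side with the smaller $|V'|$ dominates and its error terms must be absorbed. I would choose $\delta_{b,\pm}$ separately on each side, as in the one-sided proof, so that each error is relative to its own scale and hence to the minimum. I would use \eqref{Am.res.sum} to see that the local scales are $\approx\min\{m,\,|V'(x_{b,\pm})|^{1/2}\}$ up to the $\exp(\mu^2)$ factor, which is bounded by \eqref{eq:lab.gral.curves.ext}.

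Finally, \eqref{eq:resnorm.iR.ext.exact} follows by setting $a=0$ and using \eqref{Am.inv.norm}. In that case both terms in the maximum equal $\frac{1}{m_{b,\pm}}|V'(x_{b,\pm})|^{-1/2}=\frac1m$.
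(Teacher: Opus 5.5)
\textbf{Gap in Step 4: the crude bound on $\sum_j\|\chi_j'u\|^2$.} Most of your plan works: the exterior estimate, the local Airy--Dirac bound at each turning point (with the unitary reflection $(Pu)(x)=\sigma_3u(-x)$ for $x_{b,-}$), the pseudomodes, and the quadratic partition of unity. The localization identity is even exact here. Since $\sum_j\chi_j\chi_j'=0$, one has $\sum_j\|(H-\la_b)\chi_ju\|^2=\|(H-\la_b)u\|^2+\sum_j\|\chi_j'u\|^2$, and this is what produces the maximum.

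The problem is how you control the error $\sum_j\|\chi_j'u\|^2$. You bound it by $\sum_j\|\chi_j'\|_\infty^2\|u\|^2$. With the one-sided windows $\Delta_b=(x_b/V'(x_b))^{1/2}$ this is of size $V'(x_b)/x_b\approx b/x_b^2$. The quantity it must be small against is not $\min\{m,|V'|^{1/2}\}^2$. By \eqref{Am.res.sum}, both local scales $\|(A_{m_{b,\pm}}-\mu_{b,\pm})^{-1}\|^{-1}|V'(x_{b,\pm})|^{1/2}$ are $\approx m_{b,\pm}|V'(x_{b,\pm})|^{1/2}=m$.

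So your estimate closes only if $V'(x_b)/x_b\to0$, i.e.\ for subquadratic growth. For $V=|x|^2$ (Example \ref{ex:even}) the error is of order one, comparable to $m^2$. For $|x|^\gamma$ with $\gamma>2$ it diverges. Widening the windows does not rescue this. The perturbation argument of Proposition \ref{prop:local.iR} needs $\frac{\Delta}{x_b}(1+m_b^{-1})\to0$ (cf.\ \eqref{eq:Rb.x}--\eqref{eq:Rb.x1}). That means $\Delta\ll x_bV'(x_b)^{-1/2}\approx x_b^{(3-\gamma)/2}$, which is incompatible with $\Delta\to\infty$ once $\gamma\geq3$.

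\textbf{The missing step.} You need the bootstrap of Lemma \ref{lem:Hb.phibk.commutator}. The function $\chi_j'u$ is supported in the analogue of $\Omega_b\setminus\Omega_b'$ around $x_{b,\pm}$. There $|V-b|\gtrsim |V'(x_{b,\pm})|\Delta_{b,\pm}\approx b^{1/2}$. So:
\begin{enumerate}[\upshape (i)]
\item apply your Step 1 bound to $\chi_j'u$ itself;
\item write $(H-\la_b)\chi_j'u=\chi_j'(H-\la_b)u-i\sigma_1\chi_j''u$;
\item iterate $n_0$ times, using $|V'(x_{b,\pm})|\Delta_{b,\pm}^2=|x_{b,\pm}|$ and \eqref{n0.def}.
\end{enumerate}
This gives $\|\chi_j'u\|\lesssim |x_{b,\pm}|^{-1}\|(H-\la_b)u\|+|x_{b,\pm}|^{-1/2}\|u\|$. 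That is negligible against the scale $m$ and closes your gluing with the stated error.

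\textbf{Alternative route (the paper's).} Cut only once, at a fixed location near the origin. There $|V-b|\gtrsim b$, so the cut-off commutator trivially obeys $\|\xi'u\|\lesssim b^{-1}(\|(H-\la_b)u\|+\|u\|)$. You can then apply Theorem \ref{thm:iR} as a black box to each half. Use your reflection $P$ for the left half, and modify $V$ on the far half-line so that Assumption \ref{asm:V.iR} holds. The delicate window gluing is then already contained in the proof of Theorem \ref{thm:iR}.
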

The proof of \eqref{eq:resnorm.iR.ext} relies on introducing a suitable partition of unity to separately estimate (via Theorem~\ref{thm:iR}) the asymptotic behaviour of $H$ on $(0,\infty)$ and $(-\infty,0)$. Details of the procedure in a similar result for Schr\"odinger operators with complex potentials can be found in \cite[Sub-sec.~5.3]{ArSi-2023-284}. The necessary modifications are straightforward and we leave them to the reader. The special case \eqref{eq:resnorm.iR.ext.exact} then follows by \eqref{Am.inv.norm}. Like for Theorem~\ref{thm:iR}, one can also formulate an analogue of the other statements in Corollary~\ref{cor:iR}.

\begin{example}\label{ex:even}
For application of Theorem~\ref{thm:iR.ext}, let us consider the following potential $V(x)$ which satisfies the Assumption~\ref{asm:V.iR.ext}:
 \begin{align}
V(x)=\left\{\begin{aligned}
& \vert x \vert^{\gamma},\qquad &&\text{if }|x|\geq 1,\\
& V_{0}(x), &&\text{if } -1< x< 1,\\
\end{aligned}
\right.
\end{align}
where $\gamma>0$ and  $V_{0}:(-1,1)\to \R$ is any function such that $V\in C^{1}(\R,\R)$. Applying Theorem \ref{thm:iR.ext}, \eqref{Eq norm res} is obtained. For example, when $V(x)=x^2$, we have
\begin{equation}
\|(H-\la_b)^{-1}\| = \frac1 \eps \left(1 + \BigO( b^{-\frac{1}{4}})\right),\qquad \text{ as } b \to+\infty, 
\end{equation}
t
where  $a(b)^2 = 2 b^{\frac{1}{2}} \log\frac{m}{\varepsilon}$. This case is illustrated in Fig.~\ref{fig:main}\subref{fig:example x2}.
\end{example}

Finally, under Assumption~\ref{asm:V.iR.ext} and faster than a linear growth of $V$, our resolvent analysis in Theorem~\ref{thm:iR.ext} leads to the following result on the spectrum and spectral projections of $H$. The proof is based on  \cite[Cor.~XI.9.31]{DS2}.
\begin{corollary}
\label{cor:V.bd.below}
Let $H$ be the Dirac operator in \eqref{H.def} with $m>0$ and $V$ satisfying Assumption~\ref{asm:V.iR.ext}. Suppose in addition that
	\begin{equation}
		V(x) \gs \langle x \rangle^\gamma, \quad |x|>x_0
	\end{equation}
	with some $x_0 \geq 0$ and $\gamma>1$. Then the spectrum of $H$ comprises infinitely many (distinct) discrete eigenvalues of finite algebraic multiplicities and the vector space spanned by generalised eigenfunctions is complete in $L^2(\R,\C^2)$.
\end{corollary}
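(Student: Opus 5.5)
We will apply the completeness criterion \cite[Cor.~XI.9.31]{DS2}. In the form we use it says: if $T$ is compact, $T\in\cS_p$ for some $p<\infty$, and there are finitely many rays from the origin such that the angle between adjacent rays is less than $\pi/p$ and $\|(I-zT)^{-1}\|$ grows at most polynomially in $|z|$ along each ray, then the generalised eigenvectors of $T$ span a dense subspace. We take $T=H^{-1}$, which exists since $0\in\rho(H)$ by Theorem~\ref{thm:H.def}~\ref{thm:H.def.iii)}. The resolvent identity $(I-zH^{-1})^{-1}=I+z(H-z)^{-1}$ with $z=1/\mu$ then converts growth of $\|(I-zT)^{-1}\|$ into growth of $|z|\,\|(H-z)^{-1}\|$.

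First, Schatten class. Assumption~\ref{asm:V.iR.ext} together with $V\gs\langle x\rangle^\gamma$ gives \eqref{eq:V.separ.cond}: by $|V'|\approx V/|x|$ we get $|V'|/V^2\to0$, and on compacts $|V'|$ is bounded. Also $|V|\to\infty$. Hence Theorem~\ref{thm:H.def}~\ref{thm:H.def.iv)} applies and gives $s_k(H^{-1})=\BigO(k^{-1/p_\gamma})$ with $p_\gamma=(\gamma+1)/\gamma<2$ because $\gamma>1$. So $H^{-1}\in\cS_p$ for any $p\in(p_\gamma,2)$, and then $\pi/p>\pi/2$.

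Second, rays. We choose the four rays $\arg z\in\{0,\pi/2,\pi,3\pi/2\}$, whose adjacent angles $\pi/2$ are less than $\pi/p$.
\begin{itemize}
\item On the real axis: for $|\Re z|<m$ the strip \eqref{H.rho.strip} only gives invertibility, so a different argument is needed for large real $z$. Since $\Re\langle iVu,u\rangle=0$, we write $H-z=D_m-z+iV$, where $D_m=-i\partial_x\sigma_1+m\sigma_3$ is self-adjoint. A numerical-range/generalised-coercivity bound, or an application of Theorem~\ref{thm:H.def}~\ref{thm:H.def.v)} away from the spectrum, should give polynomial growth. \emph{Alternative:} rotate the rays slightly, to angles $\pm\theta$ and $\pi\pm\theta$ with small $\theta$, while keeping the adjacent angles below $\pi/p$.
\item On the rays $\pm i\R_+$: the estimate \eqref{eq:resnorm.iR.ext.exact} gives $\|(H-ib)^{-1}\|=\frac1m(1+o(1))$ as $b\to+\infty$, so $|z|\,\|(H-z)^{-1}\|$ grows linearly along $+i\R_+$.
\item For $b\to-\infty$: $V$ is bounded below, so we use $\|(H-\la)^{-1}\|=\|(H^*-\bar\la)^{-1}\|$. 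Here $H^*-\bar\la=D_m-i(V+b')$ with $b'=|b|$, where $V+b'\geq b'-C$. The form $\Re\langle i(\ldots)\rangle$ then gives $\|(H-\la)u\|\,\|u\|\geq |\Im\langle (H-\la)u,u\rangle|\geq (|b|-C)\|u\|^2$, so the norm is $\BigO(|b|^{-1})$ there.
\end{itemize}

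Third, conclusion. Completeness of the generalised eigenvectors of $H^{-1}$, which coincide with those of $H$, follows from the criterion. Infinitely many distinct eigenvalues follow because the resolvent is compact, so the spectrum is discrete with finite algebraic multiplicities. If there were only finitely many eigenvalues, the root spaces would be finite dimensional, contradicting completeness in the infinite-dimensional space $L^2(\R,\C^2)$.

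The main obstacle is the real-axis ray, since Theorem~\ref{thm:iR.ext} only controls the direction of the imaginary axis. We will first try the rotated rays at angles $\pm\theta$ and $\pi\pm\theta$, bounding the resolvent there via \eqref{res.est.Schatten}-type estimates. If those bounds are only sub-exponential rather than polynomial, we will instead use a Phragmén--Lindelöf version of the criterion within sectors of opening less than $\pi/p$.
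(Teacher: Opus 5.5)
\textbf{Gap: the real-axis rays are never handled.} Most of your proposal matches the paper: the criterion \cite[Cor.~XI.9.31]{DS2} for $T=H^{-1}$, $H^{-1}\in\cS_p$ with $p<2$ from Theorem~\ref{thm:H.def}~\ref{thm:H.def.iv)}, four rays at right angles, \eqref{eq:resnorm.iR.ext.exact} on $i\R_+$, the numerical range on $-i\R_+$, and the finite-dimensionality argument for infinitely many eigenvalues. But you leave the two horizontal rays open, and the routes you commit to there do not work. Theorem~\ref{thm:H.def}~\ref{thm:H.def.v)} only gives $\|(H-\la)^{-1}\|\le \exp(c|\la|^{p_\gamma})$. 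That is no better than the Carleman-type growth which $H^{-1}\in\cS_p$ already supplies. The Phragm\'en--Lindel\"of argument behind \cite[Cor.~XI.9.31]{DS2} uses exactly this growth in the interior of the sectors, and it needs \emph{polynomial} bounds on the boundary rays to conclude anything. Sub-exponential bounds on the rays therefore carry no information, and switching to a ``Phragm\'en--Lindel\"of version'' of the criterion does not help. Your remark that a numerical-range bound ``should give polynomial growth'' is precisely the step that is missing.

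\textbf{The fix is the argument you already use on $-i\R_+$.} Since $V$ is continuous and $V\gs\langle x\rangle^\gamma$ for $|x|>x_0$, there is $C\ge 0$ with $V\ge -C$ on $\R$. Hence $\Im\langle Hu,u\rangle=\int_\R V|u|^2\,\dd x\ge -C\|u\|^2$, i.e.\ $\Num(H)\subset\{\Im z\ge -C\}$. You can then proceed in either of two ways.
\begin{enumerate}
\item Replace $H$ by $H+ic$ with $c>C$. This operator has the same root vectors, its inverse is still in $\cS_p$ by the resolvent identity, and \eqref{eq:resnorm.iR.ext.exact} still applies on $i\R_+$ after shifting $b$. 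Now $\Num(H+ic)\subset\{\Im z\ge c-C\}$, so the resolvent is bounded by $(c-C)^{-1}$ on the whole real axis.
\item Tilt the horizontal rays \emph{into the lower half-plane}, to $\arg z=-\theta$ and $\arg z=\pi+\theta$ with $0<\theta<\pi/p-\pi/2$. All adjacent angles stay below $\pi/p$, and $\dist(re^{-i\theta},\Num(H))\ge r\sin\theta-C$ gives $\|(H-z)^{-1}\|=\BigO(r^{-1})$.
\end{enumerate}
This is how the paper argues. It records $\Num(H)\subset\{\Im z\ge v_0\}$ with $v_0>0$ and treats the rays $\arg z\in\{0,\pi,3\pi/2\}$ by distance to the numerical range, so only $\arg z=\pi/2$ needs Theorem~\ref{thm:iR.ext}. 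The paper asserts $V\ge v_0\langle x\rangle^\gamma$ on all of $\R$, whereas the hypothesis only controls $|x|>x_0$; the shift described above is what makes that step rigorous.
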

\begin{proof}
We verify the assumptions of \cite[Cor.~XI.9.31]{DS2}, from which the claims follow. 

Since $V(x) \geq v_0 \langle x \rangle^\gamma$, $x \in \R$, with some $v_0>0$ and $\gamma >1$, Theorem~\ref{thm:H.def}~\ref{thm:H.def.iv)} yields that $H^{-1} \in \cS_{p_\gamma,\infty}$ where $p_\gamma \in (1,2)$. Fix $\varepsilon>0$ such that $p_{\gamma}+\varepsilon<2$. Since $\cS_{p_\gamma,\infty}\subset \cS_{p_\gamma+\varepsilon}$, it follows that $H^{-1}\in \cS_{p_{\gamma}+\varepsilon}$.

For $u \in \textup{Dom}(H)$, by straightforward calculation, we have
\begin{equation}
\Im \langle H u ,u\rangle = \int_{\R} V(x) |u(x)|^2 \, \dd x \geq v_0  \|u\|^2,	
\end{equation}
thus $\Num(H) \subset \{z \in \C: \Im z \geq v_0\}$. We consider four rays in $\C$ 
\begin{equation}
\gamma_k= \left\{ r e^{i \frac\pi 2 k} \, : \, r>0 \right\}, \quad k = 0,1,2,3,
\end{equation}
for which the angle between any two adjacent rays equals $\frac{\pi}{2} < \frac{\pi}{p_{\gamma}+\varepsilon}$. Finally,   
\begin{equation}
\|(H- \lambda)^{-1}\| = \BigO(1), \quad \lambda \to  \infty \text{ on }\gamma_{k}, \quad k = 0,1,2,3;
\end{equation}
the cases $k=0,2,3$ follow from the standard estimate using the distance of $\la$ to $\Num(H)$ and the case $k=1$ is covered by Theorem~\ref{thm:iR.ext}. 
\end{proof}

\section{\texorpdfstring{$L^2$}{L2}-realization with non-empty resolvent set -- proofs}
\label{sec:H.def.proofs}

\subsection{Schur complements}
\label{ssec:Sch.compl}

Recall that we assume that $m>0$ and $V \in C^1(\R,\R)$.
The Schur complements of $H$ are the following operator families in $L^2(\R)$
\begin{equation}\label{S12.def}
	\begin{aligned}
		S_1(\la) &= -\partial_x \frac{1}{m - i V +\la} \partial_x + m + i V  - \la, &\la &\notin \ov{ \{ i V(x) - m \, : \,  x \in \R \}},
		\\
		S_2(\la) &= \partial_x \frac{1}{m + i V - \la} \partial_x - m + i V - \la, &\la &\notin \ov{ \{ i V(x) + m \, : \,  x \in \R \}}.
	\end{aligned}
\end{equation}
In this Sub-section, we find realizations of $S_j$, $j=1,2$, in $L^2(\R)$ with non-empty resolvent set. To this end, we employ the generalized coercivity of Almog-Helffer of the associated sesquilinear forms (see Appendix~\ref{app:AH} and Lemma \ref{lem:s.coer} \ref{itm:s.coer.ii}); this coercivity is obtained under the assumption
\begin{equation}\label{V.nabla.form}
	\forall \delta > 0, \ \exists C_\delta > 0 
	\,: \,  |V'(x)|^2 \leq \delta V(x)^8 + C_\delta, \quad x \in \R.
\end{equation}
\begin{remark}
	\label{rmk:3.2.wkr.th.2.2}
	We note that \eqref{V.nabla.form} is a weaker condition than \eqref{eq:V.separ.cond}. To see this, assume that \eqref{eq:V.separ.cond} holds. Then
	\begin{equation}
		|V'(x)|^2 \leq 2 \eps^2 V(x)^4 + 2 M_\eps^2, \quad x \in \R,
	\end{equation}
	for some $\eps \in (0,1), \; M_\eps > 0$. Let $\delta > 0$ be arbitrary. Then, applying Young's inequality for products with $p=q=2$ to $(2 \eps^2 V(x)^4 \hat{\delta})\hat{\delta}^{-1}$ with $\hat{\delta} = \sqrt{\delta/2}/\eps^2$, we obtain
	\begin{equation}
		2 \eps^2 V(x)^4 \leq \frac{4 \eps^4 V(x)^8 \hat{\delta}^2}{2} + \frac{\hat{\delta}^{-2}}{2} = \delta V(x)^8 + \frac{\eps^4}{\delta}.
	\end{equation}
	Hence \eqref{V.nabla.form} follows with $C_\delta = \eps^4/\delta + 2 M_\eps^2$.
\end{remark}

We show that a suitable form-domain is the space
\begin{equation}\label{cV.def}
	\cV := \Big\{ f \in L^2(\R): \left(m^2+V^2\right)^{-\frac14} f' \in L^2(\R), \, \vert V \vert^{\frac{1}{2}}f\in L^2(\R)\Big\}
\end{equation}
equipped with the scalar product $\langle \cdot, \cdot \rangle_{\mathcal{V}}$
which induces the norm
\begin{equation}\label{cV.norm.def}
	\|f \|_{\cV}^2 =  \Vert (m^2+V^2)^{-\frac14} f' \Vert^2+ \Vert (m^2+V^2)^{\frac{1}{4}} f \Vert^2 + \Vert f \Vert^2, \quad f \in \cV.
\end{equation}
\begin{lemma}\label{lem:cV}
Let $m>0$ and $V \in C^1(\R,\R)$. Let $(\cV,\langle \cdot,\cdot \rangle_\cV)$ be as in \eqref{cV.def} and \eqref{cV.norm.def}. Then $(\cV,\langle \cdot,\cdot \rangle_\cV)$ is a Hilbert space and $\CcR$ is  dense in it.
\end{lemma}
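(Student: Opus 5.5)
Write $w := (m^2+V^2)^{1/4}$. Since $V \in C^1$ and $m>0$, $w$ is continuous, $w \ge m^{1/2} > 0$, and $w$ is locally bounded. The norm then reads $\|f\|_\cV^2 = \|w^{-1} f'\|^2 + \|w f\|^2 + \|f\|^2$. Because $w^2 \approx m + |V|$, the condition $|V|^{1/2} f \in L^2$ is equivalent to $w f \in L^2$ for $f \in L^2$, so $\cV$ is exactly the set where this norm is finite. The plan has two parts: completeness by a standard closedness argument, then density via truncation followed by mollification.

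\emph{Completeness.} Take a Cauchy sequence $(f_n)$ in $\cV$. Then $f_n \to f$ in $L^2$, $w f_n \to g$ in $L^2$ and $w^{-1} f_n' \to h$ in $L^2$.
\begin{itemize}
\item Passing to an a.e.\ convergent subsequence gives $g = w f$.
\item Since $w$ is bounded above and below on compacts, $f_n' \to w h$ in $L^2_{\mathrm{loc}}$. Testing against $\varphi \in \CcR$ and using the definition of the distributional derivative, $f' = w h$.
\end{itemize}
Hence $f \in \cV$ and $f_n \to f$ in $\cV$. The inner product is obviously well defined, so $\cV$ is a Hilbert space.

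\emph{Density.} This proceeds in three steps.

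\emph{Step 1: reduce to compact support.} Let $f \in \cV$ and $\chi_R(x) = \chi(x/R)$, with $\chi \in \CcR$, $0 \le \chi \le 1$ and $\chi = 1$ on $[-1,1]$. Then $\chi_R f \to f$ and $w\chi_R f \to w f$ in $L^2$ by dominated convergence. For the derivative,
\begin{equation*}
w^{-1}(\chi_R f)' = \chi_R w^{-1} f' + \chi_R' w^{-1} f .
\end{equation*}
The first term converges to $w^{-1} f'$ by dominated convergence. The second is bounded in norm by $R^{-1}\|\chi'\|_\infty m^{-1/2}\|f\| \to 0$. So compactly supported elements of $\cV$ are dense.

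\emph{Step 2: reduce to $H^1$.} If $f \in \cV$ has support in $[-K,K]$, then $f' = w \cdot w^{-1} f' \in L^2$, because $w$ is bounded on $[-K,K]$. Thus $f \in H^1$ with compact support.

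\emph{Step 3: mollify.} Set $f_\eps = \rho_\eps * f$. These functions lie in $\CcR$ with supports in $[-K-1,K+1]$ for $\eps<1$, and $f_\eps \to f$ in $H^1$. On $[-K-1,K+1]$ the weights $w$ and $w^{-1}$ are bounded, so $\|f_\eps - f\|_\cV \lesssim_K \|f_\eps - f\|_{H^1} \to 0$.

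The only point that needs care is the cutoff term in Step 1, and it is handled by the uniform lower bound $w \ge m^{1/2}$. This is exactly where $m>0$ is used. No growth condition on $V$ is needed, and continuity of $V$ suffices.
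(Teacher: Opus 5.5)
Your proof is correct and follows the same route as the paper, which only invokes standard arguments for completeness and the usual cut-off and mollification procedure (citing Davies) for the density of $C_c^\infty(\R)$. You supply the details the paper omits, and you correctly identify the two places where care is needed: the lower bound $(m^2+V^2)^{1/4}\ge m^{1/2}$, which controls the cut-off error term, and the local boundedness of the weight, which puts compactly supported elements of $\cV$ into $H^1$.
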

\begin{proof}
The first claim follows by standard arguments. For the density of $\CcR$, the usual cut-off and mollification like e.g.~in~the proof of~\cite[Thm.~8.2.1]{Davies-1995} can be used.
\end{proof}
Let $\frs$ be a sesquilinear form on $\cV \times \cV$ defined via its quadratic form as
\begin{equation}\label{s.form.def}
\frs[f] := \int_\R \frac{1}{m-iV(x)} |f'(x)|^2 \, \dd x + \int_\R (m+ iV(x))|f(x)|^2 \, \dd x,
\quad 
\Dom(\frs) := \cV.
\end{equation}
\begin{lemma}\label{lem:s.coer}
Let $m>0$, $V \in C^1(\R,\R)$ and let $\frs$ be as in \eqref{s.form.def}. 
\begin{enumerate}[\upshape (i)]
\item The form $\frs$ is bounded on $\cV$, in particular,
	\begin{equation}
	|\frs[f]| \leq \|f\|_\cV^2, \qquad f \in \cV.
	\end{equation}
\item\label{itm:s.coer.ii} Suppose that $V$ satisfies \eqref{V.nabla.form}. Then there exists $c_\frs>0$ such that for all sufficiently large $t>0$
\begin{equation}\label{s.AH.coer}
\begin{aligned}
|(\frs+t)[f]| + |(\frs+t)(f,\Phi f)| &\geq c_\frs \|f\|_\cV^2, 
\\		
|(\frs+t)[f]| + |(\frs+t)(\Phi f, f)| &\geq c_\frs \|f\|_\cV^2, \quad f \in \cV,
\end{aligned}
\end{equation}
where 
\begin{equation}
\Phi:= \frac{V}{(m^2+V^2)^\frac12}.
\end{equation}
The multiplication operator by $\Phi$ is bounded on $\cV$, thus the form $\frs + t$ is coercive on $\cV$ in the generalized sense of Almog-Helffer.
\end{enumerate}
\end{lemma}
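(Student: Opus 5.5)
The claim (i) is immediate from the pointwise identities
\begin{equation*}
\Big|\frac{1}{m-iV}\Big| = (m^2+V^2)^{-\frac12}, \qquad |m+iV| = (m^2+V^2)^{\frac12}.
\end{equation*}
Inserting them into \eqref{s.form.def} gives
\begin{equation*}
|\frs[f]| \le \|(m^2+V^2)^{-\frac14} f'\|^2 + \|(m^2+V^2)^{\frac14} f\|^2 \le \|f\|_\cV^2 .
\end{equation*}
Boundedness of the sesquilinear form on $\cV\times\cV$ then follows by the same bound and Cauchy--Schwarz.

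For (ii), I would first check that $\Phi$ is bounded on $\cV$. Clearly $|\Phi|\le 1$, and $(\Phi f)' = \Phi f' + \Phi' f$ with
\begin{equation*}
\Phi' = \frac{m^2 V'}{(m^2+V^2)^{3/2}}.
\end{equation*}
It therefore suffices to show that $|\Phi'|(m^2+V^2)^{-\frac14} \ls (m^2+V^2)^{\frac14}+1$. This follows from \eqref{V.nabla.form}, which gives $|V'| \le (m^2+V^2)^2 + C$.

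Next I would use $\frac{1}{m-iV} = \frac{m+iV}{m^2+V^2}$ to separate real and imaginary parts:
\begin{equation*}
\Re (\frs+t)[f] = \int_\R \frac{m}{m^2+V^2}|f'|^2 + \int_\R (m+t)|f|^2 .
\end{equation*}
Expanding $(\Phi f)'$, the imaginary part $\Im (\frs+t)(f,\Phi f)$ equals
\begin{equation*}
\int_\R \frac{V\Phi}{m^2+V^2}|f'|^2 + \int_\R V\Phi |f|^2 + \Im E(f),
\qquad E(f) = \int_\R \frac{m+iV}{m^2+V^2}\, f'\,\overline{\Phi' f}.
\end{equation*}
The $t$-term does not contribute to the imaginary part because $\Phi$ is real. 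Since $V\Phi = V^2(m^2+V^2)^{-1/2}$, the sum of the main terms has weights
\begin{equation*}
\frac{m}{m^2+V^2} + \frac{V^2}{(m^2+V^2)^{3/2}} \gtrsim (m^2+V^2)^{-\frac12} \quad\text{on } |f'|^2,
\end{equation*}
and
\begin{equation*}
m + t + \frac{V^2}{(m^2+V^2)^{1/2}} \gtrsim (m^2+V^2)^{\frac12} + t \quad\text{on } |f|^2 .
\end{equation*}
For the first weight, consider separately the regions $|V|\le m$ and $|V|\ge m$, with constants depending on $m>0$.

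The main step is absorbing the error $E(f)$. Pointwise, its integrand is bounded by $m^2|V'|(m^2+V^2)^{-2}|f'||f|$. Young's inequality then gives
\begin{equation*}
|E(f)| \le \delta \|(m^2+V^2)^{-\frac14}f'\|^2 + C_\delta \int_\R \frac{m^4|V'|^2}{(m^2+V^2)^{7/2}}|f|^2 .
\end{equation*}
Applying \eqref{V.nabla.form} in the form $|V'|^2\le \eta (m^2+V^2)^4 + C_\eta$, the last integral is at most
\begin{equation*}
C_\delta\big(\eta\|(m^2+V^2)^{\frac14}f\|^2 + C_\eta m^{-3}\|f\|^2\big).
\end{equation*}
The exponent $8$ in \eqref{V.nabla.form} is exactly what makes $V^8/(m^2+V^2)^{7/2}$ comparable to $(m^2+V^2)^{1/2}$. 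Choosing $\delta$ small, then $\eta$ small, and finally $t$ large absorbs all error terms, which yields
\begin{equation*}
|(\frs+t)[f]| + |(\frs+t)(f,\Phi f)| \ge \Re(\frs+t)[f] + \Im(\frs+t)(f,\Phi f) \ge c_\frs\|f\|_\cV^2 .
\end{equation*}

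The second inequality in \eqref{s.AH.coer} is handled identically. The form $(\frs+t)(\Phi f,f)$ has the same main imaginary part, and its error term is $\int \frac{m+iV}{m^2+V^2}\Phi' f\,\overline{f'}$, which obeys the same bound.
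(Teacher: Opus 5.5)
Your proof is correct and follows essentially the same route as the paper. Both compute $\Phi' = m^2V'/(m^2+V^2)^{3/2}$ to show that $\Phi$ is bounded on $\cV$, bound $\Re(\frs+t)[f] + \Im(\frs+t)(f,\Phi f)$ from below by the main terms, absorb the cross term via Cauchy--Schwarz/Young together with \eqref{V.nabla.form}, and then take $t$ large. Your case split $|V|\le m$ versus $|V|\ge m$ is unnecessary: $\frac{m}{m^2+V^2}+\frac{V^2}{(m^2+V^2)^{3/2}}\ge (m^2+V^2)^{-1/2}$ holds directly because $(m^2+V^2)^{1/2}\ge m$, which is how the paper argues.
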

\begin{proof}
\begin{enumerate}[\upshape (i), wide]
\item Follows immediately from \eqref{s.form.def} and \eqref{cV.norm.def}.
\item First we find that
\begin{equation}
\Phi' =  \frac{V'}{(m^2+V^2)^{\frac{1}{2}}} -\frac{V'V^2}{(m^2+V^2)^\frac 32}
=\frac{m^2V'}{(m^2+V^2)^{\frac{3}{2}}}.
\end{equation}
Thus, for any $f \in \cV$, using the assumption \eqref{V.nabla.form}, we arrive at
\begin{equation}
\begin{aligned}
\|\Phi f\|_{\cV}^2 & = \left\Vert \frac{(\Phi f)'}{(m^2+V^2)^{\frac{1}{4}}} \right\Vert^2 
+ \Vert (m^2+V^2)^{\frac{1}{4}} \Phi f \Vert^2 + \Vert \Phi f \Vert^2
\\
& \leq 
2 \left\Vert \frac{f'}{(m^2+V^2)^{\frac{1}{4}}} \right\Vert^2
+ 2 m^4 \left\Vert \frac{V' f }{(m^2+V^2)^{\frac{7}{4}}} \right\Vert^2 
+ \Vert (m^2+V^2)^{\frac{1}{4}} f \Vert^2 
\\ & \quad 
+ \Vert f \Vert^2
\\
& \leq 
2 \|f\|_\cV^2 + 2m^4\delta \Vert (m^2+V^2)^{\frac{1}{4}} f \Vert^2 + \frac{2 C_\delta}{m^3} \|f\|^2,
\end{aligned}
\end{equation}
and so the multiplication by $\Phi$ is bounded on $\cV$.

Next, for all $f \in \cV$,
\begin{equation}
\begin{aligned}
& \Re \left((\frs+t)[f]\right) + \Im \left((\frs+t)(f, \Phi f) \right) 
\\
&\ \geq 
\int_\R \left( \frac{m}{m^2+V(x)^2} + \frac{V(x) \Phi(x)}{m^2+V(x)^2} \right)|f'(x)|^2 \; \dd x
+  \int_\R V(x) \Phi(x) |f(x)|^2 \; \dd x\\
& \quad + (m+t)\|f\|^2 + \Im \int_\R \frac{f'(x) \Phi'(x) \ov{f(x)}}{m-iV(x)} \; \dd x.
\end{aligned}
\end{equation}
By Cauchy-Schwarz and Young inequalities, we obtain that for any $\eps>0$ and $\delta > 0$
\begin{equation}
\begin{aligned}
& \left|\Im \int_\R \frac{f'(x) \Phi'(x) \ov{f(x)}}{m-iV(x)} \; \dd x \right| 
\\
& \quad \leq 
\int_\R \frac{|f'(x)|}{(m^2+V(x)^2)^\frac14}  \frac{|\Phi'(x)||f(x)|}{(m^2+V(x)^2)^\frac14} \; \dd x
\\
& \quad \leq
\eps \int_\R \frac{|f'(x)|^2}{(m^2+V(x)^2)^\frac 12} \; \dd x + 
\frac1{4\eps} \int_\R \frac{|\Phi'(x)|^2|f(x)|^2}{(m^2+V(x)^2)^\frac 12} \; \dd x
\\
& \quad \leq
\eps \int_\R \frac{|f'(x)|^2}{(m^2+V(x)^2)^\frac 12} \; \dd x + 
\frac{m^4 \delta}{4\eps} \int_\R \frac{V(x)^8 |f(x)|^2}{(m^2+V(x)^2)^\frac 72}  \; \dd x 
+\frac{C_\delta}{4\eps m^3}\|f\|^2. 
\end{aligned}
\end{equation}
Thus 
\begin{equation}
\begin{aligned}
& \Re \left((\frs+t)[f]\right) + \Im \left((\frs+t)(f, \Phi f) \right) 
\\
&\quad \geq 
\int_\R \left( \frac{m}{m^2+V(x)^2} + \frac{V(x)^2}{(m^2+V(x)^2)^\frac32} - \frac{\eps}{(m^2+V(x)^2)^\frac 12}\right)|f'(x)|^2 \; \dd x
\\
&\qquad  +  \int_\R \left( \frac{V(x)^2}{(m^2+V(x)^2)^\frac12} - \frac{m^4 \delta}{4\eps} \frac{V(x)^8 }{(m^2+V(x)^2)^\frac 72}  \right)|f(x)|^2 \; \dd x
\\
&\qquad
+ \left(m+t - \frac{C_\delta}{4\eps m^3}\right)\|f\|^2.
\end{aligned}
\end{equation}
Notice that
\begin{equation}
\frac{m}{m^2+V^2} + \frac{V^2}{(m^2+V^2)^{\frac{3}{2}}} -\frac{\eps}{(m^2+V^2)^\frac 12} 
\geq \frac{1-\eps}{(m^2+V^2)^{\frac{1}{2}}},
\end{equation}
and
\begin{equation}
\begin{aligned}
& \frac{V^2}{(m^2+V^2)^\frac12} - \frac{m^4 \delta}{4\eps} \frac{V^8 }{(m^2+V^2)^\frac 72} 
\\
& \quad  \geq \left(1 - \frac{m^4 \delta}{4\eps} \right) \frac{V^2}{(m^2+V^2)^\frac12}
\geq \left(1 - \frac{m^4 \delta}{4\eps} \right) \left( (m^2+V^2)^\frac12 - m \right).
\end{aligned}
\end{equation}
By assumption we can select $\delta > 0$ such that $m^4 \delta/4 <1$, thus we can select $\eps \in (0,1)$ and afterwards $t>0$ so that
\begin{equation}
 \Re \left((\frs+t)[f]\right) + \Im \left((\frs+t)(f, \Phi f) \right)  \geq c_\frs \|f\|_\cV^2, \quad f \in \cV,
\end{equation}
with $c_\frs >0$. The proof of the second inequality in \eqref{s.AH.coer} is fully analogous.
\qedhere
\end{enumerate}
\end{proof}

\begin{remark}
If $V \geq 0$  (or $V \leq 0$), then $\frs$ is coercive on $\cV$, i.e.~there exists $c_\frs >0$ such that 
\begin{equation}\label{s.V.pos.coer}
	|\frs[f]| \geq c_\frs \|f\|_\cV^2, \quad f \in \cV.
\end{equation}
Indeed, for all $f \in \cV$,
\begin{equation}
\begin{aligned}
&	\Re \left(e^{-i\frac \pi4}\frs[f]\right) 
\\
& \quad = 
		\Re \left( e^{-i\frac \pi4} \int_\R \left(\frac{m}{m^2+ V(x)^2} |f'(x)|^2 + m|f(x)|^2\right) \, \dd x \right.
		\\ & \qquad \qquad
		\left. +
		e^{i\frac \pi4} \int_\R \left(\frac{V(x)}{m^2+ V(x)^2} |f'(x)|^2 + V|f(x)|^2\right) \, \dd x
		\right)
		\\
		& \quad = \frac{1}{\sqrt 2}
		\int_\R \left(\frac{m+V(x)}{m^2+ V(x)^2} |f'(x)|^2 + (m+V(x))|f(x)|^2\right) \, \dd x
		\\
		& \quad \geq c_{\frs} \int_\R \left(\frac{|f'(x)|^2}{(m^2+ V(x)^2)^\frac 12}  + \left((m^2+V(x)^2)^\frac12+1 \right)|f(x)|^2\right) \, \dd x,
	\end{aligned}
\end{equation}
where $c_{\frs} = c_{\frs}(m) > 0$. We do not discuss this special case further in the following. 

\end{remark}

\begin{remark}
	We note that the statement in Lemma~\ref{lem:s.coer}~\ref{itm:s.coer.ii} is still true if we replace condition~\eqref{V.nabla.form} with the weaker assumption
	\begin{equation}
		\exists C_\nabla \in [0, \tfrac{4}{m^4}), \ \exists C>0 
		\,: \,  |V'(x)|^2 \leq C_\nabla V^8(x) + C, \quad x \in \R.
	\end{equation}
\end{remark}

For $j=1,2$, let $\frs_j$ be the sesquilinear forms on $\cV \times \cV$ related to the Schur complements $S_j(\la)$ in \eqref{S12.def} defined via 
\begin{equation}\label{s12.form.def}
	\begin{aligned}
	\frs_1[f] &:= \int_\R \frac{1}{m-iV(x)+\la} |f'(x)|^2 \, \dd x + \int_\R (m+ iV(x)-\la)|f(x)|^2 \, \dd x,
		\\
	-\frs_2[f] &:= \int_\R \frac{1}{m+iV(x)-\la} |f'(x)|^2 \, \dd x + \int_\R (m- iV(x)+\la)|f(x)|^2 \, \dd x,
	\\
		\Dom(\frs_j) & := \cV, \quad j =1,2;
	\end{aligned}
\end{equation}
here $\frs_1$ is defined for 
\begin{equation}\label{s1.la.restr}
\la \notin \ov{ \{ i V(x) - m \, : \,  x \in \R \}}
\end{equation}
and $\frs_2$ for
\begin{equation}\label{s2.la.restr}
\la \notin \ov{ \{ i V(x) + m \, : \,  x \in \R \}}.
\end{equation}

Lemma \ref{lem:s.coer} and straightforward substitutions can be employed to establish a generalized coercivity of $\frs_j$.
\begin{lemma}\label{lem:s12.coer}
Let $m>0$ and let $V \in C^1(\R,\R)$ satisfy \eqref{V.nabla.form}. Let further $j=1,2$ and let $\frs_j$ be as in \eqref{s12.form.def}, \eqref{s1.la.restr} and \eqref{s2.la.restr}.  
Then $\frs_j$ are bounded on $\cV$ and moreover, 
\begin{enumerate}[\upshape (i)]
\item If $\Re \la> -m$, then there exists $c_{\frs_1}>0$ such that for all sufficiently large $t>0$, 
	\begin{equation}\label{s1.AH.coer}
		\begin{aligned}
			|(\frs_1+t)[f]| + |(\frs_1+t)(f,\Phi_1 f)| &\geq c_{\frs_1} \|f\|_\cV^2, 
			\\		
			|(\frs_1+t)[f]| + |(\frs_1+t)(\Phi_1 f, f)| &\geq c_{\frs_1} \|f\|_\cV^2, \quad f \in \cV,
		\end{aligned}
	\end{equation}
	where 
	\begin{equation}
		\Phi_1:= \frac{V-\Im \la}{((m+\Re \la)^2+(V- \Im \la)^2)^\frac12}.
	\end{equation}
\item If $\Re \la < m$, then there exists $c_{\frs_2}>0$ such that for all sufficiently large $t>0$, 
	\begin{equation}\label{s2.AH.coer}
		\begin{aligned}
			|(-\frs_2+t)[f]| + |(-\frs_2+t)(f,\Phi_2 f)| &\geq c_{\frs_2} \|f\|_\cV^2, 
			\\		
			|(-\frs_2+t)[f]| + |(-\frs_2+t)(\Phi_2 f, f)| &\geq c_{\frs_2} \|f\|_\cV^2, \quad f \in \cV,
		\end{aligned}
	\end{equation}
	where 
	\begin{equation}
		\Phi_2:= \frac{V - \Im \la}{((m-\Re \la)^2+(V- \Im \la)^2)^\frac12}.
	\end{equation}
\end{enumerate}
\end{lemma}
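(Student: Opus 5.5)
The plan is to reduce Lemma~\ref{lem:s12.coer} to Lemma~\ref{lem:s.coer} by absorbing $\la$ into the mass and the potential. For $\frs_1$ set $\tilde m := m + \Re \la$ and $\tilde V := V - \Im \la$. Then $m - iV + \la = \tilde m - i\tilde V$ and $m + iV - \la = \tilde m + i\tilde V$. Hence $\frs_1$ is exactly the form $\frs$ of \eqref{s.form.def} with $(m,V)$ replaced by $(\tilde m,\tilde V)$.

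For $\frs_2$ set $\tilde m := m - \Re\la$ and $\tilde V := -(V - \Im\la)$. Then $m + iV - \la = \tilde m - i\tilde V$ and $m - iV + \la = \tilde m + i\tilde V$, so $-\frs_2$ is again of the form $\frs$ with these parameters. The hypotheses $\Re\la>-m$, respectively $\Re\la<m$, give exactly $\tilde m>0$, which Lemma~\ref{lem:s.coer} requires.

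Next I check the remaining hypotheses and the identification of spaces. Clearly $\tilde V \in C^1(\R,\R)$ with $\tilde V' = \pm V'$. Condition \eqref{V.nabla.form} for $\tilde V$ follows from the one for $V$ because $V^8 \le 2^7(\tilde V^8 + |\Im\la|^8)$. Given $\delta>0$, apply \eqref{V.nabla.form} for $V$ with $\delta 2^{-7}$, which yields $|\tilde V'|^2 \le \delta \tilde V^8 + C'_\delta$.

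The space $\cV$ built from $(m,V)$ coincides with the space $\tilde\cV$ built from $(\tilde m,\tilde V)$, and the norms are equivalent. The reason is that $m^2+V^2 \approx \tilde m^2+\tilde V^2$ uniformly on $\R$, with constants depending on $m$, $\la$. Both sides are bounded below by positive constants and differ by bounded shifts, so their ratio is bounded above and below. Similarly $|V|^{1/2}f\in L^2$ is equivalent to $|\tilde V|^{1/2} f \in L^2$ for $f\in L^2$.

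With this in hand, Lemma~\ref{lem:s.coer}~(i) gives boundedness of $\frs_j$ on $\tilde\cV=\cV$. Lemma~\ref{lem:s.coer}~\ref{itm:s.coer.ii} gives \eqref{s.AH.coer} with $\tilde\Phi = \tilde V/(\tilde m^2+\tilde V^2)^{1/2}$, which is $\Phi_1$ in the first case and $-\Phi_2$ in the second. The sign is harmless, since $|(\frs+t)(f,-\Phi f)| = |(\frs+t)(f,\Phi f)|$, and likewise in the other slot. Converting the $\|\cdot\|_{\tilde\cV}$ lower bound into a $\|\cdot\|_\cV$ lower bound only changes the constant, which gives \eqref{s1.AH.coer} and \eqref{s2.AH.coer}.

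The only step needing care is the norm equivalence of $\cV$ and $\tilde\cV$, in particular that $\tilde m>0$ keeps $\tilde m^2+\tilde V^2$ bounded away from zero. The rest is bookkeeping of constants.
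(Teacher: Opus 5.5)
Your strategy is the paper's: substitute $\wt m=m+\Re\la$, $\wt V=V-\Im\la$, check \eqref{V.nabla.form} for $\wt V$, identify $\cV$ with the space built from $(\wt m,\wt V)$ via equivalent norms, and invoke Lemma~\ref{lem:s.coer}. However, the identity you base this on is false when $\Re\la\neq0$. Write $\la=a+ib$. Then $m+iV-\la=(m-a)+i(V-b)$, whereas $\wt m+i\wt V=(m+a)+i(V-b)$. So $\frs_1$ is \emph{not} the form $\frs$ built from $(\wt m,\wt V)$. Denoting that form by $\wt\frs$, the correct relation is
\[
(\frs_1+t)[f]=\wt\frs[f]+(t-2a)\|f\|^2 .
\]
The same slip occurs for $\frs_2$. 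With your choice $\wt m=m-a$, $\wt V=-(V-b)$ you get $m-iV+\la=\wt m+i\wt V+2a$, hence $(-\frs_2+t)[f]=\wt\frs[f]+(t+2a)\|f\|^2$.

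The discrepancy is not cosmetic. It sits in the real part of the zeroth-order coefficient, $\Re(m+iV-\la)=m-\Re\la$, which is negative in case (i) whenever $\Re\la>m$.

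The repair is immediate, and it is exactly what the paper does. Lemma~\ref{lem:s.coer}~\ref{itm:s.coer.ii} gives one constant valid for all sufficiently large shifts. Apply it with $\wt t=t-2\Re\la$ for $\frs_1$, respectively $\wt t=t+2\Re\la$ for $\frs_2$; both are large whenever $t$ is. Boundedness of $\frs_j$ on $\cV$ is unaffected by the extra term, since $\|f\|\le\|f\|_\cV$.

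With this correction the rest of your argument goes through:
\begin{itemize}
\item the verification of \eqref{V.nabla.form} for $\wt V$;
\item the norm equivalence between $\cV$ and the space built from $(\wt m,\wt V)$;
\item the observation that $\wt\Phi=-\Phi_2$ is harmless.
\end{itemize}
Your explicit choice $\wt V=-(V-\Im\la)$ for $\frs_2$ actually spells out the ``similar arguments'' that the paper leaves implicit.
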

\begin{proof}
Let $\la = a + i b$ where $a,b \in \R$ and $a+m > 0$. The claim for $\frs_1$ follows from Lemma~\ref{lem:s.coer} since
\begin{equation}
\begin{aligned}
(\frs_1+t)[f] &:= \int_\R \frac{1}{(m+a)-i(V(x)-b)} |f'(x)|^2 \, \dd x 
\\
 & \quad  + \int_\R ((m+a)+ i(V(x)-b))|f(x)|^2 \, \dd x + (t-2a)\|f\|^2,
\end{aligned}
\end{equation}
i.e.~with the substitutions $ \wt m:=m+a$, $\wt V : =V-b$ and $\wt t:=t-2a$ and the assumptions on $a,b$ and $V$, we can apply the already obtained claims for $\frs$. Notice also that \eqref{V.nabla.form} implies that for any $\delta>0$, there is $C_\delta>0$ such that
\begin{equation}
	|\wt V'(x)|^2 = |V'(x)|^2 \leq \delta (\wt V(x) + b)^8 + C_\delta \leq \delta C (\wt V(x)^8 + 1) + C_\delta, \quad x \in \R,
\end{equation}
with some $C = C(b) > 0$. Thus $\wt V$ satisfies \eqref{V.nabla.form} as well.
(Notice that applying Lemma~\ref{lem:s.coer} yields the coercivity for the space $\wt{\cV}$ associated with $\wt m, \wt V$. However, we have $\wt\cV = \cV$ and moreover the corresponding norms are equivalent). The statement for $\frs_2$ can be shown using similar arguments. 
\end{proof}
\begin{proposition}
Let $m>0$ and let $V \in C^1(\R,\R)$ satisfy \eqref{V.nabla.form}. 
Define the operators, for $\Re \la>-m$,
	\begin{equation}
		\begin{aligned}
	 S_1(\la) & := -\partial_x \frac{1}{m - i V +\la} \partial_x + m + i V - \la,
	 \\
	 \Dom(S_1(\la)) & := \left\{ f \in \cV \, : \, -\partial_x \frac{f'}{m - i V +\la} + i V f \in L^2(\R) \right\}, 
		\end{aligned}
	\end{equation}
and, for $\Re \la < m$, 
\begin{equation}
\begin{aligned}
S_2(\la) & := \partial_x \frac{1}{m + i V -\la} \partial_x - m + i V - \la,
\\
\Dom(S_2(\la)) & := \left\{ g \in \cV \, : \, \partial_x \frac{g'}{m + i V -\la} + i V g \in L^2(\R) \right\}.
\end{aligned}
\end{equation}
Both $S_1(\la)$, $\Re \la > -m$, and $S_2(\la)$, $\Re \la < m$, are densely defined, have non-empty resolvent set. Moreover,
\begin{equation}\label{S12.strip}
0 \in \rho(S_j(\la)), \quad \Re \la \in (-m,m), \quad j =1,2.
\end{equation}
\end{proposition}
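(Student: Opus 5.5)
We plan to derive both statements from the generalized Almog--Helffer Lax--Milgram theorem recalled in Appendix~\ref{app:AH}. Fix $\la$ with $\Re\la>-m$ and consider the form $\frs_1$ on $\cV$. Lemma~\ref{lem:s12.coer} gives boundedness and the generalized coercivity \eqref{s1.AH.coer} of $\frs_1+t$ for all large $t$, with the bounded multiplier $\Phi_1$ on $\cV$. The boundedness of $\Phi_1$ on $\cV$ follows as in Lemma~\ref{lem:s.coer}, after the substitutions $\wt m=m+\Re\la$ and $\wt V=V-\Im\la$. Moreover $\cV\hookrightarrow L^2(\R)$ continuously and densely, since $\CcR$ is dense in $\cV$ by Lemma~\ref{lem:cV}.

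The Almog--Helffer theorem then shows that the operator associated with $\frs_1+t$ is a bijection onto $L^2(\R)$ with bounded inverse. Here the operator is defined by $\frs_1(f,g)+t\langle f,g\rangle=\langle F,g\rangle$ for all $g\in\cV$. Its adjoint is handled by the second line of \eqref{s1.AH.coer}.

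Next we identify this operator with $S_1(\la)+t$ on the stated domain. Testing the form identity against $g\in\CcR$ yields, in the distributional sense,
\begin{equation*}
-\big((m-iV+\la)^{-1}f'\big)'+(m+iV-\la)f+tf=F.
\end{equation*}
Since $f\in\cV\subset L^2$, this shows that $-\big(f'/(m-iV+\la)\big)'+iVf\in L^2$, so $f\in\Dom(S_1(\la))$. Conversely, take $f\in\Dom(S_1(\la))$. Integration by parts against $g\in\CcR$, followed by the density of $\CcR$ in $\cV$ and the boundedness of the form, shows that $f$ lies in the form-operator domain. Hence $-t\in\rho(S_1(\la))$, so the resolvent set is non-empty. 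Density of $\Dom(S_1(\la))$ is part of the output of the Almog--Helffer theorem, or follows from the density of the range of the adjoint resolvent. The case of $S_2(\la)$ with $\Re\la<m$ is identical, using $-\frs_2$ and \eqref{s2.AH.coer}.

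The main step is \eqref{S12.strip}, namely that $t=0$ can be taken when $\Re\la\in(-m,m)$. Then $\tilde m_1=m+\Re\la>0$ and $\tilde m_2=m-\Re\la>0$ simultaneously. The plan is to rerun the coercivity estimate of Lemma~\ref{lem:s.coer} without the shift $t$. Write $a=\Re\la$ and $\wt V=V-\Im\la$. Then
\begin{equation*}
\Re\frs_1[f]=\int\frac{m+a}{(m+a)^2+\wt V^2}|f'|^2+(m-a)\|f\|^2 ,
\end{equation*}
so the $L^2$ term is already positive because $m-a>0$. The imaginary part of $\frs_1(f,\Phi_1 f)$ supplies $\int\wt V\Phi_1|f|^2$ and the weighted $f'$ terms. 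The cross term involving $\Phi_1'$ has to be absorbed. The estimate $|\Phi_1'|\lesssim|V'|/(\tilde m^2+\wt V^2)^{3/2}$ together with \eqref{V.nabla.form} controls this term by $\eps\|f\|_\cV^2+C\|f\|^2$.

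The difficulty is that the constant $C$ need not be dominated by $m-a$. I would first try to combine $\frs_1[f]$ and $\frs_1(f,\Phi_1 f)$ with weights $\theta$ and $\eta$, where $\eta\ll\theta$, so that $\theta(m-a)\|f\|^2$ beats the small multiple $\eta C\|f\|^2$. Because $\eta$ is small, the gain $\eta\int\wt V\Phi_1|f|^2$ is still positive and gives the $(\tilde m^2+\wt V^2)^{1/2}$ weight up to a bounded region. On that bounded region the weight is controlled by the $\|f\|^2$ term.

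If this fails, the fallback is a Fredholm-type argument. The form estimate $|\frs_1[f]|\ge(m-a)\|f\|^2$ from the real part gives injectivity of $S_1(\la)$ and of its adjoint, which has the same structure with conjugated coefficients. Since $S_1(\la)+t$ is invertible, I would then show that $S_1(\la)$ is invertible via a compactness or perturbation argument in $t$. This route needs care if $V$ is bounded, where the resolvent is not compact. In that case I would rely on the closed-range estimate $\|S_1(\la)f\|\,\|f\|\ge\Re\frs_1[f]\ge(m-a)\|f\|^2$, which shows that $S_1(\la)$ is injective with closed range. Trivial kernel of the adjoint then gives surjectivity, hence $0\in\rho(S_1(\la))$, and analogously for $S_2(\la)$.
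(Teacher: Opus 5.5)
Your proposal is correct. The first part is what the paper does: the generalized Lax--Milgram theorem for $\frs_1+t$ and $-\frs_2+t$, followed by identifying the form operator with $S_j(\la)$ through testing against $\CcR$ and density in $\cV$.

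For \eqref{S12.strip}, your fallback is exactly the paper's argument. The bound $\Re\langle S_1(\la)f,f\rangle\ge(m-a)\|f\|^2$ on $\Dom(S_1(\la))$ gives injectivity and closed range; closedness comes from $-t\in\rho(S_1(\la))$. The same bound for $S_1(\la)^*$, which is the operator of the adjoint form, gives dense range. This closed-range argument works whether or not $V$ is bounded. So the detour through compactness or a perturbation in $t$ is unnecessary, and a perturbation from $-t$ to $0$ would not be small anyway.

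Your primary route is genuinely different, and it also works. Take weights $\theta\ge\eta$ and set $s=\wt V^2/(\wt m^2+\wt V^2)$. After absorbing the cross term, the coefficient of $|f'|^2$ in $\theta\Re\frs_1[f]+\eta\Im\frs_1(f,\Phi_1 f)$ is $(\wt m^2+\wt V^2)^{-1/2}(\theta\sqrt{1-s}+\eta s-\eta\eps)$. Since $\sqrt{1-s}\ge 1-s$, this is at least $\eta(1-\eps)(\wt m^2+\wt V^2)^{-1/2}$. The $L^2$ constant $C_\delta/(4\eps\wt m^3)$ produced by \eqref{V.nabla.form} appears multiplied by $\eta$, as does the loss $\eta\kappa\wt m$ from writing $\wt V^2(\wt m^2+\wt V^2)^{-1/2}\ge(\wt m^2+\wt V^2)^{1/2}-\wt m$. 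Both are beaten by $\theta(m-a)$ once $\eta/\theta$ is small, and the other slot is handled analogously.

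Hence $\frs_1$ itself is coercive in the generalized sense on the strip, with no shift. Theorem~\ref{thm:lm.0} then gives $\wh{S_1}(\la)^{-1}\in\cB(\cV^*,\cV)$ directly. This is precisely what the paper needs later in Proposition~\ref{prop:H0.def}, where it obtains it from \eqref{S12.strip} via Lemma~\ref{lem:hat.ext}. The paper's route is shorter and requires no constant-tracking; yours delivers the stronger $\cV^*\to\cV$ invertibility in one step.
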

\begin{proof}
The forms $\frs_j$, $j=1,2$, are coercive in the generalized sense, see Lemma~\ref{lem:s12.coer}, and $\CcR$ is dense in $\cV$, hence all claims except \eqref{S12.strip} follow from the generalized Lax-Milgram theorem, see Theorem~\ref{thm:lm.1} in Appendix~\ref{app:AH}. 

To justify \eqref{S12.strip}, let $\la= a+ib$ with $a \in (-m,m)$ and $b \in \R$. Then
\begin{equation}
\begin{aligned}
\Re \langle S_1(\la) f,f \rangle & =  
\int_\R \Re \left( \frac{m+a + i(V(x)-b)}{(m+a)^2 + (V(x) - b)^2 } \right) |f'(x)|^2 \, \dd x   
\\ & \qquad + 
\int_\R \Re
\left( m+iV(x)-\la \right) |f(x)|^2 \, \dd x
\\
& \geq (m-a) \|f\|^2 
\end{aligned}
\end{equation}
for all $f \in \Dom(S_1(\la))$. Hence $0$ is a regular point of $S_1(\la)$, thus $S_1(\la)$ is injective and has closed range. To show that the range of $S_1(\la)$ is dense, we use the fact that $S_1(\la)^*$ is associated with the adjoint form 
\begin{equation}
\begin{aligned}
\frs_1^*[g] &= \int_\R \frac{1}{m+iV(x)+ \ov \la} |g'(x)|^2 \, \dd x + \int_\R (m - iV(x) - \ov \la)|g(x)|^2 \, \dd x, 
\\
\Dom(\frs_1^*) &= \cV.
\end{aligned}
\end{equation}
Thus, for all $g \in \Dom(S_1(\la)^*)$, 
\begin{equation}
\begin{aligned}
\Re \langle S_1(\la)^* g,g \rangle
&= 
\int_\R \Re  \left( \frac{m+a - i(V(x)-b)}{(m+a)^2 + (V(x) - b)^2 } \right) |g'(x)|^2 \, \dd x
\\
& \qquad + \int_\R \Re \left(m-iV(x)- \ov \la \right) |g(x)|^2 \, \dd x
\\
& \geq (m-a) \|g\|^2. 
	\end{aligned}
\end{equation}
Hence $S_1(\la)^*$ is injective and therefore the range of $S_1(\la)$ is dense.

The proof for $S_2(\la)$ is fully analogous.
\end{proof}

\subsection{Definition of \texorpdfstring{$H$}{H} }

We define $H$ via the strategy in \cite{Gerhat-2024-286}, i.e.~via the restriction of a distributional operator $\wh H$, see Appendix~\ref{app:Schur.dom} for details. To this end, we first introduce the spaces and operators in Assumption~\ref{asm:schur.dom} and then verify the required mapping properties. 

Suppose that $m>0$, $V \in C^1(\R,\R)$ and define the Hilbert spaces
\begin{equation}\label{spaces.hat.def}
\begin{aligned}
\cD_S &:= \cV, \quad \cD_{-S}:=\cV^*, 
\\
\cD_2 & := \{ f : \R \to \C \ \text{measurable} \, : \, (m^2+V^2)^\frac14 f \in L^2(\R) \}, 
\\ 
& \qquad \|\cdot\|_{\cD_2} := \|(m^2+V^2)^\frac14 \cdot\|_{L^2},
\\
\cD_{-2} & := \{ g : \R \to \C \ \text{measurable} \, : \, (m^2+V^2)^{-\frac14} g \in L^2(\R) \}, 
\\ 
& \qquad \|\cdot\|_{\cD_{-2}} := \|(m^2+V^2)^{-\frac14} \cdot\|_{L^2}.
\end{aligned}
\end{equation}
By construction, we have 
\begin{equation}
	\cD_S \subset L^2(\R) \subset \cD_{-S}, \qquad \cD_2 \subset L^2(\R) \subset \cD_{-2}
\end{equation}
where the embeddings are bounded and have dense ranges (we omit details). 

Next, we define the operators
\begin{equation}\label{op.hat.def}
\begin{aligned}
\wh A : \cD_S \to \cD_{-S}:&& (\wh A \phi,\psi)_{\cV^* \times \cV} &= \intR(m+i V(x)) \phi(x) \ov{\psi(x)} \, \dd x, 
\ \ \phi, \psi \in \cD_S, 
\\
\wh B : \cD_2 \to \cD_{-S}:&& (\wh B f,\phi)_{\cV^* \times \cV} &= \intR f(x) \ov{(-i \phi'(x))} \, \dd x, 
\ \ f \in \cD_2, \; \phi \in \cD_S,
\\
\wh C : \cD_S \to \cD_{-2}:&& \wh C \phi &= -i \phi',
\ \ \phi \in \cD_S,
\\
\wh D : \cD_2 \to \cD_{-2}:&& \wh D f &= (-m + i V) f, \ \ f \in \cD_2.
\end{aligned}
\end{equation}
\begin{lemma}\label{lem:Op.hat.bdd}
Let the spaces $\cD_S$, $\cD_{-S}$, $\cD_2$ and $\cD_{-2}$ be as in \eqref{spaces.hat.def} and let the operators $\wh A$, $\wh B$, $\wh C$ and $\wh D$ be as in \eqref{op.hat.def}. Then the following claims hold.
\begin{enumerate}[\upshape (i)]
	\item The conditions \eqref{op.hat.cond.bdd} are satisfied.
	\item For every $f \in \cD_2 \subset L^1_{\rm loc}(\R) \hookrightarrow \cD'(\R)$, the functional $\wh Bf$ is the unique extension of $-i f' \in \cD'(\R)$ to $\cV^*$. 
	\item For every $\la \notin \ov{\{ i V(x) -m \,: \, x \in \R\}}$, 
\begin{equation}
(\wh D-\la)^{-1} \in \cB(\cD_{-2},\cD_2).
\end{equation}
\end{enumerate}
\end{lemma}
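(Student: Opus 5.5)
The three claims are routine mapping checks. The approach is to exploit the weighted structure of $\cV$ through the weight $w:=(m^2+V^2)^{1/4}$. With this weight, $\|\cdot\|_{\cD_2}=\|w\,\cdot\|$ and $\|\cdot\|_{\cD_{-2}}=\|w^{-1}\cdot\|$. Since $(m^2+V^2)^{1/2}\ge |V|$, the norm $\|f\|_\cV^2$ controls $\|w^{-1}f'\|^2$ and $\|wf\|^2$. The conditions \eqref{op.hat.cond.bdd} of Assumption~\ref{asm:schur.dom} (as I read them) amount to boundedness of $\wh A,\wh B,\wh C,\wh D$ between the indicated spaces. I would verify these one at a time.

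\emph{Part (i).} For $\wh A$, use $|m+iV|\le (m^2+V^2)^{1/2}=w^2$ and Cauchy--Schwarz:
\begin{equation}
|(\wh A\phi,\psi)|\le \|w\phi\|\,\|w\psi\|\le \|\phi\|_\cV\|\psi\|_\cV .
\end{equation}
This shows $\wh A\in\cB(\cD_S,\cD_{-S})$. For $\wh B$, write $f\,\ov{\phi'}=(wf)\,\ov{(w^{-1}\phi')}$. This gives $|(\wh Bf,\phi)|\le\|f\|_{\cD_2}\|\phi\|_\cV$. For $\wh C$, the estimate $\|\phi'\|_{\cD_{-2}}=\|w^{-1}\phi'\|\le\|\phi\|_\cV$ is immediate. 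For $\wh D$, similarly $\|w^{-1}(-m+iV)f\|\le\|wf\|$. If \eqref{op.hat.cond.bdd} also includes density or compatibility requirements, these follow from Lemma~\ref{lem:cV} and the dense embeddings already noted.

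\emph{Part (ii).} Since $w\ge\sqrt m>0$, any $f\in\cD_2$ lies in $L^2$, hence in $L^1_{\rm loc}$. For $\phi\in\CcR$ we have
\begin{equation}
(\wh Bf,\phi)=\intR f\,\ov{(-i\phi')}=\intR f\, i\ov{\phi'} .
\end{equation}
This coincides with the distributional action of $-if'$ on $\ov\phi$, up to the sesquilinear convention. Thus $\wh Bf$ extends $-if'$. It is bounded on $\cV$ by (i), and $\CcR$ is dense in $\cV$ by Lemma~\ref{lem:cV}, so the extension is unique.

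\emph{Part (iii).} Set $d:=\dist(\la,\ov{\{iV(x)-m\}})>0$. Then $(\wh D-\la)^{-1}$ is multiplication by $(-m+iV-\la)^{-1}$. I need to bound the ratio $w^2/|{-m}+iV-\la|$ uniformly in $x$. This is the only point needing care, since both numerator and denominator are of order $|V|$ where $|V|$ is large. Where $|V|$ is large, the denominator is at least $|V|-|\la|-m$, so the ratio tends to $1$ and is bounded by a constant depending on $\la$. Where $|V|$ is bounded, the denominator is at least $d$ and the numerator is bounded. With $C_\la$ the resulting sup, we get
\begin{equation}
\|w(\wh D-\la)^{-1}g\|=\Bigl\|\frac{w^2}{-m+iV-\la}\,w^{-1}g\Bigr\|\le C_\la\|g\|_{\cD_{-2}} .
\end{equation}
This is exactly $(\wh D-\la)^{-1}\in\cB(\cD_{-2},\cD_2)$, which completes the plan.
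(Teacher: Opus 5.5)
Your proposal is correct and matches the paper's proof: the same Cauchy--Schwarz estimates with the weight $(m^2+V^2)^{1/4}$ give (i), and (ii) follows from boundedness together with the density of $\CcR$ in $\cV$ (Lemma~\ref{lem:cV}). In (iii), your uniform bound on $(m^2+V^2)^{1/2}/|{-m}+iV-\la|$ is the paper's splitting into $\{|V|\le 2(|\la|+m)\}$, where the positive distance from $\la$ to $\ov{\{iV(x)-m\}}$ is used, and its complement, where the ratio is bounded by an absolute constant.
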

\begin{proof}
\begin{enumerate}[\upshape (i), wide]
\item For every $\phi, \psi \in \cD_S$, 
\begin{equation}
|(\wh A \phi,\psi)_{\cV^* \times \cV}| \leq \|(m^2+V^2)^\frac14 \phi\| \|(m^2+V^2)^\frac14 \psi \| \leq \|\phi\|_{\cD_S}\|\psi\|_{\cD_S},
\end{equation}
thus $\wh A \phi \in \cV^* = \cD_{-S} $ and $\wh A$ is a bounded operator from $\cD_S$ to $\cD_{-S}$. Similarly, for every $f \in \cD_2$ and $\phi \in \cD_S$
\begin{equation}
|(\wh B f,\phi)_{\cV^* \times \cV}| \leq \|(m^2+V^2)^\frac14 f\| \|(m^2+V^2)^{-\frac14} \phi' \| \leq \|f\|_{\cD_2}\|\phi\|_{\cD_S},
\end{equation}
thus $\wh B f \in \cV^*$ and $\wh B$ is a bounded operator from $\cD_2$ to $\cD_{-S}$. It is immediate that for every $\phi \in \cD_S$
\begin{equation}
\|\wh C \phi\|_{\cD_{-2}} \leq \|\phi\|_{\cD_S}
\end{equation}
and for every $f \in \cD_2$
\begin{equation}
\|\wh D f\|_{\cD_{-2}} = \|f\|_{\cD_2},
\end{equation}
thus the claimed boundedness in \eqref{op.hat.cond.bdd} holds.
\item The claim follows since $\CcR$ is dense in $\cV$, see Lemma~\ref{lem:cV}.

\item Since $\dist(\la, \ov{\{ i V(x) -m \,: \, x \in \R\}}) \geq \delta>0$, it is easy to verify that $\wh D -\la$ is injective. For every $g \in \cD_{-2}$, we have $(-m+iV-\la)^{-1} g \in \cD_{2}$
since splitting the integration to $\cM:=\{ x \in \R \, : \, |V(x)| \leq 2(|\la|+m)\}$ and its complement yields (we omit the details of the needed straightforward estimates)
\begin{equation}
\begin{aligned}
&\intR \frac{(m^2+V(x)^2)^\frac12}{|-m+iV(x)-\la|^2} |g(x)|^2 \, \dd x 
\\   
& \quad = 
\int_{\cM} \frac{(m^2+V(x)^2)^\frac12}{|-m+iV(x)-\la|^2} |g(x)|^2 \, \dd x 
\\
& \quad \qquad  +
\int_{\cM^c} \frac{(m^2+V(x)^2)^\frac12}{|-m+iV(x)-\la|^2} |g(x)|^2 \, \dd x
\\
& \quad \ls \int_{\cM} \frac{|g(x)|^2}{\delta^2}  \, \dd x   + 
\int_{\cM^c} \frac{|g(x)|^2}{|V(x)|}  \, \dd x 
\ls \|g\|_{\cD_{-2}}^2.
\end{aligned}
\end{equation}
Notice that it also follows that $(\wh D -\la)^{-1} \in \cB(\cD_{-2},\cD_2)$.
\qedhere
\end{enumerate}
\end{proof}

We employ Theorem~\ref{thm:Schur.dom} and define an $L^2$-realization of the Dirac operator $H_0$ with non-empty resolvent set. In the next step, we show that, under the stronger assumption \eqref{eq:V.separ.cond}, the operator  $H_0$ is equal to $H$ from \eqref{H.def} and we also establish the remaining claims in Theorem~\ref{thm:H.def}.

\begin{proposition}\label{prop:H0.def}
Let $m>0$ and let $V \in C^1(\R,\R)$ satisfy \eqref{V.nabla.form}. Then the operator
\begin{equation}\label{H0.def}
\begin{aligned}
H_0 &:= \begin{pmatrix}
m & -i\partial_{x}\\
-i\partial_{x} & -m
\end{pmatrix} 
+ i  \begin{pmatrix}
V(x) & 0\\
0 & V(x)
\end{pmatrix}, 
\\
\Dom(H_0) & := \{ (u_1,u_2) \in \cV \oplus \Dom(|V|^\frac12) \,: \,  (m+iV) u_1 -i u_2' \in L^2(\R), 
\\
& \qquad \qquad  \qquad \qquad \qquad \qquad \qquad -i u_1' + (iV -m) u_2 \in L^2(\R)  \}.
\end{aligned}
\end{equation}
is densely defined in $L^2(\R,\C^2)$ and 
\begin{equation}\label{rho.strip}
\{ z \in \C \, :\, \Re z \in (-m,m) \} \subset	\rho(H_{0}). 
\end{equation}
Moreover, $C_{c}^{\infty}(\R,\C^2)$ is a core of $H_0$.
\end{proposition}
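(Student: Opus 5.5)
The natural route is the abstract Schur complement dominance result, Theorem~\ref{thm:Schur.dom} of Appendix~\ref{app:Schur.dom}, applied with the spaces \eqref{spaces.hat.def} and the operators \eqref{op.hat.def}. We regard $\wh H := \begin{pmatrix} \wh A & \wh B \\ \wh C & \wh D\end{pmatrix} : \cD_S \oplus \cD_2 \to \cD_{-S} \oplus \cD_{-2}$ as a bounded distributional operator. Lemma~\ref{lem:Op.hat.bdd}~(i) supplies the boundedness hypotheses \eqref{op.hat.cond.bdd}, and Lemma~\ref{lem:Op.hat.bdd}~(iii) gives $(\wh D-\la)^{-1} \in \cB(\cD_{-2},\cD_2)$ for $\la$ off $\ov{\{iV(x)-m\}}$, in particular for $\Re\la > -m$. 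We then define $H_0$ as the maximal restriction of $\wh H$ to $L^2(\R,\C^2)$, i.e. $\Dom(H_0) = \{u \in \cD_S \oplus \cD_2 : \wh H u \in L^2\}$.

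First we identify this domain with \eqref{H0.def}. Note that $\cD_2 = \Dom(|V|^{1/2})$ since $m>0$. By Lemma~\ref{lem:Op.hat.bdd}~(ii), $\wh B u_2$ is the extension of $-iu_2'$, and $\wh A u_1$ is the functional $(m+iV)u_1$. Hence the first row of $\wh H u$ lies in $L^2$ exactly when $(m+iV)u_1 - iu_2' \in L^2$ as a distribution. The second row is $-iu_1' + (iV-m)u_2$ directly.

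Next we verify the Schur complement hypothesis. For $\la$ with $\Re\la \in (-m,m)$, the distributional first Schur complement $\wh A - \la - \wh B(\wh D-\la)^{-1}\wh C : \cD_S \to \cD_{-S}$ is, after a short computation with the forms, exactly the operator induced by the form $\frs_1$ in \eqref{s12.form.def}. We then have to show that this operator is an isomorphism $\cV \to \cV^*$. For this we use Lemma~\ref{lem:s12.coer} together with the generalized Lax--Milgram Theorem~\ref{thm:lm.1}: the generalized coercivity of $\frs_1+t$ gives invertibility for large $t$. To remove the shift $t$, we combine this with \eqref{S12.strip}. Invertibility of $S_1(\la)$ in $L^2$, plus the fact that the form is bounded and coercive in the Almog--Helffer sense, upgrades by a standard Fredholm/regularity argument to invertibility $\cV\to\cV^*$; alternatively it can be read off directly from the Lax--Milgram statement. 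Theorem~\ref{thm:Schur.dom} then yields $\la \in \rho(H_0)$, which proves \eqref{rho.strip}. Density of $\Dom(H_0)$ should also be part of the conclusion of Theorem~\ref{thm:Schur.dom}, coming from density of $\cD_S\oplus\cD_2$ and the fact that the resolvent maps $L^2$ densely.

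For the core property, we would use that $\CcR$ is dense in $\cV$ (Lemma~\ref{lem:cV}) and in $\cD_2$. The cleanest route is the criterion, presumably included in the appendix, that a set $\cC \subset \Dom(H_0)$ is a core when it is dense in $\cD_S\oplus\cD_2$ and $H_0$ has a regular point: fix $\la=0 \in \rho(H_0)$ and show that $(H_0)|_{C_c^\infty}$ has dense range. If the range were not dense, some $v\in L^2$ would satisfy $\langle H_0 \phi, v\rangle = 0$ for all $\phi\in C_c^\infty$. Then $v$ solves the adjoint equation distributionally, which is the same type of system with $V\mapsto -V$. Applying the Schur argument to $H_0^*$, or using that the distributional kernel in $\cD_{-S}\oplus\cD_{-2}$ is trivial, we get $v=0$.

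The main obstacle is this last step together with the precise matching of the Schur complement with $\frs_1$. In particular, the invertibility $\cV \to \cV^*$ without the shift $t$, and the regularity needed to place the orthogonal vector $v$ into $\cD_S\oplus\cD_2$ (local $H^1$ estimates from the system with a cutoff), are where the care is required.
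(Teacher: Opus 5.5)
\textbf{Gap: the core property is not established.}

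Your handling of density and of \eqref{rho.strip} is essentially the paper's. You identify $\wh{S_1}(\la)$ with the operator of $\frs_1$, and Lemma~\ref{lem:s12.coer} with Theorem~\ref{thm:lm.0} gives $(\wh{S_1}(\la)+t)^{-1}\in\cB(\cV^*,\cV)$. This, together with \eqref{S12.strip}, feeds Theorem~\ref{thm:Schur.dom}, which also yields density. The paper removes the shift via Lemma~\ref{lem:hat.ext}, not a Fredholm argument. Strictly, \eqref{Schur.asm} only needs some $z_\la$, so $z_\la=-t$ already suffices.

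The gap is in the core argument. The criterion ``dense in $\cD_S\oplus\cD_2$ plus a regular point'' is not in the appendix and is false in general. Density in $\cD_S\oplus\cD_2$ is weaker than density in the graph norm of $H_0$. The dense-range criterion you then switch to is correct, but its decisive step is not proved:
\begin{itemize}
\item A vector $v\perp H_0(C_c^\infty(\R,\C^2))$ is a priori only in $\Dom\big((H_0\restriction C_c^\infty(\R,\C^2))^*\big)$, which contains $\Dom(H_0^*)$. Equality of these two domains is precisely the core property, so ``applying the Schur argument to $H_0^*$'' is circular.
\item Local $H^1$ estimates cannot place $v$ in $\cD_S\oplus\cD_2$. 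Local regularity of $v$ is automatic, since $v$ solves $(-i\partial_x\sigma_1+m\sigma_3-iVI_2)v=0$, a first-order system with continuous coefficients. Membership in $\cV\oplus\Dom(|V|^{\frac12})$, however, is a weighted integrability condition at $\pm\infty$ that local estimates do not reach.
\item Nothing in the paper gives triviality of a ``distributional kernel in $\cD_{-S}\oplus\cD_{-2}$''.
\end{itemize}

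\textbf{How to close it.} The missing idea is to put the cut-off on $v$ instead of proving global regularity. Let $H_0^c$ be \eqref{H0.def} with $V$ replaced by $-V$. Since $-V$ also satisfies \eqref{V.nabla.form}, the part you already proved gives $0\in\rho(H_0^c)$. With $\chi_n(x)=\chi(x/n)$, $\chi_n v$ is a compactly supported $H^1$ function, so $\chi_n v\in\Dom(H_0^c)$. Moreover $H_0^c(\chi_n v)=-i\sigma_1\chi_n' v$ has norm $\ls n^{-1}\|v\|$. Hence $\|\chi_n v\|\le\|(H_0^c)^{-1}\|\,\|H_0^c(\chi_n v)\|\to0$, while $\chi_n v\to v$, so $v=0$.

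The paper uses the primal version of the same device, which is shorter. For $u\in\Dom(H_0)$ one has $\chi_n u\in\Dom(H_0)$ and $H_0(\chi_n u)=\chi_n H_0u-i\sigma_1\chi_n'u\to H_0u$. Since $V$ is bounded on compacts, the compactly supported $\chi_n u$ lie in $H^1(\R,\C^2)$ and can be mollified in the graph norm.

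A Lagrange-identity argument also works. For $u,w$ in the maximal domains of $H$ and of its formal adjoint, the boundary form $-i(u_2\ov{w_1}+u_1\ov{w_2})$ is a product of $L^2$ functions. Its limits at $\pm\infty$ exist by the Lagrange identity and must therefore vanish, so the minimal and maximal realizations coincide.
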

\begin{proof}
The intention is to apply Theorem~\ref{thm:Schur.dom}. To this end, one first verifies that, for every $\la \notin \ov{\{ i V(x) -m \,: \, x \in \R\}}$, $\wh{S_1}(\la)$ defined as in \eqref{eq:def.hat.S} satisfies (recall \eqref{s12.form.def})
\begin{equation}
(\wh{S_1}(\la) \phi, \psi)_{\cV^* \times \cV} = \frs_1(\phi, \psi), \qquad \phi, \psi \in \cV.
\end{equation}
We proved generalized coercivity for $\frs_1+t$, with $t>0$ sufficiently large, see Lemma~\ref{lem:s12.coer}, thus it follows that $(\wh{S_1}(\la)+t)^{-1} \in \cB(\cV^*,\cV)$, see Theorem~\ref{thm:lm.0}. By Lemma~\ref{lem:hat.ext} and \eqref{S12.strip}, we have that $\wh{S_1}(\la)^{-1} \in \cB(\cV^*,\cV) = \cB(\cD_{-S},\cD_S)$ for all $\la$ with $\Re \la \in (-m,m)$. So Theorem~\ref{thm:Schur.dom} is indeed applicable and it yields that $H_0$ is densely defined and \eqref{rho.strip} holds.

The claim on the density of $C_{c}^{\infty}(\R,\C^2)$ in $\Dom(H_0)$ follows by standard arguments which we only sketch. Let $u \in \Dom(H_0)$. Consider the cut-off
\begin{equation}
v_n := u \chi_n, \quad n \in \N,
\end{equation}
where $\chi_n(x) = \chi(x/n)$, $x \in \R$, with $\chi \in C_{c}^\infty(\R, [0,1])$ such that $\supp \chi \subset (-2,2)$ and $\chi = 1$ on $(-1,1)$. It can be verified in a straightforward way that $v_n \in \Dom(H_0)$ and, by the dominated convergence theorem, that $H_0 v_n \to H_0u$ and $v_n \to u$ in $L^2(\R, \C^2)$ as $n \to \infty$. By a subsequent mollification of $v_n$ for a fixed $n$, one shows the claimed density of $C_{c}^{\infty}(\R,\C^2)$.
\end{proof}

\begin{proposition}\label{prop:H.separation}
Let the assumptions of Theorem~\ref{thm:H.def} be satisfied and let $H_0$ be as in \eqref{H0.def}. Then
\begin{equation}\label{H0.gr.norm.sep}
\|H_0 u \|^2 + \| u \|^2 \approx \| u' \|^2 + \| V I_2 u \|^2 + \| u \|^2,
\quad  u \in \Dom(H_0).
\end{equation}
Consequently, $\Dom(H_0) = H^1(\R,  \C^2) \cap \Dom(V I_2)$ and hence $H_0 = H$ with $H$ defined in \eqref{H.def}.
\end{proposition}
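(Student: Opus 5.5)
We first prove \eqref{H0.gr.norm.sep} on the core $C_c^\infty(\R,\C^2)$ (Proposition~\ref{prop:H0.def}) and then pass to $\Dom(H_0)$ by closure. For $u=(u_1,u_2)\in C_c^\infty(\R,\C^2)$ we write $H_0u = \cD u + iVu$ with $\cD := -i\partial_x\sigma_1 + m\sigma_3$ and expand
\begin{equation*}
\|H_0u\|^2 = \|\cD u\|^2 + \|Vu\|^2 + 2\Re\langle \cD u, iVu\rangle .
\end{equation*}
Since $\cD$ is symmetric and $\sigma_1^2=\sigma_3^2=I_2$, $\sigma_1\sigma_3+\sigma_3\sigma_1=0$, we get $\|\cD u\|^2=\|u'\|^2+m^2\|u\|^2$. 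The mass term contributes nothing to the cross term, because $\Re\langle m\sigma_3u, iVu\rangle = \Re\big(-i\int V\,\sigma_3 u\cdot\ov u\big)=0$, as $\sigma_3 u\cdot \ov u$ is real. For the derivative part we integrate by parts:
\begin{equation*}
2\Re\langle -i\sigma_1u', iVu\rangle = -2\Re\int V\,(u_2'\ov{u_1}+u_1'\ov{u_2})\,\dd x = \int V'\,2\Re(u_1\ov{u_2})\,\dd x ,
\end{equation*}
using $(u_1\ov{u_2})' + (u_2\ov{u_1})' = 2\Re(u_1\ov{u_2})'$. Hence
\begin{equation*}
\|H_0u\|^2 = \|u'\|^2+\|Vu\|^2+m^2\|u\|^2 + \int V'\,2\Re(u_1\ov{u_2})\,\dd x .
\end{equation*}

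The main point is to control the cross term, and this is exactly where \eqref{eq:V.separ.cond} is needed. Pointwise $|2\Re(u_1\ov{u_2})|\le |u_1|^2+|u_2|^2 = |u|^2$, so
\begin{equation*}
\Big|\int V' \,2\Re(u_1\ov{u_2})\Big| \le \eps\|Vu\|^2 + M_\eps\|u\|^2 .
\end{equation*}
Since $\eps<1$, this gives
\begin{equation*}
\|H_0u\|^2+(1+M_\eps)\|u\|^2 \ge \|u'\|^2+(1-\eps)\|Vu\|^2+\|u\|^2 ,
\end{equation*}
which is the lower bound $\gtrsim$. The same identity bounds the cross term above by $\|Vu\|^2+M_\eps\|u\|^2$ and yields the upper bound $\lesssim$; alternatively the upper bound is just the triangle inequality. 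The same computation works for $H^*$, whose cross term changes sign.

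To extend to all of $\Dom(H_0)$, we note that $C_c^\infty$ is a core of $H_0$. Given $u\in\Dom(H_0)$, take $v_n\in C_c^\infty$ with $v_n\to u$ and $H_0v_n\to H_0u$. By the estimate, $(v_n)$ is Cauchy in the norm of $H^1(\R,\C^2)\cap\Dom(VI_2)$, whose graph norm is complete. Its limit coincides with $u$, so $u$ lies in that space and \eqref{H0.gr.norm.sep} holds by continuity. This gives $\Dom(H_0)\subset H^1(\R,\C^2)\cap\Dom(VI_2)$. For the reverse inclusion, if $u\in H^1\cap\Dom(VI_2)$, then $u_1\in\cV$ and $u_2\in\Dom(|V|^{1/2})$, since $(m^2+V^2)^{1/2}\le m+|V|$ and $|V|\le 1+V^2$. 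The two expressions in \eqref{H0.def} are then in $L^2$, so $u\in\Dom(H_0)$. As both operators act by the same differential expression, $H_0=H$.
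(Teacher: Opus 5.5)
Your proof is correct and follows the paper's argument. You use the same identity $\|H_0\psi\|^2=\|\psi'\|^2+m^2\|\psi\|^2+\|VI_2\psi\|^2+2\Re\langle V'\psi_1,\psi_2\rangle$ on $C_c^\infty(\R,\C^2)$, absorb the cross term with \eqref{eq:V.separ.cond} using $\eps<1$, and extend to $\Dom(H_0)$ through the core property. Two small remarks: as the paper does, you should note that \eqref{eq:V.separ.cond} implies \eqref{V.nabla.form} (Remark~\ref{rmk:3.2.wkr.th.2.2}), so that Proposition~\ref{prop:H0.def} applies; and your pointwise bound $|2\Re(u_1\ov{u_2})|\le|u|^2$ is just a slightly shorter route to the paper's Cauchy--Schwarz/Young step.
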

\begin{proof}
Recall first that the assumption \eqref{eq:V.separ.cond} implies \eqref{V.nabla.form}, see Remark~\ref{rmk:3.2.wkr.th.2.2}, thus the statements of Proposition~\ref{prop:H0.def} hold. Moreover, it is easy to check that $H \subset H_0$.	
	
It suffices to show \eqref{H0.gr.norm.sep} for al $ \psi \in C_c^{\infty}(\R,\C^2)$.  Since the latter is a core of $H_0$, the inequality \eqref{H0.gr.norm.sep} extends to all $u \in \Dom(H_0)$. Thus the graph norm of $H_0$ is equivalent to $(\| \Ntp \cdot \|^2 + \| V I_2 \cdot \|^2 + \| \cdot \|^2)^\frac12$ which yields $\Dom(H_0) = \Dom(H)$.

It is immediate that 
\begin{equation}
\|H_0 \psi \|^2 + \| \psi \|^2 \ls \| \psi' \|^2 + \| V I_2 \psi \|^2 + \| \psi \|^2, \quad \psi \in C_c^{\infty}(\R,\C^2).
\end{equation}
To show the opposite inequality, we get for all $\psi\in C_c^{\infty}(\R,\C^2)$ that
\begin{equation}\label{H0.sep.1}
\begin{aligned}
\Vert H_0 \psi \Vert^2 = &\left\Vert
	\begin{pmatrix}
		(m+iV)\psi_{1}-i\psi_{2}'\\[1mm]
		(-m+iV)\psi_{2}-i\psi_{1}'
	\end{pmatrix}
	\right\Vert^2\\
	=& \Vert (m+iV)\psi_{1}\Vert^2 + \Vert \psi_{2}' \Vert^2 -2 \mathrm{Re}\, \langle (m+iV)\psi_{1}, i\psi_{2}'\rangle\\
	&+ \Vert (-m+iV)\psi_{2}\Vert^2  + \Vert \psi_{1}' \Vert^2-2 \mathrm{Re}\, \langle (-m+iV)\psi_{2}, i\psi_{1}'\rangle\\
	=& \Vert \psi' \Vert^2 + m^2 \Vert \psi \Vert^2 + \Vert V I_2 \psi\Vert^2  +2\mathrm{Re}\, \langle V'\psi_{1}, \psi_{2}\rangle,
\end{aligned}
\end{equation}
where the last equality follows by partial integration
\begin{equation}\label{H0.sep.2}
\begin{aligned}
	\Re\, \langle (m+iV)\psi_{1}, i\psi_{2}'\rangle =&-\mathrm{Re}\, \langle iV'\psi_{1}, i\psi_{2}\rangle-\mathrm{Re}\, \langle (m+iV)\psi_{1}',i\psi_2 \rangle\\
	=&-\mathrm{Re}\, \langle V'\psi_{1}, \psi_{2}\rangle-\mathrm{Re}\, \langle i\psi_2 , (m+iV)\psi_{1}'\rangle\\
	=& -\mathrm{Re}\, \langle V'\psi_{1}, \psi_{2}\rangle- \mathrm{Re}\, \langle (-m+iV)\psi_{2}, i\psi_{1}'\rangle.
\end{aligned}
\end{equation}
By \eqref{eq:V.separ.cond} and Cauchy-Schwarz inequality, we obtain
\begin{equation}\label{H0.sep.3}
\begin{aligned}
	\Vert H_0 \psi \Vert^2\geq &m^2 \Vert \psi \Vert^2 + \Vert V I_2 \psi\Vert^2 + \Vert \psi' \Vert^2 -2 \langle \vert V' \vert \vert\psi_{1}\vert, \vert\psi_{2}\vert\rangle
	\\
	\geq &m^2 \Vert \psi \Vert^2 + \Vert V I_2 \psi\Vert^2 + \Vert \psi' \Vert^2 -2 \varepsilon \langle \vert  V \vert \vert\psi_{1}\vert, \vert V \vert \vert\psi_{2}\vert\rangle 
	\\ & \quad -2M_\eps \langle  \vert\psi_{1}\vert, \vert\psi_{2}\vert\rangle
	\\
	\geq &m^2 \Vert \psi \Vert^2 + \Vert V I_2 \psi\Vert^2 + \Vert \psi' \Vert^2 -2 \varepsilon \Vert V \psi_{1}\Vert \Vert V \psi_{2}\Vert 
	-2M_\eps \Vert \psi_{1}\Vert \Vert\psi_{2}\Vert
	\\
	\geq &(m^2-M_\eps) \Vert \psi \Vert^2 + (1-\varepsilon)\Vert V I_2 \psi\Vert^2 + \Vert \psi' \Vert^2,
\end{aligned}
\end{equation}
from which the sought inequality follows (recall that $\eps \in (0,1)$ by assumption).
\end{proof}

\begin{proof}[Proof of Theorem~\ref{thm:H.def}]
We have already shown that $H$ satisfies \eqref{H.gr.norm.sep}, $C_c^\infty(\R, \C^2)$ is a core of $H$ and \eqref{H.rho.strip} holds, see Propositions~\ref{prop:H.separation} and \ref{prop:H0.def}. 

Let
\begin{equation}
 H^c := -i\Ntp \sigma_1 + m \sigma_3 - i V I_2, \quad \Dom(H^c) := \Dom(H),
\end{equation}
i.e.~$H^c$ coincides with the operator defined in \eqref{H*.def}. Our goal is to show that $H^c$ is indeed the adjoint of $H$. To this end, notice that Propositions~\ref{prop:H.separation} and \ref{prop:H0.def}, applied with $-i V$ instead of $i V$, yield in particular that $H^c$ satisfies \eqref{H.gr.norm.sep}, $0 \in \rho(H^c)$ and $C_c^\infty(\R, \C^2)$ is a core of $H^c$. It is straightforward to verify that
\begin{equation}
	H^c \restriction C_c^\infty(\R, \C^2) \subset H^* 
\end{equation}
thus, by taking the closures, $H^c \subset H^*$. Since $0 \in \rho(H^c) \cap \rho(H^*)$, it follows by a standard argument that $H^c = H^*$ as claimed.

In summary, the claims \ref{thm:H.def.i)}--\ref{thm:H.def.iii)} are proved.

To show \ref{thm:H.def.iv)}, notice first that the compactness of the resolvent follows from \eqref{H.gr.norm.sep} and Rellich's criterion, see e.g.~\cite[Thm.~XIII.65]{Reed4}. Finally, let $V$ satisfy \eqref{V.x.gamma}. Consider 
\begin{equation}
T_\gamma := - \partial_x^2 + |x|^{2 \gamma}, \quad \Dom(T_\gamma) := H^2(\R) \cap \Dom(|x|^{2\gamma}),
\end{equation}
which is self-adjoint in $L^2(\R)$ and whose eigenvalues $\{\mu_k\} \subset (0,\infty)$ satisfy (with some $C_\gamma>0$)
\begin{equation}
\mu_k = C_\gamma k^{\frac{2 \gamma}{\gamma+1}}(1+o(1)), \quad k \to \infty;
\end{equation} 
see~\cite{Titchmarsh-1954-5}, \cite[Sec.~7.7]{Titchmarsh-1962-book1} or \cite[Sec.~6]{Mityagin-2019-139}. From the second representation theorem, see \cite[Thm.~VI.2.23]{Kato-1966}, and \eqref{H.gr.norm.sep}, it follows that 

\begin{equation}
\cT_\gamma
H^{-1} \in \cB(L^2(\R,\C^2)), 
\qquad 
\cT_\gamma: = 
\begin{pmatrix}
T_\gamma^\frac12 & 0 \\
0 & T_\gamma^\frac12
\end{pmatrix}.
\end{equation}
Hence by writing $H^{-1}$ as
\begin{equation}
H^{-1} = \cT_\gamma^{-1} (\cT_\gamma H^{-1})
\end{equation}
we infer that the singular values $s_k(H^{-1})$ of $H^{-1}$ satisfy (see e.g.~\cite[Chap.II,\S2]{Gohberg-1969})
\begin{equation}
s_k(H^{-1}) \ls  s_k(\cT_\gamma^{-1}) \ls  \mu_k^{-\frac 12} \ls k^{-\frac{\gamma}{\gamma+1}}, \quad k \to \infty.
\end{equation}
Thus the claim \ref{thm:H.def.iv)} is proved.

Finally, the claim \ref{thm:H.def.v)} can be obtained from \cite{Sarihan-2021-610}, similarly as in \cite[Sec.~6]{ArSi-2025}.
\end{proof}

\section{Imaginary Airy-Dirac operator -- proofs}
\label{sec:Airy.proofs}

\subsection{Spectrum and basic resolvent estimates}
\label{ssec:Airy.basic}

\begin{proof}[Proof of Proposition~\ref{prop:Airy.basic} ]
\begin{enumerate}[\upshape (i), wide]
\item This is a special case of \eqref{H.gr.norm.sep}. The explicit constants can be obtained easily by following the steps in \eqref{H0.sep.1} -- \eqref{H0.sep.3}. In particular, for any $\psi \in C_{c}^{\infty}(\R,\C^2)$, we verify that
\begin{equation}
\begin{aligned}
\Vert A_{m} \psi \Vert^2 = \Vert \psi' \Vert^2+ m^2 \Vert \psi \Vert^2 + \Vert x I_2 \psi\Vert^2 + \langle \sigma_{1} \psi, \psi \rangle  .
\end{aligned}
\end{equation}
Notice that $\langle \sigma_{1} \psi, \psi \rangle =  \langle \psi_{1}, \psi_{2} \rangle+ \langle \psi_{2}, \psi_{1} \rangle$ and thus, we obtain
\begin{equation}\label{Ineq Am}
	\begin{aligned}
		\Vert A_{m} \psi \Vert^2+ \Vert \psi \Vert^2= &\Vert \psi' \Vert^2+ m^2 \Vert \psi \Vert^2 + \Vert x I_2\psi\Vert^2  + \Vert \psi_{1} + \psi_{2} \Vert^2\\
		\geq & \Vert \psi' \Vert^2+ m^2 \Vert \psi \Vert^2 + \Vert x I_2 \psi\Vert^2.
	\end{aligned}
\end{equation}
\item The standard argument for the imaginary Airy operator $-\partial_x^2 + ix$ can be used. In detail, for any $t\in \R$,
\begin{equation}\label{Translation}
	T_t A_{m}=(A_{m}-it)T_{t}
\end{equation}
where $T_{t}$ is the unitary translation operator on $L^2(\R,\C^2)$ defined by $(T_{t}u)(x)=u(x-t)$. 

Recall that the resolvent of $A_m$ is compact, see Theorem~\ref{thm:H.def}, hence the spectrum of $A_m$ is discrete. Assume that $\sigma(A_{m}) \neq \emptyset$ and take $\la \in \sigma(A_{m})$. Then there exists $\psi\in \Dom(A_{m})$ such that $A_{m}\psi =\lambda \psi$. Since $T_{t} \psi \in \Dom(A_m)$, we obtain from \eqref{Translation} that
\begin{equation}
	(A_{m}-\la -i t) T_{t} \psi = T_{t} (A_{m}- \la)\psi=0
\end{equation} 
for all $t \in \R$. Thus $\la +i\R \subset \sigma(A_{m})$, which contradicts the discreteness of $\sigma(A_{m})$. Therefore, $\sigma(A_{m})$ is empty.

To show \eqref{res.Airy.unit.eq}, notice that for all $\la \in \C$ we obtain from \eqref{Translation} that
\begin{equation}
T_{-\Im \lambda}(A_{m}-\lambda) T_{-\Im \lambda}^{-1} = A_{m}-\Re \lambda. 
\end{equation}
Hence $A_{m}-\lambda$ and $A_{m}-\Re \lambda$ are unitarily equivalent and the claim follows.
\item Notice that $\Dom(A_{m}A_{m}^{*})=\Dom(\sL)^2$ and
\begin{equation}
A_{m} A_{m}^{*}=\begin{pmatrix}
	\sL+m^2 & -1\\
	-1 & \sL+m^2
\end{pmatrix}
\end{equation}
where $\sL= -\partial_x^2 + x^2$ is the self-adjoint harmonic oscillator in $L^2(\R)$ (see also Appendix \ref{sec:app.ho}).
By conjugating with a unitary matrix 
\begin{equation}\label{U.def}
U=\frac{1}{\sqrt{2}}\begin{pmatrix}
	1 & 1\\
	1 & -1
\end{pmatrix},	
\end{equation}
we obtain
\begin{equation}\label{Conjugate by U}
	U \left(A_{m}A_{m}^{*}\right) U^{-1}=\begin{pmatrix}
		\sL+m^2-1 &0\\
		0 &\sL+m^2+1
	\end{pmatrix}.
\end{equation}
Thus
\begin{equation}
\sigma(A_{m}A_{m}^{*})= \{ 2n+m^2: n\in \N_0 \}\cup \{ 2n+2+m^2:n\in \N_0 \}
\end{equation}
and the claims follow.
\item The claim is a special case of Theorem~\ref{thm:H.def} \ref{thm:H.def.v)} with $\gamma=1$ and so $p_1 =2$. (Since $\sigma(A_m)=\emptyset$ the resolvent bound \eqref{res.est.Schatten} can be extended to $|\la| \leq e$. The claim follows also from \cite{Sarihan-2021-610} or as an application of \cite[Cor.~6.4]{ArSi-2025}.) 
\qedhere
\end{enumerate}
\end{proof}
\subsection{Resolvent of \texorpdfstring{$A_m$}{Am} and the harmonic oscillator}
\label{ssec:Am.res}
Recall that the Fourier transform $\cF: \cS(\R) \to \cS(\R)$ 
\begin{equation}
\cF \phi(y) =\frac{1}{\sqrt{2\pi}}\int_{\R} e^{-i x y} \phi(x)\, \dd x, \quad y \in \R,
\end{equation}
is bijective on the Schwartz space $\cS(\R)$, the inverse reads 
\begin{equation}
\cF^{-1} \phi (x) = \frac{1}{\sqrt{2\pi}}\int_{\R} e^{ ix y} \phi (y)\, \dd y,\quad  x \in \R,	
\end{equation}
and $\cF$ can be uniquely extended to a unitary operator on $L^2(\R)$. We keep the notation $\cF$ for this extension as well as for the Fourier transform on $L^2(\R,\C^2)$, \ie~$\cF u = (\cF u_1,\cF u_2)^t$ for $u \in L^2(\R,\C^2)$.

We introduce the operator (with $m>0$)
\begin{equation}\label{A.td.def}
\wt A_m = \cF U A_m U^{-1} \cF^{-1}, 
\end{equation}
where $U$ is the unitary matrix from \eqref{U.def}. Since both $U$ and $\cF$ are unitary in $L^2(\R,\C^2)$, we have 
\begin{equation}\label{Am.td.Am.norm}
\|(A_m-\la)^{-1}\|=\|(\wt A_m-\la)^{-1}\|, \quad \la \in \R.
\end{equation} 
In the next lemma, we relate the resolvent of $\wt A_m$ to the harmonic oscillator 
\begin{equation}\label{L.HO.def}
	\sL = -\partial_y^2 + y^2, \quad \Dom(\sL) = H^2(\R) \cap \Dom(y^2),
\end{equation}
and more precisely to the conjugated oscillator
\begin{equation}\label{HO.shift.F}
\sL_\la = - (\partial_y + \la)^2 + y^2, \quad \Dom(\sL_\la) = \Dom(\sL), \quad \la \in \R.
\end{equation}
Let $\cE_{\lambda}=\operatorname{span} \left\{h_{n,\lambda} : n\in \N_{0}\right\}$ where $h_{n,\lambda}$ are eigenfunctions of $\sL_\la$, see Appendix~\ref{app:HO.conj} for details.
\begin{lemma}\label{lem:res.form}
Let $m>0$, let $\wt A_{m}$ be as in \eqref{A.td.def} and let $\lambda\in \R$. Then 
\begin{equation}\label{A.hat.res}
	\begin{aligned}
		& ( \wt A_{m}-\lambda)^{-1} 
		\\
		& \quad =
		\begin{pmatrix}
			(\partial_{y}+y + \la)(\sL_\la +m^2-1)^{-1} & m (\sL_\la +m^2+1)^{-1}
			\\[1mm]
			m(\sL_\la +m^2-1)^{-1} & ( \partial_{y}-y +\la )(\sL_\la +m^2+1)^{-1}
		\end{pmatrix},
	\end{aligned}
\end{equation}
where $\sL_\la$ is the conjugated oscillator in \eqref{HO.shift.F}. Moreover, for all $u \in \cE_{\lambda}^2$
\begin{equation}\label{A.hat.res.S}
\begin{aligned}
& ( \wt A_{m}-\lambda)^{-1} u 
\\
& \quad =e^{-\lambda y}
\begin{pmatrix}
(\partial_{y}+y)( \sL +m^2-1)^{-1} & m ( \sL +m^2+1)^{-1}
\\[1mm]
m( \sL +m^2-1)^{-1} & ( \partial_{y}-y )( \sL +m^2+1)^{-1}
\end{pmatrix}
e^{\lambda y} u,
\end{aligned}
\end{equation}
where $\sL$ is the harmonic oscillator in \eqref{L.HO.def}.
\end{lemma}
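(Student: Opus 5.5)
First compute $\wt A_m$ explicitly. Conjugating by $U$ swaps $\sigma_1$ and $\sigma_3$ while $I_2$ is unchanged: $U\sigma_1U^{-1}=\sigma_3$ and $U\sigma_3U^{-1}=\sigma_1$. Hence
\begin{equation*}
U A_m U^{-1} = -i\partial_x\sigma_3 + m\sigma_1 + ix I_2 .
\end{equation*}
Under $\cF$, the operator $-i\partial_x$ becomes multiplication by $y$, and $ix$ becomes $i\cdot i\partial_y=-\partial_y$. Therefore
\begin{equation*}
\wt A_m-\la=\begin{pmatrix} y-\partial_y-\la & m\\ m & -y-\partial_y-\la\end{pmatrix},
\end{equation*}
with domain $\cF U \Dom(A_m)$, which again equals $H^1(\R,\C^2)\cap\Dom(yI_2)$. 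Write $a_\pm:=\partial_y+\la\pm y$, so that $\wt A_m-\la$ has diagonal entries $-a_-$ and $-a_+$ and off-diagonal entries $m$. The algebra uses the identities
\begin{equation*}
a_- a_+ = \sL_\la - 1, \qquad a_+ a_- = \sL_\la + 1,
\end{equation*}
which follow from $(\partial_y+\la)^2-y^2\pm[\partial_y,y]$ with $[\partial_y,y]=1$, together with the sign convention $\sL_\la=-(\partial_y+\la)^2+y^2$. With the opposite sign convention, $-a_\mp a_\pm=\sL_\la\mp1$; I will fix the signs carefully in the final computation.

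Denote the proposed matrix in \eqref{A.hat.res} by $R$. I check $(\wt A_m-\la)R=I$ entrywise. The $(1,1)$ entry is
\begin{equation*}
-a_-a_+(\sL_\la+m^2-1)^{-1}+m^2(\sL_\la+m^2-1)^{-1}=(\sL_\la+m^2-1)(\sL_\la+m^2-1)^{-1}=I,
\end{equation*}
using $-a_-a_+=\sL_\la-1$. The $(2,1)$ entry is $m a_+(\sL_\la+m^2-1)^{-1}-a_+m(\sL_\la+m^2-1)^{-1}=0$. The remaining two entries are handled the same way, using $-a_+a_-=\sL_\la+1$.

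For the functional-analytic side, $\sL_\la$ is similar on $\cE_\la$ to $\sL$ via $\sL_\la=e^{-\la y}\sL e^{\la y}$, since $e^{-\la y}\partial_y e^{\la y}=\partial_y+\la$. Hence $\sL_\la+m^2\mp1$ has eigenvalues $2n+m^2\mp1+1$, all positive because $m>0$ (the smallest is $m^2$). Its inverse is well defined, and I will take it to be bounded from Appendix~\ref{app:HO.conj}. Then $a_\pm(\sL_\la+c)^{-1}$ is bounded, since $\Dom(\sL_\la)=\Dom(\sL)$ and first-order operators map $\Dom(\sL)$ into $L^2$. So $R$ is a bounded right inverse. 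Because $\wt A_m-\la$ is invertible by Proposition~\ref{prop:Airy.basic} ($\sigma(A_m)=\emptyset$), $R$ equals the resolvent. Alternatively, one can check $R(\wt A_m-\la)=I$ on a core.

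Formula \eqref{A.hat.res.S} then follows by conjugation. On $\cE_\la$ we have $(\sL_\la+c)^{-1}=e^{-\la y}(\sL+c)^{-1}e^{\la y}$ and $\partial_y+\la\pm y=e^{-\la y}(\partial_y\pm y)e^{\la y}$. Since $e^{\la y}h_{n,\la}$ are Hermite-type functions in $\Dom(\sL)$, every expression is legitimate on $\cE_\la^2$.

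The main obstacle is the unboundedness of $e^{\pm\la y}$. Justifying that the resolvent of $\sL_\la$ is bounded, and that the similarity holds on the full domain rather than just on $\cE_\la$, requires the appendix facts; this is why the second formula is stated only on $\cE_\la^2$.
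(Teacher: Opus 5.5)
Your proposal is correct and follows the paper's proof: your matrix $R$ is exactly $(\wt A_m^*+\la)\,\mathrm{diag}\bigl((\sL_\la+m^2-1)^{-1},(\sL_\la+m^2+1)^{-1}\bigr)$, so your entrywise check amounts to the paper's factorization $(\wt A_m-\la)(\wt A_m^*+\la)=\mathrm{diag}(\sL_\la+m^2-1,\sL_\la+m^2+1)$, and you obtain the second formula by the same conjugation on $\cE_\la$. One correction: your first displayed identities have the wrong sign (in fact $a_-a_+=1-\sL_\la$ and $a_+a_-=-(\sL_\la+1)$), but the identities you actually use in the computation, $-a_-a_+=\sL_\la-1$ and $-a_+a_-=\sL_\la+1$, are the correct ones.
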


\begin{proof}
It is straightforward to check that
\begin{equation}\label{A* hat}
\wt A_{m} 
= \begin{pmatrix}
	-\partial_{y}+y & m\\
	m & -\partial_{y}-y
\end{pmatrix},
\quad
\wt A_{m}^{*} 
= \begin{pmatrix}
	\partial_{y}+y & m\\
	m & \partial_{y}-y
\end{pmatrix}.
\end{equation}
Considering the operator $\wt T_{m}(\lambda) \coloneqq (\wt A_{m}-\lambda)(\wt A_{m}^{*}+\lambda)$, we can express the inverse of $\wt A_{m}-\lambda$ as
\begin{equation}\label{A.hat.inv}
(\wt A_{m}-\lambda)^{-1} = ( \wt A_{m}^{*}+\lambda )\wt T_{m}(\lambda)^{-1}.
\end{equation}
The inverse of $\wt T_{m}(\lambda)$ can be found by observing that
\begin{equation}
\begin{aligned}	
\wt T_{m}(\lambda) &=  \wt  A_{m} \wt A_{m}^{*} +\lambda (\wt A_{m}- \wt A_{m}^{*})-\lambda^2 I_2\\
&= \begin{pmatrix}
	\sL_\la +m^2-1 & 0\\
	0 & \sL_\la +m^2+1
\end{pmatrix}
\end{aligned}
\end{equation}
and moreover, since $m \neq 0$, that we also have $1-m^2, -1-m^2 \in \rho(\sL_\la)$ (refer to~\eqref{Spec Conj Os}). Therefore to justify \eqref{A.hat.res}, it suffices to compose $\wt A_{m}^{*}+\lambda$ and $\wt T_{m}(\lambda)^{-1}$.\\
From \eqref{Eigen Os}, \eqref{Eig Conj Os} and \eqref{Eig Conj Os HO}, it can be verified that
\begin{equation}
(\mathscr{L}_{\lambda}+m^2 \pm 1)^{-1} \phi= e^{-\lambda y} (\mathscr{L}+m^2 \pm 1)^{-1} e^{\lambda y} \phi, \qquad \phi\in \cE_{\lambda},
\end{equation}
Applying these formulae, we get that
\begin{equation} 
\wt T_{m}(\lambda)^{-1} u= e^{-\lambda y}
\begin{pmatrix}
(\sL +m^2-1)^{-1} &0\\
0 & (\sL +m^2+1)^{-1}
\end{pmatrix}
e^{\lambda y}u, \quad u \in \cE_{\lambda}^2.
\end{equation}
Finally, from the observation that
\begin{equation} (\partial_{y}+\lambda)  \phi = e^{-\lambda y} \partial_{y} e^{\lambda y}\phi, \quad \phi \in \cE_{\lambda},
\end{equation}
we get 
\begin{equation}
 (\wt A_m^* +\la) u =  e^{-\lambda y}\wt A_m^* e^{\lambda y} u, \quad u \in \cE_{\lambda}^2,	
\end{equation}
and \eqref{A.hat.res.S} follows.
\end{proof}

\subsection{Resolvent norm of \texorpdfstring{$A_m$}{Am} as \texorpdfstring{$\la \to \pm \infty$}{lambda -> infinity}  via Schur test}
\label{ssec:res.norm}

Employing \eqref{A.hat.res.S}, we first finding an explicit integral kernel of $(\wt A_{m}-\lambda)^{-1}$. 
\begin{lemma}\label{lem:res.norm.form}
	Let $m>0$ and $\wt A_{m}$ be as in \eqref{A.td.def} and $U(a,z)$ be the parabolic cylinder function in Appendix \ref{ssec:app.par.cyl}. Then, for each $\la \in \R$, the resolvent of $\wt A_{m}$ can be decomposed as 
	\begin{equation}
	(\wt A_{m}-\lambda)^{-1}= \mathcal{R}_{\lambda,1} + \mathcal{R}_{\lambda,2},
	\end{equation}
	where $\mathcal{R}_{\lambda,1}$ and $\mathcal{R}_{\lambda,2}$ are bounded integral operators on $L^2(\R,\C^2)$
	\begin{equation}\label{R.la12.op}
	(\mathcal{R}_{\lambda,1}f)(y) =
	\int_{\R} \mathcal{R}_{\lambda,1}(y,\xi)f(\xi)\, \dd \xi,\quad (\mathcal{R}_{\lambda,2}f)(y)=\int_{\R} \mathcal{R}_{\lambda,2}(y,\xi)f(\xi)\, \dd \xi, 
	\end{equation}
	whose kernels, also denoted by $\mathcal{R}_{\lambda,1}$ and $\mathcal{R}_{\lambda,2}$, read
	\begin{equation} \label{R.la12.def} 
		\begin{aligned}
			&\mathcal{R}_{\lambda,1}(y,\xi) =\frac{m}{\sqrt2} c_m
			e^{\lambda (\xi-y)} \mathds{1}_{\{y\geq \xi\}} \times \\
			&\begin{pmatrix}
				- U(\frac{m^2+1}{2},\sqrt{2}y)U(\frac{m^2-1}{2},-\sqrt{2}\xi) & \frac{m}{\sqrt{2}} U(\frac{m^2+1}{2},\sqrt{2}y) U(\frac{m^2+1}{2},-\sqrt{2}\xi)\\[1mm]
				\frac{\sqrt{2}}{m} U(\frac{m^2-1}{2},\sqrt{2}y)U(\frac{m^2-1}{2},-\sqrt{2}\xi) &-U(\frac{m^2-1}{2},\sqrt{2}y) U(\frac{m^2+1}{2},-\sqrt{2}\xi)
			\end{pmatrix}
			,\\
			&\mathcal{R}_{\lambda,2}(y,\xi) = \frac{m}{\sqrt2} c_m
			e^{\lambda (\xi-y)} \mathds{1}_{\{y\leq \xi \}} \times \\
			& \begin{pmatrix}
				U(\frac{m^2+1}{2},-\sqrt{2}y)U(\frac{m^2-1}{2},\sqrt{2}\xi) & \frac{m}{\sqrt{2}} U(\frac{m^2+1}{2},-\sqrt{2}y) U(\frac{m^2+1}{2},\sqrt{2}\xi)\\[1mm]
				\frac{\sqrt{2}}{m} U(\frac{m^2-1}{2},-\sqrt{2}y)U(\frac{m^2-1}{2},\sqrt{2}\xi) & U(\frac{m^2-1}{2},-\sqrt{2}y) U(\frac{m^2+1}{2},\sqrt{2}\xi)
			\end{pmatrix},
	\end{aligned} 
\end{equation}
where 
\begin{equation}\label{cm.def}
	c_m:=\frac{ \Gamma(\frac{m^2}{2}+1)}{m\sqrt{\pi}}.
\end{equation}
\end{lemma}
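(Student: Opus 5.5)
The plan is to start from the conjugated representation \eqref{A.hat.res.S} of Lemma~\ref{lem:res.form}. It reduces the problem to writing the Green's functions of the shifted harmonic oscillators $\sL + m^2 \mp 1$ as integral kernels, applying the first-order operators $\partial_y \pm y$, and then undoing the conjugation by $e^{\pm\lambda y}$.

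\emph{Step 1: Green's functions.} Substitute $z = \sqrt2 y$. The equation $(\sL + m^2 \mp 1) w = 0$ becomes Weber's equation $w'' - (\tfrac{z^2}{4} + a) w = 0$ with $a = \tfrac{m^2 \mp 1}{2}$. Its solutions decaying at $\pm\infty$ are $U(a, \pm\sqrt2 y)$. Both $a > -\tfrac12$ since $m > 0$, so the two solutions are linearly independent. The Wronskian $\mathcal W\{U(a,z), U(a,-z)\} = \sqrt{2\pi}/\Gamma(a + \tfrac12)$ from Appendix~\ref{ssec:app.par.cyl} then gives
\[
(\sL + m^2 \mp 1)^{-1}(y,\xi) = \frac{\Gamma(a + \frac12)}{\sqrt{2\pi}\,\sqrt2}\, U(a, \sqrt2\,\max\{y,\xi\})\, U(a, -\sqrt2\,\min\{y,\xi\}),
\]
up to normalisation constants that I will fix carefully. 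Here $\Gamma(a + \tfrac12)$ equals $\Gamma(\tfrac{m^2}{2})$ for the minus sign and $\Gamma(\tfrac{m^2}{2} + 1)$ for the plus sign. Using $\Gamma(\tfrac{m^2}{2}+1) = \tfrac{m^2}{2}\Gamma(\tfrac{m^2}{2})$, both kernels can be written with the common constant $c_m$ from \eqref{cm.def}; this is where the extra factors $m$, $\tfrac{m}{\sqrt2}$ and $\tfrac{\sqrt2}{m}$ in \eqref{R.la12.def} come from.

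\emph{Step 2: first-order operators.} Apply $\partial_y + y$ and $\partial_y - y$ to these kernels in the variable $y$, separately on $\{y \ge \xi\}$ and $\{y \le \xi\}$. The kernel is continuous across the diagonal, so no delta term arises. On each region I will use the recurrences $U'(a,z) + \tfrac z2 U(a,z) + (a + \tfrac12) U(a+1,z) = 0$ and $U'(a,z) - \tfrac z2 U(a,z) + U(a-1,z) = 0$. These convert $(\partial_y + y)\, U(\tfrac{m^2-1}{2}, \pm\sqrt2 y)$ into multiples of $U(\tfrac{m^2+1}{2}, \pm\sqrt2 y)$, and $(\partial_y - y)\, U(\tfrac{m^2+1}{2}, \pm\sqrt2 y)$ into multiples of $U(\tfrac{m^2-1}{2}, \pm\sqrt2 y)$. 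The signs depend on the argument $\pm\sqrt2 y$, and this produces the minus signs on the diagonal of $\mathcal R_{\lambda,1}$ but not of $\mathcal R_{\lambda,2}$. The off-diagonal entries $m(\sL + m^2 \pm 1)^{-1}$ need no differentiation.

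\emph{Step 3: conjugation and extension.} Conjugating by $e^{\pm\lambda y}$ multiplies each kernel by $e^{\lambda(\xi - y)}$. This gives \eqref{R.la12.def} acting on $u \in \cE_\lambda^2$. For boundedness on $L^2(\R,\C^2)$, I would not bound the kernels directly, since $e^{\lambda(\xi-y)}$ grows on one side. Instead, both sides of the identity extend from $\cE_\lambda^2$: the left-hand side $(\wt A_m - \lambda)^{-1}$ is bounded, and $\cE_\lambda$ is dense (Appendix~\ref{app:HO.conj}). It then suffices to show that the integral operators with kernels \eqref{R.la12.def} are bounded. This follows from the asymptotics $U(a,z) \sim z^{-a-1/2} e^{-z^2/4}$ as $z \to +\infty$ together with polynomial-times-$e^{z^2/4}$ growth as $z \to -\infty$. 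On $\{y \ge \xi\}$ the Gaussian factors combine to $e^{-(y^2 - \xi^2)/2}$, which dominates $e^{\lambda(\xi - y)}$. A Schur test therefore works, by splitting into $|y|$ and $|\xi|$ large and small.

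The main obstacle is the bookkeeping in Step 2: getting the signs and constants from the recurrence relations consistent with $c_m$ and the stated prefactors. The boundedness argument near the diagonal for large $|y|$, where only weak decay is available, is the secondary difficulty, and I would handle it by the Schur test with constant weights.
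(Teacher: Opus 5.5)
Your proposal is correct and follows essentially the same route as the paper. It starts from the conjugated representation in Lemma~\ref{lem:res.form} and inserts the parabolic-cylinder Green's function of the harmonic oscillator. It then uses the recurrences for $(\partial_y \pm y)U$ together with $\frac{m^2}{2}\Gamma(\frac{m^2}{2})=\Gamma(\frac{m^2}{2}+1)$ to obtain the kernels with the common constant $c_m$, and it extends the identity from the dense set $\cE_\lambda^2$ once $\mathcal{R}_{\lambda,1}$ and $\mathcal{R}_{\lambda,2}$ are shown to be bounded by a Schur test with constant weights. The paper carries out that Schur test separately in Lemma~\ref{lem:R1.est}, splitting at $\pm\alpha$ with $\lambda+\alpha>0$.
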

\begin{proof}
	By using the recurrence formulae \eqref{Recurrence}, we have
\begin{equation} 
\begin{aligned}
			&(\partial_{y}+y) U\left(-\frac{\eta}{2}, \sqrt{2} y\right)=-\frac{\sqrt{2}(1-\eta)}{2}U\left(-\frac{\eta}{2}+1, \sqrt{2} y\right),\\
			&(\partial_{y}+y) U\left(-\frac{\eta}{2}, -\sqrt{2} y\right)
			=\frac{\sqrt{2}(1-\eta)}{2}U\left(-\frac{\eta}{2}+1, -\sqrt{2} y\right),\\
			&(\partial_{y}-y) U\left(-\frac{\eta}{2}, \sqrt{2} y\right)=-\sqrt{2}U\left(-\frac{\eta}{2}-1, \sqrt{2} y\right),\\
			&(\partial_{y}-y) U\left(-\frac{\eta}{2}, -\sqrt{2} y\right)=\sqrt{2}U\left(-\frac{\eta}{2}-1, -\sqrt{2} y\right), \quad \eta, y \in \R.
\end{aligned} 
\end{equation}
Applying these formulae together with \eqref{A.hat.res.S}, \eqref{Prop HO Res} and the standard recurrence relation $\frac{m^2}{2} \Gamma(\frac{m^2}{2}) = \Gamma(\frac{m^2}{2}+1)$, we obtain
\begin{equation} 
(\wt A_{m}-\lambda)^{-1}u= \left(\mathcal{R}_{\lambda,1} + \mathcal{R}_{\lambda,2}\right)u, \qquad u \in \cE_{\lambda}^2.
\end{equation}
We show in Lemma \ref{lem:R1.est} below that $\mathcal{R}_{\lambda,1}$ and $\mathcal{R}_{\lambda,2}$ are bounded operators on $L^2(\R,\C^2)$.
Then, by the density of $\cE_{\lambda}^2$ in $L^2(\R,\C^2)$ (cf.~Appendix \ref{app:HO.conj}), the equality extends to $L^2(\R,\C^2)$.
\end{proof}
In the following, given a matrix $A\in \C^{2\times 2}$, we denote 
\begin{equation} 
|A|_{\C^{2\times 2}} = \sup_{ v \neq 0} \frac{|A v|_{\C^2}}{| v |_{\C^2}}.
\end{equation}
Notice that if $\det A=0$, then 
\begin{equation}\label{Operator norm}
\left\vert A \right\vert_{\C^{2 \times 2}}= \sqrt{\sum_{1\leq i,j \leq 2} \vert A_{ij} \vert^2}.
\end{equation}
Indeed, since $\det (A^* A)=0$, it follows that $0$ and $\Tr (A^* A)$ are two non-negative eigenvalues of $A^* A$, so we have
\begin{equation}
|A|_{\mathbb{C}^{2\times 2}}^2 = \lambda_{\max} (A^*A)= \Tr \left( A^* A\right)= \sum_{1\leq i,j \leq 2} \vert A_{ij} \vert^2,
\end{equation}
where $\lambda_{\max}(A^* A)$ represents the largest eigenvalue of $A^{*}A$.

From the formulas of kernels $\mathcal{R}_{\lambda,1}$ and $\mathcal{R}_{\lambda,1}$ (refer to~\eqref{R.la12.def}), it is easy to see that their determinants are zero and therefore
\begin{equation}\label{Matrix norm}
\begin{aligned}
|\mathcal{R}_{\lambda,1}(y,\xi)|_{\C^{2\times 2}} &= c_m e^{\lambda (\xi-y)}u_{m}(y) u_m(-\xi)\mathds{1}_{\{y\geq \xi\}},\\
|\mathcal{R}_{\lambda,2}(y,\xi)|_{\C^{2\times 2}} &= c_m e^{\lambda (\xi-y)}u_m(-y) u_m(\xi)\mathds{1}_{\{y\leq \xi\}},
\end{aligned}
\end{equation}
where $c_m$ is as in \eqref{cm.def} and 
\begin{equation}\label{um.def}
u_m(y)\coloneqq  \sqrt{\frac{m^2}{2}\left|U\left(\frac{m^2+1}{2},\sqrt{2}y\right)\right|^2+ \left|U\left(\frac{m^2-1}{2},\sqrt{2}y\right)\right|^2}, \quad y \in \R.
\end{equation}
It is also worth noticing that for all $y,\xi \in \R$
\begin{equation}\label{anti sym}
	\begin{aligned}
		\vert\mathcal{R}_{\lambda,1}(y,\xi)\vert_{\C^{2\times 2}}=&\vert \mathcal{R}_{\lambda,1}(-\xi,-y) \vert_{\C^{2\times 2}},
		\\
		\vert\mathcal{R}_{\lambda,2}(y,\xi)\vert_{\C^{2\times 2}}=&\vert \mathcal{R}_{\lambda,2}(-\xi,-y) \vert_{\C^{2\times 2}}.
	\end{aligned}
\end{equation}
The asymptotic formulae for $U$ in \eqref{Asymp U +} and \eqref{Asymp U -} yield
\begin{equation}\label{um.asym}
	\begin{aligned}
		u_m(y) =& e^{-\frac{y^2}{2}} (\sqrt{2}y)^{-\frac{m^2}{2}}\left(1+\mathcal{O}\left(\frac{1}{|y|^2}\right)\right),\qquad &&y \to +\infty,\\
		u_m(y) =& \frac{1}{c_m} e^{\frac{y^2}{2}} (-\sqrt{2}y)^{\frac{m^2}{2}}\left(1+\mathcal{O}\left(\frac{1}{|y|^2}\right)\right),\qquad &&y \to -\infty.
	\end{aligned}
\end{equation}
Given a matrix-valued kernel $K:\R^2 \to \C^{2\times 2}$, consider the associated integral operator $\mathcal{R}_{K}$ on $L^2(\R,\C^2)$ defined by
\begin{equation}\label{R.K.def}
\mathcal{R}_{K} f(x) = \int_{\R} K(x,y) f(y)\, \dd y, \qquad f \in L^2(\R,\C^2),\ x\in \R.
\end{equation}
Due to the inequality
\begin{equation}\label{R.K.norm}
\left\vert \int_{\R} K(x,y) f(y)\, \dd y \right\vert_{\C^2} \leq \int_{\R} \vert K(x,y) \vert_{\C^{2\times 2}} \vert f(y) \vert_{\C^2}\, \dd y,
\end{equation}
for all $f \in L^2(\R,\C^2)$ and $x \in \R$, 
we deduce that the operator $\mathcal{R}_{K}$ is bounded on $L^2(\R,\C^2)$ and
\begin{equation} 
	\Vert \mathcal{R}_{K} \Vert \leq  \Vert \mathcal{R}_{|K|} \Vert
\end{equation}
if the operator $\mathcal{R}_{|K|}$, the integral operator with the scalar kernel $\vert K(x,y) \vert_{\C^{2\times 2}}$, is bounded on $L^2(\R)$. Thus an upper bound from the Schur test for the scalar kernel also produces an upper bound for the matrix-valued kernel.
\begin{lemma}\label{lem:R1.est}
Let $m>0$ and let $\mathcal{R}_{\lambda,1}$ and $\mathcal{R}_{\lambda,2}$ be integral operators defined as in Lemma \ref{lem:res.norm.form}. Then, for each $\lambda\in \R$, $\mathcal{R}_{\lambda,1}$ and $\mathcal{R}_{\lambda,2}$ are bounded on $L^2(\R,\C^2)$. Furthermore, we have 
\begin{equation}
	\begin{aligned}
		\|\mathcal{R}_{\lambda,1}\|  =& \BigO(|\la|^{-1}), &&\lambda \to+\infty,\\
		\|\mathcal{R}_{\lambda,2}\| =& \BigO(|\la|^{-1}), &&\lambda \to-\infty.
	\end{aligned} 
\end{equation}
\end{lemma}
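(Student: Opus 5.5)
By \eqref{R.K.norm} it suffices to bound the scalar integral operators with kernels $K_1(y,\xi)=c_m e^{\la(\xi-y)}u_m(y)u_m(-\xi)\mathds 1_{\{y\ge\xi\}}$ and $K_2(y,\xi)=c_m e^{\la(\xi-y)}u_m(-y)u_m(\xi)\mathds 1_{\{y\le\xi\}}$, see \eqref{Matrix norm}. For these I will use the plain Schur test, i.e.\ the bound
\begin{equation*}
\|\mathcal R_{K}\|^2\le \sup_y\int_\R K(y,\xi)\,\dd\xi \cdot \sup_\xi\int_\R K(y,\xi)\,\dd y .
\end{equation*}
The symmetry \eqref{anti sym} shows that $\int K(y,\xi)\,\dd\xi$ and $\int K(y',\xi')\,\dd y'$ are interchanged under $(y,\xi)\mapsto(-\xi,-y)$. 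Hence only one of the two suprema has to be controlled for each kernel. Moreover, $K_2$ at $\la$ is obtained from $K_1$ at $-\la$ by the reflection $y\mapsto -y$, $\xi\mapsto-\xi$, which is unitary. So I will treat only $\mathcal R_{\la,1}$, and the case $\la\to-\infty$ for $\mathcal R_{\la,2}$ follows.

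Boundedness for every fixed $\la\in\R$ comes from the asymptotics \eqref{um.asym} together with continuity of $u_m$. On $\{y\ge\xi\}$ the product $u_m(y)u_m(-\xi)$ behaves roughly like $e^{-(y^2-\xi^2)/2}$ times powers, and more precisely:
\begin{itemize}
\item for $\xi\le y$ both of order $+\infty$, it is $\approx e^{-(y^2-\xi^2)/2}(y/\xi)^{-m^2/2}$;
\item when both are near $-\infty$, the roles reverse and the product is $\approx e^{-(\xi^2-y^2)/2}$ with $|\xi|\ge|y|$;
\item when $\xi<0<y$, both factors decay.
\end{itemize}
In every case $u_m(y)u_m(-\xi)\lesssim \langle\cdot\rangle^{\text{powers}}\, e^{-|y-\xi|\,\min(|y|,|\xi|)}$-type decay in $y-\xi$, which dominates $e^{\la(\xi-y)}$ away from a compact region. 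This makes the row integrals uniformly bounded for fixed $\la$. For $\la>0$ the factor $e^{-\la(y-\xi)}\le1$ on the support, so the problem reduces to the same bound.

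For the decay $\|\mathcal R_{\la,1}\|=\BigO(\la^{-1})$ as $\la\to+\infty$, I will write $y=\xi+t$ with $t\ge0$ and show
\begin{equation*}
\sup_\xi \int_0^\infty e^{-\la t}\,c_m u_m(\xi+t)u_m(-\xi)\,\dd t\lesssim \la^{-1}.
\end{equation*}
Since $\int_0^\infty e^{-\la t}\,\dd t=\la^{-1}$, this amounts to the uniform bound
\begin{equation*}
\sup_{\xi\in\R,\ t\ge0} c_m u_m(\xi+t)u_m(-\xi)<\infty .
\end{equation*}
To prove it, I split into the cases $\xi\to+\infty$, $\xi\to-\infty$ and $\xi$ bounded, and use \eqref{um.asym} in each.

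For $\xi\ge R$ the product is at most $C e^{-((\xi+t)^2-\xi^2)/2}\le C$. For $\xi\le -R$ and $\xi+t\le -R$ it is about $e^{-(\xi^2-(\xi+t)^2)/2}\big(|\xi+t|/|\xi|\big)^{m^2/2}\le C$, because $|\xi+t|\le|\xi|$. Mixed and bounded regions are handled by continuity and decay of $u_m$. The key point is that the growth $e^{y^2/2}$ of $u_m$ at $-\infty$ exactly cancels against $e^{-\xi^2/2}$ along $y\ge\xi$. The main obstacle is checking that the $(1+\BigO(|y|^{-2}))$ error factors and the transition regions near the origin do not spoil the uniform bound; $c_m$ is included precisely to match the normalization in \eqref{um.asym}. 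The column integrals then follow from \eqref{anti sym}, and the Schur test gives $\|\mathcal R_{\la,1}\|\lesssim\la^{-1}$.
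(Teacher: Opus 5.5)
Your argument is correct. It shares the paper's skeleton: the trivial-weight Schur test on the scalar majorant \eqref{Matrix norm}, the symmetry \eqref{anti sym} to pass from row to column integrals, and the reflection identity \eqref{Equalities} to reduce $\mathcal{R}_{\lambda,2}$ to $\mathcal{R}_{-\lambda,1}$. Where you differ is in how the row integrals are estimated.

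The paper splits $y$ into $(-\infty,-\alpha)$, $[-\alpha,\alpha]$ and $(\alpha,\infty)$. In the outer regions it rewrites the integrand as $|y/\xi|^{m^2/2}[e^{\lambda\xi-\xi^2/2}]'(\lambda-\xi)^{-1}$, or the analogue with $(\lambda+\xi)^{-1}$, to obtain the bound $(\lambda+\alpha)^{-1}$. With $\alpha>-\lambda$ this one computation gives boundedness for every fixed $\lambda$. For $\lambda>0$ the cases are then re-run with $\alpha$ independent of $\lambda$ to get $\lambda^{-1}$.

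You instead observe that $u_m(y)u_m(-\xi)$ is uniformly bounded on the half-plane $\{y\ge\xi\}$. The growth of $u_m$ at $-\infty$ is compensated because $|y|\le|\xi|$ there, and the growth of $u_m(-\xi)$ as $\xi\to+\infty$ is compensated because $\xi\le y$. The factor $\lambda^{-1}=\int_0^\infty e^{-\lambda t}\,\dd t$ then comes out directly. This is shorter, and it gives $\|\mathcal{R}_{\lambda,1}\|\le C_m\lambda^{-1}$ for all $\lambda>0$, not only asymptotically.

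One caveat on your fixed-$\lambda$ part. A decay of the form $e^{-|y-\xi|\min(|y|,|\xi|)}$ does not beat $e^{|\lambda||y-\xi|}$ where one variable stays bounded and the other runs to infinity, and that set is not compact. So the argument as worded does not close. What \eqref{um.asym} actually gives, and what you should state, is the Gaussian off-diagonal bound
\begin{equation*}
u_m(y)\,u_m(-\xi)\lesssim e^{-(y-\xi)^2/4}, \qquad y\ge\xi .
\end{equation*}
It follows from three inequalities:
\begin{itemize}
\item $y^2-\xi^2\ge(y-\xi)^2$ for $0\le\xi\le y$;
\item $\xi^2-y^2\ge(y-\xi)^2$ for $\xi\le y\le 0$;
\item $y^2+\xi^2\ge\frac12(y-\xi)^2$ for $\xi\le 0\le y$.
\end{itemize}
In the strips $|y|\le\alpha$ or $|\xi|\le\alpha$ the same bound holds with constants depending on $\alpha$. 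This single estimate yields both claims at once. For each fixed $\lambda$ you get $\int_0^\infty e^{|\lambda|t-t^2/4}\,\dd t<\infty$, and since the bound is $\lesssim 1$, it also gives the $\lambda^{-1}$ decay.
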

\begin{proof}
	Let $m>0$ and $\lambda \in \R$ be fixed. We first prove that the integral operator $\mathcal{R}_{\lambda,1}$ is bounded on $L^2(\R,\C^2)$. To this end, we apply the Schur test with trivial weight functions. More precisely, we establish 
	\begin{equation}\label{Schur 1} 
		\begin{aligned}
			&\int_{\R} \vert \mathcal{R}_{\lambda,1}(y,\xi) \vert_{\C^{2\times 2}} \, \dd \xi \lesssim 1,\qquad y\in \R,\\
			&\int_{\R} \vert \mathcal{R}_{\lambda,1}(y,\xi) \vert_{\C^{2\times 2}} \, \dd y \lesssim 1, \qquad \xi\in \R,
		\end{aligned}
	\end{equation}
	where the implicit constants depend only on $m$ and $\lambda$. By the Schur test, this proves the claim (see \eqref{R.K.def} -- \eqref{R.K.norm}).
	
	By \eqref{um.asym}, we may choose $\alpha>0$ sufficiently large so that
	\begin{equation}\label{Estimate hm}
		\begin{aligned}
			&u_m(y) \lesssim e^{-\frac{y^2}{2}} y^{-\frac{m^2}{2}},\qquad &&y\geq \alpha,\\
			&u_m(y) \lesssim e^{\frac{y^2}{2}} (-y)^{\frac{m^2}{2}},\qquad &&y\leq -\alpha,
		\end{aligned}
	\end{equation}
	and, in addition,
	\begin{equation}\label{Add Cond alpha}
		\lambda+\alpha>0.
	\end{equation}
	From \eqref{Matrix norm}, we have
	\begin{equation} 
		\begin{aligned}
			I_{\lambda}(y)\coloneqq\int_{\R} \vert \mathcal{R}_{\lambda,1}(y,\xi) \vert_{\C^{2\times 2}}\, \dd \xi =c_m \int_{-\infty}^{y} e^{\lambda(\xi-y)}u_m(y)u_m(-\xi)\, \dd \xi.
		\end{aligned} 
	\end{equation}
	We consider three cases.
	\begin{enumerate}[\upshape (i), wide]
		\item \label{first case} $y\in (-\infty,-\alpha)$. Using \eqref{Estimate hm}, we obtain
		\begin{equation}\label{Est 1}
			\begin{aligned}
				I_{\lambda}(y) =& c_me^{-\lambda y} u_m(y) \int_{-\infty}^{y} e^{\lambda \xi}u_m(-\xi)\, \dd \xi 
				\\
				\lesssim & e^{-\lambda y+\frac{y^2}{2}} (-y)^{\frac{m^2}{2}} \int_{-\infty}^{y} e^{\lambda \xi-\frac{\xi^2}{2}} (-\xi)^{-\frac{m^2}{2}}\, \dd \xi
				\\
				= & e^{-\lambda y+\frac{y^2}{2}} \int_{-\infty}^{y} 
				\left|\frac{y}{\xi}\right|^{\frac{m^2}{2}} \left[e^{\lambda \xi-\frac{\xi^2}{2}}\right]'\frac{1}{\lambda - \xi} \, \dd \xi
				\ls  \frac{1}{\lambda+\alpha}.
		\end{aligned} \end{equation}
		\item \label{second case}$y\in [-\alpha,\alpha]$. We decompose
		\begin{equation}
			\begin{aligned}
				I_{\lambda}(y) & = c_me^{-\lambda y} u_m(y) \int_{-\infty}^{-\alpha} e^{\lambda \xi}u_m(-\xi)\, \dd \xi 
				\\
				& \qquad +c_me^{-\lambda y} u_m(y) \int_{-\alpha}^{y} e^{\lambda \xi}u_m(-\xi)\, \dd \xi.
			\end{aligned}
		\end{equation}
		Since $e^{-\lambda y} u_m(y)$ is bounded on $[-\alpha,\alpha]$, the first term satisfies
		\begin{equation}
			\begin{aligned}
				c_me^{-\lambda y} u_m(y) \int_{-\infty}^{-\alpha} e^{\lambda \xi}u_m(-\xi)\, \dd \xi \lesssim & \int_{-\infty}^{-\alpha} e^{\lambda \xi-\frac{\xi^2}{2}} (-\xi)^{-\frac{m^2}{2}}\, \dd \xi\\
				\lesssim & e^{\frac{\lambda^2}{2}}\int_{-\infty}^{-\alpha} e^{-\frac{1}{2}(\xi-\lambda)^2}\, \dd \xi\\
				\lesssim &  e^{\frac{\lambda^2}{2}}\int_{-\infty}^{+\infty} e^{-\frac{1}{2}\xi^2}\, \dd \xi \lesssim  e^{\frac{\lambda^2}{2}}.
			\end{aligned}
		\end{equation}
		Furthermore, since the continuous function $u_{m}$ is bounded on the compact set $[-\alpha,\alpha]$,
		\begin{equation}
			c_me^{-\lambda y} u_m(y) \int_{-\alpha}^{y} e^{\lambda \xi}u_m(-\xi)\, \dd \xi \lesssim 1.
		\end{equation}
		%
		%
		\item  $y\in (\alpha,\infty)$. We write
		\begin{equation}
			\begin{aligned}
				I_{\lambda}(y) = & c_me^{-\lambda y} u_m(y) \int_{-\infty}^{-\alpha} e^{\lambda \xi}u_m(-\xi)\, \dd \xi 
				\\ & \qquad +c_me^{-\lambda y} u_m(y) \int_{-\alpha}^{\alpha} e^{\lambda \xi}u_m(-\xi)\, \dd \xi
				\\
				& \qquad +c_me^{-\lambda y} u_m(y) \int_{\alpha}^{y} e^{\lambda \xi}u_m(-\xi)\, \dd \xi .
			\end{aligned}
		\end{equation}
		By \eqref{Estimate hm},
		\begin{equation}
			\begin{aligned}
				c_me^{-\lambda y} u_m(y) \int_{-\infty}^{-\alpha} e^{\lambda \xi}u_m(-\xi)\, \dd \xi \lesssim & e^{-\lambda y-\frac{y^2}{2}} y^{-\frac{m^2}{2}} \int_{-\infty}^{-\alpha} e^{\lambda \xi-\frac{\xi^2}{2}} (-\xi)^{-\frac{m^2}{2}}\, \dd \xi\\
				\lesssim & e^{\lambda^2} e^{-\frac{1}{2}(y+\lambda)^2} \int_{-\infty}^{-\alpha} e^{-\frac{1}{2}(\xi-\lambda)^2}\, \dd \xi\\
				\lesssim & e^{\lambda^2}\int_{-\infty}^{+\infty} e^{-\frac{1}{2}\xi^2}\, \dd \xi \lesssim e^{\lambda^2}.
			\end{aligned}
		\end{equation}
		Similarly,
		\begin{equation}
			\begin{aligned}
				c_me^{-\lambda y} u_m(y) \int_{-\alpha}^{\alpha} e^{\lambda \xi}u_m(-\xi)\, \dd \xi & \lesssim  e^{-\lambda y-\frac{y^2}{2}} y^{-\frac{m^2}{2}} \lesssim  e^{\frac{1}{2}\lambda^2} e^{-\frac{1}{2}(y+\lambda)^2}
				\\
				&  \lesssim e^{\frac{1}{2}\lambda^2}.
			\end{aligned}
		\end{equation}
		Finally,
		\begin{equation}\label{Est 3}
			\begin{aligned}
				& c_me^{-\lambda y} u_m(y) \int_{\alpha}^{y} e^{\lambda \xi}u_m(-\xi)\, \dd \xi 
				\\& \quad \ls  e^{-\lambda y-\frac{y^2}{2}} y^{-\frac{m^2}{2}} \int_{\alpha}^{y} e^{\lambda \xi + \frac{\xi^2}{2}} \xi^{\frac{m^2}{2}} \, \dd \xi
				\\
				& \quad 
				=  e^{-\lambda y-\frac{y^2}{2}} \int_{\alpha}^{y} \left( \frac{\xi}{y} \right)^{\frac{m^2}{2}} \left[e^{\lambda \xi+\frac{\xi^2}{2}}\right]' \frac{1}{\lambda+\xi} \, \dd \xi
 \lesssim \frac{1}{\lambda+\alpha}.
			\end{aligned}
		\end{equation}
	\end{enumerate}
	Combining the above estimates, we conclude that  $I_{\lambda}(y)\lesssim 1$ uniformly for all $y\in\R$. The second estimate in \eqref{Schur 1} follows from \eqref{anti sym}, then we conclude that $\mathcal{R}_{\lambda,1}$ is bounded on $L^2(\R,\C^2)$. From ~\eqref{Matrix norm} and \eqref{anti sym}, we have
	\begin{equation}\label{Equalities} 
		\vert \mathcal{R}_{\lambda,2}(y,\xi) \vert_{\C^{2\times 2}} =\vert \mathcal{R}_{-\lambda,1}(-y,-\xi) \vert_{\C^{2\times 2}}=\vert \mathcal{R}_{-\lambda,1}(\xi,y) \vert_{\C^{2\times 2}}, \quad (y,\xi)\in \R^2.
	\end{equation}
	Then, by Schur test, $\mathcal{R}_{\lambda,2}$ is also bounded on $L^2(\R,\C^2)$.
	
	Now, fix $m>0$ and consider $\lambda>0$. In this case, condition \eqref{Add Cond alpha} is automatically satisfied, so $\alpha$ can be chosen independently of $\lambda$. As above, we distinguish three cases. The only difference is that the implicit constants appearing in the estimates below are independent of $\lambda$.
	\begin{enumerate}[\upshape (i), wide]
		\item  $y\in (-\infty,-\alpha)$. From \eqref{Est 1}, for $\lambda>0$,
		\begin{equation} \begin{aligned}
				I_{\lambda}(y) \lesssim \frac{1}{\lambda+\alpha}\lesssim \frac{1}{\lambda}.
		\end{aligned} 
		\end{equation}
		\item $y\in [-\alpha,\alpha]$. Since $u_m$ is bounded on $[-\alpha,\alpha]$ and from \eqref{Estimate hm}, $u_m(-\xi)$ is also bounded for all $\xi\in (-\infty,\alpha]$, we have, for $\lambda>0$,
		\begin{equation}\label{Est 2}
			I_{\lambda}(y) \lesssim  \int_{-\infty}^{y} e^{\lambda (\xi-y)}\, \dd \xi =\frac{1}{\lambda}.
		\end{equation}
		\item  $y\in (\alpha,\infty)$. We write
		\begin{equation} 
			I_{\lambda}(y) = c_m\int_{-\infty}^{\alpha} e^{\lambda(\xi-y)}u_m(y)u_m(-\xi)\, \dd \xi
			+
			c_m\int_{\alpha}^{y} e^{\lambda(\xi-y)}u_m(y)u_m(-\xi)\, \dd \xi.
		\end{equation}
		The first integral is controlled as in \eqref{Est 2} and the second integral is estimated as in \eqref{Est 3}, more precisely, for $\lambda>0$,
		\begin{equation} 
			\begin{aligned}
				\int_{-\infty}^{\alpha} e^{\lambda(\xi-y)}u_m(y)u_m(-\xi)\, \dd \xi
				&\lesssim  \int_{-\infty}^{\alpha} e^{\lambda (\xi-y)} \, \dd \xi =\frac{1}{\lambda} e^{\lambda(\alpha-y)}
				\lesssim \frac{1}{\lambda},
				\\
				\int_{\alpha}^{y} e^{\lambda(\xi-y)}u_m(y)u_m(-\xi)\, \dd \xi 
				& \lesssim \frac{1}{\lambda+\alpha}\lesssim \frac{1}{\lambda}.
		\end{aligned} \end{equation}
	\end{enumerate}
	Combining the above estimates, $\int_{\R} \vert \mathcal{R}_{\lambda,1}(y,\xi) \vert_{\C^{2\times 2}} \, \dd \xi\lesssim \frac{1}{\lambda}$ as $\lambda\to+\infty$. Using \eqref{anti sym}, it follows that $\int_{\R} \vert \mathcal{R}_{\lambda,1}(y,\xi) \vert_{\C^{2\times 2}} \, \dd y \lesssim \frac{1}{\lambda}$ as $\lambda\to+\infty$. An application of Schur test then yields the asserted upper bound on the operator norm of $\mathcal{R}_{\lambda,1}$. Finally, the result for $\mathcal{R}_{\lambda,2}$ follows immediately from \eqref{Equalities}.
\end{proof}

\begin{lemma}\label{lem:R2.est.up}
Let $m>0$ and let $\mathcal{R}_{\lambda,1}$ and $\mathcal{R}_{\lambda,2}$ be integral operators defined as in Lemma \ref{lem:res.norm.form}. We have (with $\la \in \R$)
\begin{equation}\label{R.la.Schur}
\begin{aligned}
&\Vert \mathcal{R}_{\lambda,2} \Vert \leq \frac{ \Gamma(\frac{m^2}{2}+1)}{2^{\frac{m^2}{2}}m} \frac{e^{\lambda^2}}{\lambda^{m^2}}\left(1+\mathcal{O}\left(\frac{1}{|\lambda|^2}\right)\right),\quad &&\lambda\to +\infty,\\
&\|\mathcal{R}_{\lambda,1}\| \leq  \frac{ \Gamma(\frac{m^2}{2}+1)}{2^{\frac{m^2}{2}}m} \frac{e^{\lambda^2}}{(-\lambda)^{m^2}}\left(1+\mathcal{O}\left(\frac{1}{|\lambda|^2}\right)\right), \quad &&\lambda \to-\infty.
\end{aligned} 
\end{equation}
\end{lemma}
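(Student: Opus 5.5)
By \eqref{Equalities} the scalar kernel of $\mathcal{R}_{\lambda,2}$ is $|\mathcal{R}_{-\lambda,1}(\xi,y)|_{\C^{2\times 2}}$, i.e.\ the transpose of that of $\mathcal{R}_{-\la,1}$. Transposition preserves Schur-test bounds, so it suffices to prove the second estimate, for $\mathcal{R}_{\lambda,1}$ as $\lambda\to-\infty$. Write $\lambda=-\mu$ with $\mu\to+\infty$. By \eqref{Matrix norm} the relevant scalar kernel is
\begin{equation*}
K_\mu(y,\xi)=c_m e^{\mu(y-\xi)}u_m(y)u_m(-\xi)\mathds{1}_{\{y\ge \xi\}}.
\end{equation*}
By \eqref{um.asym} this kernel is concentrated where $y\approx \mu$ and $-\xi\approx\mu$. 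Indeed,
\begin{equation*}
e^{\mu y}u_m(y)\approx e^{\mu y-y^2/2}(\sqrt2 y)^{-m^2/2},
\end{equation*}
which peaks at $y=\mu$ with value $e^{\mu^2/2}(\sqrt2\mu)^{-m^2/2}$. The same holds for $e^{-\mu\xi}u_m(-\xi)$ at $\xi=-\mu$. The product of the two peak values is $e^{\mu^2}(2\mu^2)^{-m^2/2}$. Multiplying by the Gaussian masses $\sqrt{2\pi}$ from each variable and by $c_m$ gives $c_m\cdot 2\pi\, e^{\mu^2}2^{-m^2/2}\mu^{-m^2}$. This overshoots the target by a factor of order $\sqrt\pi$, which means the Schur test must be run with nontrivial weights. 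The kernel is essentially rank one, $K_\mu\approx c_m a(y)b(\xi)$ with $a(y)=e^{\mu y}u_m(y)$ and $b(\xi)=e^{-\mu\xi}u_m(-\xi)$, and the indicator is asymptotically irrelevant since the mass sits at $y\approx\mu>0>-\mu\approx\xi$. A rank-one operator has norm $c_m\|a\|\|b\|$, so the natural choice is to take the weights $p(y)=a(y)$ and $q(\xi)=b(\xi)$ in the weighted Schur test.

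The plan is as follows. First I apply the Schur test with $p=a$, $q=b$. This gives
\begin{equation*}
\int K_\mu(y,\xi)q(\xi)\,\dd\xi\le c_m a(y)\|b\|^2 = c_m\|b\|^2\, p(y),
\end{equation*}
since dropping the indicator only increases the integral. Symmetrically, $\int K_\mu(y,\xi)p(y)\,\dd y\le c_m\|a\|^2 q(\xi)$. Hence
\begin{equation*}
\|\mathcal{R}_{\lambda,1}\|\le c_m\|a\|\|b\|,
\end{equation*}
and by \eqref{anti sym} one has $\|a\|=\|b\|$. Second, I compute $\|a\|^2=\int e^{2\mu y}u_m(y)^2\,\dd y$ asymptotically by Laplace's method. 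The phase is $2\mu y-y^2$, maximal at $y=\mu$, with value $\mu^2$ and Gaussian width giving the factor $\sqrt\pi$. The amplitude is $(\sqrt2 y)^{-m^2}(1+\BigO(y^{-2}))$. Expanding the amplitude to second order around $y=\mu$ shows that the first-order terms cancel by symmetry, and the relative correction is $\BigO(\mu^{-2})$. This yields
\begin{equation*}
\|a\|^2=\sqrt\pi\, e^{\mu^2}2^{-m^2/2}\mu^{-m^2}\big(1+\BigO(\mu^{-2})\big).
\end{equation*}
The contributions from $y\le \mu/2$ must also be controlled, including $y<0$, where $u_m$ grows like $e^{y^2/2}$ but $e^{2\mu y}$ kills it. These contributions are exponentially smaller by \eqref{um.asym} and the boundedness of $u_m$ on compacts. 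Third, multiplying out,
\begin{equation*}
c_m\|a\|^2=\frac{\Gamma(\frac{m^2}{2}+1)}{m\sqrt\pi}\sqrt\pi\,\frac{e^{\mu^2}}{2^{m^2/2}\mu^{m^2}}\big(1+\BigO(\mu^{-2})\big),
\end{equation*}
which is exactly the claimed bound.

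The main obstacle is the Laplace-method step with the precise $\BigO(\mu^{-2})$ remainder. It requires the second-order expansion of $(\sqrt2y)^{-m^2}(1+\BigO(y^{-2}))$ near $y=\mu$ on a window of size $\mu^{1/2}$, together with tail bounds outside this window. I would handle it by substituting $y=\mu+t$, splitting into $|t|\le\mu/2$ and its complement, and noting that the odd-order terms in $t$ integrate to zero against $e^{-t^2}$.
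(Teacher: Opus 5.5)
\textbf{The gap: your weights are not admissible.} The weighted Schur test with $p=a$, $q=b$ does not work. These weights are not square integrable, and the Schur ratio they produce is unbounded.

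By the second line of \eqref{um.asym}, $a(y)^2=e^{2\mu y}u_m(y)^2=c_m^{-2}e^{2\mu y+y^2}(-\sqrt2 y)^{m^2}(1+o(1))$ as $y\to-\infty$. Since $2\mu y+y^2>0$ for $y<-2\mu$, the factor $e^{2\mu y}$ does not kill the growth of $u_m^2$; it only tames it on $[-2\mu,0]$. Hence $\|a\|=\|b\|=\infty$, and your bound $c_m a(y)\|b\|^2$ (obtained by dropping the indicator) is vacuous.

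Keeping the indicator does not save these weights either. You get $\int K_\mu(y,\xi)b(\xi)\,\dd\xi\,/\,a(y)=c_m\int_{-\infty}^{y}b(\xi)^2\,\dd\xi$. This tends to $+\infty$ as $y\to+\infty$, because $b(\xi)^2=c_m^{-2}e^{\xi^2-2\mu\xi}(\sqrt2\xi)^{m^2}(1+o(1))$ there. So the indicator is far from ``asymptotically irrelevant'': it is what makes $\mathcal{R}_{\la,1}$ bounded at all. The heuristic ``rank-one operator of norm $c_m\|a\|\|b\|$'' has no meaning here. Likewise, your Laplace computation is valid on a window around $y=\mu$ (this is \eqref{I.ii.est}), but $\int_\R e^{2\mu y}u_m(y)^2\,\dd y$ diverges.

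\textbf{The missing idea: truncate the weights.} Keep $p=a$ and $q=b$ only on $[-2\mu,2\mu]$, and freeze them at the boundary values $a(\pm2\mu)$, $b(\pm2\mu)$ outside. The cutoff sits exactly where $2\mu t+t^2$ changes sign. This is the paper's choice \eqref{pq.def}, written there for $\mathcal{R}_{\la,2}$ with $\la\to+\infty$; your reduction via \eqref{Equalities} is fine and simply mirrors it.

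The price is a region-by-region check of the Schur inequality.
\begin{itemize}
\item Inside $[-2\mu,2\mu]$ the ratio equals $c_m$ times a partial integral of $e^{2\mu t}u_m(t)^2$, plus a tail coming from the frozen region.
\item The window $[\mu/2,2\mu]$ gives the sharp constant, by your Laplace argument.
\item The pieces $[\alpha,\mu/2]$, $[-\alpha,\alpha]$ and $[-2\mu,-\alpha]$ contribute only $\BigO(e^{3\mu^2/4}/\mu)$, $\BigO(e^{2\alpha\mu}/\mu)$ and $\BigO(\mu^{m^2+1})$ respectively. The last holds because $2\mu t+t^2\le 0$ there.
\item The two frozen regions $|y|>2\mu$ need separate integration-by-parts estimates, as in the paper's cases (i) and (vi). On one side the ratio is $\BigO(\mu^{-m^2-1})$. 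On the other it is at most $(1+\BigO(\mu^{-2}))$ times the ratio at the truncation point, plus a term of polynomial size.
\end{itemize}
The symmetric Schur inequality then follows from \eqref{anti sym}. With this repair your argument becomes the paper's proof.
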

\begin{proof}
Recall \eqref{R.K.def} -- \eqref{R.K.norm}. We start the proof by showing the upper bound of $\mathcal{R}_{\lambda,2}$ as $\lambda\to +\infty$ by Schur test with non-trivial weights (depending on $\lambda$)
\begin{equation}\label{pq.def} 
\begin{aligned}
p_{\lambda}(y) &:=
\begin{cases}
e^{-\lambda y} u_m(-y), & |y| \leq 2\lambda,\\
e^{-2\lambda^2}u_m(-2\lambda), & y> 2\lambda,\\
e^{2\lambda^2}u_m(2\lambda), & y<- 2\lambda,
\end{cases}
\\
q_{\lambda}(\xi) &:=
\begin{cases}
e^{\lambda \xi} u_m(\xi), & |\xi| \leq 2\lambda,\\
e^{2\lambda^2}u_m(2\lambda), & \xi> 2\lambda,\\
e^{-2\lambda^2}u_m(-2\lambda), &  \xi<- 2\lambda.
\end{cases}
\end{aligned}
\end{equation}
Notice that $U(a_{m},z)$ (where $a_m = (m^2 \pm 1)/2$) has no real zero (see \cite[Sec. 12.11(i)]{DLMF}), thus the function $u_m$ defined in \eqref{um.def} is positive and so are $p_{\lambda},q_{\lambda}$. Let us denote (see \eqref{cm.def} and \eqref{Matrix norm})
\begin{equation}
\begin{aligned}
J_{\lambda}(y) & \coloneqq \int_{\R} \vert \mathcal{R}_{\lambda,2}(y,\xi) \vert_{\C^{2\times 2}} q_{\lambda}(\xi)\, \dd \xi
\\
& =c_m e^{-\lambda y}u_m(-y) \int_{y}^{\infty} e^{\lambda \xi} u_m(\xi)q_{\lambda}(\xi)\, \dd \xi.
\end{aligned}	
\end{equation}

We assume further that $\lambda>2 \alpha$ with $\alpha>0$ appearing in \eqref{Estimate hm}. We split the estimate in the following cases (the estimates below are uniform in $y$).
\begin{enumerate}[\upshape (i), wide]
\item \label{J.it.i} $y\in [2\lambda,\infty)$. By \eqref{pq.def} and \eqref{Estimate hm}, we have
\begin{equation}
\begin{aligned}
J_{\lambda}(y)&=c_m e^{2\lambda^2}u_m(2\lambda) e^{-\lambda y}u_m(-y) \int_{y}^{\infty} e^{\lambda\xi}u_m(\xi)\, \dd \xi
\\
&\lesssim e^{2\lambda^2}u_m(2\lambda)  e^{\frac{y^2}{2}-\lambda y} y^{\frac{m^2}{2}}\int_{y}^{\infty} e^{\lambda \xi-\frac{\xi^2}{2}}\xi^{-\frac{m^2}{2}}\, \dd \xi
\\
& =
e^{2\lambda^2}u_m(2\lambda)  e^{\frac{y^2}{2}-\lambda y} 
\int_{y}^{\infty} 
\left(\frac{y}{\xi}\right)^\frac{m^2}{2}
\left[- e^{\lambda \xi-\frac{\xi^2}{2}}\right]'  \frac{1}{\xi-\lambda} \dd \xi
\\
& 
\lesssim  \frac{e^{2\lambda^2}u_m(2\lambda)}{\lambda }, \quad \la \to + \infty.
\end{aligned} \end{equation}
From \eqref{um.asym}, we obtain that, for all $y\in [2\lambda,\infty)$,
\begin{equation}\label{Case 1}
\begin{aligned}
	\frac{J_{\lambda}(y)}{p_{\lambda}(y)}\lesssim \frac{1}{\lambda } \frac{ e^{2\lambda^2}u_m(2\lambda)}{ e^{-2\lambda^2}u_m(-2\lambda)}
	\lesssim
	\lambda^{-m^2-1}, \qquad \lambda \to +\infty.
\end{aligned}
\end{equation}
\item  $y\in [ \lambda/2, 2\lambda )$. Using \eqref{pq.def} and splitting the integration, we get
\begin{equation}\label{J.ii}
\begin{aligned}
\frac{J_{\lambda}(y)}{p_{\lambda}(y)} &=  c_m
\int_{y}^{2\lambda} e^{2\lambda \xi} u_m^2(\xi)\, \dd \xi 
+
c_m \int_{2\lambda}^{\infty} e^{\lambda \xi} u_m(\xi)q_{\lambda}(\xi)\, \dd \xi .
\\ 
& = 
c_m \int_{y}^{2\lambda} e^{2\lambda \xi} u_m^2(\xi)\, \dd \xi 
+
\frac{J_\la(2\la)}{p_\la(2\la)}. 
\end{aligned}
\end{equation}
To estimate the integral, we change the integration variable to $\xi =\lambda s$ and use Laplace's method; notice that the maximum of the appearing function $2s-s^2$ is attained at $s=1$. In detail, employing also \eqref{um.asym}, we arrive at
\begin{equation} \label{I.ii.est}
	\begin{aligned}
 \int_{y}^{2\lambda} e^{2\lambda \xi} u_m^2(\xi)\, \dd \xi
& \leq  \int_{\lambda/2}^{2\lambda} e^{2\lambda \xi} u_m^2(\xi)\, \dd \xi
\\
& =\left(1+\mathcal{O}\left(\frac{1}{|\lambda|^2}\right)\right)
\int_{\lambda/2}^{2\lambda}e^{2\lambda \xi -\xi^2} (\sqrt{2}\xi)^{-m^2} \, \dd \xi
\\
& = \left(1+\mathcal{O}\left(\frac{1}{|\lambda|^2}\right)\right)
\frac{\la}{2^{\frac{m^2}{2}} \lambda^{m^2} } \int_{1/2}^{2} e^{\lambda^2(2s-s^2)}s^{-m^2} \, \dd s
\\
& =\left(1+\mathcal{O}\left(\frac{1}{|\lambda|^2}\right)\right) \frac{ \sqrt \pi }{ 2^{\frac{m^2}{2}}} \frac{e^{\lambda^2}}{\lambda^{m^2}}, \qquad \lambda \to +\infty.
\end{aligned} 
\end{equation}
Returning to \eqref{J.ii} and using \eqref{Case 1}, we obtain for all $y\in \left[ \lambda/2,2\lambda\right)$ that
\begin{equation}\label{Case 2}
\frac{J_{\lambda}(y)}{p_{\lambda}(y)} \leq \left(1+\mathcal{O}\left(\frac{1}{|\lambda|^2}\right)\right)
c_m \frac{ \sqrt \pi }{ 2^{\frac{m^2}{2}}}
\frac{e^{\lambda^2}}{\lambda^{m^2}}, \qquad \lambda \to +\infty.
\end{equation}
\item  $y\in [ \alpha, \lambda/2)$. Similarly as above, we have
\begin{equation}\label{J.iii}
\frac{J_{\lambda}(y)}{p_{\lambda}(y)} 
 = c_m 
\int_{y}^{\lambda/2} e^{2\lambda \xi} u_m^2(\xi)\, \dd \xi +  \frac{J_{\lambda}(\lambda/2)}{p_{\lambda}(\lambda/2)}.
\end{equation}
We estimate the integral by using \eqref{Estimate hm} and Laplace's method with maximum at the right endpoint
\begin{equation} \int_{a}^{b} e^{\mu f(x)} \, \dd x = \frac{e^{\mu f(b)}}{ \mu f'(b)} \left(1+\mathcal{O}\left(\frac{1}{\mu}\right)\right),\qquad \mu\to+\infty.
\end{equation}
Thus we have as $\lambda \to+\infty$ that
\begin{equation} 
\begin{aligned}
\int_{y}^{\lambda/2} e^{2\lambda \xi} u_m^2(\xi)\, \dd \xi 
&\lesssim \int_{0}^{\lambda/2}e^{2\lambda \xi -\xi^2} \, \dd \xi
\lesssim\int_{0}^{1/2} e^{\lambda^2(2s-s^2)} \lambda\, \dd s\lesssim \frac{e^{\frac{3}{4}\lambda^2}}{\lambda}.
\end{aligned} 
\end{equation}
Returning to \eqref{J.iii} and applying \eqref{Case 2}, we conclude that for all $y\in \left[ \alpha, \lambda/2\right)$
\begin{equation}\label{Case 3}
\frac{J_{\lambda}(y)}{p_{\lambda}(y)} \leq \left(1+\mathcal{O}\left(\frac{1}{|\lambda|^2}\right)\right)
c_m \frac{ \sqrt \pi }{ 2^{\frac{m^2}{2}}} 
\frac{e^{\lambda^2}}{\lambda^{m^2}}, \qquad  \lambda \to +\infty.
\end{equation}
\item $y\in \left[ -\alpha, \alpha\right)$. As above, we have
\begin{equation}
\frac{J_{\lambda}(y)}{p_{\lambda}(y)}= 
c_m
\int_{y}^{\alpha} e^{2\lambda \xi} u_m^2(\xi)\, \dd \xi + \frac{J_{\lambda}(\alpha)}{p_{\lambda}(\alpha)}.
\end{equation}
From the boundedness of $u_m$ on $[-\alpha,\alpha]$, we get
\begin{equation} 
c_m\int_{y}^{\alpha} e^{2\lambda \xi} u_m^2(\xi)\, \dd \xi\lesssim \int_{y}^{\alpha}e^{2\lambda \xi}\, \dd \xi\lesssim \frac{e^{2\alpha\lambda}}{\lambda}.
\end{equation}
Thus, employing \eqref{Case 3}, we obtain for all $y\in [ -\alpha, \alpha)$ that
\begin{equation}\label{Case 4}
\frac{J_{\lambda}(y)}{p_{\lambda}(y)}\leq \left(1+\mathcal{O}\left(\frac{1}{|\lambda|^2}\right)\right)
c_m \frac{ \sqrt \pi }{ 2^{\frac{m^2}{2}}} \frac{e^{\lambda^2}}{\lambda^{m^2}}, \qquad  \lambda \to +\infty.
\end{equation}
\item  \label{J.it.v} $y\in \left[ -2\lambda, -\alpha\right)$. As above, we have
\begin{equation} 
\begin{aligned}
\frac{J_{\lambda}(y)}{p_{\lambda}(y)} = c_m 
\int_{y}^{-\alpha} e^{2\lambda \xi} u_m^2(\xi)\, \dd \xi 
+
\frac{J_{\lambda}(-\alpha)}{p_{\lambda}(-\alpha)}.
\end{aligned} 
\end{equation}
Using \eqref{Estimate hm} and noticing that $2\lambda \xi +\xi^2 \leq 0$ for all $\xi\in [-2\lambda,0]$, we obtain
\begin{equation} 
\int_{y}^{-\alpha} e^{2\lambda \xi} u_m^2(\xi)\, \dd \xi\lesssim  \int_{-2\lambda}^{-\alpha} e^{2\lambda \xi+\xi^2} (-\xi)^{m^2}\, \dd \xi
\lesssim  \int_{\alpha}^{2\lambda} \xi^{m^2}\, \dd \xi
\lesssim \lambda^{m^2+1}.
\end{equation}
Thus, employing \eqref{Case 4}, we have for all $y\in [ -2\lambda, -\alpha)$ that
\begin{equation}\label{Case 5} 
\frac{J_{\lambda}(y)}{p_{\lambda}(y)}\leq \left(1+\mathcal{O}\left(\frac{1}{|\lambda|^2}\right)\right) 
c_m \frac{ \sqrt \pi }{ 2^{\frac{m^2}{2}}} \frac{e^{\lambda^2}}{\lambda^{m^2}}, \qquad  \lambda \to +\infty.
\end{equation}
\item\label{J.it.vi} $y\in \left(-\infty, -2\lambda\right)$. We write
\begin{equation} \begin{aligned}
J_{\lambda}(y)= c_m 
&
\int_{y}^{-2\lambda} e^{\lambda(\xi-y)}u_m(-y)u_m(\xi)q_{\lambda}(\xi)\, \dd \xi 
\\
& \quad 
+ c_m \int_{-2\lambda}^{\infty} e^{\lambda(\xi-y)}u_m(-y)u_m(\xi)q_{\lambda}(\xi)\, \dd \xi.
\end{aligned}
\end{equation}
For the first integral (recall \eqref{Estimate hm} and \eqref{pq.def})
\begin{equation}\label{Int.1.est}
\begin{aligned}
& \int_{y}^{-2\lambda} e^{\lambda(\xi-y)}u_m(-y)u_m(\xi)q_{\lambda}(\xi)\, \dd \xi
\\
& \quad \lesssim  e^{-2\lambda^2}u_m(-2\lambda)  e^{-\lambda y-\frac{y^2}{2}}(-y)^{-\frac{m^2}{2}}\int_{y}^{-2\lambda} e^{\lambda \xi+\frac{\xi^2}{2}}(-\xi)^{\frac{m^2}{2}}\, \dd \xi
\\
& \quad = e^{-2\lambda^2}u_m(-2\lambda)  e^{-\lambda y-\frac{y^2}{2}}
\int_{y}^{-2\lambda}
\left|\frac{\xi}{y}\right|^\frac{m^2}{2}
\left[-e^{\lambda \xi+\frac{\xi^2}{2}}\right]'
\frac1{ - \xi - \lambda}  \, \dd \xi
\\
& \quad 
\ls \frac{e^{-2\lambda^2}u_m(-2\lambda)}{\lambda}.
\end{aligned}
\end{equation}
For the second integral, notice that $-\lambda y -y^2/2\leq 0$ and $-y\geq 2\lambda$, thus by \eqref{um.asym} in the second step
\begin{equation}\label{Int.2.est}
\begin{aligned}
& c_m \int_{-2\lambda}^{\infty} e^{\lambda(\xi-y)}u_m(-y)u_m(\xi)q_{\lambda}(\xi)\, \dd \xi\\
& \qquad =J_{\lambda}(-2\lambda)\frac{e^{-\lambda y}u_m(-y)}{e^{2\lambda^2}u_m(2\lambda)}\\
& \qquad =\left(1+\mathcal{O}\left(\frac{1}{|\lambda|^2}\right)\right) J_{\lambda}(-2\lambda) e^{-\lambda y-\frac{y^2}{2}} 
\left(\frac{-y}{2\lambda}\right)^{-\frac{m^2}{2}}
\\
& \qquad \leq \left(1+\mathcal{O}\left(\frac{1}{|\lambda|^2}\right)\right)J_{\lambda}(-2\lambda), \quad \la \to + \infty.
\end{aligned} 
\end{equation}
Combining the estimates \eqref{Int.1.est} and \eqref{Int.2.est} and using \eqref{um.asym}, \eqref{Case 5}, we get
\begin{equation} 
\begin{aligned}
\frac{J_{\lambda}(y)}{p_{\lambda}(y)}\leq &\mathcal{O}\left(\frac{e^{-2\lambda^2}u_m(-2\lambda)}{\lambda e^{2\lambda^2}u_m(2\lambda)}\right)+\left(1+\mathcal{O}\left(\frac{1}{|\lambda|^2}\right)\right) \frac{J_{\lambda}(-2\lambda)}{p_{\lambda}(-2\lambda)}\\
\leq & \mathcal{O}\left(\lambda^{m^2-1}\right)+\left(1+\mathcal{O}\left(\frac{1}{|\lambda|^2}\right)\right)
c_m \frac{ \sqrt \pi }{ 2^{\frac{m^2}{2}}} \frac{e^{\lambda^2}}{\lambda^{m^2}}
\\
=&\left(1+\mathcal{O}\left(\frac{1}{|\lambda|^2}\right)\right)
c_m \frac{ \sqrt \pi }{ 2^{\frac{m^2}{2}}} \frac{e^{\lambda^2}}{\lambda^{m^2}},\qquad \lambda\to+\infty.
\end{aligned} 
\end{equation}
\end{enumerate}

Summarising the estimates in \ref{J.it.i}--\ref{J.it.vi}, we conclude that uniformly for all $y\in \R$  
\begin{equation}\label{Schur nontrivial 1}
\int_{\R} \vert \mathcal{R}_{\lambda,2}(y,\xi) \vert_{\C^{2\times 2}} q_{\lambda}(\xi)\, \dd \xi \leq p_{\lambda}(y)\left(1+\mathcal{O}\left(\frac{1}{|\lambda|^2}\right)\right)
c_m \frac{ \sqrt \pi }{ 2^{\frac{m^2}{2}}} 
\frac{e^{\lambda^2}}{\lambda^{m^2}}
\end{equation}
as $\lambda \to +\infty$.
Due to \eqref{anti sym} and $p_{\lambda}(y)=q_{\lambda}(-y)$ for all $y\in \R$, the estimate \eqref{Schur nontrivial 1} implies that for all $\xi \in \R$
\begin{equation}
\int_{\R} \vert \mathcal{R}_{\lambda,2}(y,\xi) \vert_{\C^{2\times 2}} p_{\lambda}(y)\, \dd y\leq q_{\lambda}(\xi)\left(1+\mathcal{O}\left(\frac{1}{|\lambda|^2}\right)\right)
c_m \frac{ \sqrt \pi }{ 2^{\frac{m^2}{2}}} 
\frac{e^{\lambda^2}}{\lambda^{m^2}}
\end{equation}
as $\lambda \to +\infty$. Thus the Schur test yields the upper bound of $\Vert \mathcal{R}_{\lambda,2} \Vert$ in \eqref{R.la.Schur}. 

We do not need to repeat the above procedure for estimating $\Vert \mathcal{R}_{\lambda,1} \Vert$ as $\lambda \to -\infty$. Indeed, by setting 
\begin{equation} 
\wt p_{\lambda}(y)= p_{\lambda}(-y),\qquad \wt q_{\lambda}(\xi) = q_{\lambda}(-\xi),
\end{equation}
and by the definition of the kernels $ \vert \mathcal{R}_{\lambda,1}(y,\xi) \vert_{\C^{2\times 2}}$ and $ \vert \mathcal{R}_{\lambda,2}(y,\xi) \vert_{\C^{2\times 2}}$ in \eqref{Matrix norm}, we have for every $y\in \R$
\begin{equation}
\begin{aligned}
\int_{\R} \vert \mathcal{R}_{\lambda,1}(y,\xi) \vert_{\C^{2\times 2}} \wt q_{\lambda}(\xi)\, \dd \xi & = \int_{\R} \vert \mathcal{R}_{-\lambda,2}(-y,-\xi) \vert_{\C^{2\times 2}}  q_{\lambda}(-\xi)\, \dd \xi\\
& \leq  \wt p_{\lambda}(y)\left(1+\mathcal{O}\left(\frac{1}{|\lambda|^2}\right)\right)
c_m \frac{ \sqrt \pi }{ 2^{\frac{m^2}{2}}} 
\frac{e^{\lambda^2}}{(-\lambda)^{m^2}},
\end{aligned} 
\end{equation}
as $\lambda \to -\infty$. Similarly, for every $\xi \in \R$
\begin{equation} \begin{aligned}
\int_{\R} \vert \mathcal{R}_{\lambda,1}(y,\xi) \vert_{\C^{2\times 2}} \wt p_{\lambda}(y)\, \dd y 
& = \int_{\R} \vert \mathcal{R}_{-\lambda,2}(-y,-\xi) \vert_{\C^{2\times 2}}  p_{\lambda}(-y)\, \dd y\\
& \leq  \wt q_{\lambda}(\xi)\left(1+\mathcal{O}\left(\frac{1}{|\lambda|^2}\right)\right)
c_m \frac{ \sqrt \pi }{ 2^{\frac{m^2}{2}}} 
\frac{e^{\lambda^2}}{(-\lambda)^{m^2}} 
\end{aligned} 
\end{equation}
as $\lambda \to -\infty$. Hence the upper bound of $\Vert \mathcal{R}_{\lambda,1} \Vert$ in \eqref{R.la.Schur} follows again by the Schur test.
\end{proof}

\begin{lemma}\label{lem:R2.est.low}
Let $m>0$ and let $\mathcal{R}_{\lambda,1}$ and $\mathcal{R}_{\lambda,2}$ be integral operators defined as in Lemma \ref{lem:res.norm.form}. We have (with $\la \in \R$)
\begin{equation}
\begin{aligned}
&\Vert \mathcal{R}_{\lambda,2} \Vert \geq \frac{ \Gamma(\frac{m^2}{2}+1)}{2^{\frac{m^2}{2}}m} \frac{e^{\lambda^2}}{\lambda^{m^2}}\left(1+\mathcal{O}\left(\frac{1}{|\lambda|^2}\right)\right),\qquad && \lambda\to +\infty,\\
&\|\mathcal{R}_{\lambda,1}\| \geq  \frac{ \Gamma(\frac{m^2}{2}+1)}{2^{\frac{m^2}{2}}m} \frac{e^{\lambda^2}}{(-\lambda)^{m^2}}\left(1+\mathcal{O}\left(\frac{1}{|\lambda|^2}\right)\right), \qquad && \lambda \to-\infty.
\end{aligned} 
\end{equation}
\end{lemma}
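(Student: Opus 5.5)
The plan is to bound $\|\mathcal{R}_{\lambda,2}\|$ from below by $|\langle \mathcal{R}_{\lambda,2} f, g\rangle|/(\|f\|\|g\|)$ for an explicit pair $f,g$, chosen to be nearly extremal for the Schur-test weights of Lemma~\ref{lem:R2.est.up}. The case $\lambda\to-\infty$ for $\mathcal{R}_{\lambda,1}$ then follows from the reflection identities \eqref{anti sym} and \eqref{Equalities}, exactly as at the end of the proof of Lemma~\ref{lem:R2.est.up}.

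\textbf{Step 1: rank-one structure of the kernel.} From \eqref{R.la12.def}, the matrix kernel factorises as
\begin{equation*}
\mathcal{R}_{\lambda,2}(y,\xi)=\frac{m}{\sqrt2}c_m e^{\lambda(\xi-y)}\mathds{1}_{\{y\le\xi\}}\, v(y)\,w(\xi)^{t},
\end{equation*}
with
\begin{equation*}
v(y)=\Big(U(\tfrac{m^2+1}{2},-\sqrt2 y),\ \tfrac{\sqrt2}{m}U(\tfrac{m^2-1}{2},-\sqrt2 y)\Big)^t,\qquad
w(\xi)=\Big(U(\tfrac{m^2-1}{2},\sqrt2\xi),\ \tfrac{m}{\sqrt2}U(\tfrac{m^2+1}{2},\sqrt2\xi)\Big)^t .
\end{equation*}
The normalisation is such that $\tfrac{m}{\sqrt2}|v(y)||w(\xi)|=u_m(-y)u_m(\xi)$, consistent with \eqref{Matrix norm}. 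I will check this constant bookkeeping carefully, because it fixes the leading constant.

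\textbf{Step 2: choice of test functions.} Take
\begin{equation*}
f(\xi)=e^{\lambda\xi}\,\overline{w(\xi)}\,\chi\big(\xi-\lambda\big),\qquad g(y)=e^{-\lambda y}\,v(y)\,\chi\big(y+\lambda\big),
\end{equation*}
where $\chi\in C_c^\infty((-\lambda/2,\lambda/2))$ with $\chi=1$ on $[-\lambda/4,\lambda/4]$, or simply the indicator of that interval. Then $f$ is supported near $\xi=\lambda$ and $g$ near $y=-\lambda$, so the constraint $y\le\xi$ holds automatically on $\supp g\times\supp f$. The double integral therefore factorises:
\begin{equation*}
\langle \mathcal{R}_{\lambda,2}f,g\rangle=c_m\, I_f\, I_g\cdot\frac{m}{\sqrt2}\,\text{(normalisation)},\qquad
I_f=\int e^{2\lambda\xi}|w(\xi)|^2\chi^2\,\dd\xi,\quad I_g=\int e^{-2\lambda y}|v(y)|^2\chi^2\,\dd y .
\end{equation*}
We also have $\|f\|^2=I_f$ and $\|g\|^2=I_g$. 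Hence the ratio equals $c_m\sqrt{I_f' I_g'}$, where $I'$ denotes the same integrals rewritten with $u_m$ via Step~1. By the symmetry $y\mapsto -y$ these two integrals coincide, so
\begin{equation*}
\frac{|\langle \mathcal{R}_{\lambda,2}f,g\rangle|}{\|f\|\,\|g\|}=c_m\int_{\lambda-\lambda/4}^{\lambda+\lambda/4}e^{2\lambda\xi}u_m(\xi)^2\,\dd\xi .
\end{equation*}

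\textbf{Step 3: Laplace asymptotics.} Insert \eqref{um.asym}, giving $e^{2\lambda\xi}u_m(\xi)^2=e^{2\lambda\xi-\xi^2}(\sqrt2\xi)^{-m^2}(1+\BigO(\lambda^{-2}))$. Then apply Laplace's method as in \eqref{I.ii.est}, with the maximum at $\xi=\lambda$ and the truncation costing only $\BigO(e^{-c\lambda^2})$. This yields
\begin{equation*}
c_m\frac{\sqrt\pi}{2^{m^2/2}}\frac{e^{\lambda^2}}{\lambda^{m^2}}\big(1+\BigO(\lambda^{-2})\big),
\end{equation*}
and since $c_m\sqrt\pi=\Gamma(\tfrac{m^2}{2}+1)/m$ this is the claimed constant. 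The $\BigO(\lambda^{-2})$ relative error requires the second-order Laplace term: the factor $\xi^{-m^2}$ expanded at $\xi=\lambda$ contributes a linear term, which integrates to zero against the symmetric Gaussian, and a quadratic term of relative size $\lambda^{-2}$. The main obstacle is precisely this: keeping exact track of the normalising constants in Step~1 and confirming that no $\BigO(\lambda^{-1})$ term survives. Otherwise the proof is a direct computation.
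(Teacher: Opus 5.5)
Your proposal is correct and is essentially the paper's argument. The paper takes the single test function $\Psi_\lambda = c_m^{1/2}\, e^{\lambda\xi}\, w(\xi)\,\mathds{1}_{[\lambda/2,2\lambda]}(\xi)$, which is your $f$ on a different window. It restricts $\mathcal{R}_{\lambda,2}\Psi_\lambda$ to $y\in[-2\lambda,-\lambda/2]$, where it is a multiple of the analogue of your $g$, obtains $\|\mathcal{R}_{\lambda,2}\|\ge\|\Psi_\lambda\|^2 = c_m\int_{\lambda/2}^{2\lambda} e^{2\lambda\xi}u_m^2(\xi)\,\dd\xi$, and evaluates this with the same Laplace computation as in \eqref{I.ii.est}.

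One correction is needed for $\lambda\to-\infty$. The identities \eqref{anti sym} and \eqref{Equalities} only concern the scalar norms $|\mathcal{R}_{\lambda,j}(y,\xi)|_{\C^{2\times 2}}$. Those suffice for Schur-test upper bounds but do not by themselves transfer a lower bound. You can fix this in either of two ways:
\begin{enumerate}
\item repeat your pairing argument using the (equally rank-one) factorisation of $\mathcal{R}_{\lambda,1}$, as the paper does;
\item use the exact matrix identity $\mathcal{R}_{-\lambda,1}(-y,-\xi) = -\sigma_3\,\mathcal{R}_{\lambda,2}(y,\xi)\,\sigma_3$, which gives $\|\mathcal{R}_{-\lambda,1}\| = \|\mathcal{R}_{\lambda,2}\|$.
\end{enumerate}
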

\begin{proof}
To find a lower bound for $\Vert \mathcal{R}_{2,\lambda} \Vert$ as $\lambda\to +\infty$, we consider the function
\begin{equation} \Psi_{\lambda}(\xi)\coloneqq c_m^{\frac 12}\begin{pmatrix}
U\left(\frac{m^2-1}{2},\sqrt{2}\xi\right)\\
\frac{m}{\sqrt{2}} U\left(\frac{m^2+1}{2},\sqrt{2}\xi\right)
\end{pmatrix}e^{\lambda \xi}\mathds{1}_{[\frac{\lambda}{2},2\lambda]}(\xi), \quad \xi \in \R,
\end{equation}
with $c_m$ from \eqref{cm.def}. By the definition \eqref{um.def} and the equalities in \eqref{I.ii.est}, we have
\begin{equation}\label{Norm of vector}
\Vert \Psi_{\lambda} \Vert^2 =c_m \int_{\lambda/2}^{2\lambda} e^{2\lambda \xi}u_m^{2}(\xi)\, \dd \xi
= 
c_m \frac{ \sqrt \pi }{ 2^{\frac{m^2}{2}}}
\frac{e^{\lambda^2}}{\lambda^{m^2}}\left(1+\mathcal{O}\left(\frac{1}{|\lambda|^2}\right)\right), 
\end{equation}
as $\lambda \to +\infty$. A straightforward calculation yields (recall \eqref{R.la12.op}--\eqref{R.la12.def}) 
\begin{equation}
 \left\vert \left(\mathcal{R}_{\lambda,2}\Psi_\la\right)(y)\right\vert_{\C^2}^2
 = c_m^ 3e^{-2\lambda y}u_m^{2}(-y) \left(\int_{y}^{\infty} e^{2\lambda \xi} u_m^2(\xi) \mathds{1}_{[\frac{\lambda}{2},2\lambda]}(\xi) \, \dd \xi\right)^2,
\end{equation}
for $y \in \R$. Then, for any $y \in [-2\lambda, -\lambda/2]$, we have
\begin{equation}\left\vert \left(\mathcal{R}_{\lambda,2}\Psi_\la\right)(y)\right\vert_{\C^2}^2
	= c_m e^{-2\lambda y}u_m^{2}(-y)\Vert \Psi_{\lambda} \Vert^4.
\end{equation}
This yields
\begin{equation} 
\begin{aligned}
\Vert \mathcal{R}_{\lambda,2}\Psi_{\lambda} \Vert^2 \geq & \Vert \Psi_{\lambda} \Vert^4 c_m \int_{-2\lambda}^{- \lambda/2} e^{-2\lambda y}u_m^{2}(-y)\, \dd y\\
=& \Vert \Psi_{\lambda} \Vert^4 c_m 
\int_{\lambda/2}^{2\lambda} e^{2\lambda y}u_m^{2}(y)\, \dd y = \Vert \Psi_{\lambda} \Vert^6,
\end{aligned} 
\end{equation}
and the lower bound of $\|\mathcal{R}_{\lambda,2}\|$ follows by \eqref{Norm of vector}. 

Next, to obtain a lower bound for $\mathcal{R}_{\lambda,1}$ as $\lambda\to -\infty$, we consider 
\begin{equation} \Phi_{\lambda}(\xi)\coloneqq  
c_m^\frac12 
\begin{pmatrix}
-U\left(\frac{m^2-1}{2},-\sqrt{2}\xi\right)\\
\frac{m}{\sqrt{2}} U\left(\frac{m^2+1}{2},-\sqrt{2}\xi\right)
\end{pmatrix}e^{\lambda \xi}\mathds{1}_{[2\lambda,\frac{\lambda}{2}]}(\xi), \quad \xi \in \R.
\end{equation}
By repeating the above procedure, we also have
\begin{equation}
\Vert \mathcal{R}_{\lambda,1}\Phi_{\lambda} \Vert^2 \geq  \Vert \Phi_{\lambda} \Vert^6.
\end{equation}
Noticing that $\Vert \Phi_{\lambda} \Vert^2 = \Vert \Psi_{-\lambda} \Vert^2$, the lower bound of $\Vert \mathcal{R}_{\lambda,1} \Vert$ follows from \eqref{Norm of vector}.
\end{proof}

\begin{proof}[Proof of Theorem~\ref{thm:airy.res.a}]
Putting together \eqref{Am.td.Am.norm} and Lemmas \ref{lem:res.norm.form}, \ref{lem:R1.est}, \ref{lem:R2.est.up} and \ref{lem:R2.est.low}, the resolvent norm estimate in \eqref{Airy.la.inf} is obtained by the triangle inequality.
\end{proof}

\subsection{Resolvent norm of \texorpdfstring{$A_m$}{Am} for varying \texorpdfstring{$m$}{m}}
\label{ssec:Am.m0}

\begin{proof}[Proof of Theorem~\ref{thm:airy.res.m}]
	
While \eqref{eq:Am.small.m} for $m \to 0+$ is the main claim, \eqref{Am.norm.m.1} for $m \approx 1$ follows by a compactness argument and \eqref{Am.norm.m.inf} for $m \to + \infty$ is an easy consequence of \eqref{Am.inv.norm} and the first resolvent identity -- see below for details.

Recall that, throughout the proof, we have $\la \in \R$ and $|\la| \leq R$ with some fixed $R>0$. 
	
\begin{enumerate}[\upshape (i), wide]
\item In this part we restrict the parameters to 
\begin{equation}\label{MR.def}
	\cM_R :=\left \{ (\la,m) \in \R^2 \, : \, |\la| \leq R, m^2\leq \frac 12 \right\}.
\end{equation}

For all $(\la,m) \in \cM_R$
\begin{equation}\label{L.la.sp}
\sigma(\sL_\la + m^2-1 ) = \{ 2n + m^2 \, : \, n \in \N_0\}
\end{equation}
and 
\begin{equation}
\begin{aligned}
(\sL_\la + m^2-1) h_{n,\la} &= (2n + m^2) h_{n,\la}, 
\\
(\sL_\la^* + m^2-1) h_{n,-\la} &= (2n + m^2) h_{n,-\la}, \quad n \in \N_0,		
\end{aligned}
\end{equation}
where $h_{n,\la}$ are the transformed normalized Hermite functions, in particular
\begin{equation}
h_{0,\la}(y) = \frac1{\pi^\frac14} \exp \left( - \frac 12 y^2 - \la y \right), \quad y \in \R;
\end{equation}
see Appendix~\ref{app:HO.conj} for details.

We start from \eqref{A.hat.res} and isolate the singular term. By the basic holomorphic functional calculus (see e.g.~\cite[Chap.~I]{Gohberg-1990} or \cite[Thm.~XII.5]{Reed4}), we decompose 
\begin{equation}\label{L.la.res.decomp}
( \sL_\la + m^2-1)^{-1} = \frac{1}{m^2} P_{\la} + T_{\la,m},
\end{equation}
where
\begin{equation}\label{P.T.def}
\begin{aligned}
P_\la &:= - \frac1{2\pi i} \int_{\Gamma} (\sL_\la + m^2-1-z)^{-1} \, \dd z, 
\\
T_{\la,m} &:= \frac1{2\pi i} \int_{\Gamma} \frac1z (\sL_\la + m^2-1-z)^{-1} \, \dd z, \qquad \Gamma := \{z \in \C \, : \, |z|=1\}.
\end{aligned}	
\end{equation}
Notice that the integration contour $\Gamma$ is selected such that both integrals are well-defined for all $(\la,m) \in \cM_R$. The decomposition \eqref{L.la.res.decomp} can be justified by $(\sL_\la + m^2-1) P_\la = m^2 P_\la$ and 
\begin{equation}\label{L.T.id}
\begin{aligned}
(\sL_\la + m^2-1) T_{\la,m} &  = \frac1{2\pi i} \int_{\Gamma} \frac1z (\sL_\la + m^2-1) (\sL_\la + m^2-1-z)^{-1} \, \dd z
\\
& =  \frac1{2\pi i} \int_{\Gamma} \frac{\dd z} {z} + \frac1{2\pi i} \int_{\Gamma}  (\sL_\la + m^2-1-z)^{-1} \, \dd z
\\
& = I - P_\la;
\end{aligned}
\end{equation}
the argument for the composition $T_{\la,m} (\sL_\la + m^2-1) $ is analogous.

We have $\Ran P_\la = \lspan \{ h_{0,\la}\}$, thus
\begin{equation}\label{P.RanP}
P_\la = \frac{h_{0,\la} \langle \cdot, h_{0,-\la} \rangle }{\langle h_{0,\la}, h_{0,-\la} \rangle } = h_{0,\la} \langle \cdot, h_{0,-\la} \rangle
\end{equation}
and
\begin{equation}\label{P.la.norm}
\|P_\la\| = \|h_{0,\la}\| \|h_{0,-\la}\| = \|h_{0,\la}\|^2 = 
\frac{e^{\la^2}}{\sqrt{\pi}} \int_\R  e^{ - (y+\la)^2} \, \dd y = e^{
		\la^2}.
\end{equation}
It is also important to notice that $(\partial_{y}+y + \la) h_{0,\la} = 0$, hence
\begin{equation}\label{a.P.anih}
(\partial_{y}+y + \la) P_\la = 0,.
\end{equation}
Thus using \eqref{A.hat.res}, \eqref{L.la.res.decomp} and \eqref{a.P.anih}, we obtain

\begin{equation}\label{Am.td.dec}
\begin{aligned}
( \wt A_{m}-\lambda)^{-1}  & = 
\frac1{m} 
\begin{pmatrix}
	0 & 0 \\
	P_\la  & 0
\end{pmatrix}
\\
& \quad +
\begin{pmatrix}
(\partial_{y}+y + \la)T_{\la,m} & m (\sL_\la +m^2+1)^{-1}
\\[1mm]
mT_{\la,m} & ( \partial_{y}-y +\la )(\sL_\la +m^2+1)^{-1}
\end{pmatrix}.
\end{aligned}
\end{equation}
We show below that the entries in the second operator matrix on the r.h.s.~of \eqref{Am.td.dec} are uniformly bounded on $\cM_R$, so the claim \eqref{eq:Am.small.m} follows from \eqref{P.la.norm}.

Let $\cD_{\sL} := \Dom(\sL) = \Dom(\sL_\la) = H^2(\R) \cap \Dom(x^2)$ be equipped with the norm 
\begin{equation}
\|f\|_{\cD_{\sL}} = (\|f'' \|^2 + \|x^2 f\|^2 + \|f\|^2)^\frac12, \quad f \in \cD_{\sL};	
\end{equation}
see Appendix~\ref{sec:app.ho}. Let  
$\iota_{\cD_{\sL} \to L^2}: \cD_{\sL} \to L^2(\R): f \mapsto f$ 
be the canonical embedding of $\cD_{\sL}$ in $L^2(\R)$, which is continuous and has dense range. 
We introduce the bounded operators
\begin{equation}\label{L.lmz.cont}
\wh \sL_{\la,m,z} :  \cD_{\sL} \to L^2(\R) : f \mapsto  (\sL_\la + m^2-1-z) f, \qquad \la, m \in \R, z \in \C;
\end{equation}
i.e., we view $\sL_\la$ as bounded operators acting from their ($\la$-independent) domain $\cD_\sL$ to $L^2(\R)$.

Let $(\la_0,m_0,z_0) \in \R^2 \times \C$ be such that $\sL_{\la_0} + m_0^2-1-z_0$ is boundedly invertible on $L^2(\R)$.  It follows from the graph norm estimate \eqref{D.norm.def} that $\wh \sL_{\la_0,m_0,z_0}$ is boundedly invertible. Indeed, for $f\in L^2(\R)$,
\begin{equation}
\begin{aligned}
\|\wh \sL_{\la_0,m_0,z_0}^{-1} f \|_{\cD_\sL} & \ls 
\|\sL_{\la_0} (\sL_{\la_0} + m_0^2-1-z_0)^{-1} f \|_{L^2} 
\\ & \qquad + \| (\sL_{\la_0} + m_0^2-1-z_0)^{-1} f \|_{L^2} 
\\
& \ls \left(1+ \|(\sL_{\la_0} + m_0^2-1-z_0)^{-1}\|\right) \|f\|_{L^2}.	
\end{aligned}
\end{equation}
It is straightforward to verify that there exists $C>0$ such that
\begin{equation}\label{Cont 1}
\Vert \widehat \sL_{\la,m,z}f - \widehat \sL_{\la_{0},m_{0},z_{0}}f \Vert_{L^2} \leq C \vert (\lambda,m,z)-(\lambda_{0},m_{0},z_{0}) \vert \Vert f \Vert_{D_{\sL}},
\end{equation}
for all $\vert (\lambda,m,z)-(\lambda_{0},m_{0},z_{0}) \vert<1$ and for all $f\in \cD_{\sL}$. By the stability of bounded invertibility (see~\cite[Thm.~IV.1.16]{Kato-1966}), we deduce that $\widehat \sL_{\lambda,m,z}$ is invertible and 
\begin{align}
&\vert (\lambda,m,z)-(\lambda_{0},m_{0},z_{0}) \vert < \min(1, 1/(2C \Vert \widehat\sL_{\la_{0},m_{0},z_{0}}^{-1}\Vert_{L^2\to \cD_{\sL}})) \implies\\
  &\Vert {\widehat\sL}_{\la,m,z}^{-1} -\widehat \sL_{\la_{0},m_{0},z_{0}}^{-1} \Vert_{L^2\to \cD_{\sL}} \leq C' \vert (\lambda,m,z)-(\lambda_{0},m_{0},z_{0}) \vert,
\end{align}
for  some $C' > 0$ (independent of $(\la,m,z)$). This implies that the function defined by $(\lambda,m,z)\mapsto \widehat \sL_{\la,m,z}^{-1}$ is continuous on the set where $\sL_{\la} + m^2-1-z$ is boundedly invertible.
By \eqref{L.la.sp}, $\sL_{\la} + m^2-1-z$ is boundedly invertible for all 
\begin{equation}
(\la,m,z) \in K := \cM_R \times \Gamma.
\end{equation}
Since $K$ is a compact set, there exists $C_K >0$ such that, for all $(\la,m,z) \in K$, we have
\begin{equation}\label{L.unif.bd}
\|\wh \sL_{\la,m,z}^{-1}\|_{L^2 \to \cD_\sL} \leq C_K.	
\end{equation}
Consequently, for all $(\la,m) \in \cM_R$, the operators
\begin{equation}
\wh T_{\la,m} := \frac1{2\pi i} \int_{\Gamma} \frac1z \wh \sL_{\la,m,z}^{-1} \, \dd z : L^2(\R) \to \cD_{\sL},
\end{equation}
are well-defined and uniformly bounded
\begin{equation}
\|\wh T_{\la,m}\|_{L^2 \to \cD_{\sL}} \leq C_K, \quad (\la,m) \in \cM_R.
\end{equation}
Since $T_{\la,m} = \iota_{\cD_{\sL} \to L^2} \wh T_{\la,m}$, we obtain
\begin{equation}
\begin{aligned}
\|m T_{\la,m}\| & \leq m \|\iota_{\cD_{\sL} \to L^2}\|_{{\cD_{\sL} \to L^2}} \|\wh T_{\la,m}\|_{L^2 \to \cD_{\sL}} \leq m C_K,
\\
\|(\partial_{y}+y + \la) T_{\la,m}\| & \leq \|(\partial_{y}+y + \la) \iota_{\cD_{\sL} \to L^2}\|_{{\cD_{\sL} \to L^2}} \|\wh T_{\la,m}\|_{L^2 \to \cD_{\sL}}
\\
& \leq \sqrt{3} (|\la|+1) C_K \leq \sqrt{3} (R+1) C_K,
\end{aligned}
\end{equation}
where, for the second bound, we have used
\begin{equation}
\begin{aligned}
\|(\partial_{y}+y + \la)\iota_{\cD_{\sL} \to L^2} f\|_{L^2} & \leq \|f'\| + \|y f\| + |\la|\|f\| 
\\
& \leq \|f''\|^\frac12 \|f\|^\frac12 + \|y^2 f\|^\frac12 \|f\|^\frac12 + |\la|\|f\|
\\
& \leq \frac12 \|f''\| + \frac12 \|y^2 f\| + (|\la|+1)\|f\|
\\
& \leq \sqrt{3} (|\la|+1) \|f\|_{\cD_{\sL}}, \qquad f \in \cD_{\sL}.
\end{aligned}
\end{equation}

The remaining uniform boundedness of $m (\sL_\la +m^2+1)^{-1}$ and $( \partial_{y}-y +\la )(\sL_\la +m^2+1)^{-1}$ can be similarly verified.

\item In this case, $m \in [m_1,m_2]$ with $m_2>m_1>0$. We sketch a continuity and compactness argument analogous to the one in \eqref{L.lmz.cont} -- \eqref{L.unif.bd}. Let $\cD_{A} : = \Dom(A_1)=H^1(\R, \C^2) \cap \Dom(x I_2)$ be equipped with the graph norm of $A_1$, see \eqref{Separation Airy}. The family of operators $\wh A_{m,\la}:=A_m-\la  \in \cB(\cD_A,  L^2(\R,\C^2))$, $(m,\la) \in (0,\infty) \times \R$, is continuous in these parameters. For every $(m,\la) \in (0,\infty) \times \R$, the operator $A_m- \la$ (and hence $\wh A_{m,\la}$) is boundedly invertible (for $\wh A_{m,\la}$ we use the graph norm estimate \eqref{Separation Airy}). By the stability of bounded invertibility  the function $(m,\la) \mapsto \|\wh A_{m,\la}^{-1}\|_{L^2 \to \cD_A}$ is continuous. Since $\iota_{\cD_A \to L^2}$, the canonical embedding of $\cD_A$ in $L^2(\R)$, is continuous (notice that we have $\|f\|_{L^2} \leq \|f\|_{\cD_A}$ for all $f \in \cD_A$), the function $(m,\la) \mapsto \|A_{m,\la}^{-1}\|_{L^2 \to L^2}$ (with $A_{m,\la}^{-1} := \iota_{\cD_A \to L^2} \wh A_{m,\la}^{-1} = (A_m - \la)^{-1}$) is also continuous and hence it attains a maximum and a (positive) minimum on the compact $[m_1,m_2] \times [-R,R]$. Therefore \eqref{Am.norm.m.1} follows.

\item Recall that $\|A_m^{-1}\| = 1/m$, see \eqref{Am.inv.norm}. As $|\la| \leq R$, the first resolvent identity yields
\begin{equation}
	(A_m - \la)^{-1} = A_m^{-1}\left(I_2 - \la A_m^{-1} \right)^{-1}, \quad m \to + \infty,
\end{equation}
and
\begin{equation}
\|(A_m - \la)^{-1}\| = \frac{1}{m}\left(1 + \BigO(\la m^{-1})\right), \quad m \to + \infty.
\end{equation}
\end{enumerate}

\end{proof}

\subsection{Spectrum of \texorpdfstring{$A_0$}{A0}}
\label{ssec:A0}

\begin{proof}[Proof of Proposition~\ref{prop:A0}]
Let $A = -i\Ntp \sigma_1 + i x I_2$ be the defined on the maximal domain, i.e.~$\Dom(A) = \{ u \in L^2(\R, \C^2) \, : \, A u \in L^2(\R, \C^2)\}$. It is straightforward to check that $A$ is densely defined and closed. 

We clearly have $A_0 \subset A$. We show below that $A \subset A_0$, hence $A = A_0$. To this end, a standard cut-off and mollification procedure yields that $C_{c}^{\infty}(\R,\C^2)$ is a core of $A$. On this core, the graph norm estimate \eqref{Ineq Am} for $A_m$ with $m>0$ reduces to
\begin{equation}\label{A0.gr}
\Vert A \psi \Vert^2+ \Vert \psi \Vert^2 \geq  \Vert \psi' \Vert^2 + \Vert x I_2 \psi\Vert^2, \quad \psi \in  C_{c}^{\infty}(\R,\C^2).
\end{equation}
The \enquote{missing} term with the $L^2$-norm of $\psi$ can be recovered by the Heisenberg inequality. In particular, integration by parts yields 
\begin{equation} 
\Vert \psi \Vert^2= -\langle x I_2 \psi, \psi' \rangle - \langle  \psi',x I_2 \psi \rangle, \qquad \psi \in C_{c}^{\infty}(\R,\C^2),
\end{equation}
thus we have
\begin{equation}\label{Heisenber Ineq}
\Vert x I_2\psi \Vert^2 + \Vert \psi' \Vert^2 
\geq 
2 \|xI_2 \psi\| \|\psi'\| 
\geq \Vert \psi \Vert^2,\qquad  \psi \in C_{c}^{\infty}(\R,\C^2). 
\end{equation}
Consequently,
\begin{equation}\label{A0.gr.1}
\Vert A \psi \Vert^2+ \Vert \psi \Vert^2 \geq \frac12 \left(
\Vert \psi' \Vert^2 + \Vert x I_2 \psi\Vert^2 + \|\psi\|^2
\right), \quad \psi \in C_{c}^{\infty}(\R,\C^2).
\end{equation}
Hence $\Dom(A) \subset H^1(\R^2, \C^2) \cap \Dom(x I_2) = \Dom(A_0)$ which proves $A \subset A_0$.

It is straightforward to see that $A_0^* =(A \restriction C_{c}^{\infty}(\R,\C^2))^* =  -i\Ntp \sigma_1 - i x I_2$ with $\Dom(A_0^*) = \Dom(A) = \Dom(A_0)$, where in the last step we use the equalities of domains shown above.

Let $\lambda\in \C$, then by solving the corresponding system of ODEs, we find that (see \eqref{f.la.def})
\begin{equation}\label{Ker.A0}
\Ker(A_0-\la)=\lspan \left\{ (-f_\la,f_\la) \right \}
\quad
\Ker(A_0^*-\la)= \lspan 
\left\{ 
(f_{-\la},f_{-\la})
\right\}
\end{equation}	
where 
\begin{equation}
f_\la(x) = e^{-\frac{x^2}{2}-i \la x}, \quad x \in \R.
\end{equation}
Since $f_\la \in L^2(\R)$, this implies that $\sigma(A_0)=  \spp(A_0)= \se{5}(A_{0}) = \C$ and the geometric multiplicity of each eigenvalue is $1$. Similar statements hold true for $A_0^*$.

To prove the claims on the remaining essential spectra in \eqref{A0.spe}, we construct a right approximate inverse for $A_0-\la$, with $\la \in \C$ arbitrary. This yields that $A_0-\la$ is Fredholm with deficiency equal to $1$ and \eqref{Ker.A0} justifies that its index is $0$ (see e.g.~\cite[Chap.~I.3]{EE} for details). We search for a right approximate inverse for the unitarily equivalent operator $\wt A_0 -\la$, where 
\begin{equation}
\wt A_0 = \begin{pmatrix}
	-\partial_{y}+y & 0 \\
	0 & -\partial_{y}-y
\end{pmatrix}
\end{equation}
is as in \eqref{A.td.def}, \eqref{A* hat}. In fact, the operator (with $T_{\la,0}$ as in \eqref{P.T.def})
\begin{equation}
B_\la :=
\begin{pmatrix}
	(\partial_{y}+y + \la)T_{\la,0} & 0
	\\[1mm]
	0 & ( \partial_{y}-y +\la )(\sL_\la +1)^{-1}
\end{pmatrix},
\end{equation}
is a right inverse of $\wt A_0 -\la$ as it satisfies (see \eqref{L.T.id})
\begin{equation}
\begin{aligned}
(\wt A_0 -\la) B_\la = 
\begin{pmatrix}
	(\sL_\la -1) T_{\la,0} & 0
	\\[1mm]
	0 & (\sL_\la +1)(\sL_\la +1)^{-1}
\end{pmatrix} = 
\begin{pmatrix}
I - P_\la & 0 
\\	
0 & I 
\end{pmatrix},
\end{aligned}
\end{equation}
with $P_\la$ as in \eqref{P.T.def} (recall also \eqref{P.RanP}). Analogously, one constructs a right approximate inverse for $A_0^*-\la$ to justify the remaining claims for $\se{k}(A_0^*)$.

Finally, the claim regarding the algebraic multiplicity of the eigenvalues follows by applying \cite[Thm.~2.2.16]{Locker-2000-73} to $\C$, the Fredholm set of $A_0$, and noting that, from the above arguments, there cannot be any eigenvalues with algebraic multiplicity equal to $0$.
\end{proof}

\section{Resolvent estimates for general \texorpdfstring{$V$}{V} -- proofs}
\label{sec:resnorm.est.iR}
In this section we prove Theorem~\ref{thm:iR}.
Let
\begin{equation}\label{eq:Hb.def}
	H_b := H - \la_b.
\end{equation}
The proof is structured in four steps. Firstly, we prove the claim \enquote{away} from the zero $x_b$ of $V - b$. Then we study the behaviour of the norm of the resolvent locally (\ie~near $x_b$). Next we establish a lower bound for the norm. Our final step, the theorem proof proper, combines the previously derived estimates. Throughout the proof we are chiefly concerned with the behaviour as $b \rightarrow +\infty$ and will therefore assume $b$ to be as large as needed for our assumptions to hold without further comment.

Let
\begin{equation}\label{eq:Deltab.def}
\Omega'_b := \left( x_b - \Delta_b, x_b + \Delta_b \right), \quad \Delta_b := \delta_b x_b,
\quad 0 < \delta_b :=
\frac{1}{(V'(x_b) x_b)^{\frac12}} 
\end{equation}
with $x_b$ as defined in \eqref{eq:xb.def}. Note that by \eqref{V'.nu.asm} and \eqref{V.unbd}, we have
\begin{equation}\label{del.b.asym}
\delta_b \approx \frac{1}{b^\frac12} = o(1), \quad b \to +\infty,
\end{equation}
and consequently,
\begin{equation}\label{eq:xb.db}
	x_b - 2 \Delta_b = x_b \left( 1 - 2 \delta_b \right) \gtrsim x_b \to + \infty, \qquad b \rightarrow +\infty.	
\end{equation}
In what follows, we shall also assume $b$ to be large enough so that $x_b - 2 \Delta_b > x_0$ and $V(x_b - 2 \Delta_b) > 0$. This ensures that $V(x) > 0 \text{ for all } x > x_b - 2 \Delta_b$.

Furthermore, in the proofs below, we will use that the above choice of $\Delta_b$ implies that $V(x)$ and $V'(x)$ are approximately equal to $V(x_b)$ and $V'(x_b)$, respectively, inside $\Omega_b'$ when $b$ is large enough. More precisely,
for all $x \in \Omega_b'$, we have
\begin{equation}\label{V.V'.approx}
\frac{V(x)}{V(x_b)} \approx 1, \quad \frac{V'(x)}{V'(x_b)} \approx 1, \quad b \to + \infty; 
\end{equation}
see \cite[Sec.~22.27]{Titchmarsh-1958-book2} or \cite[Lem.~4.1]{Mityagin-2022-28}. 

Finally, let $A_m$ be as in \eqref{eq:airy.dirac.def}. For any $\mu \in \R$, we extend the graph norm estimates in \eqref{Separation Airy} to
\begin{equation}\label{Am.gr.mu}
		\| (A_m - \mu) u \|^2 + \langle \mu \rangle^2 \| u \|^2 \geq \frac12 \left(\|  u' \|^2 + \left\| x u \right\|^2 + m^2 \| u \|^2 \right), \quad u \in \Dom(A_m).
	\end{equation}
Indeed, by Cauchy-Schwarz and Young inequalities	
\begin{equation}
\begin{aligned}
\| (A_m - \mu) u \|^2 + \langle \mu \rangle^2 \| u \|^2 & = 
\| A_m u \|^2 - 2 \mu \Re \langle A_m u,u \rangle + (2\mu^2+1) \| u \|^2 
\\
& \geq  
\| A_m u \|^2 - 2 \mu \|A_m u\| \|u\| + (2\mu^2+1) \| u \|^2 
\\
& \geq \frac 12 \| A_m u \|^2 + \|u\|^2,
\end{aligned}	
\end{equation}	
thus \eqref{Am.gr.mu} follows from \eqref{Separation Airy}. We deduce that
	\begin{equation}\label{eq:Am.inv.x.graphn}
		\| x (A_m - \mu)^{-1} \|^2 \leq 2 + 2 \langle \mu \rangle^2 \| (A_m - \mu)^{-1} \|^2.
	\end{equation}

\subsection{Step 1: estimate outside the neighbourhood of \texorpdfstring{$x_b$}{xb}}
\label{ssec:step.1.iR}
\begin{proposition}
	\label{prop:away.iR}
	Let $\Omega'_b$ be defined by~\eqref{eq:Deltab.def}, let the assumptions of Theorem~\ref{thm:iR} hold, let $H_b$ be as in \eqref{eq:Hb.def} and let $\delta_b$ be as in \eqref{eq:Deltab.def}. Then we have as $b \to +\infty$
	\begin{equation}
	b^\frac12 \approx V'(x_b) \Delta_b \ls \inf \left\{ \frac{\| H_b u \|}{\| u \|}: \; 0 \neq u \in \Dom(H), \; \supp u \cap \Omega'_b = \emptyset\right\}.
	\end{equation}
\end{proposition}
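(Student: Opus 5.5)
The first claim, $b^{1/2}\approx V'(x_b)\Delta_b$, is immediate from the definitions. We have $V'(x_b)\Delta_b = V'(x_b)x_b\delta_b = (V'(x_b)x_b)^{1/2}$. By \eqref{V'.nu.asm}, $V'(x_b)x_b\approx V(x_b)=b$.

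For the lower bound, the plan is to use the imaginary part of the quadratic form, with a sign determined by the position of $\supp u$ relative to $x_b$. For $u\in\Dom(H)$ a direct computation gives
\begin{equation*}
\Im\langle H_b u,u\rangle = \int_\R (V(x)-b)|u(x)|^2\,\dd x .
\end{equation*}
The contributions of $-i\partial_x\sigma_1$ and $m\sigma_3$ are real, and the term $-a(b)$ only affects the real part. By density of $C_c^\infty$ (Theorem~\ref{thm:H.def}) and a cut-off argument, it suffices to treat $u$ with $\supp u\subset(-\infty,x_b-\Delta_b]$ and, separately, $u$ with $\supp u\subset[x_b+\Delta_b,\infty)$. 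A general $u$ supported off $\Omega_b'$ splits as $u=u\mathds 1_{(-\infty,x_b-\Delta_b]}+u\mathds 1_{[x_b+\Delta_b,\infty)}$. Both pieces lie in $\Dom(H)$ because $\supp u$ avoids the open interval $\Omega'_b$, so $u$ is in $H^1$ and vanishes on the boundary points of $\Omega'_b$. The pieces have disjoint supports, and $H_b$ is local, so $\|H_bu\|^2=\|H_bu_-\|^2+\|H_bu_+\|^2$ and $\|u\|^2=\|u_-\|^2+\|u_+\|^2$. Hence the bound transfers from the pieces to $u$.

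On $[x_b+\Delta_b,\infty)$, $V$ is increasing by \eqref{V.unbd}. The mean value theorem together with \eqref{V.V'.approx} gives
\begin{equation*}
V(x)-b\ge V(x_b+\Delta_b)-V(x_b)\gtrsim V'(x_b)\Delta_b .
\end{equation*}
Then $\|H_bu\|\|u\|\ge \Im\langle H_bu,u\rangle\gtrsim V'(x_b)\Delta_b\|u\|^2$. On $(-\infty,x_b-\Delta_b]$ we need an upper bound on $V-b$, treated on three pieces. On $[x_b/2,\,x_b-\Delta_b]$ (or on $[x_0,x_b-\Delta_b]$) $V$ is increasing, so $b-V(x)\ge b-V(x_b-\Delta_b)\gtrsim V'(x_b)\Delta_b$, again by the mean value theorem on $\Omega'_b$. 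On $(-\infty,0)$ Assumption~\ref{asm:V.iR}~\ref{itm:V.bd.lhs} gives $V\le M$, so $b-V\ge b/2\gg b^{1/2}$. On the compact set $[0,x_0]$, $V$ is bounded, so again $b-V\gtrsim b$. Altogether $b-V(x)\gtrsim V'(x_b)\Delta_b$ on the whole left region, and $-\Im\langle H_bu,u\rangle$ gives the bound.

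The main obstacle is justifying the mean value estimates uniformly. We need $V'(\xi)\approx V'(x_b)$ for $\xi$ between $x_b$ and $x_b\pm\Delta_b$, which is exactly \eqref{V.V'.approx}; the closure of $\Omega'_b$ is harmless. We also need monotonicity on $[x_0,x_b-\Delta_b]$, which holds because $V'>0$ there, so the supremum of $V$ on this interval is $V(x_b-\Delta_b)$. Everything else is elementary.
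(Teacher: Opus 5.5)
Your proof is correct and is essentially the paper's argument. The paper folds your left/right splitting into the single multiplier $\chi_b=\sgn(V-b)$, which gives $\int_\R |V-b|\,|u|^2\,\dd x=\Im\langle \chi_b H_b u,u\rangle\le\|H_bu\|\,\|u\|$ in one step; this equals $\Im\langle H_bu_+,u_+\rangle-\Im\langle H_bu_-,u_-\rangle$ in your notation. It then bounds $|V-b|\gtrsim V'(x_b)\Delta_b$ off $\Omega_b'$ by monotonicity and the mean value theorem with \eqref{V.V'.approx}, just as you do.

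Your appeal to density of $C_c^\infty$ is unnecessary, since $\Im\langle H_bu,u\rangle=\int_\R(V-b)|u|^2\,\dd x$ holds directly for every $u\in\Dom(H)$.
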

\begin{proof}
	Define $\chi_b(x) := \sgn(V(x) - b), x \in \R,$ and note that $\| \chi_b \|_{\infty} \le 1$ and $\chi'_b(x) = 0, \; x \in \R \setminus \Omega'_b$. Let $u = (u_1, u_2)^t \in \Dom(H)$ such that $\supp u \cap \Omega'_b = \emptyset$, then
	\begin{equation}
		\begin{aligned}
			\langle \chi_b H_b u, u \rangle &= \langle (m - a + i (V - b)) u_1 - i u_2', \chi_b u_1 \rangle\\
			&\quad - \langle i u_1' + (m + a - i (V - b)) u_2, \chi_b u_2 \rangle\\
			&= (m-a) \langle u_1, \chi_b u_1 \rangle - (m+a) \langle u_2, \chi_b u_2 \rangle + 2 \Im \langle u_1', \chi_b u_2 \rangle\\
			&\quad + i (\langle |V - b| u_1, u_1 \rangle + \langle |V - b| u_2, u_2 \rangle).
		\end{aligned}
	\end{equation}
	Therefore
	\begin{equation}\label{eq:lboundstep1}
		\langle |V - b| u_1, u_1 \rangle + \langle |V - b| u_2, u_2 \rangle = \Im \langle \chi_b H_b u, u \rangle \le \| H_b u \| \| u \|.
	\end{equation}
	Next we find a lower bound for $|V(x) - V(x_b)|$ in $\R \setminus \Omega'_b$. By Assumption~\ref{asm:V.iR}, we have that $V$ is bounded above in $(-\infty, x_0]$ and unbounded and increasing in $(x_0, +\infty)$. It follows that, for large enough $b$, we obtain for all $x \in \R \setminus \Omega'_b$ that 
	\begin{equation}
		|V(x) - V(x_b)| \ge \min\{V(x_b+\Delta_b) - V(x_b), V(x_b) - V(x_b-\Delta_b)\}.
	\end{equation}
	Applying the mean-value theorem to the first term inside the $\min$ with $\xi_b \in (x_b, x_b+\Delta_b)$, and noting that $V'(\xi_b) \approx V'(x_b)$ by \eqref{V.V'.approx}, we deduce (for $b \to +\infty$)
	\begin{equation}
		|V(x_b+\Delta_b) - V(x_b)| = V'(\xi_b) \Delta_b \approx V'(x_b) \Delta_b .
	\end{equation}
	A similar result can be found for $|V(x_b-\Delta_b) - V(x_b)|$. Therefore
	\begin{equation}\label{eq:ImVlbound}
		|V(x) - b| \gs V'(x_b) \Delta_b, \qquad x \in \R \setminus \Omega'_b, \qquad b \to +\infty.
	\end{equation}
	Hence, by combining \eqref{eq:ImVlbound} and \eqref{eq:lboundstep1} and applying \eqref{eq:Deltab.def}, \eqref{del.b.asym}, we conclude that for all $u \in \Dom(H)$ with $\supp u \cap \Omega'_b = \emptyset$
	\begin{equation}
	b^\frac12 \|u\| \approx V'(x_b) \Delta_b \| u \| 	\ls \| H_b u \|, \quad b \to +\infty,
	\end{equation}
	which completes the proof.
\end{proof}

\subsection{Step 2: estimate near \texorpdfstring{$x_b$}{xb}}
\label{ssec:step.2.iR}
\begin{proposition}
	\label{prop:local.iR}
	Let the assumptions of Theorem~\ref{thm:iR} hold, let $H_b$ be as in \eqref{eq:Hb.def} and define 
	\begin{equation}\label{eq:Omega.def}
		\Omega_b := \left( x_b - 2 \Delta_b, x_b + 2 \Delta_b \right),
	\end{equation}
	with $\Delta_b$ as defined in \eqref{eq:Deltab.def}. Then we have as $b \to + \infty$
	\begin{equation}\label{eq:Hb.res.norm.local}
		\begin{aligned}
			&(V'(x_b))^{\frac12} \| (A_{m_b} - \mu_b)^{-1} \|^{-1} \left(1 - \BigO(b^{-\frac12} +x_b^{-\frac12}) \right)  \le\\
			&\hspace{3cm} \inf \left\{ \frac{\left\| H_b u \right\|}{\|u\|}: \; 0 \neq u \in \Dom(H), \; \supp u \subset \Omega_b \right\},
		\end{aligned}	
	\end{equation}
	where $A_m$ is as in \eqref{eq:airy.dirac.def} and $m_b$ as in~\eqref{mb.def}.
\end{proposition}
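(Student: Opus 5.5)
The plan is to compare $H_b$ on functions supported in $\Omega_b$ with a shifted and rescaled Airy-Dirac operator, and to treat the difference as a small relative perturbation. Write $\beta := V'(x_b)$ and $\la_b = a + ib$. Taylor's theorem around the turning point $x_b$ gives
\begin{equation*}
V(x) - b = \beta (x - x_b) + r_b(x), \qquad |r_b(x)| \le \tfrac12 \sup_{\Omega_b}|V''|\,(x-x_b)^2 \ls \beta x_b^{-1} (x-x_b)^2, \quad x \in \Omega_b .
\end{equation*}
Here we use Assumption~\ref{asm:V.iR}~\ref{itm:nu.asm} together with \eqref{V.V'.approx}; the latter extends to $\Omega_b$ since $2\delta_b = o(1)$. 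For $u$ supported in $\Omega_b$ this yields
\begin{equation*}
H_b u = \bigl(-i\partial_x\sigma_1 + m\sigma_3 + i\beta(x-x_b)I_2 - a\bigr)u + i r_b u .
\end{equation*}

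Next I would conjugate with the unitary operator $(W_b u)(y) = \beta^{-1/4} u(x_b + \beta^{-1/2} y)$. A direct computation gives
\begin{equation*}
W_b^{-1}\bigl(-i\partial_x\sigma_1 + m\sigma_3 + i\beta(x-x_b) - a\bigr)W_b = \beta^{1/2}(A_{m_b} - \mu_b),
\end{equation*}
with $m_b$ and $\mu_b$ as in \eqref{mb.def} and \eqref{mu.b.def}. Setting $v = W_b^{-1}u$, we get
\begin{equation*}
\|H_b u\| \ge \beta^{1/2}\|(A_{m_b}-\mu_b)v\| - \|r_b u\|
\ge \beta^{1/2}\|(A_{m_b}-\mu_b)^{-1}\|^{-1}\|u\| - \|r_b u\| .
\end{equation*}

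It remains to show that $\|r_b u\| = \BigO(b^{-1/2}+x_b^{-1/2})\,\beta^{1/2}\|(A_{m_b}-\mu_b)^{-1}\|^{-1}\|u\|$. By \eqref{Am.res.sum} we have $\|(A_{m_b}-\mu_b)^{-1}\|\approx m_b^{-1}$ when $\mu_b=0$. For $\mu_b \neq 0$, Theorem~\ref{thm:airy.res.m} gives the lower bound $\|(A_{m_b}-\mu_b)^{-1}\|^{-1}\gs \min\{m_b,1\}$ uniformly, because $\mu_b=\BigO(1)$. The remainder term is estimated by splitting $(x-x_b)^2 \le 2\Delta_b|x-x_b|$ on $\Omega_b$:
\begin{equation*}
\|r_b u\| \ls \beta x_b^{-1}\Delta_b \|(x-x_b)u\| = \beta^{1/2}\delta_b\, \|y v\| .
\end{equation*}
We then control $\|yv\|$ by \eqref{eq:Am.inv.x.graphn}, or more precisely by \eqref{Am.gr.mu}:
\begin{equation*}
\|yv\| \ls \|(A_{m_b}-\mu_b)v\| + \|v\| .
\end{equation*}
Combining the two bounds gives $\|r_b u\| \ls \beta^{1/2}\delta_b(\|(A_{m_b}-\mu_b)v\|+\|v\|)$. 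The $\|(A_{m_b}-\mu_b)v\|$ part can be absorbed into the left side, since $\delta_b\approx b^{-1/2}\to 0$. The $\|v\|$ part contributes $\beta^{1/2}\delta_b\|v\|$. Relative to the main term $\beta^{1/2}\|(A_{m_b}-\mu_b)^{-1}\|^{-1}\|v\|$, this is of size $\delta_b\|(A_{m_b}-\mu_b)^{-1}\|$.

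The main obstacle is this last ratio when $m_b\to0$, that is, when $V'\to\infty$. Then $\|(A_{m_b}-\mu_b)^{-1}\|\approx e^{\mu_b^2}/m_b = e^{\mu_b^2}\beta^{1/2}/m$, and we need $\delta_b\beta^{1/2}\approx x_b^{-1/2}$ to be small. Indeed,
\begin{equation*}
\delta_b\beta^{1/2} = (\beta x_b)^{-1/2}\beta^{1/2} = x_b^{-1/2},
\end{equation*}
which is exactly the $\BigO(x_b^{-1/2})$ error in \eqref{eq:Hb.res.norm.local}. In the regime $m_b\gs1$ the ratio is simply $\BigO(\delta_b)=\BigO(b^{-1/2})$.

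If the crude bound $(x-x_b)^2\le 2\Delta_b|x-x_b|$ ever loses too much, I would instead bound $\|(x-x_b)^2u\|$ directly through a second-order graph estimate for $A_m$. I expect that to be unnecessary, because the scaling above already produces exactly the claimed error. The bookkeeping of the uniformity in $\mu_b$ in the bound on $\|(A_{m_b}-\mu_b)^{-1}\|^{-1}$ across all regimes of $m_b$ is where care is needed.
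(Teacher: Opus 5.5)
Your argument is correct and follows the paper's proof: Taylor expansion at $x_b$, rescaling by $V'(x_b)^{-1/2}$ to $A_{m_b}-\mu_b$, the bound $(x-x_b)^2\le 2\Delta_b|x-x_b|$ on $\Omega_b$ (the paper's $\|x^{-1}R_b\|_\infty\ls\delta_b$ in \eqref{eq:Rbdecay}), the weighted graph estimate \eqref{Am.gr.mu}, and \eqref{Am.res.sum}, which together give the relative error $\delta_b(1+m_b^{-1})\ls b^{-1/2}+x_b^{-1/2}$. The difference is only organisational: you absorb the remainder directly into a lower bound for $\|H_b u\|$, whereas the paper extends the remainder by zero to a global operator $S_b$, inverts $S_b-\mu_b$ by a Neumann series and compares resolvents via \eqref{eq:Sb.Sinf}, a two-sided statement it reuses for the pseudomode in Step~3; two small corrections: with your definition of $W_b$ the identity should read $W_b(\cdots)W_b^{-1}=V'(x_b)^{1/2}(A_{m_b}-\mu_b)$ with $v=W_bu$, and \eqref{Am.res.sum} is already uniform for $|\mu_b|\le R$, so no separate case $\mu_b\neq 0$ is needed.
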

\begin{proof}
	If $x \in \Omega_b$, the Taylor expansion of $V$ around $x_b$ yields
	\begin{equation}
		V(x) - V(x_b) = V'(x_b)\left( x - x_b \right) + \frac{1}{2} V''(x_b + s (x - x_b)) \left( x - x_b\right)^2,
	\end{equation}
	where $s = s(x,b) \text{ and } 0 < s < 1$. Let
	\begin{equation}\label{eq:V.hat.def}
		\widetilde V_b (x) := V'(x_b)\left( x - x_b \right) + \frac{1}{2} V''(x_b + s (x - x_b)) \left( x - x_b\right)^2 \chi_{\Omega_b}(x), \quad x \in \R,
	\end{equation}
	and consider the operator in $\Lt(\R, \C^2)$
	\begin{equation}\label{eq:Hb.hat.def}
		\widetilde H_b= -i \Ntp \sigma_1 + m \sigma_3 + (i \widetilde V_b - a) I_2, \quad \Dom(\widetilde H_b) = H^1(\R, \C^2) \cap \Dom(x).
	\end{equation}
	Given $\rho > 0$, we define a unitary operator on $L^2(\R, \C^2)$ by
	$
	(U_{b,\rho} u)(x) := \rho^{\frac{1}{2}} u(\rho x + x_b)$, $x \in \R$.
	Then for any $u \in U_{b,\rho} ( \Dom(\widetilde H_b) )$ and all $x \in \R$
	\begin{equation}
		( U_{b,\rho} \widetilde H_b U_{b,\rho}^{-1} u )(x) = -i \rho^{-1} \Ntp \sigma_1 u(x) + m \sigma_3 u(x) + (i \widetilde V_b(\rho x + x_b) - a) u(x).
	\end{equation}
	Choosing the value of $\rho$ for sufficiently large $b > 0$
	\begin{equation}
		\label{eq:rho.def}
		\rho := (V'(x_b))^{-\frac12},
	\end{equation}
	letting
	\begin{equation}\label{eq:Omegabrho.def}
		\Omega_{b,\rho} := (-2 \Delta_b \rho^{-1}, 2 \Delta_b \rho^{-1}) = (-2 x_b^\frac12, 2 x_b^\frac12)
	\end{equation}
	and taking $x \in \R$, then
	\begin{equation} \begin{aligned}
		V_b(x) &:= \rho \widetilde V_b(\rho x + x_b) 
		= V'(x_b) \rho^2 x + \frac{1}{2} V''(\tilde{s} \rho x + x_b) \rho^3 x^2 \chi_{\Omega_{b,\rho}}(x)\\
		&=  V'(x_b) \rho^2 \left(x + \frac{1}{2} \frac{V''(\tilde{s} \rho x + x_b)}{V'(x_b)} \rho x^2 \chi_{\Omega_{b,\rho}}(x)\right),
	\end{aligned} 
	\end{equation}		
	where $0 < \tilde s < 1$. Calling
	\begin{equation}\label{eq:Rb.def}
		R_b(x) := \frac12 \frac{V''(\tilde{s} \rho x + x_b)}{V'(x_b)} \rho x^2 \chi_{\Omega_{b,\rho}}(x), \quad x \in \R,
	\end{equation}
	we have
	\begin{equation}\label{eq:Vb.def}
		V_b(x) = x + R_b(x), \quad x \in \R.
	\end{equation}
	By Assumption~\ref{asm:V.iR}~\ref{itm:nu.asm}, and arguing as in the proof of \cite[Prop.~3.4]{ArSi-2023-284} (note that $\delta_b \le 1/4$ for sufficiently large $b$), we deduce
	\begin{equation}
		\left| \frac{V''(\tilde{s} \rho x + x_b)}{V'(x_b)} \right| \ls x_b^{-1}, \quad x \in \Omega_{b,\rho}, \quad  b \to +\infty.
	\end{equation}
	For all $x \in \Omega_{b,\rho}$ we have $|\rho x|\ls \delta_b x_b$ and therefore
	\begin{equation}\label{eq:Rbdecay}
		\| x^{-1} R_b \|_{\infty} \ls \delta_b, \qquad b \to +\infty.
	\end{equation}

	Let $S_b$ be the operator in $\Lt(\R, \C^2)$
	\begin{equation}\label{eq:Sb.def}
		S_b = -i \Ntp \sigma_1 + m_b \sigma_3 + i V_b I_2, \quad \Dom(S_b) = H^1(\R, \C^2) \cap \Dom(x I_2);
	\end{equation}
	notice that $m_b = \rho m$, $\mu_b = \rho a$ and
	\begin{equation}\label{eq:Sb.def.Am}
	S_b - \mu_b = A_{m_b} - \mu_b + i R_b I_2 = (I_2 + i R_b (A_{m_b} - \mu_b)^{-1}) (A_{m_b} - \mu_b).
	\end{equation}
	Our next aim is to prove that $\| (S_b - \mu_b)^{-1} - (A_{m_b} - \mu_b)^{-1} \| = o(1)$ as $b \to +\infty$.
	
	We begin by showing that  $\mu_b \in \rho(S_b)$ for any sufficiently large $b$.  Recall that $|\mu_b| \ls 1$ as $b \to + \infty$ by the assumption~\eqref{mu.b.def}, so from \eqref{eq:Rbdecay} and \eqref{eq:Am.inv.x.graphn}, we get 
	\begin{equation}
	\label{eq:Rb.x}
		\begin{aligned}
			\| R_b (A_{m_b} - \mu_b)^{-1} \| &\le \| x^{-1} R_b \|_{\infty}  \| x (A_{m_b} - \mu_b)^{-1} \| 
			\\
			&
			\ls \delta_b (1 + \langle \mu_b \rangle \| (A_{m_b} - \mu_b)^{-1} \|)
			\\
			& 
			\ls \delta_b (1 + \| (A_{m_b} - \mu_b)^{-1} \|), \quad b \to +\infty.
		\end{aligned}
	\end{equation}
Thus, recalling \eqref{Am.res.sum}, \eqref{mb.def}, \eqref{eq:Deltab.def} and \eqref{del.b.asym}, we obtain that
\begin{equation}\label{eq:Rb.x1}
\delta_b (1 + \| (A_{m_b} - \mu_b)^{-1} \|) 
\ls 
\delta_b + \delta_b m_b^{-1}
\ls 
b^{-\frac12} + x_b^{- \frac 12}, \quad b \to + \infty.
\end{equation}
Returning to \eqref{eq:Rb.x}, we get
\begin{equation}
\| R_b (A_{m_b} - \mu_b)^{-1} \| \leq \frac 12, \quad b \to +\infty,
\end{equation}
so $I_2 + i R_b (A_{m_b} - \mu_b)^{-1}$ is boundedly invertible and $\| (I_2 + i R_b (A_{m_b} - \mu_b)^{-1})^{-1} \| \leq 2$ as $b \to +\infty$.
It follows that $\mu_b \in \rho(S_b)$ and
\begin{equation}\label{eq:Sb.inv.local.iR}
(S_b - \mu_b)^{-1} = (A_{m_b} - \mu_b)^{-1} (I_2 + i R_b (A_{m_b} - \mu_b)^{-1})^{-1}, \quad b \to +\infty.
\end{equation}
In particular (see also \eqref{eq:Am.inv.x.graphn}), we have  
\begin{equation}
\label{eq:Sb.norm.est}
\begin{aligned}
  \| (S_b - \mu_b)^{-1} \| +  \| x (S_b - \mu_b)^{-1} \| &  \ls  1 + \langle \mu_b \rangle \| (A_{m_b} - \mu_b)^{-1} \| 
  \\ & \ls 1 + m_b^{-1}, \qquad b \to +\infty.
\end{aligned}
\end{equation}

Using the second resolvent identity, \eqref{eq:Sb.def.Am}, \eqref{eq:Rbdecay}, \eqref{eq:Sb.norm.est} and \eqref{eq:Rb.x1}, we obtain
\begin{equation}\label{eq:Sb.Sinf}
		\begin{aligned}
	&	\| (S_b - \mu_b)^{-1} - (A_{m_b} - \mu_b)^{-1} \| 
	\\
	& \quad = \| (A_{m_b} - \mu_b)^{-1}  R_b (S_b - \mu_b)^{-1}  \|
	\\
	& \quad \le \| (A_{m_b} - \mu_b)^{-1} \|  \| x^{-1} R_b \|_{\infty}  \| x (S_b - \mu_b)^{-1} \| 
	\\
	& \quad \ls \| (A_{m_b} - \mu_b)^{-1} \| \delta_b (1 + m_b^{-1}), 
	\\
	& \quad \ls \| (A_{m_b} - \mu_b)^{-1} \| ( b^{-\frac12} + x_b^{- \frac 12} ), \quad b \to +\infty.
	\end{aligned}
	\end{equation}
	Therefore
	\begin{equation}
		\label{eq:Sb.mub.inv.norm}
		\begin{aligned}
			\| (S_b - \mu_b)^{-1} \| = \| (A_{m_b} - \mu_b)^{-1} \| \left(1 + \BigO( b^{-\frac12} + x_b^{- \frac 12} ) \right), \quad b \to +\infty.
		\end{aligned}
	\end{equation}
	But $S_b - \mu_b = \rho U_{b,\rho} \widetilde H_b U_{b,\rho}^{-1}$ and hence
	\begin{equation}
		\rho^{-1} \| \widetilde H_b^{-1} \| = \| (A_{m_b} - \mu_b)^{-1} \| 
		\left(1 + \BigO( b^{-\frac12} + x_b^{- \frac 12} ) \right), \quad b \to +\infty.
	\end{equation}
	With $b$ sufficiently large, let $u \in \Dom(H)$ such that $\supp u \subset \Omega_b$. Then $u \in \Dom( \widetilde H_b)$ and $\|\widetilde H_b u \| = \| H_b u\|$. With $v := \widetilde H_b u \in \Lt(\R, \C^2)$, we have
	\begin{equation}
		\begin{aligned}
			\rho^{-1} \| u \| = \rho^{-1} \| \widetilde H_b^{-1} v \| \le \| (A_{m_b} - \mu_b)^{-1} \| \left(1 + \BigO( b^{-\frac12} + x_b^{- \frac 12} ) \right) \left\| H_b u \right\|,
		\end{aligned}
	\end{equation}
	which yields \eqref{eq:Hb.res.norm.local}.
\end{proof}

\subsection{Step 3: pseudomode}
\label{ssec:step.3.iR}
Throughout this sub-section, we shall retain the notation introduced in the proof of Proposition~\ref{prop:local.iR}. In particular, $\rho$ is as in \eqref{eq:rho.def}. 
\begin{lemma}
	Let the assumptions of Theorem~\ref{thm:iR} hold, let $S_b$ be as defined in \eqref{eq:Sb.def} and denote (recall \eqref{eq:Deltab.def}, \eqref{eq:rho.def})
	\begin{equation}\label{eq:Omegabrhop.def}
		\Omega_{b,\rho}' := (-\Delta_b \rho^{-1}, \Delta_b \rho^{-1}).
	\end{equation}
	Then for any $u \in \Dom(S_b)$ such that $\supp u \cap \Omega_{b,\rho}' = \emptyset$, we have
	\begin{equation}\label{eq:Sb.away.step3}
		\| u \| \ls x_b^{-\frac 12} \| (S_b - \mu_b) u \|, \quad b \to +\infty.
	\end{equation}
\end{lemma}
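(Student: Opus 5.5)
The plan is to reuse the argument of Proposition~\ref{prop:away.iR}, now in the rescaled variables, with the sign of the potential as multiplier. Recall that $S_b-\mu_b=-i\partial_x\sigma_1+m_b\sigma_3+(iV_b-\mu_b)I_2$ with $V_b(x)=x+R_b(x)$ and $\|x^{-1}R_b\|_\infty\ls\delta_b=o(1)$ by \eqref{eq:Rbdecay}.

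\textbf{Step 1: sign of $V_b$.} For $b$ large we have $|R_b(x)|\le\frac12|x|$. Hence $V_b(x)$ has the sign of $x$, and
\[
|V_b(x)|\ge \tfrac12|x| \quad \text{for all } x\in\R .
\]
Set $\chi(x):=\sgn x$. For $u$ with $\supp u\cap\Omega'_{b,\rho}=\emptyset$, $\chi$ is locally constant on $\supp u$, with $\chi=+1$ for $x\ge\Delta_b\rho^{-1}$ and $\chi=-1$ for $x\le-\Delta_b\rho^{-1}$. In particular $\chi u\in\Dom(S_b)$ and $(\chi u)'=\chi u'$.

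\textbf{Step 2: the multiplier identity.} Compute $\langle (S_b-\mu_b)u,\chi u\rangle$ exactly as in the proof of Proposition~\ref{prop:away.iR}.
\begin{itemize}
\item The mass and $\mu_b$ terms, $m_b\langle\sigma_3u,\chi u\rangle$ and $-\mu_b\langle u,\chi u\rangle$, are real.
\item The derivative terms combine to $-i(\langle u_2',\chi u_1\rangle+\langle u_1',\chi u_2\rangle)=2\Im\langle u_1',\chi u_2\rangle$, up to sign. This uses integration by parts together with $\chi'=0$ on $\supp u$, so it is also real.
\item The potential term is $i\int \chi V_b|u|^2=i\int|V_b||u|^2$.
\end{itemize}
Taking imaginary parts gives
\[
\tfrac12\int|x||u|^2\,\dd x\le \int |V_b||u|^2\,\dd x=\Im\langle (S_b-\mu_b)u,\chi u\rangle\le\|(S_b-\mu_b)u\|\,\|u\|.
\]

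\textbf{Step 3: conclusion.} On $\supp u$ we have $|x|\ge\Delta_b\rho^{-1}=\delta_bx_bV'(x_b)^{1/2}$. By \eqref{eq:Deltab.def}, $\delta_b=(V'(x_b)x_b)^{-1/2}$, so
\[
\Delta_b\rho^{-1}=x_b^{1/2}.
\]
Therefore $\tfrac12x_b^{1/2}\|u\|^2\le\|(S_b-\mu_b)u\|\,\|u\|$, which is \eqref{eq:Sb.away.step3}.

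\textbf{Main obstacle.} The only delicate point is justifying the integration by parts and the use of $\chi u$ for general $u\in\Dom(S_b)$ rather than smooth $u$. Because $\chi$ is constant on each component of the complement of $\Omega'_{b,\rho}$, this reduces to the standard $H^1$ computation, as already done in Proposition~\ref{prop:away.iR}. One also needs the smallness $|R_b|\le|x|/2$ uniformly on all of $\R$; this holds because $R_b$ vanishes outside $\Omega_{b,\rho}$.
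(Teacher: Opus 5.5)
Your proof is correct and follows the paper's argument. The paper uses the multiplier $\chi_b=\sgn(V_b)$, which for large $b$ coincides with your $\sgn x$ because $|R_b(x)|\le\frac12|x|$; it then takes the imaginary part of $\langle \chi_b(S_b-\mu_b)u,u\rangle$ and bounds $|V_b(x)|\gtrsim|x|\ge\Delta_b\rho^{-1}=x_b^{1/2}$ off $\Omega'_{b,\rho}$, exactly as you do. Your choice of $\sgn x$ only makes the vanishing of $\chi'$ outside $\Omega'_{b,\rho}$ immediate and gives the explicit constant $2$.
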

\begin{proof}
For $V_b(x)$ as in \eqref{eq:Vb.def}, define $\chi_b(x) := \sgn(V_b(x))$, $x \in \R$, and observe that $\| \chi_b \|_{\infty} \le 1$ and
\begin{equation}\label{chi'.0}
\chi'_b(x) = 0, \quad x \in \R \setminus \Omega_{b,\rho}', \quad b \to +\infty.	
\end{equation}
To see \eqref{chi'.0},
notice that for $x \notin \Omega_{b,\rho}$ (refer to~\eqref{eq:Omegabrho.def}), we have $V_b(x) = x$, so $\chi_b(x) =1 $ for $x> 2\Delta_b \rho^{-1}$ and $\chi_b(x) =- 1 $ for $x < - 2\Delta_b \rho^{-1}$.
For any $x \in \Omega_{b,\rho} \cap \Omega_{b,\rho}'^c$, we have by \eqref{eq:Vb.def} and \eqref{eq:Rbdecay} that
\begin{equation}
\begin{aligned}
V_b(x) & = x + R_b(x) = x(1 + x^{-1} R_b(x)) = x (1 + \BigO(\delta_b)), \quad b \to + \infty.
\end{aligned}
\end{equation}
Since $\delta_b = o(1)$, it follows that $\chi'_b(x) = 0$ as $b \to + \infty$.

Let $u = (u_1, u_2)^t \in \Dom(S_b)$ such that $\supp u \cap \Omega_{b,\rho}' = \emptyset$. Then, arguing as in the proof of Proposition~\ref{prop:away.iR}, we find
	\begin{equation}
	\begin{aligned}
		\langle \chi_b (S_b - \mu_b) u, u \rangle &= \rho ((m - a) \langle u_1, \chi_b u_1 \rangle - (m + a) \langle u_2, \chi_b u_2 \rangle) \\
			&\quad + 2 \Im \langle u_1', \chi_b u_2 \rangle + i (\langle |V_b| u_1, u_1 \rangle + \langle |V_b| u_2, u_2 \rangle).
		\end{aligned}
	\end{equation}
	Therefore
	\begin{equation}\label{eq:Vb.Sb.qf}
		\langle |V_b| u_1, u_1 \rangle + \langle |V_b| u_2, u_2 \rangle = \Im \langle \chi_b (S_b - \mu_b) u, u \rangle \le \| (S_b - \mu_b) u \| \| u \|.
	\end{equation}
	Using \eqref{eq:Vb.def} and \eqref{eq:Rbdecay} once more, we obtain for any $x \in \R \setminus \Omega_{b,\rho}'$
	\begin{equation}\label{eq:Vb.away.llim}
		|V_b(x)| \ge |x| (1 - |x^{-1} R_b(x)|) \gs |x| \ge \Delta_b \rho^{-1}, \quad b \to +\infty.
	\end{equation}
	Combining \eqref{eq:Vb.Sb.qf} and \eqref{eq:Vb.away.llim}, we conclude that for any $u \in \Dom(S_b)$ with $\supp u \cap \Omega_{b,\rho}' = \emptyset$
	\begin{equation}
	 x_b^\frac12 \| u \| =	\Delta_b \rho^{-1} \| u \| \ls \| (S_b - \mu_b) u \|, \quad b \to +\infty;
	\end{equation}
	for the first equality recall \eqref{eq:Deltab.def} and \eqref{eq:rho.def}.
\end{proof}
With $\Omega_{b,\rho}, \Omega_{b,\rho}'$ as in \eqref{eq:Omegabrho.def}, \eqref{eq:Omegabrhop.def}, respectively, let $\psi_b \in C_c^{\infty}(\Omega_{b,\rho})$, $0 \le \psi_b \le 1$, be such that
\begin{equation}\label{eq:psib.def}
	\psi_b(x) = 1, \; x \in \Omega_{b,\rho}' = (-x_b^{\frac 12},x_b^{\frac 12} ), \quad \| \psi_b^{(j)} \|_{\infty} \ls (\Delta_b \rho^{-1})^{-j} = x_b^{-\frac j2},
\end{equation}
where $ j \in \{1, \dots, n_0+1\}$ with $n_0$ from \eqref{n0.def}. Notice that $\psi_b \rightarrow 1$ pointwise in $\R$ as $b \to +\infty$.
\begin{lemma}
	\label{lem:Sb.psib.comm.est}
	Let the assumptions of Theorem~\ref{thm:iR} hold, let $\rho$ be as in \eqref{eq:rho.def}, let $S_b$ be as defined in \eqref{eq:Sb.def} and $\psi_b$ as in \eqref{eq:psib.def}. Then, for any $u \in \Dom(S_b)$ such that $\| (S_b - \mu_b) u \| \approx \rho \| u \|$ as $b \to +\infty$, we have
	\begin{equation}
		\| [S_b, \psi_b I_2] u \| \ls \rho x_b^{-\frac 12} \| u \|, \quad b \to +\infty.
	\end{equation}
\end{lemma}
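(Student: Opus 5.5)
The commutator is explicit, so the whole task is to bound one localized $L^2$ norm. Since $S_b = -i\partial_x \sigma_1 + m_b\sigma_3 + iV_b I_2$ and multiplication by $\psi_b I_2$ commutes with the matrix potential terms, we have
\begin{equation*}
[S_b,\psi_b I_2]u = -i\psi_b'\,\sigma_1 u,
\qquad\text{hence}\qquad
\|[S_b,\psi_b I_2]u\| = \|\psi_b' u\|.
\end{equation*}
Because $\|\psi_b'\|_\infty \ls x_b^{-1/2}$ by \eqref{eq:psib.def}, it suffices to prove $\|\psi_b' u\| \ls \rho \|u\|$ as $b\to+\infty$. The weight $\rho$ has to come from the smallness of $u$ on the support of $\psi_b'$, and there is no room to lose the factor $\rho$.

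The support of $\psi_b'$ lies in $\Omega_{b,\rho}\setminus\Omega'_{b,\rho}$, where $|x|\ge x_b^{1/2}$. There $|V_b(x)| \gs |x| \gs x_b^{1/2}$ by \eqref{eq:Vb.away.llim}. I would reuse the identity \eqref{eq:Vb.Sb.qf}, but with a weight $\eta^2$ in place of the sharp support condition. Take $\eta$ to be a smooth cut-off with $0\le\eta\le1$, $\eta=1$ on $\supp\psi_b'$, $\supp\eta\cap\Omega'_{b,\rho}/2=\emptyset$ (i.e.\ $\eta$ vanishes on the half-size interval $(-\tfrac12 x_b^{1/2},\tfrac12 x_b^{1/2})$), and $\|\eta'\|_\infty\ls x_b^{-1/2}$. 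Set $w=\eta u$; it is supported away from the zero of $V_b$, so $\chi_b=\sgn V_b$ is constant on each component of $\supp w$. Repeating the computation in the lemma preceding \eqref{eq:Sb.away.step3} with $\Omega'_{b,\rho}$ replaced by its half-size version, the relevant constants are unchanged up to factors and we get
\begin{equation*}
x_b^{1/2}\|w\|^2 \ls \langle |V_b| w,w\rangle \le \|(S_b-\mu_b)w\|\,\|w\|,
\end{equation*}
and therefore $\|w\| \ls x_b^{-1/2}\|(S_b-\mu_b)w\|$.

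Next, $(S_b-\mu_b)w = \eta(S_b-\mu_b)u - i\eta'\sigma_1 u$, so
\begin{equation*}
\|(S_b-\mu_b)w\| \le \|(S_b-\mu_b)u\| + x_b^{-1/2}\|u\| \ls \rho\|u\| + x_b^{-1/2}\|u\|,
\end{equation*}
using the hypothesis $\|(S_b-\mu_b)u\|\approx\rho\|u\|$. Combining the two displays and using $\|\psi_b' u\|\le\|\psi_b'\|_\infty\|w\|$,
\begin{equation*}
\|[S_b,\psi_b I_2]u\| \ls x_b^{-1/2}\|w\| \ls x_b^{-1}\bigl(\rho + x_b^{-1/2}\bigr)\|u\|.
\end{equation*}

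The main obstacle is the term $x_b^{-3/2}$, which must be dominated by $\rho x_b^{-1/2}$, i.e.\ we need $x_b^{-1}\ls\rho$. By \eqref{V'.nu.asm} we have $\rho = V'(x_b)^{-1/2}\approx (x_b/b)^{1/2}$, so $\rho x_b \approx x_b^{3/2}b^{-1/2}$, which is not obviously bounded below when $V$ grows fast. If that comparison fails, I would iterate the localization, using nested cut-offs $\eta_1,\dots,\eta_{n_0+1}$. Each step gains a factor $x_b^{-1}$ on the $\|u\|$ contribution, while the $\rho$ term stays controlled. This is exactly why \eqref{eq:psib.def} demands derivative bounds up to order $n_0+1$ with $n_0$ from \eqref{n0.def}: the polynomial bound $V'(x)=o(x^{2n_0})$ gives $\rho\gs x_b^{-n_0}$. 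After $n_0+1$ iterations the residual is $x_b^{-(n_0+1)-1/2}\|u\|\ls\rho x_b^{-1/2}\|u\|$, which yields the claim.
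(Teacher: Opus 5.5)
Your argument is correct and is in substance the paper's proof. Both reduce the commutator to $-i\sigma_1\psi_b'u$, bootstrap the off-$\Omega'_{b,\rho}$ estimate \eqref{eq:Sb.away.step3} through commutator expansions that gain $x_b^{-1}$ per step, and close after $n_0$ steps using \eqref{n0.def} in the form $x_b^{-n_0}\ls\rho$. The only difference is bookkeeping: the paper applies \eqref{eq:Sb.away.step3} directly to $\psi_b^{(j)}u$, which is why \eqref{eq:psib.def} asks for derivative bounds up to order $n_0+1$, whereas your nested cut-offs $\eta_k$ need only first-derivative bounds. One slip: your opening reduction should require $\|u\|_{L^2(\supp\psi_b')}\ls\rho\|u\|$ (your bound on $\|\eta u\|$ gives this), not $\|\psi_b'u\|\ls\rho\|u\|$, which would only yield $\rho\|u\|$ for the commutator.
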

\begin{proof}
It is straightforward to find that
\begin{equation}\label{eq:Sb.comm}
[S_b, \psi_b I_2] u = [-i \Ntp \sigma_1, \psi_b I_2] u +  [m_b \sigma_3,\psi_b I_2] u + [i V_b I_2,\psi_b I_2] u   = -i \sigma_1 \psi_b' u.
\end{equation}
Moreover, $\supp(\psi_b' u) \cap \Omega_{b,\rho}' = \emptyset$ and therefore, by \eqref{eq:Sb.away.step3}, we get
	\begin{equation}
		\| \psi_b' u \| \ls x_b^{-\frac 12} \| (S_b - \mu_b) \psi_b' u \|.
	\end{equation}
	Since
	\begin{equation}\label{eq:Sb.psib.comm.exp}
		(S_b - \mu_b) \psi_b' u = \psi_b' (S_b - \mu_b) u + [S_b, \psi_b' I_2] u = \psi_b' (S_b - \mu_b) u - i \sigma_1 \psi_b'' u,
	\end{equation}
	applying \eqref{eq:psib.def} and our assumption $\| (S_b - \mu_b) u \| \approx \rho \| u \|$, we deduce
	\begin{equation}\label{eq:Sb.psib.comm.order1}
		\begin{aligned}
			\| [S_b, \psi_b I_2] u \| = \| \psi_b' u \| & \ls x_b^{-\frac12} \left(
			x_b^{-\frac12} \| (S_b - \mu_b) u \| + \| \psi_b'' u \|
			\right) 
			\\ 
			& \ls x_b^{-1} \rho \| u \| + x_b^{-\frac 12} \| \psi_b'' u \|.
		\end{aligned}
	\end{equation}
	We note that $\supp(\psi_b^{(j)} u) \cap \Omega_{b,\rho}' = \emptyset$, $1 \le j \le n_0$, and we can therefore repeatedly apply the above process of using \eqref{eq:Sb.away.step3} and a commutator expansion as in \eqref{eq:Sb.psib.comm.exp} to find
	\begin{equation}
		\begin{aligned}
		\| \psi_b^{(j)} u \| &\ls x_b^{-\frac 12} \| (S_b - \mu_b) \psi_b^{(j)} u \| \ls x_b^{- \frac12} (\| \psi_b^{(j)} (S_b - \mu_b) u \| + \| \psi_b^{(j+1)} u \|)\\
			&\ls x_b^{-\frac{j+1}2 } \rho \| u \| + x_b^{-\frac 12} \| \psi_b^{(j+1)} u \|, \quad b \to +\infty,
		\end{aligned}
	\end{equation}
	where, in the last inequality, we have used the assumption $\| (S_b - \mu_b) u \| \approx \rho \| u \|$. Hence, returning to \eqref{eq:Sb.psib.comm.order1}, we obtain (recall \eqref{eq:rho.def})
	\begin{equation}
		\begin{aligned}
			\| [S_b, \psi_b I_2] u \| &\ls \left(\sum_{k = 1}^{n_0} \rho x_b^{- k }\right) \| u \| + x_b^{- \frac{n_0}2} \| \psi_b^{(n_0+1)} u \|
			\\
			&\ls \left(\rho x_b^{-1} + x_b^{-n_0 - \frac12}\right) \| u \|
			= \rho x_b^{- \frac12} \left( x_b^{-\frac 12 } +  \frac{V'(x_b)^{\frac12}}{x_b^{n_0}}  \right) \| u \| 
			\\
			&\ls \rho x_b^{-\frac 12} \| u \|, \quad b \to +\infty,
		\end{aligned}
	\end{equation}
	where in the last step we use \eqref{n0.def}.
\end{proof}
\begin{proposition}	
	\label{prop:lbound.iR}
	Let the assumptions of Theorem~\ref{thm:iR} hold and let $H_b$ be as in \eqref{eq:Hb.def}. Then there exist functions $0 \neq u_b \in \Dom(H)$ such that
	\begin{equation}
		\| H_b u_b \| = \| (A_{m_b} - \mu_b)^{-1} \|^{-1} (V'(x_b))^{\frac12} (1 + \BigO(x_b^{-\frac12} + b^{-\frac12})) \|u_b\|, \quad b \to + \infty.
	\end{equation}
\end{proposition}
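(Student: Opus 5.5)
The plan is to build the pseudomode in the rescaled frame from a near-maximiser of $(A_{m_b}-\mu_b)^{-1}$, pass to $(S_b-\mu_b)^{-1}$ via the closeness \eqref{eq:Sb.Sinf}, cut off with $\psi_b$, and undo the scaling $U_{b,\rho}$.

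\textbf{Choice of the rescaled function.} Since $A_{m_b}$ has compact resolvent, so does $S_b$ by \eqref{eq:Sb.inv.local.iR}. Hence the norm $\|(S_b-\mu_b)^{-1}\|$ is attained: there is $f_b\neq0$ with
\[
\|(S_b-\mu_b)^{-1}f_b\|=\|(S_b-\mu_b)^{-1}\|\,\|f_b\|.
\]
(If one prefers not to use attainment, a function achieving the norm up to a factor $1+x_b^{-1/2}$ suffices.) Set $w_b:=(S_b-\mu_b)^{-1}f_b\in\Dom(S_b)$. By \eqref{eq:Sb.mub.inv.norm},
\[
\|(S_b-\mu_b)w_b\| = \|(A_{m_b}-\mu_b)^{-1}\|^{-1}\bigl(1+\BigO(b^{-\frac12}+x_b^{-\frac12})\bigr)\|w_b\|.
\]

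\textbf{Cut-off.} Let $v_b:=\psi_b w_b$, which is supported in $\Omega_{b,\rho}$. We have
\[
(S_b-\mu_b)v_b=\psi_b(S_b-\mu_b)w_b+[S_b,\psi_b I_2]w_b .
\]
We want to apply Lemma~\ref{lem:Sb.psib.comm.est}. Its hypothesis is $\|(S_b-\mu_b)w_b\|\approx\rho\|w_b\|$, but what we actually have is $\|(S_b-\mu_b)w_b\|\approx\|(A_{m_b}-\mu_b)^{-1}\|^{-1}\|w_b\|$, and this is $\approx m_b$ by \eqref{Am.res.sum}. So the hypothesis must be read with the scale $\|(A_{m_b}-\mu_b)^{-1}\|^{-1}$ in place of $\rho$.

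Redoing the iteration in that lemma with this scale gives
\[
\|\psi_b' w_b\|\ls x_b^{-1}\|(S_b-\mu_b)w_b\|+\cdots+x_b^{-\frac{n_0}{2}}\|\psi_b^{(n_0+1)}w_b\| .
\]
The leading terms are $\ls x_b^{-1}\|(A_{m_b}-\mu_b)^{-1}\|^{-1}\|w_b\|$. The remainder is $\ls x_b^{-n_0-\frac12}\|w_b\|$, and by \eqref{n0.def} and $m_b\approx\|(A_{m_b}-\mu_b)^{-1}\|^{-1}$ it is $o\bigl(x_b^{-\frac12}\|(A_{m_b}-\mu_b)^{-1}\|^{-1}\bigr)\|w_b\|$.

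\textbf{Norm of the cut-off function.} We also need $\|v_b\|=\|w_b\|(1+\BigO(x_b^{-1/2}))$. Since $(1-\psi_b)w_b$ is supported outside $\Omega_{b,\rho}'$, the preceding lemma \eqref{eq:Sb.away.step3} gives
\[
\|(1-\psi_b)w_b\|\ls x_b^{-\frac12}\bigl(\|(S_b-\mu_b)w_b\|+\|\psi_b'w_b\|\bigr)\ls x_b^{-\frac12}\|(A_{m_b}-\mu_b)^{-1}\|^{-1}\|w_b\| .
\]
This is $\ls x_b^{-\frac12}\|w_b\|$, because $\|(A_{m_b}-\mu_b)^{-1}\|^{-1}\approx m_b\ls 1$ when $V'(x_b)\gs1$. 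I expect this to be the \textbf{main obstacle}: controlling $m_b$ uniformly over all growth regimes of $V'$. If $V'(x_b)\to0$, then $m_b\to\infty$, and the bound $x_b^{-1/2}m_b$ is then weaker. In that regime I would exploit that the error needs only to be relative: combining $\|(S_b-\mu_b)v_b\|\le\|(S_b-\mu_b)w_b\|(1+\BigO(x_b^{-1/2}))$ with the \emph{lower} bound from Proposition~\ref{prop:local.iR} pins down the ratio. I would check whether $\delta_b m_b^{-1}$-type quantities behave as in \eqref{eq:Rb.x1}.

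\textbf{Back to the original variables.} Combining the above yields
\[
\|(S_b-\mu_b)v_b\|\le\|(A_{m_b}-\mu_b)^{-1}\|^{-1}\bigl(1+\BigO(x_b^{-\frac12}+b^{-\frac12})\bigr)\|v_b\| .
\]
Now set $u_b:=U_{b,\rho}^{-1}v_b$. It is supported in $\Omega_b$, so $\widetilde H_b u_b=H_b u_b$. From $S_b-\mu_b=\rho U_{b,\rho}\widetilde H_bU_{b,\rho}^{-1}$ we get
\[
\|H_bu_b\|=\rho^{-1}\|(S_b-\mu_b)v_b\|,
\]
and $\rho^{-1}=V'(x_b)^{1/2}$. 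This gives the upper inequality, and the reverse inequality is Proposition~\ref{prop:local.iR}, so equality holds with the stated error.
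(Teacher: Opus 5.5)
Your construction is essentially the paper's. The norm-attaining $w_b=(S_b-\mu_b)^{-1}f_b$ is, up to normalisation, the top eigenfunction $g_b$ of $B_b=((S_b-\mu_b)^*(S_b-\mu_b))^{-1}$ used there: if $T=(S_b-\mu_b)^{-1}$ and $T^*Tf_b=\|T\|^2f_b$, then $TT^*w_b=\|T\|^2w_b$ and $TT^*=B_b$. The cut-off by $\psi_b$, Lemma~\ref{lem:Sb.psib.comm.est} and the rescaling by $U_{b,\rho}$ are also the same. Bounding only $\|\psi_b(S_b-\mu_b)w_b\|\le\|(S_b-\mu_b)w_b\|$ and taking the reverse inequality from Proposition~\ref{prop:local.iR} is a legitimate shortcut. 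The paper instead estimates $(\psi_b-1)(S_b-\mu_b)g_b$ through $\|x(S_b^*-\mu_b)^{-1}\|$ and the eigen-equation.

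There is, however, a gap at the step you flag yourself. You prove $\|(1-\psi_b)w_b\|=o(\|w_b\|)$ only when $V'(x_b)\gtrsim1$, but Assumption~\ref{asm:V.iR} also allows $V'(x)\to0$ (e.g.\ $V(x)=x^\gamma$ with $0<\gamma<1$, cf.\ Corollary~\ref{cor:iR}(iv)). The remedy you suggest does not work. What is needed is a \emph{lower} bound $\|v_b\|\ge(1-o(1))\|w_b\|$. The lower bound of Proposition~\ref{prop:local.iR} on $\|(S_b-\mu_b)v_b\|/\|v_b\|$, combined with your upper bound, only returns $\|v_b\|\lesssim\|w_b\|$, which is trivial. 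Without control of $\|v_b\|/\|w_b\|$ from below, the quotient $\|(S_b-\mu_b)v_b\|/\|v_b\|$ is not controlled.

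The missing observation takes one line.
\begin{itemize}
\item There is no second scale: $m_b=m\rho$ with $m>0$ fixed. Hence $\|(A_{m_b}-\mu_b)^{-1}\|^{-1}\approx m_b\approx\rho$ by \eqref{Am.res.sum}, and Lemma~\ref{lem:Sb.psib.comm.est} applies verbatim without redoing the iteration.
\item By Assumption~\ref{asm:V.iR}~\ref{itm:nu.asm} (see \eqref{del.b.asym}),
\[
x_b^{-1/2}\rho=(x_bV'(x_b))^{-1/2}=\delta_b\approx b^{-1/2}.
\]
\end{itemize}
So your own estimate gives $\|(1-\psi_b)w_b\|\lesssim x_b^{-1/2}\rho\|w_b\|\approx b^{-1/2}\|w_b\|$, uniformly in every growth regime of $V'$. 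The relevant quantity is $x_b^{-1/2}m_b\approx\delta_b$, not $\delta_b m_b^{-1}$. This is exactly where the $b^{-1/2}$ in the error term comes from. The paper uses the same identity, in the form $x_b^{-1/2}\rho^2\approx\rho\, b^{-1/2}$, when it bounds $\|(\psi_b-1)(S_b-\mu_b)g_b\|$ and $\|\psi_b g_b\|$. With this inserted, your argument is complete.
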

\begin{proof}
	With a sufficiently large $b_0 > 0$, the operators $B_b := ( (S_b - \mu_b)^* (S_b - \mu_b))^{-1}$, $b \in (b_0,\infty)$, on $\Lt(\R)$, are compact, self-adjoint and non-negative. Let $0 < \varsigma_b^2 := \rad(B_b) = \max\{z: z \in \sigma(B_b)\}$ and let $g_b \in \Lt(\R, \C^2)$ be a corresponding normalised eigenfunction, \ie~$\| B_b \| = \varsigma_b^2$, $B_b g_b = \varsigma_b^2 g_b$ and $\| g_b \| = 1$. Note that $g_b \in \Dom((S_b - \mu_b)^*(S_b - \mu_b))$ and it is straightforward to verify that
	\begin{equation}\label{eq:la*g*}
		\| (S_{b} - \mu_b) g_b\| = \varsigma_b^{-1} = \| (S_b - \mu_b)^{-1} \|^{-1}= \| B_b \|^{-\frac12}, \quad b \in (b_0, \infty).
	\end{equation}
	Moreover, letting $\tilde\varsigma_b := \| (A_{m_b} - \mu_b)^{-1} \| \approx m_b^{-1} \approx \rho^{-1}$, see \eqref{Am.res.sum}, \eqref{mb.def}, \eqref{eq:rho.def}, and applying \eqref{eq:Sb.mub.inv.norm}, we obtain 
	\begin{equation}\label{eq:lab.conv}
	\varsigma_b = \tilde\varsigma_b \left(1 + \BigO(x_b^{-\frac12} + b^{-\frac12}) \right) \approx \rho^{-1}, \quad b \to +\infty.
	\end{equation}
	Note also that, similarly to \eqref{eq:Sb.norm.est}, we have
	\begin{equation}\label{eq:Sb*.gn}
		\| x (S_b^* - \mu_b)^{-1} \| \ls 1 + \rho^{-1}, \quad b \to + \infty.
	\end{equation}

	With $\psi_b$ as in \eqref{eq:psib.def}, then $\psi_b g_b \in \Dom(S_b)$ and we have
	\begin{equation}\label{eq:Sb.psib.comm}
		(S_b - \mu_b) \psi_b g_b = (S_b - \mu_b) g_b + (\psi_b-1) (S_b - \mu_b) g_b + [S_b, \psi_b I_2] g_b.
	\end{equation}
	The second term in \eqref{eq:Sb.psib.comm} can be  estimated using \eqref{eq:psib.def}, \eqref{eq:Sb*.gn} and the fact that $\| B_b^{-1} g_b \| = \varsigma_b^{-2}$ as follows
	\begin{equation}
		\begin{aligned}
			&\|(\psi_b-1) (S_b - \mu_b) g_b \|
			\\ & \quad \le \|(\psi_b-1) x^{-1}\|_\infty \|x (S_b^* - \mu_b)^{-1}\| \| (S_b^* - \mu_b) (S_b - \mu_b) g_b \|
			\\
			 & \quad \ls x_b^{-\frac12} (1 + \rho^{-1}) \varsigma_b^{-2} \ls \rho (x_b^{-\frac12} + b^{-\frac12}), \quad b \to +\infty,
		\end{aligned}
	\end{equation}
	where in the last step we have used \eqref{eq:lab.conv} and Assumption~\ref{asm:V.iR}~\ref{itm:nu.asm}. For the last term in \eqref{eq:Sb.psib.comm}, we apply Lemma~\ref{lem:Sb.psib.comm.est} (note that $\| (S_b - \mu_b) g_b \| = \varsigma_b^{-1} \approx \rho$) to find
	\begin{equation}
		\begin{aligned}
			\|[S_b, \psi_b I_2] g_b \| \ls \rho x_b^{- \frac12}, \quad b \to +\infty.
		\end{aligned}
	\end{equation}
	Hence 
	\begin{equation}
	\| (S_b - \mu_b) \psi_b g_b\| = \varsigma_b^{-1} + \BigO(\rho (x_b^{- \frac12} + b^{-\frac12})), \quad b \to +\infty.
	\end{equation}
	Similarly, writing $\psi_b g_b = g_b + (\psi_b - 1)g_b$, we obtain
	\begin{equation}
	\|\psi_b g_b\| = 1 + \BigO(x_b^{-\frac 12} + b^{-\frac12}), \quad  b \to +\infty.
	\end{equation}
	Thus using \eqref{eq:lab.conv}, we arrive at
	\begin{equation}
		\left| \frac{\left\| (S_b - \mu_b) \psi_b g_b \right\|}{\|\psi_b g_b\|} - \frac1{\tilde\varsigma_b} \right| 
		= \BigO \left(\rho (x_b^{- \frac12} + b^{-\frac12}) \right), \quad b \to +\infty.
	\end{equation}

	Recalling from the proof of Proposition~\ref{prop:local.iR} that $S_b - \mu_b = \rho U_{b,\rho} \widetilde H_b U_{b,\rho}^{-1}$ and letting $u_b := U_{b,\rho}^{-1} \psi_b g_b$, then $u_b \in \Dom(H)$ with $\supp u_b \subset \Omega_b$ and we conclude
	\begin{equation}
		\left| \frac{\left\| H_b u_b \right\|}{\|u_b\|} - \frac1{\rho \tilde\varsigma_b} \right| 
		= \BigO(x_b^{-\frac 12} + b^{-\frac12}), \quad b \to +\infty,
	\end{equation}
	from which the claim follows.
\end{proof}

\subsection{Step 4: combining the estimates}
\label{ssec:step.4.iR}
With $\Omega_b'$, $\Omega_b$ and $\Delta_b$ from \eqref{eq:Deltab.def}, \eqref{eq:Omega.def}, let $\phi_b \in C_c^{\infty}(\Omega_b)$,  $0 \le \phi_b \le 1$, be such that
\begin{equation}\label{eq:phib'}
	\phi_b(x) = 1, \; x \in \Omega'_b, \quad \|\phi_b^{(j)}\|_{\infty} \ls \Delta_b^{-j}, \; j \in \{1, 2, \dots, n_0+1\},
\end{equation}		
with $n_0$ as in \eqref{n0.def}, and define
\begin{equation}
	\label{eq:phikb}
	\phi_{b,0}(x) := 1 - \phi_b(x), \quad \phi_{b,1}(x) := \phi_b(x), \quad x >0.
\end{equation}
\begin{lemma}
	\label{lem:Hb.phibk.commutator}
	Let the assumptions of Theorem~\ref{thm:iR} hold, let $H_b$ be as in \eqref{eq:Hb.def} and let $\phi_{b,k}, \; k \in \{0, 1\},$ be as in \eqref{eq:phikb}. Then for all $u \in \Dom(H)$, we have
	\begin{equation}
		\label{eq:Hbphikb.commutator.norm.est}
		\| [H_b, \phi_{b,k} I_2] u \| \ls x_b^{-1} \| H_b u \| + x_b^{-\frac12} \| u \|, \quad b \to +\infty.
	\end{equation}
\end{lemma}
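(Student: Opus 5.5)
The commutator is first order, so I will compute it explicitly and then run the same iteration as in Lemma~\ref{lem:Sb.psib.comm.est}. Since multiplication operators commute with $\phi_{b,k} I_2$ and $\phi_{b,0} = 1 - \phi_{b,1}$, I expect
\begin{equation*}
[H_b, \phi_{b,k} I_2] u = \mp i \sigma_1 \phi_b' u, \qquad k \in \{0,1\},
\end{equation*}
so it suffices to bound $\|\phi_b' u\|$. The key observation is that $\supp \phi_b^{(j)} \subset \Omega_b \setminus \Omega_b'$ for every $j \ge 1$. Hence each $\phi_b^{(j)} u$ lies in $\Dom(H)$ and is supported away from $\Omega_b'$, and Proposition~\ref{prop:away.iR} applies:
\begin{equation*}
\|\phi_b^{(j)} u\| \ls b^{-\frac12} \|H_b \phi_b^{(j)} u\|.
\end{equation*}

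Next I expand $H_b \phi_b^{(j)} u = \phi_b^{(j)} H_b u - i \sigma_1 \phi_b^{(j+1)} u$. Using $\|\phi_b^{(j)}\|_\infty \ls \Delta_b^{-j}$ from \eqref{eq:phib'}, this gives
\begin{equation*}
\|\phi_b^{(j)} u\| \ls b^{-\frac12}\Delta_b^{-j} \|H_b u\| + b^{-\frac12} \|\phi_b^{(j+1)} u\|.
\end{equation*}
I iterate this from $j=1$ to $j=n_0$ and stop at $j = n_0+1$, where only the trivial bound $\|\phi_b^{(n_0+1)} u\| \ls \Delta_b^{-n_0-1}\|u\|$ is used. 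The result is
\begin{equation*}
\|\phi_b' u\| \ls \sum_{j=1}^{n_0} b^{-\frac j2} \Delta_b^{-j} \|H_b u\| + b^{-\frac{n_0}{2}} \Delta_b^{-n_0-1} \|u\|.
\end{equation*}

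It remains to convert the powers. By \eqref{eq:Deltab.def} and \eqref{V'.nu.asm}, $\Delta_b = (x_b/V'(x_b))^{1/2}$, so $b^{\frac12}\Delta_b \approx (V'(x_b)x_b)^{\frac12}(x_b/V'(x_b))^{\frac12} = x_b$. The sum is therefore dominated by its first term, $\ls x_b^{-1}\|H_b u\|$.

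The tail is the delicate step, as in Lemma~\ref{lem:Sb.psib.comm.est}. Here
\begin{equation*}
b^{-\frac{n_0}{2}}\Delta_b^{-n_0-1} \approx x_b^{-n_0}\Delta_b^{-1} = x_b^{-n_0-\frac12} V'(x_b)^{\frac12}.
\end{equation*}
By \eqref{n0.def}, $V'(x_b)^{\frac12} = o(x_b^{n_0})$, so the tail is $o(x_b^{-\frac12})\|u\|$. This gives \eqref{eq:Hbphikb.commutator.norm.est}.

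The main obstacle is that Proposition~\ref{prop:away.iR} has to be applied to functions that themselves depend on $u$ and that every $\phi_b^{(j)}u$ must stay in $\Dom(H)$. Both follow since $\phi_b$ is smooth with compact support. If \eqref{n0.def} turns out to be insufficient with the exact number of derivatives, I would increase the number of iterations, which requires the bound in \eqref{eq:phib'} only up to order $n_0+1$.
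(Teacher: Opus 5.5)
Your proposal is correct and follows essentially the same route as the paper: compute the first-order commutator $\mp i\sigma_1\phi_b' u$, apply Proposition~\ref{prop:away.iR} to $\phi_b^{(j)}u$ together with the commutator expansion up to order $n_0$, and close the tail with \eqref{n0.def}. The only cosmetic difference is in how you write the lower bound from Proposition~\ref{prop:away.iR}. You use $b^{\frac12}$ and then $b^{\frac12}\Delta_b \approx x_b$, whereas the paper keeps $V'(x_b)\Delta_b$ and uses the exact identity $V'(x_b)\Delta_b^2 = x_b$; both give the same powers of $x_b$.
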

\begin{proof}
	Let $u \in \Dom(H)$ and $\phi \in \CcR$, then straightforward calculations as in \eqref{eq:Sb.comm} show that
	\begin{equation}\label{eq:Hb.comm.particular}
		\begin{aligned}
			[H_b, \phi I_2] u = -i \sigma_1 \phi' u.
		\end{aligned}
	\end{equation}
	Noting that $\supp(\phi_{b,k}^{(j)} u) \cap \Omega_b' = \emptyset$, $1 \le j \le n_0$, $k \in \{1, 2\}$, we shall proceed by successive applications of Proposition~\ref{prop:away.iR}
	\begin{equation}\label{eq:Hb.away.Step1.est}
		\| \phi_{b,k}^{(j)} u \| \ls \frac{1}{V'(x_b) \Delta_b} \| H_b \phi_{b,k}^{(j)} u \|
	\end{equation}
	followed by commutator expansions such as
	\begin{equation}\label{eq:Hb.phibp.jorder.comm}
		H_b \phi_{b,k}^{(j)} u = \phi_{b,k}^{(j)} H_b u + [H_b, \phi_{b,k}^{(j)} I_2] u = \phi_{b,k}^{(j)} H_b u - i \sigma_1 \phi_{b,k}^{(j+1)} u
	\end{equation}
	until the remainder term is sufficiently small. Thus applying \eqref{eq:Hb.away.Step1.est}, \eqref{eq:Hb.phibp.jorder.comm}, \eqref{eq:phib'} and \eqref{eq:Deltab.def} for $1 \le j \le n_0$, we find
	\begin{equation}\label{phi.j.est}
		\begin{aligned}
		\| \phi_{b,k}^{(j)} u \| & \ls  \frac{1}{V'(x_b) \Delta_b} (\| \phi_{b,k}^{(j)} H_b u \| + \| \phi_{b,k}^{(j+1)} u \|)
		\\
		& \ls \frac{1}{V'(x_b) \Delta_b^{1+j}} \| H_b u \| + \frac{1}{V'(x_b) \Delta_b} \| \phi_{b,k}^{(j+1)} u \|, \quad b \to +\infty.
		\end{aligned}
	\end{equation}
	For the next step, recall that $V'(x_b) \Delta_b^2 = x_b$, see \eqref{eq:Deltab.def}.
	Hence, iterating \eqref{phi.j.est} and using \eqref{eq:phib'}, we deduce as $b \to +\infty$
	\begin{equation}
		\begin{aligned}
			\| [H_b, \phi_{b,k} I_2] u \| &= \| \phi_{b,k}' u \| 
			\\ & \ls \sum_{n=1}^{n_0} \frac{1}{(V'(x_b) \Delta_b^2 )^n}  \| H_b u \| + \frac{1}{(V'(x_b) \Delta_b)^{n_0}} \| \phi_{b,k}^{(n_0+1)} u \|\\
			&\ls  x_b^{-1} \| H_b u \| + \frac{1}{(V'(x_b) \Delta_b^2)^{n_0} \Delta_b}   \| u \|
			\\
			& 
			=  x_b^{-1} \| H_b u \| + \frac{ V'(x_b)^\frac12 }{ x_b^{n_0} } x_b^{-\frac12}   \| u \|,
		\end{aligned}
	\end{equation}
Finally, employing \eqref{n0.def}, we obtain the claim \eqref{eq:Hbphikb.commutator.norm.est}.
\end{proof}
\begin{proof}[Proof of Theorem~\ref{thm:iR}]
	Let $u \in \Dom(H)$, with $\phi_{b,k}$, $k \in \{0, 1\}$, as in \eqref{eq:phikb}, and write $u = u_0 + u_1$ where $u_0 := \phi_{b,0} u$ and $u_1 := \phi_{b,1} u$. Then
	\begin{equation}\label{eq:hbuexpansion}
		H_b u_k = \phi_{b,k} H_b u + [H_b, \phi_{b,k} I_2] u, \quad k \in \{0, 1\},
	\end{equation}
	and therefore by \eqref{eq:Hbphikb.commutator.norm.est} we get for $k \in \{0, 1\}$
	\begin{equation}\label{eq:hbuexpansion.est}
		\|H_b u_k\| \le \left(1 + \BigO(x_b^{-1}) \right)\|H_b u\| + \BigO(x_b^{-\frac12}) \| u \|, \quad b \to +\infty.
	\end{equation}	

	Next, note that $\supp u_1 \subset \Omega_b$, hence by Proposition~\ref{prop:local.iR}
	\begin{equation}
	\| u_1 \| \le \| (A_{m_b} - \mu_b)^{-1}\| (V'(x_b))^{-\frac12} \left(1 + \BigO( x_b^{-\frac12} + b^{-\frac12} ) \right) \| H_b u_1 \|,
	\end{equation}
	as $b \to +\infty$. Recall that by \eqref{Am.res.sum} and \eqref{mb.def}, we have
	\begin{equation}\label{Am.V.1}
	\| (A_{m_b} - \mu_b)^{-1}\| (V'(x_b))^{-\frac12} \approx m_b^{-1} (V'(x_b))^{-\frac12} = m^{-1}, \quad b \to + \infty.
	\end{equation}
	Thus by \eqref{eq:hbuexpansion.est}, we get as $b \to +\infty$
	\begin{equation}\label{eq:u1upperbound}
		\begin{aligned}
		\| u_1 \| &\le \| (A_{m_b} - \mu_b)^{-1}\| (V'(x_b))^{-\frac12} \left(1 + \BigO( x_b^{-\frac12} + b^{-\frac12} ) \right) \| H_b u \|
		\\ 
		& \quad + \BigO(x_b^{-\frac12}) \| u \|.
		\end{aligned}
	\end{equation}

 	Furthermore, since $\supp u_0 \cap \Omega'_b = \emptyset$, by Proposition~\ref{prop:away.iR}
	\begin{equation}
		\| u_0 \| \ls b^{-\frac12} \| H_b u_0 \|, \quad b \to +\infty
	\end{equation}
	and applying \eqref{eq:hbuexpansion.est}, we have as $b \to +\infty$
	\begin{equation}\label{eq:u2upperbound}
		\| u_0 \| \ls b^{-\frac12} \| H_b u \| + b^{-\frac12} x_b^{-\frac12}   \| u \|, \quad b \to +\infty.
	\end{equation}

	Combining \eqref{eq:u1upperbound}, \eqref{eq:u2upperbound} and \eqref{Am.V.1}, we find that as $b \to +\infty$
	\begin{equation}
		\begin{aligned}
			\| u \| & \le \| u_0 \| + \| u_1 \| 
			\\
			& \le \| (A_{m_b} - \mu_b)^{-1} \| (V'(x_b))^{-\frac12} \left(1 + \BigO(x_b^{- \frac12} + b^{-\frac12}) \right) \| H_b u \|
			\\
			&\quad + \BigO(x_b^{- \frac12}) \| u \|
		\end{aligned}
	\end{equation}
	and hence
	\begin{equation}\label{eq:Hb.est.iR}
		\| u \| \le \| (A_{m_b} - \mu_b)^{-1} \| (V'(x_b))^{-\frac12} \left(1 + \BigO(x_b^{- \frac12} + b^{-\frac12}) \right) \| H_b u \|.
	\end{equation}
	Together with Proposition~\ref{prop:lbound.iR}, \eqref{eq:Hb.est.iR} completes the proof of the theorem.
\end{proof}
%

\appendix

\section{Harmonic and conjugated oscillator}
\label{app:HO}

\subsection{Parabolic cylinder functions}
\label{ssec:app.par.cyl}
We follow~\cite[Chap.~12]{DLMF}. The parabolic cylinder functions $U(a,z)$ and $U(a,-z)$ (with $a,z \in \C$) are solutions of the equation
\begin{equation}
\frac{\dd^2 w}{\dd z^2} -\left(\frac{1}{4}z^2+a\right)w=0.	
\end{equation}
These solutions are linearly independent and their Wronskian is given by
\begin{equation}\label{Wronskian U}
	\mathscr{W} \{U(a,z), U(a,-z)\} = \frac{\sqrt{2\pi}}{\Gamma\left(\frac{1}{2}+a \right)},
\end{equation}
see \cite[(12.2.11)]{DLMF}. We also recall the recurrence relations
	\begin{equation}\label{Recurrence}
		\begin{aligned}
			U'(a,z)+\frac{1}{2}z U(a,z)+\left(a+\frac{1}{2}\right) U(a+1,z)&=0,\\
			U'(a,z)-\frac{1}{2}z U(a,z)+U(a-1,z)&=0,
		\end{aligned}
	\end{equation}
see \cite[(12.8.2),(12.8.3)]{DLMF}, and the asymptotic expansions when $z$ is a large \textit{real variable}
	\begin{align}
		|U(a,z)|&= e^{-\frac{1}{4}z^2} z^{-a-\frac{1}{2}}\left(1+\mathcal{O}\left(\frac{1}{|z|^2}\right)\right), & z \to +\infty, \label{Asymp U +}\\
		|U(a,z)|&= \frac{\sqrt{2\pi}}{\Gamma\left(\frac{1}{2}+a\right)} e^{\frac{1}{4}z^2} (-z)^{a-\frac{1}{2}}\left(1+\mathcal{O}\left(\frac{1}{|z|^2}\right)\right), & z \to -\infty, \label{Asymp U -}
	\end{align}
see \cite[(12.9.1),(12.9.2),(12.2.15)]{DLMF}.

\subsection{Harmonic oscillator}
\label{sec:app.ho}
We recall some properties of the self-adjoint harmonic oscillator in $L^2(\R)$
\begin{equation}
\begin{aligned}
\sL&=-\partial_{x}^2+x^2,& \Dom (\sL) & =\{v\in L^2(\R):(-\partial_{x}^2+x^2)v\in L^2(\R) \}
\\ & & & = H^2(\R) \cap \Dom(x^2);	
\end{aligned}	
\end{equation}
the separation of the domain is a consequence of the graph norm estimate
\begin{equation}\label{D.norm.def}
\|\sL f\|^2 + \|f\|^2 \gs \|f''\|^2 + \|x^2 f\|^2 + \|f\|^2 =: \|f\|_{\cD_{\sL}}^2, \quad f \in \cD_{\sL}:=\Dom(\sL). 
\end{equation}
The spectrum consists of the discrete simple eigenvalues
\begin{equation}\label{Spec HO}
	\sigma(\sL)=\{2n+1: n\in \N_0\}.
\end{equation}
We denote by $h_n$, $n \in \N_0$, the normalized Hermite functions, i.e., 
\begin{equation}\label{Eigen Os}
	\sL h_n = (2n+1) h_n, \quad \|h_n\| = 1, \quad n \in \N_0.
\end{equation} 
The resolvent can be expressed as an integral operator with an explicit kernel (in terms of the parabolic cylinder functions), namely (with $\la \in \rho(\sL)$)
\begin{equation}\label{Prop HO Res}
\begin{aligned}
( \sL -\la )^{-1}f(x) &= \int_{\R} G_{\la}(x,y)\,f(y) \,\dd y, \quad x \in \R,
\\
G_{\la}(x,y) &=\frac{\Gamma(\frac{1-\la}{2})}{2 \sqrt{\pi}} \times
\begin{cases}
U\left(-\frac{\la}{2}, \sqrt{2}x\right)U\left(-\frac{\la}{2}, -\sqrt{2} y\right), & x\geq y,
\\
U\left(-\frac{\la}{2},- \sqrt{2}x\right)U\left(-\frac{\la}{2}, \sqrt{2} y\right), & x\leq y.
\end{cases} 
\end{aligned}
\end{equation}

The proof follows the standard procedure to find the inverse of a Sturm-Liouville operator, where the properties of the parabolic cylinder function $U$ are used. We omit the details (see \eg~\cite[Chaps.~III,IV]{Titchmarsh-1962-book1}, \cite[Chap.~4]{Levitan-1991}, \cite[Chap.~9]{Coddington1955theory} or \cite[Chap.~10]{Zettl-2005-121}). 

\subsection{Conjugated and shifted oscillators}
\label{app:HO.conj}
The conjugated oscillator 
\begin{equation}
\sL_\la = - (\partial_y+\la)^2 + y^2, \quad \Dom(\sL_\la) = \Dom(\sL), \quad \la \in \R,	
\end{equation}
satisfies
\begin{equation}\label{HO.conj.app}
\sL_\la \phi(y) = e^{-\la y} \sL e^{\la y} \phi(y), \quad \phi \in \C_{c}^{\infty}(\R),
\end{equation}
and it is unitarily equivalent via the Fourier transform to the shifted oscillator, namely
\begin{equation}\label{HO.shift.def}
	\sL_\la = \cF 
	\left(
	\Dt + (x + i \la)^2
	\right) \cF^{-1}.
\end{equation}
The adjoint operator reads
\begin{equation}
\sL_\la^* = \sL_{-\la}.	
\end{equation}
The conjugated oscillator satisfies a graph norm estimate like $\sL$, see \eqref{D.norm.def}. Specifically, for any $\la \in \R$, there exists $c_\la>0$ such that 
\begin{equation}\label{L.la.gr}
\|\sL_\la f\|^2 + \|f\|^2 \geq c_\la \|f\|_{\cD_{\sL}}^2, \quad f \in \Dom(\sL_\la). 
\end{equation}
The spectrum of $\sL_\la$ is independent of $\la$, \ie~it coincides with that of the harmonic oscillator (including multiplicities)
\begin{equation}\label{Spec Conj Os}
\sigma(\sL_\la) = \sigma(\sL) = 2\N_0 +1, \quad \la \in \R.
\end{equation}
The eigenfunctions of $\sL_\la$ can be easily found using \eqref{HO.conj.app} and they satisfy
\begin{equation}\label{Eig Conj Os}
\sL_\la h_{n,\la} = (2n+1) h_{n,\la}, \quad n \in \N_0,
\end{equation}
where (recall that $h_n$, $n \in \N_0$, are the normalized Hermite functions)
\begin{equation}\label{Eig Conj Os HO}
h_{n,\la}(y) = e^{-\la y} h_n(y), \quad y \in \R, \ n \in \N_0.
\end{equation}
For each $\lambda\in \R$, we define 
\begin{equation}\label{Gamma Span}
\cE_{\lambda}=\operatorname{span} \left\{h_{n,\lambda} : n\in \N_{0}\right\}.
\end{equation}
It is known that $\cE_{\lambda}$ is dense in $L^2(\R)$, see \cite[Lem. 2.5]{Mityagin-2017-272}. For details of the proofs of the facts summarised above, see \cite{Krejcirik-2015-56,Mityagin-2017-272,Mityagin-2021-22}.

\section{Generalized coercivity and Schur complement dominant operator matrices}
\label{app:Schur.dom}

\subsection{Generalized coercivity}
\label{app:AH}

We recall the generalized Lax-Milgram-type theorems of Almog-Helffer.

\begin{theorem}[{\cite[Thm.\ 2.1]{Almog-2015-40}}] \label{thm:lm.0}
Let $\cV$ be a Hilbert space and let $\fra$ be a bounded sesquilinear form on $\cV$. Assume there exist $\Phi_1,\Phi_2 \in \mathcal{B}(\mathcal{V})$ and $c> 0$ such that for all $f \in \cV$ we have 
	\begin{equation}\label{lm.gen.coev}
		\begin{aligned}
			|\fra(f,f)| + |\fra(\Phi_1f,f)| &\geq c \|f\|_\mathcal{V}^2,
			\\
			|\fra(f,f)| + |\fra(f,\Phi_2f)| &\geq c \|f\|_\mathcal{V}^2.
		\end{aligned}
	\end{equation}
	Then the operator
	\begin{equation}\label{T.hat}
		\widehat A \in \cB (\cV, \cV^*), \qquad \langle \widehat A f, g \rangle_{\cV^* \times \cV} := \fra(f,g), \qquad f,g \in \cV,
	\end{equation}
	is boundedly invertible.
\end{theorem}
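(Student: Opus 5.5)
The plan is to show that $\widehat A$ is bounded below, and that its Banach adjoint is injective. Bounded below gives injectivity and closed range; injectivity of the adjoint gives dense range. Closed and dense range means $\widehat A$ is surjective, and the lower bound then gives a bounded inverse directly, without needing the open mapping theorem. The two inequalities in \eqref{lm.gen.coev} play these two roles: the one with $\Phi_2$ controls $\widehat A$, and the one with $\Phi_1$ controls its adjoint.

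\emph{Step 1 (lower bound for $\widehat A$).} For $f \in \cV$ we have
\[
|\fra(f,f)| = |\langle \widehat A f, f\rangle_{\cV^*\times\cV}| \le \|\widehat A f\|_{\cV^*}\|f\|_\cV
\]
and
\[
|\fra(f,\Phi_2 f)| = |\langle \widehat A f, \Phi_2 f\rangle| \le \|\Phi_2\|_{\cB(\cV)}\|\widehat A f\|_{\cV^*}\|f\|_\cV .
\]
Inserting both bounds into the second inequality of \eqref{lm.gen.coev} yields
\[
c\|f\|_\cV^2 \le (1+\|\Phi_2\|)\|\widehat A f\|_{\cV^*}\|f\|_\cV .
\]
Hence $\|\widehat A f\|_{\cV^*}\ge c(1+\|\Phi_2\|)^{-1}\|f\|_\cV$, so $\widehat A$ is injective and $\Ran \widehat A$ is closed in $\cV^*$.

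\emph{Step 2 (dense range).} Since $\cV$ is a Hilbert space, hence reflexive, a continuous (antilinear) functional on $\cV^*$ vanishing on $\Ran\widehat A$ is represented by some $g\in\cV$ with $\fra(f,g)=0$ for all $f\in\cV$. I then introduce the adjoint form $\fra^*(g,f):=\ov{\fra(f,g)}$ and its operator $\widehat A^{*}\in\cB(\cV,\cV^*)$, defined by $\langle \widehat A^* g,f\rangle=\fra^*(g,f)$. The first inequality of \eqref{lm.gen.coev} reads
\[
|\fra^*(f,f)|+|\fra^*(f,\Phi_1 f)|\ge c\|f\|_\cV^2,
\]
because $|\fra(\Phi_1 f,f)|=|\fra^*(f,\Phi_1 f)|$. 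This has exactly the form used in Step 1, so the same argument gives $\|\widehat A^* g\|_{\cV^*}\ge c(1+\|\Phi_1\|)^{-1}\|g\|_\cV$. For our $g$ we have $\widehat A^*g=0$, and therefore $g=0$. Thus $\Ran\widehat A$ is dense.

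\emph{Step 3 (conclusion).} Combining Steps 1 and 2, $\Ran \widehat A=\cV^*$. The inverse exists and is bounded with $\|\widehat A^{-1}\|_{\cB(\cV^*,\cV)}\le (1+\|\Phi_2\|)/c$.

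The only delicate point is the bookkeeping in Step 2. One must check that the term $\fra(\Phi_1 f, f)$ in \eqref{lm.gen.coev}, which is not controlled by $\|\widehat A f\|$, is exactly the one controlled by $\|\widehat A^* f\|$. One must also make sure the identification $\cV^{**}\cong\cV$ is applied consistently with antilinear pairings, so that the annihilator of $\Ran\widehat A$ is identified with $\Ker \widehat A^*$.
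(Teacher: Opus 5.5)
Your proof is correct and is essentially the standard Almog--Helffer argument; the paper cites that argument and does not reproduce it. The inequality with $\Phi_2$ in the second slot gives $\|\widehat A f\|_{\cV^*}\geq c(1+\|\Phi_2\|)^{-1}\|f\|_\cV$, hence injectivity and closed range, and the inequality with $\Phi_1$ in the first slot, tested with $f=g$ and $f=\Phi_1 g$ for $g$ annihilating $\Ran\widehat A$, forces $g=0$ and hence dense range. Your detour through the adjoint form $\fra^*$ is only a repackaging of that final test.
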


Let $\cV \subset \cH$ be continuously embedded and dense in another Hilbert space $\cH$. One can identify $\cH$ with its (anti-)dual $\cH^*$ in a standard way and consider
\begin{equation}\label{Gelfand}
	\cH \ni f \equiv \langle f, \cdot \rangle_\cH \in \cH^*, \qquad \cV \subset \cH \equiv \cH^* \subset \cV^*.
\end{equation}
In the above Hilbert space triple, the operator $\widehat A$ in~\eqref{T.hat} naturally defines a maximal restriction in $\cH$ which is formally given by
\begin{equation}\label{res.A.hat}
	A := (\iota_{\cV\to \cH}^{*})^{-1} \widehat A (\iota_{\cV\to \cH})^{-1}.
\end{equation}
Here $\iota_{\cV\to \cH}$ denotes the bounded embedding $\cV \hookrightarrow \cH$ and its adjoint is the (bounded) restriction operator $\cH^* \hookrightarrow \cV^*$. Under suitable assumptions, $A$ is boundedly invertible in $\cH$.

\begin{theorem}[{\cite[Thm.\ 2.2]{Almog-2015-40}}] \label{thm:lm.1}
	In addition to the assumptions of Theorem~{\rm\ref{thm:lm.0}}, let $\mathcal{V} \subset \cH$ be continuously embedded and dense in another Hilbert space $\cH$ and assume that $\Phi_1$, $\Phi_2$ extend to bounded operators on $\cH$. Then the operator in $\cH$ defined by 
	\begin{equation}\label{lm.op}
		\begin{aligned}
			\Dom (A) & := \big\{ f\in \cV \, : \, \exists \eta_f \in\cH,    \forall g \in \cV,   \fra (f, g) = \langle \eta_f, g \rangle_\cH \big\}, \\
			Af & := \eta_f,
		\end{aligned}
	\end{equation}
	is boundedly invertible and its domain is dense in $\cV$ and $\cH$.
\end{theorem}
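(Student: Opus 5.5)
Since $\widehat A$ from \eqref{T.hat} is already known to be boundedly invertible from $\cV$ onto $\cV^*$ by Theorem~\ref{thm:lm.0}, my plan is to identify $A$ in \eqref{lm.op} with the part of $\widehat A$ that maps into $\cH \equiv \cH^* \subset \cV^*$. Concretely, I will first check the identity
\begin{equation*}
\Dom(A) = \widehat A^{-1}\bigl(\iota_{\cV\to\cH}^*(\cH)\bigr), \qquad \iota_{\cV\to\cH}^* A f = \widehat A f, \quad f \in \Dom(A).
\end{equation*}
This is immediate from the definitions. For $f\in\cV$, the condition $\fra(f,g)=\langle \eta_f,g\rangle_\cH$ for all $g\in\cV$ says exactly that the functional $\widehat A f\in\cV^*$ coincides with the restriction of $\langle\eta_f,\cdot\rangle_\cH$ to $\cV$. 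That restriction is $\iota_{\cV\to\cH}^*\eta_f$ under the identification \eqref{Gelfand}.

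Next I will prove bijectivity and the bound. Injectivity of $A$ follows from injectivity of $\widehat A$ together with injectivity of $\iota^*_{\cV\to\cH}$. The latter holds because $\cV$ is dense in $\cH$, so $\eta\in\cH$ with $\langle\eta,g\rangle_\cH=0$ for all $g\in\cV$ forces $\eta=0$; this also shows $\eta_f$ is uniquely determined, so $A$ is well defined. For surjectivity, take $h\in\cH$ and set $f:=\widehat A^{-1}\iota^*_{\cV\to\cH}h\in\cV$; then $f\in\Dom(A)$ and $Af=h$. The bound follows from the chain
\begin{equation*}
\|A^{-1}h\|_\cH \le \|\iota_{\cV\to\cH}\|\,\|\widehat A^{-1}\|_{\cV^*\to\cV}\,\|\iota^*_{\cV\to\cH}\|\,\|h\|_\cH .
\end{equation*}
Hence $A^{-1}\in\cB(\cH)$, and in particular $A$ is closed and $0 \in \rho(A)$.

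For density, I will use that $\iota^*_{\cV\to\cH}:\cH\to\cV^*$ has dense range. This holds because its pre-adjoint, the embedding $\iota_{\cV\to\cH}$, is injective and $\cV$ is reflexive. Since $\widehat A^{-1}:\cV^*\to\cV$ is a homeomorphism, the set $\Dom(A)=\widehat A^{-1}(\iota^*\cH)$ is then dense in $\cV$. Density in $\cH$ follows because $\cV$ is dense in $\cH$ and the embedding is continuous.

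The only delicate point is the bookkeeping of the identifications in the Gelfand triple \eqref{Gelfand}, in particular the density of $\cH$ in $\cV^*$. The extension hypothesis on $\Phi_1,\Phi_2$ appears not to be needed beyond what already enters Theorem~\ref{thm:lm.0}; I would double-check whether it is used only to obtain the invertibility of $\widehat A$, which I am taking from that theorem.
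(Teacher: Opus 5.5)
Your proof is correct. The paper does not prove this statement itself (it is quoted from Almog--Helffer), so there is no in-paper argument to match line by line. The closest analogue is the proof of Lemma~\ref{lem:hat.ext}, whose density step is exactly yours: approximate an element of the larger space $\cD_-$ (here $\cV^*$) by elements of $\cH$, then pull back with the bounded inverse.

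The natural alternative is a proof modelled on the classical Lax--Milgram theorem. There one takes surjectivity from Theorem~\ref{thm:lm.0}, obtains an a priori bound by testing \eqref{lm.gen.coev} with $f$ and $\Phi_2 f$ for $f\in\Dom(A)$, and gets density in $\cH$ via the isomorphism associated with the adjoint form. Your factorisation $A^{-1}=\iota_{\cV\to\cH}\widehat A^{-1}\iota^*_{\cV\to\cH}$ instead reduces everything to the single fact that $\widehat A$ is an isomorphism. It also makes transparent that the hypothesis $\Phi_1,\Phi_2\in\cB(\cH)$ is not needed for the stated conclusions.

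Your suspicion about that hypothesis is right. Even in the a priori route, $\|\Phi_2 f\|_\cH\lesssim\|\Phi_2 f\|_\cV\lesssim\|f\|_\cV$ already gives $\|f\|_\cV\lesssim\|Af\|_\cH$. The extension to $\cH$ would only sharpen this to the interpolation-type bound $\|f\|_\cV^2\lesssim\|Af\|_\cH\|f\|_\cH$, which is not part of the statement.

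A minor point: injectivity of $A$ only needs injectivity of $\widehat A$, since $\widehat A f=\iota^*_{\cV\to\cH}Af$. Injectivity of $\iota^*_{\cV\to\cH}$ is what makes $A$ well defined.
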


\subsection{Schur complement dominant operator matrices}

We recall useful claims from \cite{Gerhat-2024-286}. Employing suitable Gelfand triples, they allow us to introduce operator matrices with non-empty resolvent set in the product space $\cH := \cH_1 \oplus \cH_2$ of two complex Hilbert spaces $\cH_1$ and $\cH_2$.

\begin{asm-sec} \label{asm:schur.dom}
	\begin{enumerate}[\upshape (i), wide]
		\item \label{item.spaces} Let $\cD_S$, $\cD_2$, $\cD_{-S}$, and $\cD_{-2}$ be complex Hilbert spaces such that, with continuous embeddings having dense ranges, 
		\begin{equation}\label{eq:triplets}
			\cD_S \subset \cH_1 \subset \cD_{-S}, \qquad \cD_2 \subset \cH_2 \subset \cD_{-2}.
		\end{equation}
		\item \label{item.entries.1} Suppose that
		\begin{equation}\label{op.hat.cond.bdd}
			\begin{aligned}
				\widehat A & \in \cB(\cD_S, \cD_{-S}), & \quad \widehat B & \in \cB (\cD_2, \cD_{-S}), \\
				\widehat C & \in \cB (\cD_S, \cD_{-2}), & \quad \widehat D  & \in \cB (\cD_2, \cD_{-2}). \\
			\end{aligned}
		\end{equation}
	\end{enumerate}
\end{asm-sec}

Under Assumption~\ref{asm:schur.dom}, we define the operator matrix 
\begin{equation}
	\widehat \cA := \left(
	\begin{array}{cc}
		\widehat A & \widehat B \\
		\widehat C & \widehat D
	\end{array}
	\right) \in \cB (\cD, \cD_-), \qquad
	\cD  := \cD_S \oplus \cD_2, \qquad
	\cD_-  := \cD_{-S} \oplus \cD_{-2}.
\end{equation}
Its first Schur complement
\begin{equation}
	\label{eq:def.hat.S}
	\quad 
	\widehat{S_1} (\la) := \widehat A - \la- \widehat B (\widehat D-\la)^{-1} \widehat C \in \cB (\cD_S, \cD_{-S})
\end{equation}
is defined for spectral parameters
\begin{equation}\label{eq:hat.rho}
	\la \in \rho ( \wh D) := \big\{ z \in \C \, : \, (\widehat D- z)^{-1} \in \cB (\cD_{-2}, \cD_{2}) \big\}.
\end{equation}
Finally, the corresponding (unbounded) maximal operators acting in $\cH$ and $\cH_1$ are 
\begin{equation}\label{Aa.S.max}
	\cA := \widehat \cA\vert_{\Dom(\cA)}, \qquad S_1 (\la):= \widehat{S_1}(\la)\vert_{\Dom (S_1(\la))},
\end{equation}
on their respective domains
\begin{equation}\label{eq:Schur.max.dom}
	\begin{aligned}
		\Dom (\cA) & := \big\{ (f,g) \in \cD \, : \, \widehat\cA (f,g) \in \cH \big\},\\
		\Dom (S_1(\la)) & := \big\{ f \in \cD_S \, : \, \widehat{S_1}(\la)f \in \cH_1 \big\}. 
	\end{aligned}
\end{equation}
Then  the spectra of $\cA$ and $S(\cdot)$ are related by the following theorem; see \cite[Chap.~IX]{EE} for the definitions of the essential spectra $\se{k}$.

\begin{theorem}[{\cite[Cor.~3.4~(ii), Cor.~3.5, Cor.~3.6, Cor.~3.7]{Gerhat-2024-286}}]
	\label{thm:Schur.dom}
	Let Assumption~{\rm{\ref{asm:schur.dom}}} be satisfied and let $\widehat{ S_1 }(\cdot)$, $\rho( \wh D)$, $\cA$ and $S_1(\cdot)$ be as in~\eqref{eq:def.hat.S},~\eqref{eq:hat.rho},~\eqref{Aa.S.max} and \eqref{eq:Schur.max.dom}. Let $\Sigma \subset \rho(\wh D)$ be such that
	\begin{equation}\label{Schur.asm}
		\forall\la \in \Sigma, \ \exists z_\la \in \C \ : \  (\widehat{S_1}(\la) - z_\la)^{-1} \in \cB (\cD_{-S}, \cD_S).
	\end{equation}
	Then the following spectra of $\cA$ and $S_1(\cdot)$ are equivalent on $\Sigma$. In detail,
	\begin{equation}
		\begin{aligned}
			& \la \in \sigma(\cA) \,\,  &&  \iff \quad 0 \in \sigma ( S_1 (\la)), \\
			& \la \in \spp(\cA) \,\, && \iff \quad 0 \in \spp (S_1 (\la)), \\
			& \la \in \se{2}(\cA) \,\, &&  \iff \quad 0 \in \se{2} (S_1 (\la)), \qquad \la \in \Sigma.
		\end{aligned}
	\end{equation}
	Moreover, if there exists $\la \in \Sigma \neq \emptyset$ with $0 \in \rho( S_1 (\la))$, then $\Dom (\cA)$ is dense in both $\cD$ and $\cH$.
\end{theorem}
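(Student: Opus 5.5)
The plan is to prove Theorem~\ref{thm:Schur.dom} by a Frobenius--Schur factorisation carried out at the level of the extended operators $\cD \to \cD_-$. The unbounded operators $\cA$ and $S_1(\la)$ will then be handled as maximal restrictions of bijections between the extended spaces.

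\emph{Factorisation.} Fix $\la \in \Sigma \subset \rho(\wh D)$, so that $(\wh D-\la)^{-1} \in \cB(\cD_{-2},\cD_2)$. Then $\wh B(\wh D-\la)^{-1} \in \cB(\cD_{-2},\cD_{-S})$ and $(\wh D-\la)^{-1}\wh C \in \cB(\cD_S,\cD_2)$. A direct multiplication gives, in $\cB(\cD,\cD_-)$,
\begin{equation*}
\wh\cA-\la=
\begin{pmatrix} I & \wh B(\wh D-\la)^{-1}\\ 0 & I\end{pmatrix}
\begin{pmatrix} \wh{S_1}(\la) & 0\\ 0 & \wh D-\la\end{pmatrix}
\begin{pmatrix} I & 0\\ (\wh D-\la)^{-1}\wh C & I\end{pmatrix}.
\end{equation*}
The outer factors are bounded bijections of $\cD_-$ and of $\cD$ respectively, with explicit triangular inverses. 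Consequently $\wh\cA-\la$ is a bijection $\cD\to\cD_-$ if and only if $\wh{S_1}(\la)$ is a bijection $\cD_S\to\cD_{-S}$, and in that case the inverse is bounded by the open mapping theorem. The factorisation also shows that $\Ker(\wh\cA-\la)$ is isomorphic to $\Ker \wh{S_1}(\la)$ via $f\mapsto (f,-(\wh D-\la)^{-1}\wh C f)$. Since $\wh\cA (f,g)=\la(f,g)\in\cH$ and $\wh{S_1}(\la)f=0\in\cH_1$ on these kernels, the kernels lie in the maximal domains. Hence $\Ker(\cA-\la)\cong \Ker S_1(\la)$, which gives the $\spp$ equivalence.

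\emph{Passing between hats and maximal restrictions.} This is the main obstacle. The right-hand side $(h_1,h_2)\in\cH$ produces $h_1-\wh B(\wh D-\la)^{-1}h_2$, which need not lie in $\cH_1$, so invertibility of $S_1(\la)$ in $\cH_1$ does not immediately give invertibility of $\wh{S_1}(\la)$ on $\cD_{-S}$. I would bridge this with \eqref{Schur.asm}.

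Assume $0\in\rho(S_1(\la))$ and let $y\in\cD_{-S}$. Put $x:=(\wh{S_1}(\la)-z_\la)^{-1}y\in\cD_S\subset\cH_1$ and $f:=x-z_\la S_1(\la)^{-1}x$. Using $\wh{S_1}(\la)x=y+z_\la x$ and $\wh{S_1}(\la)S_1(\la)^{-1}x=x$, we get $\wh{S_1}(\la)f=y$, so $\wh{S_1}(\la)$ is surjective. It is injective by the kernel identification above. Therefore $\wh\cA-\la$ is bijective, and restricting its bounded inverse to $\cH$ gives $(\cA-\la)^{-1}\in\cB(\cH,\cD)\subset\cB(\cH)$.

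Conversely, assume $\la\in\rho(\cA)$ and let $h_1\in\cH_1$. Solve $(\cA-\la)(f,g)=(h_1,0)$. The second row gives $g=-(\wh D-\la)^{-1}\wh C f$, and substituting into the first row gives $\wh{S_1}(\la)f=h_1$, so $f\in\Dom(S_1(\la))$. Together with injectivity from the kernels, $0\in\rho(S_1(\la))$.

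\emph{Essential spectrum and density.} For $\se{2}$, I would use the same correspondence at the level of ranges. The identity $\Ran(\cA-\la)=\{(h_1,h_2)\in\cH: h_1-\wh B(\wh D-\la)^{-1}h_2\in \Ran\wh{S_1}(\la)\}$, combined with the argument above (which, via \eqref{Schur.asm}, relates $\Ran\wh{S_1}(\la)$ and $\Ran S_1(\la)$ through a bounded perturbation of the identity), should show that closedness of range and finiteness of the kernel transfer in both directions. Weyl sequences could also be transported through the bounded triangular factors. Finally, if $0\in\rho(S_1(\la))$ for some $\la\in\Sigma$, then $(\wh\cA-\la)^{-1}\colon\cD_-\to\cD$ is a bounded bijection and $\Dom(\cA)=(\wh\cA-\la)^{-1}\cH$. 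Since $\cH$ is dense in $\cD_-$, $\Dom(\cA)$ is dense in $\cD$, and hence also in $\cH$ because $\cD$ is densely and continuously embedded in $\cH$.
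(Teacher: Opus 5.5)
\paragraph{Verdict.} Your proposal proves the $\sigma$, $\spp$ and density claims correctly, but the $\se{2}$ equivalence is only asserted, and the step that carries its content is missing.

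\paragraph{What is correct.} The factorisation, the kernel identification and the density argument are all sound. The paper gives no proof of this theorem; it cites Gerhat. Your surjectivity element $f=x-z_\la S_1(\la)^{-1}x$ is exactly the operator $R_\la$ of the paper's Lemma~\ref{lem:hat.ext} (with $\la_0=z_\la$ and spectral parameter $0$), checked more directly than the paper does. In the converse $\rho$-direction, add that $S_1(\la)^{-1}h_1$ is the first component of $(\cA-\la)^{-1}(h_1,0)$. This gives boundedness of the inverse.

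\paragraph{The gap in the $\se{2}$ claim.} You only say that closedness of ranges ``should'' transfer.
\begin{itemize}
\item One direction is immediate. If $\Ran(\cA-\la)$ is closed, then so is $\Ran S_1(\la)=\{h_1\in\cH_1:(h_1,0)\in\Ran(\cA-\la)\}$, and the kernels are isomorphic.
\item The converse needs $\Ran\wh{S_1}(\la)$ to be closed in $\cD_{-S}$, because $h_1-\wh B(\wh D-\la)^{-1}h_2$ in general leaves $\cH_1$.
\item Your surjectivity trick relies on $S_1(\la)^{-1}$. It therefore gives nothing when $S_1(\la)$ merely has closed range.
\end{itemize}

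\paragraph{How to close it.} Let $z=z_\la$, $J\colon\cH_1\hookrightarrow\cD_{-S}$ and $M:=\iota_{\cD_S\to\cH_1}(\wh{S_1}(\la)-z)^{-1}$. Then
\[
\wh{S_1}(\la)=(I_{\cD_{-S}}+zJM)(\wh{S_1}(\la)-z),\qquad S_1(\la)=(I_{\cH_1}+zMJ)(S_1(\la)-z),
\]
and both right factors are bijections.
\begin{itemize}
\item By the standard $I+AB$ versus $I+BA$ principle (a $2\times2$ block factorisation), $I+zJM$ and $I+zMJ$ have isomorphic kernels and simultaneously closed ranges.
\item Hence a closed $\Ran S_1(\la)$ gives a closed $\Ran\wh{S_1}(\la)$.
\item Your triangular factorisation then makes $\Ran(\wh\cA-\la)$ closed in $\cD_-$.
\item So $\Ran(\cA-\la)=\Ran(\wh\cA-\la)\cap\cH$ is closed in $\cH$.
\end{itemize}

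\paragraph{Closedness of $\cA$.} You also need $\cA$ to be closed for $\se{2}$ to be defined. This too follows from \eqref{Schur.asm}. Suppose $(f_n,g_n)\to(f,g)$ and $(\cA-\la)(f_n,g_n)\to w$ in $\cH$. Then $(\wh{S_1}(\la)-z)f_n$ converges in $\cD_{-S}$, so $f_n$ converges in $\cD_S$. Consequently $g_n$ converges in $\cD_2$.

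\paragraph{Weyl sequences.} Your remark about transporting them ``through the triangular factors'' does not work as stated. Those factors act between $\cD$ and $\cD_-$, not on $\cH$.
\begin{itemize}
\item For a singular sequence of $\cA$, the first components need not lie in $\Dom(S_1(\la))$. Also, $\wh{S_1}(\la)f_n\to0$ holds only in $\cD_{-S}$, so a correction via $(\wh{S_1}(\la)-z)^{-1}$ is required.
\item In the other direction, bounding $-(\wh D-\la)^{-1}\wh Cf_n$ in $\cH_2$ needs $f_n$ bounded in $\cD_S$.
\end{itemize}
Both points again require \eqref{Schur.asm}.
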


The condition \eqref{Schur.asm} can be verified using the following lemma.

\begin{lemma}\label{lem:hat.ext}
	Let $\cD \subset \cH \subset \cD_-$ be a triple of Hilbert spaces where the respective embeddings are continuous and have dense range. Let $\widehat T \in \cB (\cD, \cD_-)$ and define
	\begin{equation}
		T := \widehat T|_{\Dom (T)}, \qquad \Dom (T) := \big\{ f \in \cD \, : \, \widehat T f \in \cH \big\}.
	\end{equation}
	Assume there exists $\la_0 \in \C$ such that
	\begin{equation}
		(\widehat T-\la_0)^{-1} \in \cB(\cD_-,\cD).
	\end{equation}
	Then, for all $\la \in \C$
	\begin{equation}
		\la \in \rho(T) \quad \iff \quad \la \in \rho(\widehat T) \quad : \iff \quad (\widehat T-\la)^{-1} \in \cB(\cD_-,\cD).
	\end{equation}
\end{lemma}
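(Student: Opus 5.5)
The statement to prove is Lemma~\ref{lem:hat.ext}: for $\widehat T \in \cB(\cD,\cD_-)$ with $(\widehat T-\la_0)^{-1}\in\cB(\cD_-,\cD)$ for some $\la_0$, the resolvent sets of $T$ and $\widehat T$ coincide. I would treat the two implications separately, with $\la_0$ used only in the harder one.

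\textbf{Easy direction.} Assume $\la\in\rho(\widehat T)$. Define $R:=(\widehat T-\la)^{-1}$ restricted to $\cH$.
\begin{itemize}
\item For $g\in\cH$, the element $f=Rg$ lies in $\cD$ and satisfies $\widehat Tf=g+\la f\in\cH$. Hence $f\in\Dom(T)$ and $(T-\la)f=g$.
\item Injectivity of $T-\la$ is inherited from injectivity of $\widehat T-\la$.
\item Boundedness on $\cH$ follows from the continuous embeddings: $\|Rg\|_\cH\lesssim\|Rg\|_\cD\lesssim\|g\|_{\cD_-}\lesssim\|g\|_\cH$.
\end{itemize}
So $\la\in\rho(T)$.

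\textbf{Reverse direction.} Assume $\la\in\rho(T)$. I would build the bounded inverse of $\widehat T-\la$ from $R_0:=(\widehat T-\la_0)^{-1}$ through a resolvent-type identity. Write
\[
\widehat T-\la=\bigl(I+(\la_0-\la)R_0\bigr)\ \text{composed with } (\widehat T-\la_0)\quad\text{on }\cD.
\]
The candidate inverse is
\[
\widehat R:=R_0+(\la-\la_0)(T-\la)^{-1}\iota R_0,
\]
where $\iota:\cD\to\cH$ is the embedding. This maps $\cD_-\to\cH\to\Dom(T)$.

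The main obstacle is showing that $(T-\la)^{-1}$ maps $\cH$ boundedly into $\cD$, not merely into $\cH$. This holds because $\Dom(T)\subset\cD$ and, for $f\in\Dom(T)$,
\[
f=R_0(\widehat T-\la_0)f=R_0\bigl((T-\la)f+(\la-\la_0)f\bigr).
\]
Therefore
\[
\|f\|_\cD\lesssim\|(T-\la)f\|_{\cD_-}+|\la-\la_0|\,\|f\|_{\cD_-}\lesssim\bigl(1+\|(T-\la)^{-1}\|\bigr)\|(T-\la)f\|_\cH.
\]
With this estimate, $\widehat R\in\cB(\cD_-,\cD)$.

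To check that $\widehat R$ is the inverse, set $h=R_0g$ and compute
\[
(\widehat T-\la)\widehat R g=g+(\la_0-\la)h+(\la-\la_0)h=g.
\]
Injectivity of $\widehat T-\la$ on $\cD$ also holds. If $(\widehat T-\la)f=0$, then $\widehat Tf=\la f\in\cD\subset\cH$, so $f\in\Dom(T)$, and injectivity of $T-\la$ gives $f=0$. Hence $\widehat R=(\widehat T-\la)^{-1}$, and it is bounded from $\cD_-$ to $\cD$, so $\la\in\rho(\widehat T)$.
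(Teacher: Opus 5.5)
Your proof is correct and follows the paper's route. You use the same candidate inverse $R_0+(\la-\la_0)(T-\la)^{-1}R_0$ from the first resolvent identity, and the same restriction argument for $\rho(\widehat T)\subset\rho(T)$. Your verification is leaner in one respect and more complete in another. It is leaner because you check the right-inverse identity directly on all of $\cD_-$ and get injectivity from $\Ker(\widehat T-\la)\subset\Dom(T)$, whereas the paper proves density of $\Dom(T)$ in $\cD$ and extends identities by continuity. It is more complete because you prove the bound $\|(T-\la)^{-1}g\|_{\cD}\lesssim\|g\|_{\cH}$, which the paper only asserts.
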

\begin{proof}
Suppose that $(T-\lambda)^{-1}\in \cB(\cH,\Dom (T) )$. Since $\widehat T-\lambda_{0}$ is bijective, $T-\lambda_{0}$ is bijective and $(T-\lambda_{0})^{-1}\subset (\widehat T-\lambda_{0})^{-1}$. Using the first resolvent identity, we have
\begin{equation}
\begin{aligned}
( T-\lambda)^{-1} =& ( T-\lambda_{0})^{-1} + (\lambda-\lambda_{0})(T-\lambda)^{-1}(T-\lambda_{0})^{-1}\\
\subset & (\widehat T-\lambda_{0})^{-1} + (\lambda-\lambda_{0})(T-\lambda)^{-1}(\widehat T-\lambda_{0})^{-1}
\\
& \eqqcolon R_{\lambda} \in B(\cD_{-},\cD).
\end{aligned}
\end{equation}
where for the boundedness we have used that $\cD\subset \cH$ is boundedly embedded and $(T-\lambda)^{-1} \in B(\cH, \Dom (T))$. We then have
\begin{equation}\label{Identities}
R_{\lambda}(\widehat T -\lambda)|_{\Dom (T)} = I_{\Dom (T)},\qquad (\widehat T -\lambda)R_{\lambda}|_{\cH} = I_{\cH}.
\end{equation}
Now we show that $\Dom (T)$ is dense in $\cD$. Let $x\in \cD$, then $(\widehat T-\lambda_{0}) x\in \cD_{-}$. Since $\cH$ is dense range embedded in $\cD_{-}$, there exists a sequence $\{f_{n}\}_{n\in\N}\subset\cH$ such that
\begin{equation} 
f_{n} \longrightarrow (\widehat T-\lambda_{0}) x  \text{ in }\cD_{-}.
\end{equation}
Applying the continuity of $(\widehat T-\lambda_0)^{-1}:\cD_{-} \to \cD$, we obtain
\begin{equation} 
(\widehat T-\lambda_0)^{-1}f_{n} \longrightarrow  x  \text{ in }\cD.
\end{equation}
Note that $(\widehat T-\lambda_0)^{-1}f_{n}\in \Dom (T)$, hence this shows the density of $\Dom (T)$ in $\cD$.
Combine this density with the continuity and dense range of the embeddings, the identities in \eqref{Identities} can be extended from $\Dom (T)$ to $\cD$ and from $\cH$ to $\cD_{-}$, respectively. This implies that $(\widehat T -\lambda)^{-1} = R_{\lambda}$.

Conversely, suppose $(\widehat T-\lambda)^{-1}\in \cB(\cD_{-},D)$. Let $u\in \cH$, since 
\begin{equation}
(T-\lambda)^{-1}u = (\widehat T-\lambda)^{-1}u,	
\end{equation}
by the assumption and the continuity of the embeddings,
\begin{equation}
\Vert (T-\lambda)^{-1} u \Vert_{\cD} = \Vert (\widehat T-\lambda)^{-1} u \Vert_{\cD} \lesssim \Vert u \Vert_{\cD_{-}} \lesssim \Vert u \Vert_{\cH}.
\end{equation}
\end{proof}
\bibliography{../references}
\bibliographystyle{acm}	

\end{document}

%% file: commands.tex
\usepackage{amsmath,amsthm, amssymb}
\usepackage{mathrsfs}
\usepackage[shortlabels]{enumitem}
\usepackage{graphicx}
\usepackage[font=small]{caption}
\usepackage{xcolor}
\usepackage{url}

\newcommand{\N}{\mathbb{N}}
\newcommand{\R}{{\mathbb{R}}}
\newcommand{\C}{{\mathbb{C}}}

\newcommand{\dd}{{{\rm d}}}

\newcommand{\ie}{{\emph{i.e.}}}
\newcommand{\eg}{{\emph{e.g.}}}

\newcommand{\ov}{\overline}
\newcommand\wt{\widetilde}
\newcommand\wh{\widehat}

\newcommand{\la}{\lambda}

\newcommand{\eps}{\varepsilon}

\newcommand{\se}[1]{\sigma_{\rm e#1}}
\newcommand{\spp}{\sigma_{\rm p}}

\newcommand{\Dom}{{\operatorname{Dom}}}
\newcommand{\Ker}{{\operatorname{Ker}}}
\newcommand{\Ran}{{\operatorname{Ran}}}

\renewcommand{\Re}{\operatorname{Re}}
\renewcommand{\Im}{\operatorname{Im}}
\newcommand{\dist}{\operatorname{dist}}
\newcommand{\sgn}{\operatorname{sgn}}
\newcommand{\supp}{\operatorname{supp}}
\newcommand{\Tr}{\operatorname{Tr}}

\newcommand{\Num}{\operatorname{Num}}

\newcommand{\BigO}{\mathcal{O}}

\newcommand{\lspan}{{\operatorname{span}}}

\newcommand{\rad}{{\operatorname{rad}}}

\newcommand{\opT}{T}

\newcommand{\intR}{\int_{\R}}

\newcommand{\Lt}{{L^2}}

\newcommand{\CcR}{{C_c^{\infty}(\R)}}

\newcommand{\Dt}{-\partial_x^2}

\newcommand{\Ntp}{\partial_x}

\newcommand{\ls}{\lesssim 	}
\newcommand{\gs}{\gtrsim}

\theoremstyle{plain}

\newtheorem{theorem}{Theorem}[section]
\newtheorem{lemma}[theorem]{Lemma}

\newtheorem{proposition}[theorem]{Proposition}
\newtheorem{corollary}[theorem]{Corollary}

\theoremstyle{definition}
\newtheorem{example}[theorem]{Example}
\newtheorem{remark}[theorem]{Remark}
\newtheorem{asm-sec}[theorem]{Assumption}

\newcommand\cA{\mathcal A}
\newcommand\cB{\mathcal B}
\newcommand\cD{\mathcal D}
\newcommand\cE{\mathcal E}
\newcommand\cF{\mathcal F}

\newcommand\cH{\mathcal H}

\newcommand\cM{\mathcal M}

\newcommand\cS{\mathcal S}
\newcommand\cT{\mathcal T}
\newcommand\cV{\mathcal V}

\newcommand\cX{\mathcal X}

\newcommand\fra{\mathfrak a}

\newcommand\frs{\mathfrak s}

\newcommand\sL{\mathscr L}